\documentclass{amsart}

\usepackage[utf8]{inputenc}
\usepackage{amsmath,amscd,amssymb,amsthm,amsfonts}
\usepackage{enumerate}
\usepackage{array}
\usepackage[]{graphicx}
\usepackage[all]{xy}
\usepackage{tensor}
\usepackage{tikz-cd}
\usepackage{hyperref}

\usepackage{xcolor}
\definecolor{red}{rgb}{0.8,0,0.1}
\definecolor{blue}{rgb}{0.1,0,0.8}

\newtheorem{theorem}{Theorem}[section]
\newtheorem{proposition}[theorem]{Proposition}
\newtheorem{corollary}[theorem]{Corollary}
\newtheorem{lemma}[theorem]{Lemma}

\newtheorem*{theorem*}{Theorem}

\theoremstyle{definition}
\newtheorem{definition}[theorem]{Definition}
\newtheorem{example}[theorem]{Example}

\theoremstyle{remark}
\newtheorem{remark}[theorem]{Remark}

\numberwithin{equation}{section}

\def\C{{\mathcal C}}

\def\G{{\mathcal G}}

\def\id{\mathrm{id}}

\def\hcdot{\hat{\cdot}}

\newcommand{\cont}{\mathbin{\raisebox{\depth}{\scalebox{1}[-1]{$\lnot$}}}}

\def\to{\rightarrow}
\def\tto{\rightrightarrows}

\def\to{\rightarrow}
\def\tto{\rightrightarrows}

\newcommand{\hor}[0]{\mathrm{hor}}

\newcommand{\conn}[0]{\mathfrak{c}}
\newcommand{\Cbas}[0]{\mathrm{K}_\mathrm{bas}}
\newcommand{\Curv}[0]{\mathrm{K}}
\newcommand{\cbas}[0]{\mathrm{R}_\mathrm{bas}}
\newcommand{\curv}[0]{\mathrm{R}}
\newcommand{\mc}[0]{\mathfrak{mc}}

\newcommand{\pr}[0]{\mathrm{pr}}

\newcommand{\bis}[0]{\mathfrak{b}}

\newcommand{\an}[1]{\mathfrak{an}#1}

\usepackage[backend=bibtex, style=numeric-comp]{biblatex} 
\begin{document}

\title[Chern-Weil theory for groupoids with Cartan connections]{A Chern-Weil theory for Lie groupoids with Cartan connections}
\author[L. Accornero]{Luca Accornero$^1$}
\address{$^{1,3}$ Instituto de Matemática e Estatística - Universidade de São Paulo, Rua do Matão, 1010 - Butantã, São Paulo - SP, 05508-090, Brazil}
\email{$^{1}$luca.accornero@ime.usp.br, $^2$mateus.melo@ufes.br, 
$^3$ivanstru@ime.usp.br}
\author[M. de Melo]{Mateus M. de Melo$^2$}
\address{$^{2}$ Universidade Federal do Espírito Santo, Departamento de Matem\'atica, Avenida Fernando Ferrari 514 - Vitória, Espírito Santo - 29075-910, Brazil}
\author[I. Struchiner]{Ivan Struchiner$^3$}
\thanks{L.A. was supported by the São Paulo Research Foundation (FAPESP), Brasil, Process Number 2024/22841-1. M.dM. acknowledges FAPESP for supporting his research through the grant 2019/14777-3. I.S. acknowledges FAPESP for supporting his research through the grant 2022/16310-8.}
\setcounter{secnumdepth}{2}
\setcounter{tocdepth}{2}

\begin{abstract}
We extend Chern-Weil theory to Lie groupoids carrying a Cartan connection.
For a principal $(\Sigma,\omega)$-bundle
$(P,\theta)$ over $X$ we construct a map of differential graded algebras from an
$\omega$-basic subcomplex of the Weil algebra of the Lie algebroid of $\Sigma$ to
$\Omega^\bullet(X)$, whose map in cohomology does not depend on $\theta$. It
recovers the classical Chern-Weil map for Lie groups, the Cartan model for
action groupoids, and the characteristic classes of Cartan geometries for gauge groupoids. 
\end{abstract}

\maketitle
\tableofcontents

\section{Introduction}\label{sec:intro}

Classical Chern-Weil theory attaches characteristic classes to a principal
$G$-bundle by evaluating invariant polynomials on the curvature of a
connection. The present paper is the first step of a project to extend Chern-Weil theory to principal groupoid bundles.

Our motivation to attempt such 
an extension comes first of all from the theory of geometric structures on manifolds, in Haefliger's sense: a geometry on a manifold is a 
principal bundle whose structure groupoid is the germ groupoid of a Lie pseudogroup \cite{Haefliger}, and its
characteristic classes are the characteristic classes of
$\Gamma$-foliations \cite{BottHaefliger}. A Chern-Weil oriented description of such classes was given in \cite{AccCr2023}; on the other hand, Haefliger's 
principal bundles only cover the so-called \textit{integrable} geometries -- geometric structures possessing a local normal form -- and one has to look at more general 
principal objects to describe geometries with local invariants. 

A second motivation to extend Chern-Weil theory from Lie groups to Lie groupoids comes from its well known relation to the Cartan model for the equivariant cohomology
of the action of a Lie group $G$ on a manifold $M$. Such model can be understood as an infinitesimal description of the cohomology of the Bott-Shulman complex of the action groupoid $G\ltimes M$.
The search of an analogue of such a model for more general Lie groupoids has been going on for a while but, despite recent remarkable progress in the proper regular case \cite{KraaschTarnowsky2026BottSpectralSequence}, 
it has not yet reached a conclusion. In the context of this paper, the key observation is that the infinitesimal description of the cohomology of the Bott-Shulman complex of a Lie groupoid is expected to be the 
domain of definition of the Chern-Weil characteristic map, so progress with constructing the former is progress with constructing the latter and viceversa.

This already suggests that Chern-Weil theory for Lie groupoids is far from being an obvious construction. Indeed, when attempting to extend the Chern-Weil construction to principal \textit{groupoid} bundles one 
encounters several difficulties. The classical construction relies on the adjoint representation and on the equivariance of a principal connection
one-form with respect to such representation. When dealing with Lie groupoids, not only the adjoint representation is only a 
representation up to homotopy (hence, not associative), but, if $\Sigma\tto M$ is a Lie groupoid and
$P$ is a principal $\Sigma$-bundle, there is no natural action of $\Sigma$ on $TP$. In other words, the notion of principal connection is not clear.

A choice of \textbf{Cartan connection} on $\Sigma$ removes such difficulties. Such a connection is
a distribution $\ker(\omega)\subset T\Sigma$ complementary to the source
fibres and closed under multiplication, equivalently a one-form
$\omega\in\Omega^1(\Sigma,t^*A)$ restricting to the Maurer-Cartan form along
$\ker(ds)$ and satisfying a multiplicativity equation. Such a distribution
determines a representation of $\Sigma$ on the complex $A\overset{\rho}{\to}TM$,
an action on $TP$ for every $\Sigma$-space $P$, and hence a notion of
principal connection. The infinitesimal counterpart is a connection $D$ on the
vector bundle $A$ with vanishing basic curvature; pairs $(A,D)$ were studied e.g. by Blaom under the name of
\emph{Cartan algebroids}~\cite{Blaom2006}; see also~\cite{Blaom2016} for the groupoid picture.

Thus, our paper deals with pairs $(\Sigma,\omega)$ consisting of a Lie groupoid together with a Cartan connection, and develops 
Chern-Weil theory for principal $\Sigma$-bundles. Before presenting our work, let us add that the need for choosing a Cartan connection
has several drawbacks. First, Cartan connections do not always exist -- one immediately notices that a Cartan connection
provides a parallelization of the source-fibers of the groupoid (in fact, a Cartan connection in the sense \cite{Sharpe1997}). Second, even when
the groupoid supports a Cartan connection, the resulting Chern-Weil construction does depend on the Cartan connection chosen. Despite such drawbacks,
our construction applies nicely to a number of natural examples, some of which support a natural choice of Cartan connection.

The examples that we discuss are the following ones.
\begin{itemize}
    \item Lie groups with their Maurer-Cartan form. Our Chern-Weil construction coincides with the classical one.
    \item Bundles of Lie groups. Here, Cartan connections interact nicely with the structure of the bundles. Indeed, a locally trivial bundle of Lie groups carries a Cartan connection,
    and the converse holds true if the total space is assumed to be Hausdorff and the fibers are assumed to be connected. The resulting Chern-Weil theory is a refinement of the fiberwise version of the classical theory. 
    \item Action groupoids. Here, the Cartan connection is the pullback of the Maurer-Cartan form of a Lie group. As pointed out above, our Chern-Weil map is defined on the Cartan model of equivariant cohomology.
    \item Gauge groupoids of principal bundles. Here, a choice of Cartan connection is equivalent to a choice of Cartan connection on the principal bundle (with some subtleties regarding the underlying model geometry) \cite{Cattafi2021}; we discussed the resulting Chern-Weil theory separately in \cite{AccDeMelStr25}, though the map provided here is a refinement.
    \item Infinite jet groupoids of Lie pseudogroup. The relevant principal bundles here include Haefliger's $\Gamma$-foliations \cite{Haefliger}, \cite{BottHaefliger}, and the natural principal connections are flat, leading to the secondary invariant that recover the characteristic classes of $\Gamma$-foliations \cite{BottHaefliger} discussed in \cite{AccCr2023}.
\end{itemize}

\subsection*{The main results} Let $\Sigma\tto M$ be a Lie groupoid with Lie algebroid $A\to M$, and $\pi:P\to X$ be a principal $\Sigma$-bundle with moment map $\mu:P\to M$. 

Let $D$ be a connection on $A$. The \textbf{Weil algebra} of $(A,D)$, see \cite{AbCr2011RepUpHomAlg}, is the free graded $C^\infty(M)$-algebra
\[
W^\bullet(A,D) = \Gamma(\Lambda^\bullet T^*M \otimes S^\bullet A^*\otimes \Lambda^\bullet A^*)
\]
generated by $\Omega^1(M)$ and $\Omega^1(A)$ in degree $1$ and $\Gamma(S^1A^*)$ in degree two. It is equipped with a differential $d_W$ built from the connection $D$, making it into a differential graded algebra. A connection independent description of this object exists, see \cite{AbCr2009Weil}, but throughout the paper we need and use the description from \cite{AbCr2011RepUpHomAlg}. 

A connection one form on $P$ is a pointwise surjective one-form $\theta\in \Omega^1(P,\mu^*A)$ whose kernel defines an Ehresmann connection on $\pi:P\to X$ -- i.e. $TP=\ker(\theta)\oplus T^\pi P$. Let $\omega\in \Omega^1(\Sigma,t^*A)$ be a Cartan connection on $\Sigma$; $\omega$ induces an action of $\Sigma$ on $TP$ and a representation of $\Sigma$ on $A$ and $TM$ equivariant with respect to the anchor map. We define a  \textbf{principal $(\Sigma,\omega)$-connection one form} as a connection one-form $\theta\in \Omega^1(P,\mu^*A)$ that is equivariant with respect to the action of $\Sigma$ on $TP$ and $\mu^*A$ induced by $\omega$. Proposition \ref{prp:princ_conn_exist} shows that principal $(\Sigma,\omega)$-connections always exist.

The Cartan connection $\omega$ induces canonically a connection $D$ on $A$. We can then state our first main result.
\begin{theorem*}[Theorem~\ref{thm:conn_curv}]
    A connection $\theta$ on $P$ induces canonically a map of differential graded algebras
    \[
    \kappa^{\Theta}:\left(W^\bullet(A,D),d_W\right)\longrightarrow
    \left(\Omega^\bullet(P),d_\mathrm{dR}\right)
    \]    
    If $\omega$ is a Cartan connection on
    $\Sigma$, $D$ the corresponding connection on $A$, and $\theta$ is a principal
    $(\Sigma,\omega)$-connection, then $\ker(\kappa^{\Theta})$ contains the
    differential ideal generated by $\Gamma(S^1A^*)$ if and only if $\theta$ is
    flat.
\end{theorem*}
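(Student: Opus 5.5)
Since $W^\bullet(A,D)$ is the free graded-commutative $C^\infty(M)$-algebra on $\Omega^1(M)$, $\Omega^1(A)$ in degree one and $\Gamma(S^1A^*)$ in degree two, I will build $\kappa^\Theta$ by fixing it on generators and extending multiplicatively. On $C^\infty(M)$ and $\Omega^1(M)$ it is pull-back along the moment map $\mu$. A section $\xi\in\Gamma(A^*)$, seen in degree one inside $\Lambda^1A^*$, goes to $\langle \mu^*\xi,\theta\rangle\in\Omega^1(P)$. The same $\xi$, seen in degree two inside $S^1A^*$, goes to $\langle\mu^*\xi,\Theta\rangle$, where
\[
\Theta = d_{\mu^*D}\theta + \tfrac12[\theta,\theta]_{D}\in\Omega^2(P,\mu^*A)
\]
is the curvature of $\theta$. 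Here $[\cdot,\cdot]_D$ is the torsion-type bracket on $\mu^*A$-valued forms, built from the algebroid bracket and $D$ (using $\mu^*$ of $\nabla^{\rm bas}$-type data), so that the expression is tensorial.

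The first step is to check that these assignments are $C^\infty(M)$-linear and compatible with the relations, so that $\kappa^\Theta$ is a well-defined algebra map. The second step is to check that $\kappa^\Theta\circ d_W = d_{\mathrm{dR}}\circ\kappa^\Theta$, which suffices on generators because both sides are derivations. Using the explicit formula for $d_W$ from \cite{AbCr2011RepUpHomAlg}, this reduces to the following identities:
\begin{itemize}
\item on functions, $d\mu^*f = \mu^*df$;
\item on degree-one elements $\xi\in\Gamma(A^*)$, $d\langle\xi,\theta\rangle$ equals $\langle D\xi,\theta\rangle$, plus the Chevalley--Eilenberg part, plus the $S^1A^*$-part, which is exactly how $\Theta$ was defined;
\item on degree-two elements, the structure Bianchi identity $d_{\mu^*D}\Theta+[\theta,\Theta]=$ terms involving the basic curvature of $D$ and the anchor, which must match the component of $d_W$ on $S^1A^*$ given by the basic connection and basic curvature.
\end{itemize}
I will also need to check the generators of $\Lambda T^*M$ mixed with $A^*$.

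I expect this Bianchi verification to be the main obstacle. The differential $d_W$ involves $\nabla^{\rm bas}$ and the basic curvature $R^{\rm bas}_D$, so I must show that $d\Theta$ produces exactly these terms when computed with $\mu^*D$. My first attempt will be to compute $d^2\theta=0$ written through $\mu^*D$ and reorganize it using $\rho\circ\theta$ versus $d\mu$.

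For the second statement, the differential ideal generated by $\Gamma(S^1A^*)$ lies in $\ker\kappa^\Theta$ exactly when $\kappa^\Theta(\Gamma(S^1A^*))=0$. Indeed, the kernel of a dga map is a differential ideal, so it contains the generated ideal as soon as it contains the generators. Since $\theta$ is pointwise surjective, the identity $\kappa^\Theta(\Gamma(S^1A^*))=0$ holds iff $\Theta=0$ pointwise, i.e.\ iff $\theta$ is flat. For the intended meaning of flatness, where curvature is the horizontal part, I will use that $\theta$ is a principal $(\Sigma,\omega)$-connection. Equivariance, together with the multiplicativity of $\omega$ and the vanishing basic curvature of $D$ induced by $\omega$, should give that $\Theta$ is horizontal (it vanishes on vertical vectors of $\pi$). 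Then $\Theta=0$ is equivalent to the vanishing of its restriction to $\ker\theta$, i.e.\ to flatness (integrability) of the Ehresmann distribution $\ker\theta$. The formula $\Theta(h_1,h_2)=-\theta([h_1,h_2])$ for horizontal $h_i$ gives this identification.
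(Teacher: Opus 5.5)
There is a genuine gap, and it is in how you define $\kappa^\Theta$ on $\Omega^1(M)$. You send $\eta\in\Omega^1(M)$ to $\mu^*\eta=\eta\circ d\mu$ and $\xi\in\Gamma(\Lambda^1A^*)$ to $\xi\circ\theta$. But in the Weil algebra of the statement, $d_Wf=d_{\mathrm{dR}}f+d_{\mathrm{dR}}f\circ\rho$ for $f\in C^\infty(M)$, and the second summand lies in $\Gamma(\Lambda^1A^*)$. Your map therefore gives
\[
\kappa^\Theta(d_Wf)=(d_{\mathrm{dR}}f)\circ d\mu+(d_{\mathrm{dR}}f)\circ\rho\circ\theta=d_{\mathrm{dR}}\kappa^\Theta(f)+(d_{\mathrm{dR}}f)\circ\rho\circ\theta .
\]
Since $\theta$ is pointwise surjective, the extra term is nonzero for suitable $f$ as soon as $\rho$ does not vanish identically on $\mu(P)$. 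The underlying reason is that a connection form is not anchored: $\rho\circ\theta\neq d\mu$. Your degree-zero check ``$d\mu^*f=\mu^*df$'' overlooks the $\Lambda^1A^*$-component of $d_Wf$.

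The defect propagates and cannot be absorbed in degree two. On $\alpha\in\Omega^1(A)$, with $\mc_\theta$ as the image of $\iota(\alpha)$, the discrepancy $\kappa^\Theta(d_W\alpha)-d_{\mathrm{dR}}(\alpha\circ\theta)$ evaluated on $(X,Y)$ is $(D^*_{\rho(\theta X)}\alpha)(\theta Y)-(D^*_{\rho(\theta Y)}\alpha)(\theta X)$. This is not $C^\infty(M)$-linear in $\alpha$, so no tensorial choice of image for $\Gamma(S^1A^*)$ repairs it, and the Bianchi computation you postpone cannot close.

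The missing idea is the anchored form $\Theta=(d\mu-\rho\circ\theta,\ \theta)\in\Omega^1(P,TM\oplus A)$, which satisfies $(\mathrm{id}+\rho)\circ\Theta=d\mu$; this, not the curvature, is the $\Theta$ in $\kappa^\Theta$. The paper sets:
\begin{itemize}
\item $\kappa^\Theta(\eta)=\eta\circ(d\mu-\rho\circ\theta)$; equivalently, $\mu^*$ is the correct value on $\eta+\eta\circ\rho$, not on $\eta$;
\item $\kappa^\Theta(\alpha)=\alpha\circ\theta$;
\item on $\Gamma(S^1A^*)$, the obstruction $R_\Theta=d_{\mathrm{dR}}\Theta^*-\Theta^*d_\oplus$ composed with $\iota$.
\end{itemize}
Anchoredness is exactly what makes $R_\Theta$ tensorial, and it then equals $\langle\iota(a),\mc_\theta\rangle$.

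With these definitions, degrees zero and one follow directly. The degree-two check is not a hand-made Bianchi identity: via $d_W=d_\iota+d_{\mathrm{cad}}+d_{\mathrm{curv}}$ it reduces to $d_\oplus^2\alpha=-\langle\alpha,\curv^D\rangle-\langle\alpha,\cbas^D\rangle$ (Lemma~\ref{lem:dplus_square_curv}) and the pointwise identity of Lemma~\ref{lem:kappa_carrying_step}.

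Once the first part is repaired this way, your argument for the flatness criterion is exactly the paper's. The kernel of a dga map is a differential ideal, and for principal $\theta$ the form $\mc_\theta$ is horizontal and restricts to $\Curv^\theta$ on $\ker\theta$.
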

This statement is the analogue of the classical fact that a connection on a principal group bundle induces an algebra map $\Lambda^\bullet \mathfrak{g}^*\to \Omega^\bullet P$, that this map lifts to the Weil algebra as a map of DGAs, and that the lift commutes with the differential precisely when the connection is flat. 

Our second result involves an action of $\Sigma$ on $W^\bullet(A,D)$ determined by $\omega$. This provides $W^\bullet(A,D)$ with an $\omega$-basic subcomplex and the map from the previous theorem restricts to it.
\begin{theorem*}[Theorem~\ref{thm:map_on_basics}, Definition~\ref{def:main_CW_map} and
Proposition~\ref{prop:CW_indep_of_theta}]
    Let $(\Sigma,\omega)$ be a Lie groupoid with Cartan connection,
    and let $(P,\theta)$ be a principal
    $(\Sigma,\omega)$-bundle over $X$. Then $\kappa^{\Theta}$ restricts to a
    map of differential graded algebras
    \[
    \kappa^{\Theta}:\left(W^\bullet(A,D)_{\omega\mathrm{-bas}},d_W\right)
    \longrightarrow \left(\Omega^\bullet(X),d_\mathrm{dR}\right),
    \]
    and the induced map in cohomology
    \[
    \kappa^{\mathrm{CW}}: H^\bullet\!\left(W^\bullet(A,D)_{\omega\mathrm{-bas}}\right)
    \longrightarrow H^\bullet_\mathrm{dR}(X)
    \]
    does not depend on the choice of $\theta$.
\end{theorem*}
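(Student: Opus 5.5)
The plan has two parts: first show that $\kappa^{\Theta}$ sends $\omega$-basic elements to basic forms on $P$, so that they descend to $X$; then prove independence of $\theta$ by a homotopy argument, modelled on the classical Chern--Weil proof.

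\emph{Restriction to basic elements.} I would characterize $\pi^*\Omega^\bullet(X)\subset\Omega^\bullet(P)$ as the forms $\alpha$ with $\iota_V\alpha=0$ for all $V\in\Gamma(T^\pi P)$ and which are $\Sigma$-invariant. Since $\Sigma$ acts freely and transitively on the fibres of $\pi$, this last condition says $g^*\alpha=\alpha$ along the action, which is meaningful thanks to the action of $\Sigma$ on $TP$ induced by $\omega$ (and which is anyway forced by horizontality plus closedness of the vertical derivative). The $\omega$-basic subcomplex of $W^\bullet(A,D)$ should be defined by the same two conditions: annihilation by the contraction operators $\iota_\alpha$, $\alpha\in\Gamma(A)$, and invariance under the $\Sigma$-action on $W^\bullet(A,D)$ determined by $\omega$.

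The key steps, in order, are as follows.
\begin{enumerate}
\item Show that $\kappa^{\Theta}$ intertwines the contractions: $\iota_{a(\alpha)}\kappa^{\Theta}=\kappa^{\Theta}\iota_\alpha$, where $a(\alpha)$ is the fundamental vertical vector field on $P$ generated by $\alpha$. This should follow from checking on generators, using $\theta(a(\alpha))=\mu^*\alpha$ and the fact that vertical vectors are killed by $\pi^*$-type forms.
\item Show that $\kappa^{\Theta}$ intertwines the $\Sigma$-actions. This is where the equivariance of $\theta$ is used: since $\theta$ is a principal $(\Sigma,\omega)$-connection, $\kappa^{\Theta}$ evaluated on generators transforms via the representation of $\Sigma$ on $A$ and on $TM$.
\item Conclude that $\omega$-basic elements map to basic forms, which are pullbacks of unique forms on $X$. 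Because $\kappa^{\Theta}$ is a DGA map by Theorem~\ref{thm:conn_curv}, and $\pi^*$ is an injective DGA map, the restriction is again a DGA map.
\end{enumerate}

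\emph{Independence of $\theta$.} Given two principal connections $\theta_0$ and $\theta_1$, I would form $\tilde\theta=(1-s)\theta_0+s\theta_1$ on $P\times\mathbb R\to X\times\mathbb R$. This is a principal $(\Sigma,\omega)$-connection for the bundle $P\times\mathbb R$, because the equivariance and normalisation conditions are affine, and pointwise surjectivity follows from the normalisation on vertical vectors. By the first part, $\kappa^{\tilde\Theta}$ gives a DGA map $W_{\omega\text{-bas}}\to\Omega^\bullet(X\times\mathbb R)$ whose restrictions to $s=0$ and $s=1$ are $\kappa^{\Theta_0}$ and $\kappa^{\Theta_1}$. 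Homotopy invariance of de Rham cohomology, via the fibre-integration operator $\int_0^1$, then shows that the two maps agree on cohomology of cocycles.

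\emph{Main obstacle.} I expect the main difficulty to be step 2: making precise the $\Sigma$-action on $W^\bullet(A,D)$, in particular on the $\Omega^1(M)$ and $S^1A^*$ generators. For the curvature-type generators, $\kappa^{\Theta}$ involves the curvature of $\theta$, so invariance there requires showing that the curvature of a principal connection is equivariant. I would attack this by first proving equivariance of $d\theta$ relative to $\omega$, using the multiplicativity equation of $\omega$ and the vanishing of the basic curvature of $D$. Then I would verify compatibility generator by generator, since both actions are by algebra automorphisms.
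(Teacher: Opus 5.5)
Your architecture matches the paper's: intertwine the contractions, prove $\Sigma$-equivariance on generators, descend, and use $(1-s)\theta_0+s\theta_1$ on $P\times\mathbb{R}$ with naturality and homotopy invariance. The independence-of-$\theta$ part is fine. The gap is in step 2, at the point you flag. The curvature of a principal $(\Sigma,\omega)$-connection is \emph{not} $\Sigma$-equivariant unless $\omega$ is flat. Taking curvatures in $m_P^*\theta=\pr_1^*\omega+g\cdot\pr_2^*\theta$ gives, for $v,w\in\ker(\theta)_p$ (Lemma~\ref{lem:curv_is_mult_pb}),
\[
\Curv^\theta_{g\cdot p}(g\cdot v,g\cdot w)-g\cdot\Curv^\theta_p(v,w)=\Curv^\omega_g\bigl(d\mu(v),d\mu(w)\bigr).
\]
The defect is the ordinary curvature of $\omega$, on which $\Cbas^\omega=0$ and $\cbas^D=0$ impose nothing. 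So the equivariance of $d_{\mu^*D}\theta$ you plan to prove is false in general. Moreover, if $\Sigma$ acted on $\Gamma(S^1A^*)$ by the plain coadjoint representation, a coadjoint-invariant $a$ would in general not be sent to a $\Sigma$-invariant form $\langle\iota(a),\mc_\theta\rangle$.

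The missing idea is that the action defining $W^\bullet(A,D)_{\omega\mathrm{-bas}}$ is twisted by $\Curv^\omega$, as in \eqref{eq:omega_action_on_polys}:
\[
g\hcdot a=\iota(g\cdot\iota(a))+\bigl\langle\iota(a),\Curv^\omega_{g^{-1}}\bigr\rangle .
\]
This does not preserve $\Gamma(S^1A^*)$; it acquires a $\Lambda^2T^*M$ component. It is the global counterpart of the term $\epsilon\cont\langle a,\curv^D\rangle$ in $L_\epsilon a=\epsilon\cont d_Wa$, and it is associative because $\Curv^\omega$ is a groupoid cocycle (Lemma~\ref{lem:Curv_cocycle}).

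Equivariance of $\kappa^\Theta$ on $a$ then follows by evaluating both sides on $w_1,w_2\in\ker(\theta)_{g\cdot p}$, where $d\mu-\rho\circ\theta=d\mu$; set $u_i:=d\mu(w_i)$. The correction term contributes $\langle\iota(a),\Curv^\omega_{g^{-1}}(u_1,u_2)\rangle$. The identity $\Curv^\omega_g(g^{-1}\cdot u_1,g^{-1}\cdot u_2)+g\cdot\Curv^\omega_{g^{-1}}(u_1,u_2)=0$ follows from the cocycle identity at $(g,g^{-1})$ together with the vanishing of $\Curv^\omega$ at units. It shows that this contribution cancels exactly the defect above.

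Two smaller omissions:
\begin{enumerate}
\item In step 1, $\kappa^\Theta(a)$ vanishing on vertical vectors is not a formal ``$\pi^*$-type'' fact. It uses that $\mc_\theta$ is horizontal, which is where principality of $\theta$ enters (Proposition~\ref{prop:mc_expr_vanishing_pb}).
\item In step 3 you tacitly need $W^\bullet(A,D)_{\omega\mathrm{-bas}}$ to be closed under $d_W$. This is automatic for $s$-connected $\Sigma$. In general $\Sigma$-invariance is stronger than $L$-invariance, and closure needs the separate argument of Proposition~\ref{prop:omega_bas_subdga}.
\end{enumerate}
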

The characteristic map $\kappa^{\mathrm{CW}}$ is natural under pullback and invariant under isomorphisms of principal bundles (Lemmata \ref{lem:CW_natural} and \ref{lem:CW_iso_invariant}).

\subsection*{Related work}

Connections on Lie groupoids go back to~\cite{Blaom2016, Blaom2016Pseudoactions}. 

The correspondence between transitive
groupoids with Cartan connections and Cartan geometries
was worked out first~\cite{Cattafi2021}. A Chern-Weil theory for Cartan connections was developed first in~\cite{AleMichCartan}; we provided a generalization in \cite{AccDeMelStr25}. The construction presented here is different; we recover the one from \cite{AccDeMelStr25} in Subsection~\ref{subs:CartanConnections}.

The case of Lie groupoids equipped with \textit{flat} Cartan connections was discussed first in \cite{AccCr2023}.


Regarding the relation of our work with producing an infinitesimal model for the cohomology of the Bott-Shulman complex of a Lie groupoid, it is expected, at least under some compactness assumptions, for such a model to appear as the cohomology of a suitable subcomplex of the Weil algebra encoding basic cochains with respect to some kind of generalized action of $\Sigma$; see the introduction of \cite{AbCr2009Weil}, where it is also observed that this ``does not seem to work in an obvious way''. We stress that \emph{we make no claim} that our $H^\bullet\!\left(W^\bullet(A,D)_{\omega\mathrm{-bas}}\right)$ provides such a model. In fact, in general $H^\bullet\!\left(W^\bullet(A,D)_{\omega\mathrm{-bas}}\right)$ depends on $\omega$, and hence it cannot compute $H^\bullet(B\Sigma)$. For Lie groups or action groupoids, the Cartan connection is canonical, and $H^\bullet\!\left(W^\bullet(A,D)_{\omega\mathrm{-bas}}\right)$ reduces respectively to $S^\bullet_G(\mathfrak{g}^*)$ and the Cartan model of equivariant cohomology which, under compactness assumptions, do compute the cohomology of the classifying space of $G$ and $G\ltimes M$.

\subsection*{Structure of the paper}
We assume the reader to be familiar with the basics on Lie groupoids and Lie algebroids and their representations.

In Section~\ref{sec:groupoids_with_connections} we review the theory of connections on Lie groupoids, including multiplicativity, curvature and Maurer-Cartan expression, and the infinitesimal picture; for the sake of completeness, the discussion is extensive, but we refer to the literature for several proofs. Section~\ref{sec:principal_bundles}
discusses principal $\Sigma$-bundles and introduces principal $(\Sigma,\omega)$-connections. Section~\ref{sec:weil} introduces and discusses the almost Lie algebroid $TM\bowtie A$ and the Weil algebra for a
possibly non-Cartan connection $D$ on $A$. Section~\ref{sec:construction} contains the main results discussed above. Finally, Section~\ref{sec:examples} works out
some examples.

\section{Groupoids with Cartan connections}\label{sec:groupoids_with_connections}

We assume throughout a working knowledge of Lie groupoids and Lie algebroids,
including actions and representations.

\subsection{Connections on a Lie groupoid}\label{subs:connections_on_groupoids}
Let $\Sigma\rightrightarrows M$ be a Lie groupoid over $M$. We define its \textbf{first jet prolongation} as the subspace of the Grassmanian $\mathrm{Gr}(T\Sigma)$ of $\dim(M)$-dimensional vector subspaces in $T_g\Sigma$, $g\in \Sigma$, which are horizontal with respect to both $s$ and $t$:
\[
J^1\Sigma :=\{\hor_g\subset T_g\Sigma:\ ds \text{ and } dt \text{ restrict to } \hor_g \text{ as isomorphisms.}\}.
\]
Notice that the Lie groupoid structure of $\Sigma$ induces a canonical Lie groupoid structure on $J^1\Sigma$ that makes $J^1\Sigma \to \Sigma$ into a Lie groupoid map: for $\hor_g\in J^1\Sigma$ one sets
\[
s(\hor_g) = s(g),\quad t(\hor_g) = t(g)
\]
and, for $(g,h)\in \Sigma^{(2)}$,
\[
\begin{aligned}
\hor_g\cdot \hor_h &= dm(\hor_g,\hor_h)\\
&= \bigl\{\, dm_{g,h}(v,w)\ :\ v\in \hor_g,\ w\in \hor_h,\ ds(v)=dt(w) \,\bigr\}.
\end{aligned}
\]
Notice that the conditions $v\in \hor_g,\ ds(v)=dt(w)$ completely determine $v$ once $w$ is given, hence $\hor_g\cdot \hor_h \in J^1\Sigma$. The units in $J^1\Sigma$ are the tangent spaces to the image of the unit map $u:M\to \Sigma$.

A more common definition for the first jet prolongation is 
\[
{J^1\Sigma} :=\{j^1_x\bis:\ \bis\in \mathrm{Bis}_\mathrm{loc}(\Sigma) \}
\]
where we recall that a local bisection of $\Sigma$ is a section $\sigma:U\to \Sigma$ of the source map, defined on an open subset $U\subset M$, such that $t\circ \sigma:U\to t(\sigma(U))$ is a diffeomorphism onto an open subset of $M$. 

The two definitions are equivalent: any first jet of a bisection $j^1_x\bis$ induces the subspace $\mathrm{Im}(d_x\bis) \subset T\Sigma$, while any subspace $\hor_g\subset T_g\Sigma$ complementary to both $ds$ and $dt$ is tangent to a local bisection $\bis$. 

The description via bisection has the advantage to make it clear that $J^1\Sigma$ is canonically a Lie groupoid over $X$, and that the natural map $J^1\Sigma\to \Sigma$ is an fiber bundle. On the other hand, the description via subspaces has the advantage of being more symmetric and makes the definition below especially transparent. 

\begin{definition}\label{def:connection_gpd}
    Let $\Sigma\rightrightarrows M$ be a Lie groupoid over $M$. A \textbf{connection} on $\Sigma$ is a smooth section
    \[
    \conn:\Sigma \to J^1\Sigma
    \]
    of the canonical projection. We say that $\conn$ is \textbf{multiplicative} when it is also a groupoid map.
    Moreover, we call  $\conn$ a \textbf{flat connection} when there exists a foliation $\mathcal{F}$ on $\Sigma$ such that
    \[
    \conn(g) = T_g\mathcal{F} \in J^1\Sigma
    \]
\end{definition}
Observe that $\conn$ consists of a collection of subspaces $\hor_g\subset T_g\Sigma$, $\hor_g\in J^1\Sigma$, which is smooth in $g$. It follows that a connection on $\Sigma$ is both an Ehresmann connection for the source map $s:\Sigma\to M$ and for the target map $t:\Sigma\to M$. 

A connection $\conn$ on $\Sigma$ is flat if and only if it is flat as an Ehresmann connection (with respect to either projection). 

Notice also that, if $\conn$ is a Cartan connection, then the unit manifold $M$ is tangent to $\conn$ in the sense that $\conn(1_x) = du (T_{x} M)$, where $u:M\to \Sigma$ is the unit map.

While very transparent, the notion of connection on a groupoid that we just introduced is not very practical. Different but equivalent descriptions can be obtained straight away from the various descriptions of Ehresmann connections for submersions $p:Y\to X$, applied to the maps $s:\Sigma\to M$ and $t:\Sigma \to M$. 
\begin{lemma}[{see~\cite[\S 1.1]{Blaom2016}; see also~\cite[Definition 4.1 and Remark 4.2]{AccCr2023}}]\label{lem:various_conn_pov}
	Let $\Sigma\tto M$ be a Lie groupoid. A distribution $\C\subset T\Sigma$ complementary to the source can be equivalently encoded as a
	\begin{itemize}
		\item a splitting of the short exact sequence
		\[
		0\to T^s\Sigma\to T\Sigma\to s^*TM \to 0
		\]
		i.e. one of the following:
		\begin{itemize}
        \item a right splitting, i.e. a \textbf{horizontal lift} 
			\[
			\hor:s^*TM \to T\Sigma,\quad (g,v) \to \hor^\C_g(v);
            \]
			\item a left splitting, i.e. a \textbf{vertical projection} 
			\[
			\mathrm{pr}^s: T\Sigma\to T^s\Sigma,\quad V_g \to pr^s(V_g);
			\]
			
		\end{itemize}
		\item a \textbf{connection one-form} $\omega\in \Omega^1(\Sigma,t^*A)$ such that 
		\[
		\omega|_{\ker(ds)} = \omega_{\mc}
		\]
		where 
		\[
		\omega_{\mc}: T^s\Sigma \to t^*A,\quad v_g \to dR_{g^{-1}}(v_g)
		\]
		is the Maurer-Cartan form of $\Sigma\tto M$.
	\end{itemize}
A similar statement holds when replacing $s$ with $t$.
\end{lemma}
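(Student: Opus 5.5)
The plan is to reduce everything to the general theory of Ehresmann connections for the surjective submersion $s:\Sigma\to M$. We fix a distribution $\C\subset T\Sigma$ with $T\Sigma=\C\oplus T^s\Sigma$ and construct each of the listed objects from $\C$, then check that each construction can be inverted.

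\emph{Splittings.} Since $ds:T_g\Sigma\to T_{s(g)}M$ is surjective with kernel $T^s_g\Sigma$, its restriction to $\C_g$ is an isomorphism. We set $\hor^\C_g:=(ds|_{\C_g})^{-1}$. Smoothness of $\C$ makes this a smooth bundle map $s^*TM\to T\Sigma$, and it satisfies $ds\circ\hor^\C=\id$. Next we put $\mathrm{pr}^s:=\id-\hor^\C\circ ds$. This is a projection onto $T^s\Sigma$ with kernel $\C$. Conversely, a right splitting determines $\C=\mathrm{Im}(\hor)$, and a left splitting determines $\C=\ker(\mathrm{pr}^s)$. These assignments are mutually inverse; this is the standard linear-algebra correspondence between complements of a subbundle and splittings of a short exact sequence of vector bundles.

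\emph{Connection one-form.} We use that right translation identifies $T^s_g\Sigma$ with $A_{t(g)}$. Concretely, $dR_{g^{-1}}:T_g(s^{-1}(s(g)))\to T_{1_{t(g)}}(s^{-1}(t(g)))$ is an isomorphism, and its target is identified with $A_{t(g)}=\ker(ds)|_M$. Hence $\omega_{\mc}:T^s\Sigma\to t^*A$ is a vector bundle isomorphism. We then define $\omega:=\omega_{\mc}\circ\mathrm{pr}^s$. This form restricts to $\omega_{\mc}$ on $\ker(ds)$ and has $\ker\omega=\C$. Conversely, let $\omega$ be a form restricting to $\omega_{\mc}$ on $T^s\Sigma$. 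Then $\omega$ is pointwise surjective onto $t^*A$, so $\ker\omega$ is a smooth subbundle of rank $\dim T\Sigma-\mathrm{rk}A=\dim M$. This kernel meets $T^s\Sigma$ trivially because $\omega_{\mc}$ is injective, so it is a complement. The two constructions are inverse to each other: $\omega$ is determined by its kernel together with its restriction to $T^s\Sigma$, and the formula $\omega=\omega_{\mc}\circ\mathrm{pr}^s$ recovers it.

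\emph{The target version.} Here we replace $s$ by $t$ and $R_{g^{-1}}$ by $L_{g^{-1}}$. The Maurer–Cartan form then takes values in $s^*A$, via the identification of $T^t\Sigma$ with $s^*A$ through the inversion map. The argument is otherwise verbatim.

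The only real point to check is smoothness and well-definedness of $\omega_{\mc}$ as a bundle isomorphism. This amounts to showing that $R_{g^{-1}}$ maps the $s$-fiber through $g$ diffeomorphically onto the $s$-fiber through $1_{t(g)}$, and that this depends smoothly on $g$. It follows because the map $(g,v)\mapsto dm(v,0_{g^{-1}})$ is smooth on the appropriate fibered product. Everything else is routine.
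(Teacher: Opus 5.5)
Your proposal is correct and follows the same route as the paper's sketch: the horizontal lift is $(ds|_{\C})^{-1}$, the vertical projection is $\id-\hor^\C\circ ds$, the connection form is $\omega_{\mc}\circ\mathrm{pr}^s$, and conversely $\C=\ker\omega$. You also supply the rank count and smoothness checks that the paper leaves implicit.
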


Let us point out very briefly some of the relationship between the various point of view. Given $\C$, one obtains the horizontal lift by sending $v\in T_xM$ to the only vector
\[
V_g \in \C_g:\ ds(V_g) = v_x.
\]
This operation is called the \textbf{horizontal lift} associated to $\C$ and denoted by $\hor^\C$.
On the other hand vertical projection is defined by sending $w_g\in T_g\Sigma$ to the difference
\[
w_g-\hor^\C_g(ds(w_g))\in \ker_g(ds).
\]
As for the form $\omega\in \Omega^1(\Sigma,t^*A)$. it is given by the vertical projection composed with the Maurer-Cartan form $\omega_{\mc}:\ker(ds)\cong t^*A$, defined by
\[
\omega_{MC}(V_g)=dR_{g^{-1}}\cdot V_g \in A_{t(g)}.
\]
Finally, given a form $\omega\in \Omega^1(\Sigma,t^*A)$ that acts as $\omega_{\mc}$ on $s$-vertical vectors, the distribution $\C:=\ker(\omega)$ is complementary to $\ker(ds)$.

In what follows, we will favor the point of view of forms, and adapt the notation accordingly. For instance, a groupoid with connection will be denoted by $(\Sigma,\omega)$, while the horizontal lift will be denoted by $\hor^\omega$. The reader should keep in mind that the section 
$\conn^\omega:\Sigma\to J^1\Sigma$ of $J^1\Sigma\to \Sigma$ associated to the form $\omega$ is given by 
\[
g\mapsto \conn^\omega(g) = \ker_g(\omega)\subset T_g\Sigma,
\]
with the understanding that $\ker(\omega)$ is complementary to both $T^s\Sigma$ and $T^t\Sigma$.

\subsection{Curvature and basic curvatures}\label{subs:curvature_gpd}
A connection on $\Sigma$ induces the two ''quasi representations'', discussed in detail in~\cite{AbCr2009RepUpHomGpd},
\begin{align*}
    \Sigma\times_M TM\to TM,\quad (g,v) &\to (dt\circ \hor^\omega_g)(v)\\
    \Sigma\times_M A\to A,\quad (g,\alpha) &\to dR_{g^{-1}}dm((\hor^\omega_g\circ \rho)(\alpha),\alpha)
\end{align*}
where the name is motivated by the fact that both maps may fail to satisfy associativity with respect to the multiplication in $\Sigma$, and become representations of $\Sigma$ on $TM$ and $A$ respectively precisely when they are associative.

Notice that $\rho(g\cdot \alpha) = g\cdot \rho(\alpha)$ holds. In other words, there is a quasi representation on the complex
\[
A\overset{\rho}{\to} TM.
\]
\begin{definition}\label{def:adjoint_rep_gpd}
The representation of $\Sigma$ on $\rho:A\to TM$ induced by a connection $\omega$ is called the \textbf{adjoint quasi representation} associated to $\omega$.
\end{definition}
It turns out that the associativity of the adjoint quasi representation is closely related to the multiplicativity of $\omega$ is multiplicative.
\begin{lemma}[{see~\cite[\S 5.2]{AccCr2023}}]\label{lem:quasi_rep_identity}
    Let $(\Sigma,\omega)$ be a Lie groupoid with connection over $M$. For all $v\in T_xM$ and $(g,h)\in \Sigma^{(2)}$ composable arrows such that $s(h) = x$, 
    \[
    \hor^\omega_{gh}(v)-dm_{g,h}(\hor^\omega_g(h\cdot v),\hor^\omega_h(v))\in \ker_{gh}(ds)
    \]
    and $\ker_{gh}(ds)\cong A_{t(g)}$ using the right multiplication by $(gh)^{-1}$.
\end{lemma}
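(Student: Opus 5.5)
Since the difference lies in $\ker_{gh}(ds)$ exactly when $ds$ sends both vectors to the same element of $T_xM$, the plan is to apply $ds$ to each of the two vectors and compare. The identification $\ker_{gh}(ds)\cong A_{t(g)}$ is then just the Maurer-Cartan isomorphism $\omega_{\mc}$ recalled after Lemma~\ref{lem:various_conn_pov}: right translation by $(gh)^{-1}$ maps the $s$-fibre through $gh$ to the $s$-fibre through $1_{t(gh)}=1_{t(g)}$, and its tangent space there is $A_{t(g)}$.

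\emph{Step 1: the pair is composable in $T\Sigma$.} By definition of the horizontal lift, $\hor^\omega_h(v)\in T_h\Sigma$ satisfies $ds(\hor^\omega_h(v))=v$. Its target projection is $dt(\hor^\omega_h(v))=h\cdot v$, using the quasi action of $\Sigma$ on $TM$ defined in Subsection~\ref{subs:curvature_gpd}. Likewise $\hor^\omega_g(h\cdot v)\in T_g\Sigma$ satisfies $ds(\hor^\omega_g(h\cdot v))=h\cdot v$. Hence
\[
\bigl(\hor^\omega_g(h\cdot v),\hor^\omega_h(v)\bigr)\in T_{(g,h)}\Sigma^{(2)}=T\Sigma\times_{TM}T\Sigma ,
\]
so $dm_{g,h}$ may be applied, giving a vector in $T_{gh}\Sigma$.

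\emph{Step 2: compare source projections.} Differentiating $s\circ m=s\circ \pr_2$ gives $ds\bigl(dm_{g,h}(X,Y)\bigr)=ds(Y)$. Applied to the pair from Step 1, this yields $ds\bigl(dm_{g,h}(\hor^\omega_g(h\cdot v),\hor^\omega_h(v))\bigr)=v$. On the other side, $ds(\hor^\omega_{gh}(v))=v$ by definition. Subtracting shows the difference lies in $\ker_{gh}(ds)$, as claimed.

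\emph{Step 3: the identification.} Finally, we note that $R_{(gh)^{-1}}$ is a diffeomorphism from $s^{-1}(s(gh))$ onto $s^{-1}(t(gh))$ sending $gh$ to $1_{t(g)}$. Its differential therefore gives the isomorphism $\ker_{gh}(ds)\cong \ker_{1_{t(g)}}(ds)=A_{t(g)}$.

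There is no genuine obstacle here. The only point requiring care is Step 1: the first argument of the lift must be $h\cdot v=dt(\hor^\omega_h(v))$ precisely so that the tangent vectors are composable. Since this holds by definition of the quasi action, even when that action fails to be associative, the argument goes through.
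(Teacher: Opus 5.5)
Your proof is correct and complete: composability of $\bigl(\hor^\omega_g(h\cdot v),\hor^\omega_h(v)\bigr)$ holds by the very definition $h\cdot v=dt(\hor^\omega_h(v))$ of the quasi action, the identity $s\circ m=s\circ\pr_2$ gives both vectors the source projection $v$, and right translation $R_{(gh)^{-1}}\colon s^{-1}(x)\to s^{-1}(t(g))$ gives the identification $\ker_{gh}(ds)\cong A_{t(g)}$. The paper gives no proof of its own here, only the pointer to \cite[\S 5.2]{AccCr2023}, and your argument is exactly the routine direct verification that the citation stands in for.
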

\begin{definition}\label{def:basic_curvature}
Let $(\Sigma,\omega)$ be a groupoid with connection. The \textbf{basic curvature} of $\omega$ is the section
\[
\Cbas^\omega\in \Gamma(\Sigma^{(2)}, s^*T^*M\otimes t^*A)
\]
defined by
\begin{equation}\label{eq:basic_curvature}
\Cbas^\omega(g,h) = \hor^\omega_{gh}(\underline{\ })-dm_{g,h}(\hor^\omega_g(h\cdot \underline{\ })\hor^\omega_h(\underline{\ }))\in {T^*_{s(h)}M}\otimes A_{t(g)}
\end{equation}
\end{definition}
\begin{lemma}[{\cite[\S 1.1]{Blaom2016}, \cite[Definition 4.1]{AccCr2023}}]\label{lem:multipl_equiv_notion}
    Let $\omega\in \Omega^1(\Sigma,t^*A)$ be a connection on $\Sigma$ corresponding to the section $\conn:\Sigma\to J^1\Sigma$. The following are equivalent:
	\begin{itemize}
    \item $\omega$ is multiplicative in the sense of Definition \ref{def:connection_gpd}
    \item $\Cbas^\omega = 0$
		\item $\ker(\omega)\subset T\Sigma$ is a subgroupoid of the tangent groupoid, i.e.
                \[
                \ker(\omega)\cdot \ker(\omega) = \ker(\omega),\quad d \mathrm{i}(\ker(\omega))=\ker(\omega); 
                \]
	\item the \textbf{multiplicativity equation}
		\[
		(m^*\omega)_{(g,h)} = (pr_1^*\omega)_{(g,h)}+g\cdot (pr_2^*\omega)_{(g,h)}
		\]
		holds true at all compatible pairs $(g,h)\in \Sigma\times_M\Sigma$.
	\end{itemize}
\end{lemma}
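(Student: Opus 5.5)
Throughout, I write $\C=\ker(\omega)$ and work with the horizontal lift $\hor^\omega$. I will prove the cycle (1)$\Leftrightarrow$(3), (2)$\Leftrightarrow$(3), (3)$\Leftrightarrow$(4), where (1)--(4) are the items in the order listed.

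\emph{(1)$\Leftrightarrow$(3).} By definition, $\conn^\omega$ is a groupoid map precisely when $\ker_{gh}(\omega)=\ker_g(\omega)\cdot\ker_h(\omega)$ for composable pairs, together with compatibility with units and inverses. The identity $\C\cdot\C=\C$ is exactly this multiplicativity. Compatibility with inverses is $d\mathrm{i}(\C)=\C$, because inversion in $J^1\Sigma$ is $\hor_g\mapsto d\mathrm{i}(\hor_g)$.

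Compatibility with units follows from multiplicativity. First, $\C_{1_x}\cdot\C_{1_x}=\C_{1_x}$. Second, $\C_{1_x}\cdot du(T_xM)=\C_{1_x}$, since $du(T_xM)$ is the unit of $J^1\Sigma$ at $x$. Since $J^1\Sigma$ is a groupoid, cancellation gives $\C_{1_x}=du(T_xM)$.

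One point needs care: $\C\cdot\C$ is a priori only a pointwise product of subspaces. Using the description of the product in $J^1\Sigma$, the equality should be read as $\C_g\cdot\C_h=\C_{gh}$ for every composable pair.

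\emph{(2)$\Leftrightarrow$(3).} Let $v\in T_{s(h)}M$.
\begin{itemize}
\item The vector $dm_{g,h}(\hor^\omega_g(h\cdot v),\hor^\omega_h(v))$ lies in $\C_g\cdot\C_h$. This is well defined because $ds(\hor^\omega_g(h\cdot v))=h\cdot v=dt(\hor^\omega_h(v))$, by the definition of the quasi-action on $TM$.
\item Its image under $ds$ is $v$.
\end{itemize}
So $\Cbas^\omega(g,h)(v)$ is the difference between the unique $s$-horizontal lifts of $v$ in $\C_{gh}$ and in $\C_g\cdot\C_h$. Both subspaces are complements of $\ker_{gh}(ds)$; for $\C_g\cdot\C_h$ this holds because it lies in $J^1\Sigma$. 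Hence $\Cbas^\omega(g,h)=0$ if and only if $\C_{gh}=\C_g\cdot\C_h$.

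For the inverse condition, I would show that it follows from $\C\cdot\C=\C$. Apply this with $h=g^{-1}$ and use the unit property just obtained: $\C_g\cdot\C_{g^{-1}}=\C_{1}$, which forces $\C_{g^{-1}}=(\C_g)^{-1}=d\mathrm{i}(\C_g)$.

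\emph{(3)$\Leftrightarrow$(4).} Evaluate both sides of the multiplicativity equation on a tangent vector $(X,Y)$ to $\Sigma^{(2)}$ at $(g,h)$, so $ds(X)=dt(Y)$. Decompose
\[
X=\hor^\omega_g(ds X)+X^v,\qquad Y=\hor^\omega_h(ds Y)+Y^v,
\]
with $X^v,Y^v$ vertical for $s$. By linearity it suffices to check three cases.
\begin{itemize}
\item \emph{Vertical $Y$, with $X=0$.} Both sides evaluate the Maurer--Cartan form. The equation reduces to the standard identity $\omega_{\mc}(dm(0,Y))=g\cdot\omega_{\mc}(Y)$. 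One has to verify that the quasi-action of $g$ on $A$ agrees with the adjoint-type action on right-translated vectors; this follows from the formula defining the action on $A$ together with $\omega=\omega_{\mc}$ on $\ker(ds)$.
\item \emph{Vertical $X$, with $Y=0$.} Here $dm(X,0)=dR_h X$. Right invariance of the Maurer--Cartan form gives $m^*\omega=pr_1^*\omega$.
\item \emph{Horizontal pair.} Take $X=\hor^\omega_g(h\cdot v)$ and $Y=\hor^\omega_h(v)$. The right-hand side vanishes. The left-hand side equals $\omega(dm(X,Y))$, which by the previous paragraph is $-\omega_{\mc}(\Cbas^\omega(g,h)(v))$ up to sign.
\end{itemize}
Hence the equation holds if and only if $\Cbas^\omega=0$.

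The main obstacle is the first case: matching conventions for the action on $A$ defined by $dR_{g^{-1}}dm((\hor^\omega_g\circ\rho)(\alpha),\alpha)$ with the expression $g\cdot pr_2^*\omega$. For this I would unwind the definitions with $\alpha=\omega_{\mc}(Y)$, writing $Y=dR_h\alpha$.
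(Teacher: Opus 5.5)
Your arguments for (1)$\Leftrightarrow$(3) and (2)$\Leftrightarrow$(3) are correct. You read $\Cbas^\omega(g,h)$ as the discrepancy between the two complements $\C_{gh}$ and $\C_g\cdot\C_h$ of $\ker_{gh}(ds)$, and you recover units and inverses by cancellation in $J^1\Sigma$. This is cleaner and more explicit than the paper's ``almost tautological'' (1)$\Leftrightarrow$(2), which the paper follows with a direct (3)$\Leftrightarrow$(4).

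The gap is in (3)$\Leftrightarrow$(4), and it is structural rather than a matter of conventions. For $Y\in\ker_h(ds)$, the pair $(0,Y)$ is not tangent to $\Sigma^{(2)}$. Tangency needs $ds(0)=dt(Y)$, but $dt(Y)=\rho(\omega(Y))$, which is nonzero in general, so $dm(0,Y)$ is undefined. For the same reason your horizontal piece is not composable. For $(X,Y)\in T_{(g,h)}\Sigma^{(2)}$ one has $ds(X)=dt(Y)=h\cdot ds(Y)+\rho(\omega(Y))$. Hence $\hor^\omega_g(ds(X))$ differs from $\hor^\omega_g(h\cdot v)$, where $v=ds(Y)$, by $\hor^\omega_g(\rho(\omega(Y)))$. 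So your three families only decompose those $(X,Y)$ with $\rho(\omega(Y))=0$.

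The repair, which is exactly the paper's decomposition, is to let the vertical part of $Y$ carry the horizontal lift of its anchor. With $\alpha:=\omega(Y)$,
\[
(X,Y)=\bigl(\hor^\omega_g(h\cdot v),\hor^\omega_h(v)\bigr)+(X^v,0)+\bigl(\hor^\omega_g(\rho(\alpha)),\,dR_h\alpha\bigr),
\]
and every summand is now tangent to $\Sigma^{(2)}$.

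Your ``main obstacle'' then disappears. From $m\circ(\mathrm{id}\times R_h)=R_h\circ m$ one gets $dm(\hor^\omega_g(\rho(\alpha)),dR_h\alpha)=dR_h\,dm(\hor^\omega_g(\rho(\alpha)),\alpha)$, which is an $s$-vertical vector. Right invariance of $\omega_{\mc}$ turns its $\omega$-value into $dR_{g^{-1}}dm(\hor^\omega_g(\rho(\alpha)),\alpha)=g\cdot\alpha$, which is the definition of the action on $A$.

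There is no $\omega$-independent ``standard identity'' to appeal to here. For groupoids, $g\cdot\alpha$ depends on $\hor^\omega_g$, which is why the lift $\hor^\omega_g(\rho(\alpha))$ must travel with $Y^v$. Your other two cases go through unchanged.
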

\begin{proof}
    The first equivalence is almost tautological. Indeed, recall that the connection is completely encoded into the horizontal lift of $\omega$, which is equivalently described as the section
    \[
    \conn^\omega:\Sigma \to J^1\Sigma, g\to (s(g),g,\hor_g^\omega)
    \]
    of the canonical projection $J^1\Sigma\to \Sigma$. It is immediate to see that~\eqref{eq:basic_curvature} vanishes for all $(g,h)\in \Sigma^{(2)}$ if and only if $\conn^\omega$ is a groupoid morphism; since
    \[
    \ker_g(\omega) = \conn^\omega(g) \subset T_g\Sigma, \quad g \in \Sigma,
    \]
    the first point follows.

    We now focus on the equivalence between the vanishing of the basic curvature and the multiplicativity equation. Notice that, for all $(g,h)\in \Sigma^{(2)}$ and $(v_g,w_h)\in T\Sigma^{(2)}$, if we use the decompositions 
    \[v_g = v^\omega_g+v^s_g,\quad w_h = w^\omega_h+w^s_h\]
    where $v^\omega_g$ and $w^\omega_h$ are tangent to $\ker(\omega)$ while $v^s_g$ and $w_h^s$ are $s$-vertical, we have
    \[
    \begin{aligned}
    \omega\circ dm (v_g,w_h) ={}& \omega \circ dm (v^\omega_g-\hor^\omega_g(dt(w^s_h)),w^\omega_h)\\
    &+ \omega \circ dm (v^s_g,0) + \omega \circ dm (\hor^\omega_g(dt(w^s_h)),w^s_h).
    \end{aligned}
    \]
    By definition of basic curvature 
    \[
    \omega \circ dm (v^\omega_g-\hor^\omega_g(dt(w^s_h)),w^\omega_h) = -\Cbas^\omega(g,h)(ds(w^\omega_h));
    \]
    on the other hand
    \[
    \omega \circ dm (v^s_g,0) = \omega(dR_{h}(v^s_g)),
    \]
    while
    \[
    \omega \circ dm (\hor^\omega_g(dt(w^s_h)),w^s_h) = \omega(dR_g(g\cdot (dR_{h^{-1}} w^s_h))).
    \]
    Now recall that $\omega$ restricts to $\ker(ds)$ as the Maurer-Cartan form, see Lemma~\ref{lem:various_conn_pov}; in particular, it is right invariant and restricts to $\ker_M(ds)$ as the identity map. This implies
    \[
    \omega(dR_{h}(v^s_g))=\omega(v_g),\quad \omega(dR_g(g\cdot (dR_{h^{-1}} w^s_h))) = g\cdot\omega(w_h).
    \]
    Explicitly, for the first of these, $\omega$ at the point $gh$ is
    $dR_{(gh)^{-1}}$ on $\ker(ds)$, so
    \[
        \omega\bigl(dR_h(v^s_g)\bigr)=dR_{(gh)^{-1}}dR_h(v^s_g)
        =dR_{g^{-1}}(v^s_g)=\omega(v^s_g)=\omega(v_g),
    \]
    the last equality because $\omega$ vanishes on $\ker(\omega)$.
    Hence, we have
    \begin{equation}\label{eq:basic_curv_differential_omega}
        \delta(\omega) = -\Cbas^\omega(g,h)(ds(w^\omega_h))
    \end{equation}    
    and we see that the multiplicativity equation holds if and only if 
    \[
    \Cbas^\omega(g,h)(ds(w^\omega_h)) = 0.
    \]    

   Finally, we we show equivalence between the last two bullet points. Notice that for all $(g,h)\in \Sigma^{(2)}$ and $(v_g,w_h)\in T\Sigma^{(2)}$ and $v_g\in \ker_g(\omega)$, $w_h\in \ker_h(\omega)$ the multiplicativity equation reads as
    \[
    \omega (dm(v_g,w_h)) = 0.
    \]
    Consequently, if the multiplicativity equation is satisfied then $\ker(\omega)\cdot \ker(\omega) = \ker(\omega)$; similarly, one sees that the multiplicativity equation implies $di(\ker(\omega)) = \ker(\omega)$. On the other hand, $\ker(\omega)\cdot \ker(\omega) = \ker(\omega)$ implies immediately that the basic curvature of $\omega$ vanishes.
\end{proof}

Finally, as promised, we have the following.
\begin{lemma}[{see~\cite[Proposition 2.15]{AbCr2009RepUpHomGpd}}]\label{lem:multiplicativity_properties}
	Let $\omega\in\Omega^1(\Sigma,t^*A)$ satisfy
	$\omega|_{\ker(ds)}=\omega_\mathrm{MC}$ and $\ker(\omega)_{1_x}=T_xM$ for every
	$x\in M$. Then, for all $(g,h)\in \Sigma^{(2)}$, $\alpha\in A$, and $v\in TM$,
  \[
  g\cdot(h\cdot\alpha)-(gh)\cdot\alpha = -\Cbas^\omega(g,h)(\rho(\alpha))
  \]
  and
  \[
  g\cdot(h\cdot v)-(gh)\cdot v = -\rho(\Cbas^\omega(g,h)(v))
  \]
  hold true. In particular, if $\omega$ is a Cartan connection then
  $\Cbas^\omega=0$ and the quasi-representation induced by $\omega$ on
  $A\overset{\rho}{\to}TM$ is a genuine representation.
\end{lemma}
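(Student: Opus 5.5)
We prove the two identities directly from the definitions of the quasi-actions and of $\Cbas^\omega$; the final claim then follows from Lemma~\ref{lem:multipl_equiv_notion}. Fix $(g,h)\in\Sigma^{(2)}$ with $s(h)=x$. For $v\in T_xM$, recall $h\cdot v=dt(\hor^\omega_h(v))$ and $g\cdot(h\cdot v)=dt(\hor^\omega_g(h\cdot v))$.

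\emph{Identity for $TM$.} By the definition of $\Cbas^\omega$ in \eqref{eq:basic_curvature},
\[
\hor^\omega_{gh}(v)=dm_{g,h}\bigl(\hor^\omega_g(h\cdot v),\hor^\omega_h(v)\bigr)+\Cbas^\omega(g,h)(v),
\]
where the last term lies in $\ker_{gh}(ds)$ (Lemma~\ref{lem:quasi_rep_identity}). Apply $dt$. Since $t\circ m=t\circ\mathrm{pr}_1$, the first term maps to $dt(\hor^\omega_g(h\cdot v))=g\cdot(h\cdot v)$. The second term is an $s$-vertical vector, and on such vectors $dt$ corresponds to $\rho$ under the identification $\ker_{gh}(ds)\cong A_{t(g)}$ via $dR_{(gh)^{-1}}$. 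This gives $(gh)\cdot v=g\cdot(h\cdot v)+\rho(\Cbas^\omega(g,h)(v))$. The sign depends on the convention in \eqref{eq:basic_curvature}, so I will check it against the sign in the statement.

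\emph{Identity for $A$.} This is the main obstacle, because of bookkeeping. Write
\[
h\cdot\alpha=dR_{h^{-1}}\,dm\bigl(\hor^\omega_h(\rho\alpha),\alpha\bigr)
\]
and use the hypothesis $\omega|_{\ker ds}=\omega_{\mc}$. I would first rewrite the action through $\omega$: for $\alpha\in A_x$, extend $\alpha$ to the right-invariant vector $\overrightarrow{\alpha}$ through $h^{-1}$, or equivalently take a curve of bisections. Then $h\cdot\alpha=\omega(\text{some } dm\text{-image})$. A cleaner approach is to choose a curve $\epsilon\mapsto\exp(\epsilon\alpha)$ in $s^{-1}(x)$ tangent to $\alpha$ at $1_x$, and a curve $h_\epsilon$ through $h$ that is $\omega$-horizontal with $ds(\dot h_\epsilon)=\rho(\alpha)$. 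Then $h\cdot\alpha$ is the $\omega_{\mc}$-value of the derivative of $h_\epsilon\exp(\epsilon\alpha)$, which is $s$-vertical at $\epsilon=0$. Doing the same for $g$ with horizontal lift of $h\cdot\rho(\alpha)=\rho(h\cdot\alpha)$, and comparing with $(gh)\cdot\alpha$, the discrepancy is the difference between $\hor^\omega_{gh}(\rho\alpha)$ and $dm(\hor^\omega_g(h\cdot\rho\alpha),\hor^\omega_h(\rho\alpha))$, converted to $A$ by right translation. This is exactly $-\Cbas^\omega(g,h)(\rho(\alpha))$. Concretely, I would use associativity of $dm$ on the tangent groupoid: $g\cdot(h\cdot\alpha)$ is $dR_{(gh)^{-1}}$ of
\[
dm\bigl(\hor_g(h\rho\alpha),dm(\hor_h(\rho\alpha),\alpha)\bigr)=dm\bigl(dm(\hor_g(h\rho\alpha),\hor_h(\rho\alpha)),\alpha\bigr),
\]
and then substitute the $TM$ relation above for the inner product. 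This produces $dm(\hor_{gh}(\rho\alpha)-\Cbas,\alpha)$. Linearity of $dm$ on composable pairs, together with $dm(\xi,0_{1_x})=dR(\xi)$ for $s$-vertical $\xi$, splits off the curvature term as a right translate. The hypothesis $\ker(\omega)_{1_x}=T_xM$ ensures that $1_x\cdot\alpha=\alpha$ and that the zero section is handled correctly.

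\emph{Final claim.} If $\omega$ is Cartan (multiplicative), then Lemma~\ref{lem:multipl_equiv_notion} gives $\Cbas^\omega=0$. Both defects then vanish, so the quasi-actions are associative. Unitality follows from $\ker(\omega)_{1_x}=T_xM$, and compatibility with $\rho$ was noted earlier. Hence we get a genuine representation on $A\overset{\rho}{\to}TM$.
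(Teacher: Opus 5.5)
Your proof is correct. The paper gives no argument of its own for this lemma; it only cites Arias Abad--Crainic. Your computation therefore supplies the natural direct verification.

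\emph{The $TM$ identity.} Applying $dt$ to $\hor^\omega_{gh}(v)=dm_{g,h}\bigl(\hor^\omega_g(h\cdot v),\hor^\omega_h(v)\bigr)+\Cbas^\omega(g,h)(v)$, and using $t\circ m=t\circ\mathrm{pr}_1$ together with $dt|_{\ker_{gh}(ds)}=\rho\circ dR_{(gh)^{-1}}$, gives exactly the stated sign. Your hedge about sign conventions can be dropped.

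\emph{The $A$ identity.} Associativity of $dm$ on $T\Sigma$ reduces it to splitting off the curvature term. With $C:=\Cbas^\omega(g,h)(\rho\alpha)\in\ker_{gh}(ds)$, use $\bigl(\hor^\omega_{gh}(\rho\alpha)-C,\alpha\bigr)=\bigl(\hor^\omega_{gh}(\rho\alpha),\alpha\bigr)-\bigl(C,0_{1_x}\bigr)$ and $dm(C,0_{1_x})=C$. This yields $g\cdot(h\cdot\alpha)=(gh)\cdot\alpha-\Cbas^\omega(g,h)(\rho\alpha)$.

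Three points should be made explicit or tidied.

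First, you write $g\cdot(h\cdot\alpha)=dR_{(gh)^{-1}}\,dm\bigl(\hor^\omega_g(h\cdot\rho\alpha),W\bigr)$ with $W:=dm\bigl(\hor^\omega_h(\rho\alpha),\alpha\bigr)$. This uses $\rho(h\cdot\alpha)=h\cdot\rho(\alpha)$, which you note. It also uses $dm(\xi,dR_{h^{-1}}W)=dR_{h^{-1}}\,dm(\xi,W)$ for $s$-vertical $W$, which you should justify. It follows by differentiating $m(k,lh^{-1})=m(k,l)h^{-1}$ along $s^{-1}(s(h))$.

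Second, the hypothesis $\ker(\omega)_{1_x}=T_xM$ plays no role in either identity; $dm(C,0_{1_x})=C$ holds for any $\omega$. The hypothesis is needed only for unitality in the final claim. There it gives $\hor^\omega_{1_x}=du$, hence $1_x\cdot v=v$ and $1_x\cdot\alpha=\alpha$.

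Third, the exploratory detour through curves $h_\epsilon\exp(\epsilon\alpha)$ is unnecessary and can be cut.
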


Apart from the basic curvature of a connection on a groupoid, one also has the usual curvature of an Ehresmann connection.

\begin{definition}\label{def:curvature}
	The curvature of connection on a groupoid $\Sigma$ is the bundle map
 \[
 \Curv: s^*\Lambda^2TM \to t^*A,\quad (g,(v,w))\to -\omega([\mathrm{hor}^\omega_g(v),\mathrm{hor}^\omega_g(w)]).
 \]
\end{definition}
In form terms, the curvature of $\omega$ correspond to the restriction of the "differential of $\omega$" to $\ker(\omega)$; while "the differential of $\omega$" depends on the choice of a connection on the coefficient space $t^*A$, its restriction to $\ker(\omega)$, hence the curvature, is well defined. 


In distribution terms, the curvature of an $s$-Ehresmann connection vanishes when $\ker(\omega)$ is tangent to a foliation.
\begin{proposition}[{\cite[Corollary 1.4 and Proposition 5.2]{Blaom2016}, \cite[Theorem 2.7]{Blaom2016Pseudoactions}}]\label{prop:flat_Cartan_action_global}
    Let $M$ be a simply connected manifold, and $\Sigma\tto M$ be a Hausdorff Lie groupoid whose source fibres are simply connected. Let $\omega$ be a flat Cartan connection on $\Sigma$ whose induced connection $D$ is \textbf{complete}, meaning that the $A$-paths $t\mapsto X_t$ with $D_{\rho(X_t)}X_t=0$ are defined for all times. Then there exist a Lie group $G$ acting on $M$ and an isomorphism
    \[
    (\Sigma,\omega)\cong (G\ltimes M,\mathrm{pr}^*_1(\omega_\mathrm{MC}))
    \]
    where $\omega_\mathrm{MC}\in \Omega^1(G,\mathfrak{g})$ denotes the Maurer-Cartan form of $G$.
\end{proposition}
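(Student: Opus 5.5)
The plan is to integrate the infinitesimal data $(A,D)$ to a Lie algebra $\mathfrak g$ acting on $M$, integrate that to a group action, and then identify $\Sigma$ with the action groupoid.

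\textbf{Step 1: a Lie algebra of sections.} Since $\omega$ is Cartan, $\ker(\omega)$ is a subgroupoid of $T\Sigma$ (Lemma~\ref{lem:multipl_equiv_notion}). Since $\omega$ is flat, $\ker(\omega)$ integrates to a foliation $\mathcal F$ whose leaves are local bisections, and which is closed under multiplication. Infinitesimally, the induced connection $D$ on $A$ is flat and its basic curvature vanishes. Let $\mathfrak g$ be the space of $D$-parallel sections of $A$. Because $M$ is simply connected and $D$ is flat, evaluation at a point $x_0$ gives $\mathfrak g\cong A_{x_0}$, so $\mathfrak g$ is finite dimensional. Vanishing basic curvature means $D$ is compatible with the bracket. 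From the identity $D_{\rho(\alpha)}\beta-D_{\rho(\beta)}\alpha=[\alpha,\beta]+T_D(\alpha,\beta)$-type formulas together with $R_{\mathrm{bas}}=0$, one checks that the bracket of two parallel sections is parallel. Hence $\mathfrak g$ is a Lie algebra, and $\rho$ restricts to an infinitesimal action $\mathfrak g\to\mathfrak X(M)$. Moreover, $A\cong \mathfrak g\ltimes M$ as Lie algebroids, with $D$ the trivial connection.

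\textbf{Step 2: integrating the action.} Let $G$ be the simply connected Lie group integrating $\mathfrak g$. The completeness hypothesis on $D$ says exactly that, for every $\xi\in\mathfrak g$, the integral curves of $\rho(\xi)$ exist for all times, since parallel $A$-paths are the curves $t\mapsto\xi$ over integral curves of $\rho(\xi)$. By Palais' theorem, a Lie algebra action by complete vector fields integrates to an action of the simply connected group $G$ on $M$.

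\textbf{Step 3: the comparison morphism.} Write $\Phi:G\ltimes M\to\Sigma$ for the map to be built. For $\xi\in\mathfrak g$, the right-invariant vector field on $\Sigma$ determined by $\xi$ is tangent to the leaves of $\mathcal F$. This uses the fact that a parallel section corresponds to a horizontal right-invariant field, which I expect to follow from the construction of $D$ from $\omega$. Its flow through units therefore sweeps out leaves of $\mathcal F$, and these leaves are bisections. Set
\[
\Phi(\exp(\xi),x)=\text{endpoint of the flow of }\xi^R\text{ starting at }1_x,
\]
extended to all of $G$ by multiplicativity, via products of exponentials. Equivalently, $\Phi$ integrates the Lie algebroid isomorphism $\mathfrak g\ltimes M\cong A$ (Lie II). This applies because $G\ltimes M$ has simply connected source fibres $G$, and it requires $\Sigma$ Hausdorff. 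Since $\Phi$ sends horizontal leaves $\{g\}\times M$ into leaves of $\mathcal F$, we get $\Phi^*\omega=\mathrm{pr}_1^*\omega_{\mathrm{MC}}$.

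\textbf{Step 4: $\Phi$ is an isomorphism (main obstacle).} $\Phi$ is a local diffeomorphism because it integrates an isomorphism of algebroids. Restricted to a source fibre, it is a covering $G\to s^{-1}(x)$. Since $s^{-1}(x)$ is simply connected, it is a bijection, provided the image is all of $s^{-1}(x)$; this holds because the source fibres are connected. The hard part is making this covering argument rigorous. Completeness of the flows is needed to show that $\Phi|_{G\times\{x\}}$ has the path-lifting property; I would try to establish it by lifting paths in $s^{-1}(x)$ via their Maurer--Cartan $A$-paths, which are complete by hypothesis. Once each source-fibre restriction is a bijection, $\Phi$ is a bijective local diffeomorphism, hence an isomorphism of groupoids with connection.
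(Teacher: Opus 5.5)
Your architecture is the paper's own: the infinitesimal statement via $D$-parallel sections (the second argument in the proof of Proposition~\ref{prop:flat_Cartan_action}), followed by integration. Steps 1--2 and the appeal to Lie II are fine.

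The step that fails is the one you flagged as an expectation. The right-invariant field $\xi^R$ of a parallel section is \emph{not} tangent to $\ker(\omega)$. Indeed $\xi^R$ is $s$-vertical, and $\omega|_{\ker(ds)}=\omega_{\mathrm{MC}}$ gives $\omega(\xi^R)=t^*\xi$, which vanishes only where $\xi$ does. So, as written, your derivation of $\Phi^*\omega=\mathrm{pr}_1^*\omega_{\mathrm{MC}}$, which is the part of the conclusion concerning the connections, has no valid basis.

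What is true is that the flow of $\xi^R$ \emph{preserves} $\ker(\omega)$. For $Y\in\Gamma(\ker\omega)$ one has $\omega([\xi^R,Y])=-(t^*D)_Y(t^*\xi)$. At the units this is the defining formula for $D$ in Proposition~\ref{prop:diff_conn}; in general it is the identity $D^{\delta\omega}=D^{\mathrm{pr}_1^*\omega}$ of Lemma~\ref{lem:two_connections_on_action_gpd}. It vanishes because $D\xi=0$. With this in hand the argument goes through:
\begin{itemize}
\item The unit manifold is tangent to $\ker(\omega)$, so its image under the time-one flow of $\xi^R$, namely $\Phi(\{\exp\xi\}\times M)$, is a bisection tangent to $\ker(\omega)$.
\item Products of such bisections stay tangent because $\ker(\omega)\cdot\ker(\omega)=\ker(\omega)$, and $G$ is generated by exponentials. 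Hence $\Phi$ maps every $\{g\}\times M$ into a leaf of $\mathcal F$.
\item Then $\Phi^*\omega$ and $\mathrm{pr}_1^*\omega_{\mathrm{MC}}$ agree, because they have the same kernel and both restrict to the Maurer--Cartan form on $s$-vertical vectors.
\end{itemize}
A shorter fix avoids flows. $\Phi$ is a local diffeomorphism, so $\Phi^*\omega$ (transported along $\mathfrak g\ltimes M\cong A$) is a Cartan connection on the $s$-simply connected groupoid $G\ltimes M$ inducing the trivial connection. So does $\mathrm{pr}_1^*\omega_{\mathrm{MC}}$, and the bijectivity in Proposition~\ref{prop:diff_conn} forces them to coincide.

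Step 4 is not really an obstacle, and completeness plays no role in it; it is used up in Step 2.
\begin{itemize}
\item Surjectivity: the image of $\Phi$ is an open subgroupoid containing all units. Such a subgroupoid meets each connected $s$-fibre in a closed set, since its complement is open by right translation. So $\Phi$ is onto.
\item Injectivity: if $\Phi(g,x)=\Phi(g',x)$, then $g'^{-1}g\in K_x:=\{k\in G:\ \Phi(k,x)=1_x\}$, a discrete subgroup of $G$. So $\Phi|_{G\times\{x\}}$ descends to a diffeomorphism $G/K_x\cong s^{-1}(x)$. Since $s^{-1}(x)$ is simply connected and $G$ is connected, $K_x=\{e\}$.
\end{itemize}
Your path-lifting route would also work. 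What it actually uses is that time-dependent right-invariant ODEs on $G$ have solutions for all times, and that solutions in the Hausdorff $s$-fibre are unique. It does not use completeness of $D$, which only concerns parallel $A$-paths.
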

\begin{proof}
    The proof follows from the infinitesimal version of the statement --  which is an application of parallel transport, see Proposition~\ref{prop:flat_Cartan_action} -- and an integration argument. 
\end{proof}
 \begin{definition}\label{def:flat_gpds}
     A groupoid with connection $(\Sigma,\omega)$ is called a \textbf{flat groupoid} 
     \[
     \Cbas^\omega =0,\quad \Curv^\omega=0.
     \]
 \end{definition}
 By the proposition above, action groupoids arising from group actions are flat and, under certain topological assumptions, flat groupoids arise from actions of groups. 

We conclude by spending some words on the interaction between curvature and basic curvature. Notice that the curvature of a connection $\omega$ on $\Sigma\rightrightarrows M$ induces the $1$-cochain $\Curv^\omega\in C^1(\Sigma, s^*\Lambda^2T^*M\otimes t^*A)$ valued in the ''quasi-representation'' $\Lambda^2T^*M\otimes A$ and given by
    \[
    g\to \Curv^\omega_g \in  \Lambda^2T_{s(g)}M \otimes A_{t(g)}
    \]
\begin{lemma}[{see~\cite[Lemmas 6.2.30 and 6.2.31]{Sal13}}]\label{lem:Curv_cocycle}
    Let $\omega\in \Omega^1(\Sigma,t^*A)$ be a connection on $\Sigma$. If $\omega$ is multiplicative, the one cochain associated to $\Curv^\omega$ is a groupoid cocycle -- i.e. $\delta\Curv^\omega=0$, where $\delta$ is the usual simplicial differential with values in a representation.
\end{lemma}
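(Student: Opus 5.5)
We need to show that, when $\omega$ is multiplicative, the cochain $g\mapsto \Curv^\omega_g$ satisfies
\[
(\delta\Curv^\omega)(g,h) \;=\; g\cdot \Curv^\omega_h - \Curv^\omega_{gh} + \Curv^\omega_g(h\cdot\underline{\ },h\cdot\underline{\ }) \;=\;0
\]
for all $(g,h)\in\Sigma^{(2)}$. Here $g$ acts on $A$ through the adjoint representation, which is a genuine representation by Lemma~\ref{lem:multiplicativity_properties}, and $h$ acts on the $TM$-arguments. The guiding idea is that multiplicativity makes $\ker(\omega)$ a subgroupoid of $T\Sigma$ (Lemma~\ref{lem:multipl_equiv_notion}). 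Consequently, horizontal lifts at $gh$ are products of horizontal lifts at $g$ and $h$:
\[
\hor^\omega_{gh}(v)=dm_{g,h}\bigl(\hor^\omega_g(h\cdot v),\hor^\omega_h(v)\bigr),
\]
since $\Cbas^\omega=0$. The plan is to compute $\Curv^\omega_{gh}(v,w)=-\omega([\hor^\omega(v),\hor^\omega(w)])_{gh}$ by pushing brackets through $dm$.

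The first step is to extend $v,w\in T_{s(h)}M$ to local vector fields $V,W$ on $M$ and build $s$-projectable horizontal vector fields $\tilde V=\hor^\omega(V)$ on $\Sigma$. We then form the vector fields $(\hor^\omega(h\cdot V),\hor^\omega(V))$ on $\Sigma^{(2)}\subset\Sigma\times\Sigma$. These are tangent to $\Sigma^{(2)}$ and $m$-related to $\hor^\omega(V)$ on $\Sigma$, by the identity above. To make $h\cdot V$ a vector field near $g$, we use the representation $\Sigma\times_M TM\to TM$, $u\mapsto dt(\hor^\omega_h u)$. The naturality of brackets under $m$ then gives
\[
[\hor V,\hor W]_{gh}=dm\bigl([\hor(h\cdot V),\hor(h\cdot W)],\,[\hor V,\hor W]\bigr),
\]
where the first bracket is computed on $\Sigma^{(2)}$.

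The second step applies $\omega$, using the multiplicativity equation in Lemma~\ref{lem:multipl_equiv_notion}:
\[
\omega(dm(X,Y))=\omega(X)+g\cdot\omega(Y).
\]
The second component gives $g\cdot\omega([\hor V,\hor W]_h)=-g\cdot\Curv^\omega_h(v,w)$. The first component is a bracket on $\Sigma^{(2)}$ whose $\Sigma$-projection at a point $(g',h')$ is $\hor^\omega_{g'}(h'\cdot V)$. The vector fields in the first slot depend on $h'$, so to identify this bracket I would decompose it as follows. Its first component equals $[\hor(V'),\hor(W')]_g$, with $V'=h\cdot V$ and $W'=h\cdot W$ frozen, plus correction terms coming from differentiating along the $h'$-direction.

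The main obstacle is these correction terms. They are derivatives, in the direction of $\hor^\omega_h(w)$, of the map $h'\mapsto \hor^\omega_g(h'\cdot v)$. Because the second component of the relevant vectors is horizontal, this derivative lands in $\ker(\omega)$ plus terms controlled by how the representation on $TM$ varies along horizontal directions. I would try to show that these terms lie in $\ker(\omega_g)$ by using the fact that $\ker(\omega)$, viewed as a subgroupoid, is closed under $dm$. Concretely, the map $(g',h')\mapsto \hor^\omega_{g'}(h'\cdot v)$ is tangent to the horizontal distribution of $\Sigma^{(2)}$ induced from $\ker(\omega)\times\ker(\omega)$, so its variation is horizontal and killed by $\omega$. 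If this works, the first component gives $-\Curv^\omega_g(h\cdot v,h\cdot w)$, and we obtain
\[
\Curv^\omega_{gh}(v,w)=\Curv^\omega_g(h\cdot v,h\cdot w)+g\cdot\Curv^\omega_h(v,w),
\]
which is exactly $\delta\Curv^\omega=0$ in the cocycle conventions of~\cite{AbCr2009RepUpHomGpd}. As an alternative route to bypass this step, I would note that $\ker(\omega)\subset T\Sigma$ is a wide subgroupoid distribution. Its integrability defect $[\ ,\ ]\bmod \ker(\omega)$, valued in $T\Sigma/\ker(\omega)\cong t^*A$, is then a multiplicative tensor, and multiplicativity of such a tensor is precisely the cocycle identity.
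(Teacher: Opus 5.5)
Your strategy is the one the paper has in mind. The paper only cites \cite{Sal13} here, but its proof of the analogous Lemma~\ref{lem:curv_is_mult_pb} likewise evaluates the multiplicativity identity on brackets of sections of the distribution $\{(\hor^\omega_g(h\cdot u),\hor^\omega_h(u))\}\subset T\Sigma^{(2)}$, which lies in the kernels of $m^*\omega$, $\pr_1^*\omega$ and $\pr_2^*\omega$. Your treatment of the $m$- and $\pr_2$-components is correct, since $\tilde V:=(\hor^\omega(h\cdot V),\hor^\omega(V))$ is $m$-related (by $\Cbas^\omega=0$) and $\pr_2$-related to $\hor^\omega(V)$.

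The gap is the $\pr_1$-component. It is the only nontrivial point, and you leave it conditional (``If this works''). The mechanism you sketch is not well defined as written. There is no derivative of $h'\mapsto\hor^\omega_g(h'\cdot v)$ at fixed $g$ in the direction $\hor^\omega_h(w)$: moving $h$ along $\hor^\omega_h(w)$ changes both $s(h)$ and $t(h)=s(g)$, so $g$ has to move as well. Moreover, closure of $\ker(\omega)$ under $dm$ is irrelevant for this term. Your alternative route only asserts that the integrability defect of $\ker(\omega)$ is multiplicative, which is the statement to be proved.

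The missing idea is that the curvature is tensorial on the kernel and natural under pullback, so no freezing and no correction terms are needed. Take any linear connection $\nabla'$ on $t^*A$ (e.g.\ $t^*D$). Then $d_{\pr_1^*\nabla'}(\pr_1^*\omega)=\pr_1^*(d_{\nabla'}\omega)$. For vector fields $X,Y$ on $\Sigma^{(2)}$ with $d\pr_1X,d\pr_1Y\in\ker(\omega)$, the terms $(\pr_1^*\nabla')_X(\pr_1^*\omega(Y))$ and $(\pr_1^*\nabla')_Y(\pr_1^*\omega(X))$ vanish. Hence
\[
-\omega_g\bigl(d\pr_1[X,Y]_{(g,h)}\bigr)=(d_{\nabla'}\omega)_g\bigl(d\pr_1X_{(g,h)},d\pr_1Y_{(g,h)}\bigr),
\]
which depends only on pointwise values. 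By the same reasoning on $\Sigma$, $(d_{\nabla'}\omega)_g(\hor^\omega_g a,\hor^\omega_g b)=\Curv^\omega_g(a,b)$.

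Now take $X=\tilde V$ and $Y=\tilde W$, whose first components at $(g,h)$ are $\hor^\omega_g(h\cdot v)$ and $\hor^\omega_g(h\cdot w)$. This gives $-\omega_g(d\pr_1[\tilde V,\tilde W])=\Curv^\omega_g(h\cdot v,h\cdot w)$, and your computation closes to $\Curv^\omega_{gh}(v,w)=\Curv^\omega_g(h\cdot v,h\cdot w)+g\cdot\Curv^\omega_h(v,w)$.

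Equivalently, expand $d\pr_1\tilde V$ in a local frame $E_i$ of $\ker(\omega)$ with coefficients in $C^\infty(\Sigma^{(2)})$, and use $\pr_1$-related lifts of the $E_i$. Your ``correction terms'' are then derivatives of the coefficients multiplied by the $E_i$, hence horizontal and killed by $\omega$. This is the precise version of your heuristic.
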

In other words we see that, if $\omega$ satisfies the multiplicativity equation from Lemma~\ref{lem:multipl_equiv_notion}, then $\Curv^\omega$ satisfies the same equation (an identity of forms on $\Sigma^{(2)}$):
        \[
		\Curv^\omega_{gh}(v,w) = \Curv^\omega_g(h\cdot v,h\cdot w)+g\cdot \Curv^\omega_h(v,w)
	\]
 for all compatible arrows $(g,h)\in \Sigma^{(2)}$ and all $v,w\in T_{s(h)}M$.

\subsection{The infinitesimal picture: connections on algebroids}\label{subs:conn_algebroids}
The infinitesimal counterpart of connections on groupoids is provided by linear connections $D$ on vector bundles $A\to M$ underlying their Lie algebroids, i.e. by operators
\[
D:\Gamma(A)\to \Omega^1(M,A)
\]
satisfying the Leibniz identity
\[
D(f\alpha) = fD(\alpha)+df\otimes \alpha,\quad f\in C^\infty(M),\ \alpha\in \Gamma(A).
\]

The statements and definition below are mostly from \cite{AbCr2011RepUpHomAlg}.

\begin{proposition}[{\cite[\S 2.3]{AbCr2011RepUpHomAlg}; see also~\cite[Corollary 5.5]{AccCr2023}}]\label{prop:adj_rep}
    Let $(A,D)$ be a Lie algebroid with connection over $M$. Then the following two operators
    \[
    \nabla^A: \Gamma(A)\times \Gamma(A)\to \Gamma(A),\quad \nabla^A_\alpha(\beta) = [\alpha,\beta]+D_{\rho(\beta)}(\alpha),
    \]
    \[
    \nabla^{TM}: \Gamma(A)\times \mathcal{X}(M)\to \mathcal{X}(M),\quad \nabla^{TM}_\alpha(X) = [\rho(\alpha),X]+\rho(D_{X}(\alpha)),
    \]
    define an $A$-connection $\nabla$ on the complex
    \[
    A\overset{\rho}{\to} TM,
    \]
    called the \textbf{adjoint connection}.
\end{proposition}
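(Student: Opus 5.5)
To prove the statement, I will check directly the axioms of an $A$-connection on the two-term complex $A\overset{\rho}{\to} TM$. Concretely, I will verify three things.

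First, $\nabla^A$ and $\nabla^{TM}$ are $\mathbb{R}$-bilinear, $C^\infty(M)$-linear in the $\alpha$-slot, and satisfy the Leibniz rule $\nabla_\alpha(f\,\cdot)=f\nabla_\alpha(\cdot)+\rho(\alpha)(f)\,\cdot$ in the second slot.

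Second, the two operators are compatible with the differential of the complex, namely
\[
\rho(\nabla^A_\alpha\beta)=\nabla^{TM}_\alpha(\rho(\beta)).
\]
No curvature or flatness condition is required, since the statement only asserts that $\nabla$ is an $A$-connection, not that it is flat.

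For the linearity in $\alpha$, I will compute
\[
\nabla^A_{f\alpha}\beta=[f\alpha,\beta]+D_{\rho(\beta)}(f\alpha)=f[\alpha,\beta]-\rho(\beta)(f)\alpha+fD_{\rho(\beta)}\alpha+\rho(\beta)(f)\alpha .
\]
Here the anchor-Leibniz rule of the bracket and the Leibniz identity of $D$ produce the terms $\mp\rho(\beta)(f)\alpha$, which cancel and leave $f\nabla^A_\alpha\beta$. The same computation for $\nabla^{TM}$ uses $[f\rho(\alpha),X]=f[\rho(\alpha),X]-X(f)\rho(\alpha)$ and $\rho(D_X(f\alpha))=f\rho(D_X\alpha)+X(f)\rho(\alpha)$, and again the extra terms cancel.

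For the Leibniz rule in the second slot, I will compute
\[
\nabla^A_\alpha(f\beta)=f[\alpha,\beta]+\rho(\alpha)(f)\beta+fD_{\rho(\beta)}\alpha ,
\]
which is the desired identity because $D$ is $C^\infty(M)$-linear in its lower argument. Similarly, $\nabla^{TM}_\alpha(fX)=f\nabla^{TM}_\alpha X+\rho(\alpha)(f)X$.

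For the compatibility with $\rho$, I apply $\rho$ to $\nabla^A_\alpha\beta$ and get $\rho([\alpha,\beta])+\rho(D_{\rho(\beta)}\alpha)$. Since $\rho$ is a Lie algebra morphism on sections, this equals $[\rho(\alpha),\rho(\beta)]+\rho(D_{\rho(\beta)}\alpha)$, which is exactly $\nabla^{TM}_\alpha(\rho(\beta))$ by definition.

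The only point needing care, and thus the one I'd regard as the main check, is tensoriality in $\alpha$. The sign conventions must be tracked so that the derivative of $f$ coming from the bracket exactly cancels the one coming from $D$. Everything else is formal. One could also cite \cite{AbCr2011RepUpHomAlg} for this verification.
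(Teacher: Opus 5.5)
Your direct verification is correct and complete: the cancellations you exhibit give $C^\infty(M)$-linearity in $\alpha$ for both operators, the Leibniz rule in the second slot holds, and $\rho(\nabla^A_\alpha\beta)=\nabla^{TM}_\alpha(\rho(\beta))$ follows from $\rho$ preserving brackets, with no flatness needed. The paper gives no proof of its own and simply defers to \cite[\S 2.3]{AbCr2011RepUpHomAlg}, where the argument is this same routine check, so your proposal matches the intended proof.
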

\begin{definition}\label{def:inf_basic_curvature}
    Let $(A,D)$ be a Lie algebroid with connection over $M$. The \textbf{basic curvature} of $(A,D)$ is the tensor
    \[
    \cbas^D \in \Omega^2(A,\mathrm{Hom}(TM,A))
    \]
    given by
    \[
    \cbas^D(\alpha,\beta)X = D_X[\alpha,\beta] - [D_X\alpha,\beta] - [\alpha,D_X\beta] + D_{\nabla_{\alpha} X} \beta - D_{\nabla_{\beta} X} \alpha
    \]
\end{definition}

\begin{proposition}[{\cite[Proposition 2.11]{AbCr2011RepUpHomAlg}; see also~\cite[Definition 5.2]{AccCr2023}}]\label{prop:cbas_identities}
The following facts are true.
\begin{itemize}
    \item The following identities hold:
    \[
    \cbas^D\circ \rho = -R_{\nabla^{A}},
    \]
    \[
    \rho\circ\cbas^D\ = -R_{\nabla^{TM}},
    \]
    where the right hand sides denote the curvatures of the $A$-connections $\nabla^{TM}$ and $\nabla^{A}$.
    In particular if $D$ is an infinitesimally Cartan connection on $A$, then the adjoint $A$-connection $\nabla$ is flat, i.e. it is a Lie algebroid representation.
    \item $\cbas^D$ is $d_\nabla$-closed: $d_\nabla(\cbas^D)=0$.
\end{itemize}
\end{proposition}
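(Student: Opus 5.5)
Both identities and the closedness of $\cbas^D$ reduce to direct computations in the definitions of $\nabla^A$, $\nabla^{TM}$ and $\cbas^D$. We work with sections $\alpha,\beta\in\Gamma(A)$ and vector fields $X\in\mathcal{X}(M)$, and use the Jacobi identity together with the fact that $\rho$ is a bracket-preserving bundle map.

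For the first identity we evaluate $\cbas^D(\alpha,\beta)(\rho(\gamma))$ for $\gamma\in\Gamma(A)$. We compare it term by term with
\[
R_{\nabla^A}(\alpha,\beta)\gamma=\nabla^A_\alpha\nabla^A_\beta\gamma-\nabla^A_\beta\nabla^A_\alpha\gamma-\nabla^A_{[\alpha,\beta]}\gamma .
\]
Expanding $\nabla^A_\alpha\gamma=[\alpha,\gamma]+D_{\rho(\gamma)}\alpha$ produces three kinds of terms:
\begin{enumerate}[(i)]
\item pure bracket terms $[\alpha,[\beta,\gamma]]-[\beta,[\alpha,\gamma]]-[[\alpha,\beta],\gamma]$, which vanish by Jacobi;
\item terms with one $D$ and one bracket;
\item terms with two $D$'s, such as $D_{\rho(D_{\rho\gamma}\beta)}\alpha$.
\end{enumerate}
Using $\rho[\beta,\gamma]=[\rho\beta,\rho\gamma]$, we rewrite $\nabla^{TM}_\alpha(\rho\gamma)=[\rho\alpha,\rho\gamma]+\rho(D_{\rho\gamma}\alpha)=\rho(\nabla^A_\alpha\gamma)$. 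This compatibility of $\nabla$ with $\rho$ is the key rewriting. With it, the terms $D_{\nabla_\alpha X}\beta$ and $D_{\nabla_\beta X}\alpha$ in $\cbas^D$ at $X=\rho\gamma$ match the type (iii) terms, and $D_X[\alpha,\beta]-[D_X\alpha,\beta]-[\alpha,D_X\beta]$ matches the type (ii) terms up to the global sign.

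The second identity is analogous. We compute $\rho(\cbas^D(\alpha,\beta)X)$ and compare it with $R_{\nabla^{TM}}(\alpha,\beta)X$. Here the Jacobi identity for vector fields kills the pure bracket terms $[\rho\alpha,[\rho\beta,X]]$, and $\rho$ applied to $D$-terms matches the mixed terms.

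Flatness of $\nabla$ in the infinitesimally Cartan case then follows at once. By definition such a $D$ has $\cbas^D=0$, so both curvatures vanish, i.e.\ $\nabla$ is a representation up to homotopy with zero curvature term, hence a genuine Lie algebroid representation on $A\oplus TM$.

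For $d_\nabla\cbas^D=0$ we regard $\cbas^D$ as a $2$-form on $A$ with values in $\mathrm{Hom}(TM,A)$, with the connection induced from $\nabla^{TM}$ and $\nabla^A$. Written out, $d_\nabla\cbas^D(\alpha,\beta,\gamma)$ is a cyclic sum of $\nabla_\alpha(\cbas^D(\beta,\gamma))$ minus $\cbas^D([\alpha,\beta],\gamma)$. We will organize this by the conceptual route: $\cbas^D$ is the obstruction to $D$ commuting with brackets, i.e.\ to the operator $\alpha\mapsto D\alpha$ being a $1$-cocycle, so its differential vanishes as a Bianchi-type identity. Concretely, expanding the cyclic sum, the terms $D_X$ of iterated brackets cancel by Jacobi, and the remaining terms cancel pairwise after again substituting $\nabla^{TM}_\alpha X=[\rho\alpha,X]+\rho(D_X\alpha)$.

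The main obstacle is this last bookkeeping. There are many terms, and the cancellation also uses the identity $\rho\circ\cbas^D=-R_{\nabla^{TM}}$ from the first part, to absorb terms of the form $D_{R_{\nabla^{TM}}(\alpha,\beta)X}\gamma$. I would therefore prove the two curvature identities first and then use them inside the Bianchi computation. Signs and conventions follow \cite{AbCr2011RepUpHomAlg}; the cleanest way to avoid sign errors is to check everything against their Proposition 2.11.
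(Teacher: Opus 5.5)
Your proposal is correct. It differs from the paper only in that the paper gives no argument of its own: the proposition is imported from \cite[Proposition 2.11]{AbCr2011RepUpHomAlg}, where these identities are the structure equations of the adjoint representation up to homotopy $(\rho,\nabla,\cbas^D)$ (Remark~\ref{rmk:rep_up_to_homotopy}). You instead verify them by direct expansion, and the signs do come out as stated. The citation places the identities in the representation-up-to-homotopy framework. Your computation is self-contained, and its only inputs are the Jacobi identity, $\rho[\alpha,\beta]=[\rho\alpha,\rho\beta]$, and, for the last item, the identity $\rho\circ\cbas^D=-R_{\nabla^{TM}}$.

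Some of your bookkeeping needs correcting when you write it out.

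\textbf{First identity.} One has $D_{\nabla_\alpha\rho\gamma}\beta=D_{\rho[\alpha,\gamma]}\beta+D_{\rho(D_{\rho\gamma}\alpha)}\beta$. So the last two terms of $\cbas^D(\alpha,\beta)(\rho\gamma)$ account for both type (iii) terms and also for the two type (ii) terms $D_{\rho[\alpha,\gamma]}\beta$ and $D_{\rho[\beta,\gamma]}\alpha$. The first three terms of $\cbas^D$ match only the remaining three type (ii) terms.

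\textbf{Closedness.} No substitution of the formula for $\nabla^{TM}$ is needed; keep $\nabla_\alpha X$ as an opaque vector field. The cyclic sum of $\nabla^A_\alpha(\cbas^D(\beta,\gamma)X)-\cbas^D(\beta,\gamma)(\nabla_\alpha X)-\cbas^D([\alpha,\beta],\gamma)X$ then splits into three kinds of terms.
\begin{enumerate}
\item $D_X$ of the Jacobiator, together with double brackets such as $[\alpha,[\beta,D_X\gamma]]$ and $[[\alpha,\beta],D_X\gamma]$. These are killed by Jacobi.
\item Terms $[\alpha,D_X[\beta,\gamma]]$, and terms with one $D_{\nabla_\cdot X}$ and one bracket. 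These cancel pairwise.
\item Second-order terms. Those in which $D$ differentiates $\gamma$ assemble into
\[
D_{R_{\nabla^{TM}}(\alpha,\beta)X+\rho(\cbas^D(\alpha,\beta)X)}\gamma=0 .
\]
The second summand comes from the $D_{\rho(\cdot)}$-part of $\nabla^A_\gamma$, and the whole expression vanishes by the second identity.
\end{enumerate}

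\textbf{The Bianchi heuristic.} Do not present the ``$D$ is a $1$-cochain, so Bianchi'' idea as a justification. The map $\alpha\mapsto D\alpha$ is not $C^\infty(M)$-linear, and $d_\nabla^2\neq0$ because $\nabla$ is not flat. Moreover, $\cbas^D(\alpha,\beta)$ differs from $D[\alpha,\beta]-\nabla_\alpha(D\beta)+\nabla_\beta(D\alpha)$ by the quadratic term $X\mapsto D_{\rho(D_X\beta)}\alpha-D_{\rho(D_X\alpha)}\beta$. It is the explicit expansion that proves $d_\nabla\cbas^D=0$.
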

\begin{remark}\label{rmk:rep_up_to_homotopy}
    The framework that truly illuminates what is going on is that of representations up to homotopy from~\cite{AbCr2011RepUpHomAlg}. The point is that the Lie algebroid $A$ admits a representation up to homotopy on the graded vector bundle
    \[
    E^{-1}:=A,\quad E^0:=TM
    \]
    given by the triple $(\rho, \nabla, \cbas^D)$. We do not elaborate further on this point here.
\end{remark}

\begin{definition}\label{def:inf_mult}
	A connection $D$ on a Lie algebroid $A$ is called \textbf{infinitesimally multiplicative} if $\cbas^D = 0$.
\end{definition}

Cartan connections are compatible with the groupoid/algebroid structure, and one can discuss differentiation and integration for them. The following proposition appears, in greater generality, in~\cite{CrSalStr2012}.
\begin{proposition}[{\cite{CrSalStr2012}; see also~\cite[Theorem 5.9]{AccCr2023}}]\label{prop:diff_conn}
    Let $\omega$ be a Cartan connection on a Lie groupoid $\Sigma\tto M$. The following formula defines an infinitesimally Cartan connection on the associated Lie algebroid $A\to M$:
    \[
    D_X\alpha = \omega([{\mathrm{hor}^\omega(X)}, \alpha^\mathrm{R}]|_M),
    \]
    where ${\mathrm{hor}^\omega(X)}\in \Gamma(\ker(\omega))$ is the horizontal lift of $X$ and $\alpha^R\in \Gamma(\ker(ds))$ is the right invariant vector field on $\Sigma$ corresponding to $\alpha\in \Gamma(A)$.
    
    Conversely, let $(A,D)$ be a Lie algebroid equipped with an infinitesimally Cartan connection and let $\Sigma$ be an $s$-simply connected Lie groupoid integrating $A$. Then $D$ can be integrated to a Cartan connection $\omega$ on $\Sigma$. The two constructions are inverse of each other.
\end{proposition}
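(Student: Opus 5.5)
The statement has three parts: $D$ is a connection, $D$ has vanishing basic curvature, and (conversely) a Cartan connection can be integrated and the two constructions are inverse. I would prove them in this order, treating $\omega$ through its kernel $\ker(\omega)$, a multiplicative distribution by Lemma~\ref{lem:multipl_equiv_notion}.

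\emph{Step 1: $D$ is a connection.} The formula makes sense pointwise along $M$. Since $\ker(\omega)_{1_x}=T_xM$, the lift $\hor^\omega(X)$ is tangent to $M$ along the units. The bracket $[\hor^\omega(X),\alpha^R]$ restricted to $M$ therefore depends on $X$ only through its value at points of $M$; this comes from the Leibniz rule in the first slot and the fact that $\alpha^R$ is tangent to the $s$-fibres. So $D$ is $C^\infty(M)$-linear in $X$. For the Leibniz identity in $\alpha$, use $(f\alpha)^R=(t^*f)\alpha^R$. This gives
\[
[\hor^\omega(X),(t^*f)\alpha^R]=(t^*f)[\hor^\omega(X),\alpha^R]+\hor^\omega(X)(t^*f)\,\alpha^R.
\]
On $M$ we have $dt\circ\hor^\omega(X)=X$, and $\omega(\alpha^R)=\alpha$ because $\omega$ restricts to $\omega_{\mc}$ on $\ker(ds)$. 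Hence $D_X(f\alpha)=fD_X\alpha+X(f)\alpha$.

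\emph{Step 2: $\cbas^D=0$.} This is the main obstacle. My plan is to differentiate the multiplicativity of $\ker(\omega)$ rather than compute $\cbas^D$ directly. The key tool is that $\ker(\omega)\subset T\Sigma$ is a Lie subgroupoid of $T\Sigma\tto TM$ (Lemma~\ref{lem:multipl_equiv_notion}).

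Its Lie algebroid is therefore a subalgebroid $E\subset TA\to TM$ of the tangent algebroid, complementary to the core. Such a splitting of $TA$ over $TM$ is the same as a linear connection on $A$. I would check that this connection is exactly the $D$ of the formula, by comparing linear sections of $TA$ with right-invariant vector fields and horizontal lifts. The check uses the standard identification of brackets of linear/core sections of $TA$ with brackets of the vector fields $\hor^\omega(X)$ and $\alpha^R$ on $\Sigma$.

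It then remains to show that $E$ being a subalgebroid is equivalent to $\cbas^D=0$. For this, compute the bracket of two linear sections $(X,\alpha)$-lifts in $TA$ via the connection. Its failure to lie in $E$ is a core component, which I expect to equal $\cbas^D(\alpha,\beta)X$ up to sign; this is precisely the computation behind the representation-up-to-homotopy description of $TA$ in \cite{AbCr2011RepUpHomAlg}. If this identification proves messy, my fallback is to differentiate the identities of Lemma~\ref{lem:multiplicativity_properties} at the units along $\alpha$. That yields $R_{\nabla^A}=0$, but only on $\mathrm{Im}(\rho)$, so it would not suffice in general; this is why I prefer the $TA$ argument.

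\emph{Step 3: integration and inverse constructions.} Conversely, given $(A,D)$ infinitesimally Cartan, $E\subset TA$ is a subalgebroid by Step 2 read backwards. Since $\Sigma$ is $s$-simply connected, so is $T\Sigma$. Lie's second theorem then integrates the inclusion $E\hookrightarrow TA$ to a morphism of groupoids whose image is a multiplicative distribution complementary to $\ker(ds)$; in particular it is a subgroupoid of $T\Sigma$. By Lemma~\ref{lem:multipl_equiv_notion}, its associated form $\omega$ is Cartan. The two constructions are mutually inverse by the uniqueness in Lie's second theorem together with the identification of Step 2.
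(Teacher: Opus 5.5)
The paper does not prove this proposition; it defers to the Spencer-operator correspondence of \cite{CrSalStr2012} (see \cite[Theorem 5.9]{AccCr2023}). There $\omega$ is treated as a multiplicative form with coefficients, and the bracket-compatibility of its Spencer operator unwinds to $\cbas^D=0$. Your route through the VB-subgroupoid $\ker(\omega)\subset T\Sigma$ is different, and Steps 1 and 2 are sound in outline:
\begin{itemize}
\item Linearity in $X$ holds because $\hor^\omega(fX)=(s^*f)\hor^\omega(X)$ and $\alpha^R(s^*f)=0$.
\item $\mathrm{Lie}(\ker\omega)$ is the horizontal space of $D$. Indeed, if $\varphi_\epsilon$ is the flow of $\alpha^R$ and $v=X(x)$, the first-order variation of $\omega(d\varphi_\epsilon(v))$ at $\epsilon=0$ is exactly $\omega([\hor^\omega(X),\alpha^R])(1_x)$.
\item The core component of the bracket of two $D$-horizontal linear sections of $TA\to TM$ is indeed $\pm\cbas^D$. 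So closure of $\mathrm{Lie}(\ker\omega)$ under brackets gives $\cbas^D=0$.
\end{itemize}

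The gap is in Step 3. Lie's second theorem integrates a morphism $A_1\to A_2$ to a groupoid morphism defined on an $s$-simply connected integration of the \emph{source} $A_1$. To integrate $E\hookrightarrow TA$ you therefore need an $s$-simply connected groupoid integrating $E$, and you never produce one. The $s$-simple connectedness of $T\Sigma$ is irrelevant here; it is the hypothesis needed when integrating a morphism or cocycle defined \emph{on} $TA$, which is how the form $\omega$ itself is integrated in the Spencer-operator approach. You also give no reason why the image of the integrated morphism should be a smooth subbundle of $T\Sigma\to\Sigma$ complementary to $\ker(ds)$.

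Both points can be repaired:
\begin{itemize}
\item When $\cbas^D=0$, the $D$-horizontal linear sections close under the bracket and are anchored to the linear vector fields on $TM$ determined by $\nabla^{TM}$. So $E\cong A\ltimes TM$ is the action algebroid of the representation $\nabla^{TM}$, which is flat since $\rho\circ\cbas^D=-R_{\nabla^{TM}}$.
\item Because $\Sigma$ is $s$-simply connected, $\nabla^{TM}$ integrates to a representation of $\Sigma$, and $\Sigma\ltimes TM$ is an $s$-simply connected integration of $E$.
\item Lie II then gives $\Phi:\Sigma\ltimes TM\to T\Sigma$ covering $\id_{TM}$, so $ds(\Phi(g,v))=v$.
\item Uniqueness in Lie II, applied to the composites with the projections to $\Sigma$ and with fibrewise scalar multiplication, shows that $\Phi$ covers $\id_\Sigma$ and is fibrewise linear.
\item Hence its image is a smooth distribution complementary to $\ker(ds)$, closed under multiplication and inversion, and Lemma~\ref{lem:multipl_equiv_notion} applies.
\end{itemize}
The same identification $\ker(\omega)\cong\Sigma\ltimes TM$ supplies the $s$-simply connected integration that your uniqueness argument for ``the constructions are inverse'' silently requires.
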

Under the Lie functor established in the previous proposition, the adjoint representation of $(\Sigma,\omega)$ with connection correspond to the adjoint representation of $(A, D^\omega)$.

    Together with the basic curvature, a Lie algebroid $A$ with connection $D$ carries the ordinary \textbf{curvature} of $D$, i.e. the tensor $\curv^D\in \Omega^2(TM,A^*\otimes A)$
     \[
     \curv^D \in \Omega^2(M,\mathrm{Hom}(A,A))
     \]
     given by
     \[
     \curv^D(X,Y)\alpha = D_XD_Y\alpha-D_YD_X\alpha-D_{[X,Y]}(\alpha).
     \]
     For infinitesimally Cartan connections, $\curv^D \in \Omega^2(TM,A^*\otimes A) \cong C^1(A,\Lambda^2T^*M\otimes A)$ becomes a Lie algebroid cocycle -- where $\Lambda^2T^*M\otimes A$ is made into a representation of $A$ by using the adjoint representation induced by $D$. Moreover
\begin{proposition}[{cf.~\cite[Lemma 6.2.32]{Sal13}}]\label{prop:curv_alg_is_lin_curv_gpd}
    Let $(\Sigma,\omega)$ be a Lie groupoid with Cartan connection, and let $D$ be the corresponding infinitesimally Cartan connection on its Lie algebroid. The Lie algebroid cocycle corresponding to the curvature $\Curv^\omega$ is precisely $\curv^D$.
\end{proposition}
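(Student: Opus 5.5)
The statement to prove is Proposition~\ref{prop:curv_alg_is_lin_curv_gpd}: the van Est/Lie-functor image of the groupoid cocycle $\Curv^\omega$ is $\curv^D$. The plan is to use the standard differentiation map for $1$-cochains. A groupoid $1$-cochain $c\in C^1(\Sigma,E)$ with values in a representation $E$ and normalised by $c(1_x)=0$ differentiates to
\[
\Phi(c)(\alpha)=\tfrac{d}{d\epsilon}\big|_{\epsilon=0}\,c\bigl(\phi^{\epsilon}_{\alpha^R}(1_x)\bigr),
\]
where $\phi^{\epsilon}_{\alpha^R}$ is the flow of the right invariant vector field $\alpha^R$, and the value $c(\phi^\epsilon_{\alpha^R}(1_x))\in E_{t}$ is transported back to $E_x$ using the representation.

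Equivalently, and more conveniently, $\Phi(c)(\alpha)_x$ is the derivative of $c$ along the $s$-vertical vector $\alpha_x\in\ker_{1_x}(ds)\cong A_x$. This requires a trivialisation of $E$ along the $s$-fibre near the unit, and the representation supplies one. Since $\Curv^\omega$ vanishes on units, this derivative is canonical at $1_x$ and does not depend on the trivialisation. We take $E=\Lambda^2T^*M\otimes A$, viewed as a representation of $\Sigma$ through the adjoint representation of Lemma~\ref{lem:multiplicativity_properties}.

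First, check normalisation: $\Curv^\omega_{1_x}=0$. This holds because $\hor^\omega_{1_x}$ is tangent to the unit manifold, and the horizontal lifts of two vector fields $X,Y$ restrict along $M$ to $du(X),du(Y)$, so their bracket at $1_x$ equals $du([X,Y])$, which lies in $\ker\omega$. Next, compute the derivative. For vector fields $X,Y$ on $M$, set $F(g)=-\omega_g([\hor^\omega(X),\hor^\omega(Y)])$, where $\hor^\omega(X)$ denotes the vector field $g\mapsto\hor^\omega_g(X_{s(g)})$. Differentiate along $\alpha^R$ at $1_x$:
\[
\alpha^R(\omega(Z))\big|_{1_x}=\omega([\alpha^R,Z])\big|_{1_x}+(\text{terms from }d\omega).
\]
The cleaner route is to avoid $d\omega$ and differentiate the identity $\omega(\hor^\omega(X))=0$ along flows instead. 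The key identity is $[\alpha^R,\hor^\omega(X)]$, evaluated via $\omega$ along $M$; by Proposition~\ref{prop:diff_conn} this is $-D_X\alpha$, or more precisely its negative right invariant lift plus a horizontal part.

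To handle this, decompose $[\alpha^R,\hor^\omega(X)]=-(D_X\alpha)^R+\hor^\omega(V_\alpha X)$ near $M$, where $V_\alpha X$ is some vector field on $M$ that turns out to be $\nabla_\alpha X$-related. The Jacobi identity then gives
\[
\alpha^R\bigl(\omega[\hor X,\hor Y]\bigr)=\omega\bigl([[\alpha^R,\hor X],\hor Y]+[\hor X,[\alpha^R,\hor Y]]\bigr)+(\text{adjoint-action correction}).
\]
Substituting the decomposition produces the terms $\omega([(D_X\alpha)^R,\hor Y])$, which equal $-D_YD_X\alpha$ on $M$, together with the symmetric term and a $D_{[X,Y]}\alpha$ term. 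Collecting these yields $\curv^D(X,Y)\alpha$. The correction coming from transporting via the representation precisely cancels the $\nabla_\alpha$-terms acting on $X$, $Y$ and on the $A$-value.

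The main obstacle is this bookkeeping. One must identify the horizontal component $V_\alpha X$ of $[\alpha^R,\hor X]$ with $-\nabla^{TM}_\alpha X$, up to sign conventions, using the multiplicativity of $\ker\omega$ (Lemma~\ref{lem:multipl_equiv_notion}) and the definition of $g\cdot v=dt\,\hor^\omega_g(v)$. One must also check that the representation-induced terms cancel exactly. I would first try doing everything at $1_x$ with flows of bisections $\exp(\epsilon\alpha)$, using the fact that $\ker\omega$ is a subgroupoid, so that left translation by a bisection tangent to $\ker\omega$ preserves horizontality; this should make the correction terms transparent. As an alternative that avoids the issue, one could argue via Lemma~\ref{lem:Curv_cocycle} and uniqueness: both $\Phi(\Curv^\omega)$ and $\curv^D$ are tensorial in $X,Y$, so it suffices to compare their values at a single point.
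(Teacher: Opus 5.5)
The paper gives no argument of its own for this statement; it only cites \cite[Lemma 6.2.32]{Sal13}, so your direct computation has to stand on its own. Several parts of it are right: the van Est derivative of a normalised $1$-cocycle along $\alpha_x\in\ker_{1_x}(ds)$, your proof that $\Curv^\omega_{1_x}=0$, and the idea of applying the Jacobi identity to $\alpha^R$, $\hor^\omega(X)$, $\hor^\omega(Y)$. There is, however, a genuine gap in the central step, which you leave as ``bookkeeping''. That step rests on a false decomposition and on correction terms that do not exist.

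Since $\alpha^R$ is $s$-vertical and $\hor^\omega(X)$ is $s$-related to $X$, the bracket $[\alpha^R,\hor^\omega(X)]$ is $s$-related to $[0,X]=0$, so it is $s$-vertical. It therefore has no component $\hor^\omega(V_\alpha X)$, let alone one related to $\nabla_\alpha X$. What Proposition~\ref{prop:diff_conn} actually gives is $[\alpha^R,\hor^\omega(X)]=-(D_X\alpha)^R+W_X$, with $W_X$ $s$-vertical and vanishing along $M$. Because $\hor^\omega(Y)$ is tangent to $M$ along $M$, $[W_X,\hor^\omega(Y)]$ vanishes at $1_x$. The Jacobi identity then gives $\omega\bigl([\alpha^R,Z]\bigr)_{1_x}=D_YD_X\alpha-D_XD_Y\alpha$ for $Z:=[\hor^\omega(X),\hor^\omega(Y)]$.

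The representation-induced corrections that are supposed to cancel ``$\nabla_\alpha$-terms'' are also spurious. As you observed yourself at the outset, $\Curv^\omega$ vanishes at $1_x$, so the derivative there does not depend on the transport used, and no such terms arise.

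The real missing ingredient is the source of the $D_{[X,Y]}\alpha$ term. It is exactly the ``$d\omega$'' contribution you proposed to avoid: since $Z$ is not horizontal, the derivative of $\omega(Z)$ along $\alpha_x$ is not $\omega([\alpha^R,Z])_{1_x}$. Here is one way to account for it.

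\begin{itemize}
\item $Z$ is $s$-related to $[X,Y]$, so $Z=\hor^\omega([X,Y])+Z^v$ with $Z^v$ $s$-vertical, $\omega(Z^v)=-\Curv^\omega(X,Y)$, and $Z^v=0$ along $M$.
\item Since $Z^v_g=dR_g\bigl(\omega(Z^v)_g\bigr)$ vanishes at $1_x$, $\omega([\alpha^R,Z^v])_{1_x}$ is the derivative of $g\mapsto-\Curv^\omega_g(X_x,Y_x)$ along $\alpha_x$.
\item By the definition of $D$, $\omega\bigl([\alpha^R,\hor^\omega([X,Y])]\bigr)_{1_x}=-D_{[X,Y]}\alpha$.
\end{itemize}

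Combining these, the derivative of $\Curv^\omega_g(X_x,Y_x)$ along $\alpha_x$ equals $-(D_YD_X\alpha-D_XD_Y\alpha+D_{[X,Y]}\alpha)=\curv^D(X,Y)\alpha$, with no further terms.

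Your fallback argument does not work either. Tensoriality in $X,Y$ only says that both sides are pointwise objects; it does not reduce equality at every $x$ to equality at a single point. The cocycle identity of Lemma~\ref{lem:Curv_cocycle} also says nothing about the derivative at the units.
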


\begin{definition}\label{def:flat_algebroid}
    A Lie algebroid $(A,D)$ is called a \textbf{flat algebroid} if 
    \[
    \cbas^D = 0,\quad \curv^D = 0.
    \]
\end{definition}
Flat Cartan connections strongly constrain the structure of an algebroid.
\begin{proposition}[{\cite[Theorem A]{Blaom2006}}]\label{prop:flat_Cartan_action}
    Let $(A,D)$ be a flat algebroid over a simply connected manifold. Then there exists a Lie algebra $\mathfrak{g}$ acting on $M$ and an isomorphism of algebroids
    \[
    A\cong \mathfrak{g}\ltimes M
    \]
    such that $D$ is the pullback connection of the standard flat connection on $\mathfrak{g}\ltimes M$.
\end{proposition}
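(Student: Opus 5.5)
The plan is to use $D$ to produce a global frame of parallel sections, read the algebroid structure off this frame, and check that $D$ becomes the standard flat connection of an action algebroid. Since $\curv^D=0$ and $M$ is simply connected, parallel transport of $D$ is path independent. Fix $x_0\in M$ and set $\mathfrak{g}:=A_{x_0}$. For $\xi\in\mathfrak{g}$ let $\tilde\xi\in\Gamma(A)$ be the unique $D$-parallel section with $\tilde\xi(x_0)=\xi$. This gives a vector bundle isomorphism $\Phi:\mathfrak{g}\times M\to A$, $(\xi,x)\mapsto\tilde\xi(x)$, and $D$ corresponds under $\Phi$ to the trivial connection $d$ on $\mathfrak{g}\times M$. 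It then remains to show two things: brackets of parallel sections are parallel, and the anchor, restricted to parallel sections, is a Lie algebra action.

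For the first point I use $\cbas^D=0$ in Definition~\ref{def:inf_basic_curvature}. Take $\alpha=\tilde\xi$ and $\beta=\tilde\eta$. Since $D\tilde\xi=D\tilde\eta=0$, the terms $[D_X\alpha,\beta]$, $[\alpha,D_X\beta]$, $D_{\nabla_\alpha X}\beta$ and $D_{\nabla_\beta X}\alpha$ all vanish, so $\cbas^D(\tilde\xi,\tilde\eta)X=D_X[\tilde\xi,\tilde\eta]$. Hence $D[\tilde\xi,\tilde\eta]=0$. A parallel section is determined by its value at $x_0$, so $[\tilde\xi,\tilde\eta]=\widetilde{[\xi,\eta]_{\mathfrak{g}}}$, where $[\xi,\eta]_{\mathfrak{g}}:=[\tilde\xi,\tilde\eta](x_0)$. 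Antisymmetry and the Jacobi identity for $[\cdot,\cdot]_{\mathfrak{g}}$ follow from the corresponding identities in $\Gamma(A)$, because the map $\xi\mapsto\tilde\xi$ is injective. So $\mathfrak{g}$ is a Lie algebra.

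For the second point, define $a:\mathfrak{g}\to\mathcal{X}(M)$ by $a(\xi)=\rho(\tilde\xi)$. The anchor preserves brackets, so $a([\xi,\eta]_{\mathfrak{g}})=\rho([\tilde\xi,\tilde\eta])=[a(\xi),a(\eta)]$, and $a$ is an infinitesimal action. (Here $a$ is a Lie algebra homomorphism into vector fields; the sign convention for left versus right actions only needs to be fixed consistently with the paper's conventions.) On the action algebroid $\mathfrak{g}\ltimes M$, the bracket of general sections is given by the Leibniz rule extended from the brackets of constant sections. Since $\Phi$ sends constant sections to the $\tilde\xi$, is $C^\infty(M)$-linear, and matches anchors and brackets on a frame, it is an isomorphism of Lie algebroids, and it intertwines the trivial connection with $D$.

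The main obstacle is the reduction of $\cbas^D$ on parallel sections. This requires checking that the definition is tensorial in $\alpha$ and $\beta$, which is ensured by $\cbas^D\in\Omega^2(A,\mathrm{Hom}(TM,A))$, so that evaluating it on the global parallel frame is legitimate. Beyond that, only routine bookkeeping remains.
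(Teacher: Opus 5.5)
Your proof is correct and follows the paper's second sketched argument: flatness and simple connectedness identify $A_{x_0}$ with the space of global $D$-parallel sections, $\cbas^D=0$ makes the bracket close on parallel sections, and the anchor then gives the action, so that $A\cong\Gamma(A,D)\ltimes M$ with $D$ becoming the trivial connection. Your worry about tensoriality is unnecessary: $\tilde\xi$ and $\tilde\eta$ are global sections, so the defining formula of $\cbas^D$ can be applied to them directly.
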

\begin{proof}
    For our purposes, it is insightful to have descriptions of $\mathfrak{g}$. Hence, we sketch two arguments.

    For the first argument, we consider
    \begin{equation}\label{eq:torsion_bracket}
    \{\ ,\ \}: \Gamma(A)\times \Gamma(A) \to \Gamma(A),\quad (\alpha,\beta) \to - D_{\rho(\alpha)} \beta + D_{\rho(\beta)}\alpha + [\alpha,\beta];
    \end{equation}
    since the expression on the right is $C^\infty(M)$-linear in $\alpha$ and $\beta$, it induces a bracket $\{\ ,\ \}$ on $A$. In fact, such bracket is the opposite of the torsion of the connection $D$, and a direct computation shows that the Jacobi identity holds precisely due to the fact that $D$ is infinitesimally multiplicative and flat. We denote by $\mathfrak{a}_x$ the Lie algebra $(A_x,\{\ ,\ \}$. Another computation shows that $D$ acts on $(\Gamma(A),\{\ ,\ \})$ by derivations, ensuring that $\mathfrak{a}_x$ does not depend on $x$. It also follows that, if $M$ is simply connected, parallel transport with respect to $D$ gives an isomorphism between $A$ and $\mathfrak{a}_x\times M$, and it is straightforward to check that this is an isomorphism of algebroids.
    
    For the second argument, we use that $D$ is flat to identify $A_x$ with the set of germs of flat sections around $x\in M$ -- that is, germs around $x$ of sections $\alpha\in \Gamma(A)$ such that $D_X\alpha=0$ for all vector fields $X$. Since $M$ is simply connected, each germ of a flat section at $x$ is represented by a (necessarily unique) flat section defined on the whole $M$. The Lie bracket of $A$ restrict to a Lie bracket on the set $\Gamma(A,D)$ of flat sections because $D$ is infinitesimally multiplicative, and the resulting Lie algebra acts on $M$ via the anchor of $A$. Then, one has the algebroid isomorphism 
    \[
    A\to \Gamma(A,D)\ltimes M, \quad a\in A\mapsto (\alpha_a, \pi(a)),
    \]
    where $\pi:A\to M$ is the vector bundle projection and $\alpha_a\in \Gamma(A,D)$ is the flat section such that $\alpha_a(\pi(a))=a$.
    
    Finally, notice that the identification between $A_x$ and $\Gamma(A,D)$ induces a Lie algebra isomorphism between $\mathfrak{a}_x$ and $\Gamma(A,D)$.
\end{proof}

\subsection{The Maurer-Cartan expression}\label{subs:mc_expression}
Let $\omega$ be a Cartan connection on a Lie groupoid $\Sigma\tto M$, and let $D$ be the corresponding infinitesimally Cartan connection on the corresponding Lie algebroid. 
\begin{definition}\label{def:mc_expr_gpd}
	The \textbf{Maurer-Cartan expression} of $\omega$ is the $2$-form
	\[
	\mc_\omega := d_{t^*D}\omega + \frac{1}{2}\{\omega,\omega\}\in \Omega^2(\Sigma,t^*A),
	\]
	where
	\begin{itemize}
		\item $d_{t^*D}:\Omega^\bullet(\Sigma,t^*A) \to \Omega^{\bullet+1}(\Sigma,t^*A)$ is the operator induced by the pullback connection $t^*D$ of $D$ under the target map, hence
  \[
    d_{t^*D}\omega (X,Y) = D_{dt(X)}\omega(\hat{Y}) - D_{dt(Y)}\omega(\hat{X}) - \omega([\hat{X},\hat{Y}])
  \]
  for all $X,Y\in T\Sigma$, where $\hat{X},\hat{Y}$ are any $t$-projectable extensions;
		\item $\{\ ,\ \}$ is the opposite of the torsion of $D$, see Proposition~\ref{prop:flat_Cartan_action}:
			\[
				\{a,b\} = -D_{\rho(\alpha)}(\beta)+D_{\rho(\beta)}(\alpha)+[\alpha,\beta]
			\]
			for all $a,b\in A$, where $\alpha,\beta\in \Gamma(A)$ are extensions.
	\end{itemize}
\end{definition}

The following proposition shows that the Maurer-Cartan expression encodes the curvature.
\begin{proposition}[{see also~\cite[Proposition 5.18, its proof, and Remark 5.19]{AccCr2023}}]\label{prop:mc_expr_vanishing}
	Let $(\Sigma,\omega)$ be a groupoid with Cartan connection. Then
	\[
	\mc_\omega = 0
	\]
	if and only if $\omega$ is flat. More precisely, under the splitting
	\[
	\Omega^2(\Sigma,t^*A) = \Gamma(s^*\Lambda^2TM,t^*A)\oplus \Gamma(s^*TM\otimes t^*A,t^*A)\oplus \Gamma(\Lambda^2 t^*A,t^*A)
	\]
	the first component of $\mc_\omega$ is $\Curv^\omega$, and the other components vanish.
\end{proposition}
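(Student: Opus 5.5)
We evaluate $\mc_\omega$ on each component of the splitting $T_g\Sigma=\ker_g(\omega)\oplus\ker_g(ds)$ and then read off the second claim. For $X,Y\in T_g\Sigma$ we write $X=X^\omega+X^s$, and we use extensions adapted to this splitting. On the horizontal part we use $\hor^\omega(v)$ for vector fields $v$ on $M$. On the vertical part we use right invariant vector fields $\alpha^R$, $\alpha\in\Gamma(A)$; these span $\ker(ds)$ pointwise and satisfy $\omega(\alpha^R)=t^*\alpha$ because $\omega|_{\ker(ds)}=\omega_\mathrm{MC}$.

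\emph{Horizontal--horizontal component.} Here $\omega(\hor^\omega(v))=0$, so the $D$-terms and the bracket term $\frac12\{\omega,\omega\}$ both vanish. We are left with
\[
\mc_\omega(\hor^\omega_g v,\hor^\omega_g w)=-\omega([\hor^\omega(v),\hor^\omega(w)])_g=\Curv^\omega_g(v,w).
\]
This uses that $d_{t^*D}\omega$ is tensorial. Consequently any extensions may be used, not only $t$-projectable ones: the defect lies in $\ker(dt)$, and we check that the formula is unchanged.

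\emph{Vertical--vertical component.} We compute
\[
\mc_\omega(\alpha^R,\beta^R)=D_{\rho(\alpha)}\beta-D_{\rho(\beta)}\alpha-\omega([\alpha^R,\beta^R])+\{\alpha,\beta\},
\]
all pulled back by $t$. Here $\alpha^R$ is $t$-projectable onto $\rho(\alpha)$, and $[\alpha^R,\beta^R]=[\alpha,\beta]^R$. By the definition of $\{\ ,\ \}$ this expression is $0$.

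\emph{Mixed component.} We compute
\[
\mc_\omega(\hor^\omega(v),\alpha^R)=D_{dt(\hor^\omega v)}\alpha-\omega([\hor^\omega(v),\alpha^R]).
\]
The bracket term is zero since one argument is horizontal. Let $g\cdot v=dt(\hor^\omega_g v)$. We must show
\[
\omega([\hor^\omega(v),\alpha^R])_g=(D_{g\cdot v}\alpha)(t(g)).
\]
At units this is exactly the formula of Proposition~\ref{prop:diff_conn} defining $D$. To transport it to an arbitrary $g$, we use multiplicativity. The distribution $\ker(\omega)$ is invariant under right translation by local bisections tangent to $\ker(\omega)$ (Lemma~\ref{lem:multipl_equiv_notion}: $\ker(\omega)\cdot\ker(\omega)=\ker(\omega)$). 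Also, $\alpha^R$ is right invariant. Take a local bisection $\bis$ through $g$ that is tangent to $\ker(\omega)$ at $g$, and right translate the vector fields near $t(g)$ by $\bis$. The horizontal lift of $g\cdot v$ at the unit $1_{t(g)}$ is then sent, to first order at $g$, to $\hor^\omega_g(v)$. Comparing the brackets needs control of the second-order error. An alternative is the multiplicativity equation differentiated along the flow of $\alpha^R$: the flow of $\alpha^R$ is left multiplication by the exponentials $\exp(\epsilon\alpha)$. The equation $m^*\omega=\mathrm{pr}_1^*\omega+g\cdot\mathrm{pr}_2^*\omega$ then gives
\[
(\phi^{\alpha}_\epsilon)^*\omega=\omega+(\text{terms linear in }\epsilon\text{ involving }d/d\epsilon\text{ of the adjoint action}),
\]
and the derivative at $\epsilon=0$ of the adjoint action by $\exp(\epsilon\alpha)$ on $A$ is $\nabla^A_\alpha$. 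Taking $\mathcal{L}_{\alpha^R}\omega$ evaluated on $\hor^\omega(v)$ yields $-\omega([\alpha^R,\hor^\omega v])$. Matching this with $D_{g\cdot v}\alpha$ plus adjoint terms, which are killed by $\omega(\hor^\omega)=0$, is the step we expect to be the main obstacle. We would try this Lie derivative approach first, since it uses only the multiplicativity equation and the adjoint representation from Lemma~\ref{lem:multiplicativity_properties}.

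Once the components are established, $\mc_\omega=0$ if and only if $\Curv^\omega=0$. For a Cartan connection this is, by Definition~\ref{def:flat_gpds}, exactly flatness. Equivalently, by Frobenius, $\ker(\omega)$ is involutive, which is the meaning of flatness in Definition~\ref{def:connection_gpd}.
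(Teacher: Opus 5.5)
Your horizontal--horizontal and vertical--vertical computations are correct and are exactly what the paper does. The gap is the mixed component. The identity $\omega([\hor^\omega(v),\alpha^R])_g=(D_{g\cdot v}\alpha)(t(g))$ at an arbitrary arrow $g$ is the only place where multiplicativity enters, so it is the real content of the proposition, and you do not prove it. You drop the bisection argument because of a feared second-order error, and the flow argument stops at what you call the main obstacle. Your sketch of the flow argument also looks for $D_{g\cdot v}\alpha$ in the wrong place. It does not come from differentiating the adjoint action, since that term is killed by $\omega(\hor^\omega)=0$. It comes from the $\mathrm{pr}_1^*\omega$ term of the multiplicativity equation.

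Either of your routes closes in a few lines.

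\emph{Flow route.} The flow of $\alpha^R$ is $\phi_\epsilon(h)=b_\epsilon(t(h))\,h$ with $b_\epsilon:=\phi_\epsilon\circ u$, so $d\phi_\epsilon(\xi)=dm(db_\epsilon(dt\,\xi),\xi)$. Multiplicativity then gives $\omega\bigl(d\phi_\epsilon(\hor^\omega_g v)\bigr)=\omega\bigl(db_\epsilon(g\cdot v)\bigr)$. Both sides vanish at $\epsilon=0$. The $\epsilon$-derivative of the left side is $\omega([\hor^\omega(v),\alpha^R])_g$. The right side is the same expression taken at the unit $1_{t(g)}$, because $\hor^\omega=du$ at units. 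So its derivative is $D_{g\cdot v}\alpha$ by Proposition~\ref{prop:diff_conn}.

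\emph{Bisection route.} No second-order information is needed, for three reasons.
First, $[\alpha^R,Y]_g$ depends only on $Y$ along the integral curve of $\alpha^R$ through $g$, and that curve stays in a single source fibre.
Second, on $s^{-1}(t(g))$ the differential of right translation by $\bis$ involves $d\bis$ only at $s(g)$, where $\bis$ is tangent to $\ker(\omega)$. Multiplicativity therefore makes it send $\hor^\omega(g\cdot v)$ to $\hor^\omega(v)$ along that curve.
Third, it preserves $\alpha^R$, and it preserves $\omega$ on vertical vectors.
Your claim that $\ker(\omega)$ is invariant under right translation by such bisections holds only along this fibre; bisections everywhere tangent to $\ker(\omega)$ exist only in the flat case.

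The paper takes a third route. It identifies the mixed term with the difference $D^{\mathrm{pr}_1^*\omega}-D^{\delta\omega}$ of the connections induced on $t^*A$ by the two connections $\delta\omega$ and $\mathrm{pr}_1^*\omega$ on $\Sigma\ltimes\Sigma$ (Lemma~\ref{lem:two_connections_on_action_gpd}). Multiplicativity says $\delta\omega=\mathrm{pr}_1^*\omega$, so this difference vanishes.
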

\begin{proof}
    The fact that $\mc_\omega$ vanishes on $s$-vertical vectors follows straight away from the fact that the anchor $\rho$ of $A$ is given by the restriction of $dt$ to $\ker_M(ds)$, and the equality
    \[
    \omega|_{\ker(ds)} = \omega_{MC}.
    \]
    
    When $X,Y\in T_{s(g)}M$, $g\in \Sigma$, we get
    \[
    (\mc_\omega)_g(X,Y) = -\omega([\hat{\mathrm{hor}^\omega_g(X)},\ \hat{\mathrm{hor}^\omega_g(Y)}])
    \]
    where in the right hand side we have extensions of the horizontal lifts 
    \[
    \mathrm{hor}^\omega_g(X),\ \mathrm{hor}^\omega_g(Y)\in \ker_g(\omega);
    \]
    this is precisely the curvature of $\omega$ at $X,Y$.

    We are left with the case $X\in T_{s(g)}M$, $a\in A_{t(g)}$, which gives, after taking an extension ${\mathrm{hor}^\omega(X)}\in \Gamma(\ker(\omega))$ of the horizontal lift of $X$ at $g$ and an extension $\alpha\in \Gamma(A)$ of $a$,
    \[
    (\mc_\omega)_g(X,a) = D^t_{\mathrm{hor}_g^\omega(X)}(\omega(\alpha^R)) -\omega([{\mathrm{hor}^\omega_g(X)},\ \alpha^R]).
    \]
    The vanishing of the above for all $X\in T_{s(g)}M$, $a\in A_{t(g)}$ follows from multiplicativity, as the next lemma implies. 
\end{proof}
\begin{lemma}[{see~\cite[Proposition 5.3.4]{Catt2020}, and the proof of~\cite[Proposition 5.18]{AccCr2023}}]\label{lem:two_connections_on_action_gpd}
    The following two connections on the action groupoid $\Sigma\ltimes \Sigma\tto \Sigma$ 
    \[
    \delta\omega\in \Omega^1(\Sigma\times_M\Sigma),\quad \delta\omega_{(g,h)} := (m^*\omega)_{(g,h)} - g\cdot (pr_2^*\omega)_{(g,h)}
    \]
    \[
    pr_1^*\omega\in \Omega^1(\Sigma\times_M\Sigma)
    \]
    are multiplicative and induce the following two connections on the Lie algebroid $t^*A = \mathrm{Lie}(\Sigma\ltimes \Sigma)$:
    \[
    D^{\delta\omega}: (Y,\beta) \mapsto (t^*\nabla^{A})_\beta (\omega(Y))-\omega[\beta,Y]
    \]
    \[
    D^{pr_1^*\omega}: (Y,\beta) \mapsto  D^t_Y\beta
    \]
    for all $Y\in \mathcal{X}(\Sigma)$, $\beta\in \Gamma(t^*A)$ (in the last term we use the identification $t^*A\cong \ker(ds)$).
\end{lemma}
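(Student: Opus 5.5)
The plan is to treat the two forms separately but with the same three-step scheme. First, check that each is a connection on $\Sigma\ltimes\Sigma\tto\Sigma$. Second, check multiplicativity through one of the criteria of Lemma~\ref{lem:multipl_equiv_notion}. Third, differentiate using the formula of Proposition~\ref{prop:diff_conn}, namely $D_Y\beta=\Omega([\hor^\Omega(Y),\beta^R]|_{\text{units}})$.

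Conventions for the action groupoid: arrows are pairs $(g,h)$ with $s(g)=t(h)$, the source is $(g,h)\mapsto h$ and the target is $(g,h)\mapsto gh$. The product is $(g,kh)(k,h)=(gk,h)$, the units are $(1_{t(h)},h)$, and the inverse of $(g,h)$ is $(g^{-1},gh)$. Its source-vertical vectors at $(g,h)$ are the pairs $(v,0)$ with $ds(v)=0$. Right translation by the inverse sends $(v,0)$ to $dR_{g^{-1}}v\in A_{t(g)}=(t^*A)_{gh}$. This identifies the algebroid with $t^*A$ and its Maurer--Cartan form with $(v,0)\mapsto\omega_{\mc}(v)$.

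For the connection property, evaluate both forms on $(v,0)$. We get $\mathrm{pr}_1^*\omega(v,0)=\omega(v)=dR_{g^{-1}}v$ and $\delta\omega(v,0)=\omega(dR_h v)=dR_{(gh)^{-1}}dR_h v=dR_{g^{-1}}v$. The second computation is the same one as in the proof of Lemma~\ref{lem:multipl_equiv_notion}.

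For multiplicativity I use the subgroupoid criterion for the kernels. The kernel of $\mathrm{pr}_1^*\omega$ is $\ker(\omega)\times_{TM}T\Sigma$. In the tangent groupoid, the product of $(V,dR\,W)$ and $(V',W)$ is $(dm(V,V'),W)$, so closure of this kernel under products and inverses reduces to $\ker(\omega)\cdot\ker(\omega)=\ker(\omega)$ and $d\mathrm{i}(\ker\omega)=\ker\omega$. For $\delta\omega$ I expect the cleanest route to be the observation that, since $\omega$ is Cartan, the multiplicativity equation of Lemma~\ref{lem:multipl_equiv_notion} reads $m^*\omega-g\cdot \mathrm{pr}_2^*\omega=\mathrm{pr}_1^*\omega$. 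Hence $\delta\omega=\mathrm{pr}_1^*\omega$ on the nose, and multiplicativity of $\delta\omega$ follows from the previous case. If one prefers not to use this identity, one can check the multiplicativity equation for $\delta\omega$ directly: it reduces to associativity of $m$ together with the representation identity $g\cdot(k\cdot\alpha)=(gk)\cdot\alpha$ from Lemma~\ref{lem:multiplicativity_properties}.

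For the infinitesimal connections, first take $\mathrm{pr}_1^*\omega$. The horizontal lift of $Y\in T_h\Sigma$ at $(g,h)$ is $(\hor^\omega_g(dt\,Y),Y)$. Given $\beta\in\Gamma(t^*A)$, its right-invariant extension is $\beta^R(g,h)=(dR_g\beta_{gh},0)$, using the identification $t^*A\cong\ker(ds)$. The bracket of the extended lift with $\beta^R$, restricted to the units $(1_{t(h)},h)$, has first component $[\hor^\omega(dt\,Y),\beta^R]$. Applying $\omega$ gives $D_{dt Y}\beta$ by Proposition~\ref{prop:diff_conn}, extended in the pulled-back sense, that is, $D^t_Y\beta$. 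Care is needed here because $\beta$ depends on the base point $h$ too. This is handled by writing $\beta$ locally as $\sum f_i\,t^*\alpha_i$ and using the Leibniz rule, since $Y(f_i)$ appears on both sides.

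For $\delta\omega$, the same computation is done with the horizontal lift, which at units is $(-\text{correction},Y)$. The correction is the first-order term coming from the $g\cdot$ action. Differentiating $g\mapsto g\cdot\omega(Y)$ along $\beta^R$ produces $(t^*\nabla^A)_\beta(\omega(Y))$, and the remaining part produces $-\omega[\beta,Y]$. Here $\beta$ is viewed as the vector field $dR\,\beta$ on $\Sigma$ via $t^*A\cong\ker(ds)$. This derivative of the adjoint action at the identity is the step I expect to be the main obstacle. It requires Proposition~\ref{prop:adj_rep}, i.e.\ that the derivative of $g\mapsto g\cdot\alpha$ is $\nabla^A$, and careful bookkeeping of extensions. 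As a consistency check, $\delta\omega=\mathrm{pr}_1^*\omega$ forces the two formulas to agree, which is exactly what is used in Proposition~\ref{prop:mc_expr_vanishing}.
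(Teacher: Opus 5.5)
\paragraph{Where the proposal breaks.} Several of your steps are fine:
\begin{itemize}
\item the check that both forms restrict to the Maurer--Cartan form of $\Sigma\ltimes\Sigma$;
\item the multiplicativity argument via $\delta\omega=\pr_1^*\omega$, which is legitimate since $\omega$ is Cartan;
\item the computation $D^{\pr_1^*\omega}=t^*D$.
\end{itemize}
The gap is the formula for $D^{\delta\omega}$. It is the only non-formal content of the lemma, and it is exactly what the proof of Proposition~\ref{prop:mc_expr_vanishing} uses. Your route cannot produce it.

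Take any $\omega$ normalized on $\ker(ds)$ and along the units. The $\delta\omega$-horizontal lift of $Y$ at $(g,h)$ is $\bigl(\hor^\omega_g(dt\,Y)+dR_g\,c(g,h),\,Y\bigr)$, with $c(g,h)=g\cdot\omega(Y)-\omega\bigl(dm(\hor^\omega_g(dt\,Y),Y)\bigr)$. Up to sign, $c$ is the basic-curvature term $\Cbas^\omega(g,h)(ds\,Y)$ of~\eqref{eq:basic_curv_differential_omega}. It always vanishes at the units, so there is no ``correction at units''. It vanishes identically when $\omega$ is Cartan, as it must once you know $\delta\omega=\pr_1^*\omega$.

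So ``the same computation with the horizontal lift'' just returns $(t^*D)_Y\beta$. To reach the stated expression you would still need
\[
(t^*\nabla^A)_\beta(\omega(Y))-\omega([a(\beta),Y])=(t^*D)_Y\beta,\qquad a(\beta)_h=dR_h\beta(h).
\]
This is precisely the vanishing of the mixed component $\mc_\omega(Y,a(\beta))$, which the paper deduces \emph{from} this lemma. Your closing ``consistency check'' is therefore the whole content, and leaning on it is circular.

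\paragraph{What a non-circular proof needs.} You must compute $D^{\delta\omega}$ from the defining expression $m^*\omega-g\cdot\pr_2^*\omega$, without ever using $\delta\omega=\pr_1^*\omega$. One way:
\begin{itemize}
\item Let $\Omega$ be a connection on $\Sigma\ltimes\Sigma$ whose kernel is tangent to the units along them. Put $\nabla:=m^*t^*D$ and let $\widetilde Y$ be the horizontal lift of $Y$. Expanding $\Omega([\widetilde Y,\beta^R])$ through $d_\nabla\Omega$ gives, along the units, $D^\Omega_Y\beta=(t^*D)_Y\beta-(d_\nabla\Omega)\bigl((du(dt\,Y),Y),(\beta,0)\bigr)$. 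So $D^\Omega$ depends affinely on the $1$-jet of $\Omega$ there, and the two summands can be treated separately.
\item The $m^*\omega$ summand: $m$ is the target map of $\Sigma\ltimes\Sigma$, so naturality of $d_\nabla$ turns this summand into $(d_{t^*D}\omega)(Y,a(\beta))$. Moreover $(t^*D)_Y\beta-(d_{t^*D}\omega)(Y,a(\beta))=(t^*D)_{a(\beta)}(\omega(Y))-\omega([a(\beta),Y])$.
\item The summand $P:=g\cdot\pr_2^*\omega$: both $\beta^R$ and $[\widetilde Y,\beta^R]$ are killed by $d\,\pr_2$. Hence $(d_\nabla P)(\widetilde Y,\beta^R)=-\nabla_{\beta^R}\bigl(g\cdot\omega(Y)\bigr)$ at $g=1$. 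Since the quasi-action differentiates to $\nabla^A$, this equals the torsion bracket $\{\beta,\omega(Y)\}$. That differentiation fact is the remark after Proposition~\ref{prop:diff_conn}; Proposition~\ref{prop:adj_rep}, which you cite, only defines $\nabla$.
\item Since $(t^*D)_{a(\beta)}+\{\beta,\cdot\,\}=(t^*\nabla^A)_\beta$, the two contributions add up to the stated formula. Its difference from $t^*D$ is $\mc_\omega(Y,a(\beta))$.
\end{itemize}
This also shows your term-by-term attribution is off. With this bookkeeping the quasi-action alone contributes only a torsion term. The expression $(t^*\nabla^A)_\beta(\omega(Y))$ appears only after combining it with part of the $m^*\omega$ contribution.
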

\begin{proof}
    
    Propositions 3.4.25 and 3.4.26 in \cite{Catt2020}.
\end{proof}
    Since, on the one hand, multiplicativity implies
    \[
    D^{\delta\omega} = D^{pr_1^*\omega}
    \]
    and, on the other hand, ${\mathrm{hor}^\omega_g(X)}\in \Gamma(\ker(\omega))$ and $\alpha^R\in \Gamma(t^*A)$, by the Lemma above the difference $D^{pr_1^*\omega} - D^{\delta\omega}$ becomes
    \[
    D^t_{\mathrm{hor}_g^\omega(X)}(\omega(\alpha^R)) -\omega([{\mathrm{hor}^\omega_g(X)},\ \alpha^R]).
    \]
    Hence, if $\omega$ is multiplicative $(\mc_\omega)_g(X,a)=0$ for all $X\in \mathcal{X}(M)$, $\alpha\in \Gamma(A)$, concluding the proof of Proposition \ref{prop:mc_expr_vanishing}.

We defer examples to Section \ref{sec:examples}.

\section{Principal bundles with a structure groupoid with connection}\label{sec:principal_bundles}
\subsection{Groupoid actions and principal bundles}\label{subs:principal_bundles}
Recall that, given a Lie groupoid $\Sigma\rightrightarrows M$, a (left) \textbf{$\Sigma$}-space $P$ consists of a manifold $P$ together with a \textbf{momentum map} $\mu:P\to M$ and a smooth map
\[
m_P:\Sigma \tensor[_s]{\times}{_\mu} P \to P
\]
satisfying the associativity property
\[
m_P(g,m_P(h,p))=m_P(gh,p),\quad (g,h,p) \in \Sigma\tensor[_s]{\times}{_t}\Sigma \tensor[_s]{\times}{_\mu} P;
\]
when there is no danger of confusion, we will write $m_P(g,p)=g\cdot p$. We call the map $m_P$ a (left) \textbf{action} of $\Sigma$ on $P$ along $\mu$. 

Given $p\in P$, the orbit of $p$ is defined as the set of $p'$ such that $p'=g\cdot p$, for some $g\in s^{-1}(\mu(p))$. Orbits define a partition of $P$; the corresponding quotient is denoted by $P/\Sigma$ and called the \textbf{orbit space} of the action.

An action of a Lie groupoid $\Sigma$ along a map $\mu:P\to M$ can be neatly packed into the \textbf{action groupoid} $\Sigma\ltimes P$, i.e. the Lie groupoid over $P$ whose arrow space is $\Sigma\tensor[_s]{\times}{_\mu}P$ and whose structure maps are
\[
s(g,p) = p,\ t(g,p) = g\cdot p,\ (h,g\cdot p)\cdot(g,p) = (hg,p).
\]
Notice that the orbits of $\Sigma\ltimes P \tto P$ are precisely the orbits of the $\Sigma$-action.

Actions of Lie groupoids have an infinitesimal counterpart, just as actions of Lie groups. Namely, one considers the bundle map
\[
a_P:\mu^*A\to TP,\quad \alpha_{\mu(p)}\to dm_P(\alpha_{\mu(p)},0_p)
\]
where $A$ denotes the Lie algebroid of $\Sigma$. It follows from the associativity property of $m_P$ that $\rho = d\mu \circ a_P$, where $\rho$ is the anchor of $A$, and that the map of sections (still denoted by $a_P$)
\[
a_P:\Gamma(A) \to \mathfrak{X}(P),\quad \alpha \to \left[p\to a_P(\alpha_{\mu(p)})\right]
\]
is a map of Lie algebras. 

More generally, an \textbf{$A$-space} is a smooth manifold $P$ together with a momentum map $\mu:P\to M$ and a vector bundle map $a_P:\mu^*A\to TP$ over $P$ satisfying $d\mu\circ a_P=\mu^*\rho$ and inducing a Lie algebra homomorphism $\Gamma(A)\to \mathfrak{X}(P)$; we also say that $A$ \textbf{acts} on $P$ along $\mu$. The image of $a_P$ in $TP$ defines an integrable (singular) distribution; the orbits of the action are the leaves of such distribution. As we just recalled, an action of a Lie groupoid induces an action of the associated Lie algebroid along the same map. Moreover, an action of an $s$-connected groupoid is totally determined by
its associated infinitesimal action. Conversely, an infinitesimal action of a Lie
algebroid integrates to an action of its $s$-simply connected integration
whenever it is \textbf{complete}, i.e.\ when $a_P(\alpha)$ is a complete vector
field for every compactly supported $\alpha\in \Gamma(A)$. See~\cite[Theorem
5.3]{MoerMrc2002} together with the remark following its proof.

Similar to groupoid actions, an action of $A$ on $P$ along the moment map $\mu:P\to M$ can be described in terms of the \textbf{action algebroid} $A\ltimes P$, whose underlying vector bundle is $\mu^*A$, while the anchor is the map $a_P$ and the Lie bracket on $\Gamma(\mu^*A)$ is the unique one satisfying
\[
[\mu^*\alpha,\mu^*\beta] = \mu^*[\alpha,\beta].
\]
Of course, when the infinitesimal action is induced by a Lie groupoid action of $\Sigma$ on $P$, the action algebroid $A\ltimes P$ associated to it is precisely the Lie algebroid associated to $\Sigma\ltimes P$. 

A \textbf{principal $\Sigma$-bundle} $P$ over a manifold $X$ is a (left) $\Sigma$-space $\mu:P\to M$ together with a $\Sigma$-invariant surjective submersion $\pi:P\to X$ such that the action of $\Sigma$ is free and proper on $P$ and transitive along the fibers of $\pi$. Here, freeness and properness are defined as usual: an action $m_P$ of $\Sigma$ along $\mu:P\to M$ is free when 
\[
m_P(g,p)=p \implies g=1_{\mu}(p)
\]
and proper when 
\[
\Sigma \tensor[_s]{\times}{_\mu} P \to P\times P,\quad (g,p) \to (g\cdot p, p).
\]
is a proper map. Similarly, transitivity along fibers means that for all $x\in X$, given $p,p'\in \pi^{-1}(x)$ there is $g\in s^{-1}(\mu(p))$ such that $g\cdot p = p'$. Notice that the orbits of a principal $\Sigma$-bundle are precisely the fibers of $\pi$, hence we have $P/\Sigma \cong X$; under this identification $\pi$ is the quotient projection. A \textbf{morphism} of principal $\Sigma$-bundles $P_1$, $P_2$ over the same base $X$ is a $\Sigma$-equivariant smooth map $\varphi:P_1\to P_2$ which covers the identity of $X$ and intertwines the momentum maps, $\mu_2\circ\varphi=\mu_1$. As in the case of principal group bundles such a map is necessarily an isomorphism.

\begin{remark}
    A principal bundle is often defined by requiring the action map
    \[
    \Sigma \tensor[_s]{\times}{_\mu} P\ \longrightarrow\ P\times_XP,
    \qquad (g,p)\mapsto(g\cdot p,p),
    \]
    to be a diffeomorphism. Bijectivity is equivalent to freeness and transitivity along the fibers of the action. The smoothness of its inverse -- the division map -- encodes properness of the action.
\end{remark}

Notice that the infinitesimal action $a_P:\mu^*A\to TP$ corresponding to a principal $\Sigma$-bundle is injective, with image $T^\pi P:=\ker(d\pi)$, the bundle of $\pi$-vertical vectors.

Let us provide two basic examples.
\begin{example}\label{ex:trivial_bundle_over_f}
    Given a map $f:X\to M$, the \textbf{trivial principal $\Sigma$-bundle over $f:X\to M$} is the principal bundle with total space $P_f: \Sigma\tensor[_s]{\times}{_f} X$, momentum map $t\circ \mathrm{pr}_1$ and projection $\mathrm{pr}_2$, while the action is given by the left multiplication on $\Sigma$. 
\end{example}
\begin{example}\label{ex:trivial_Sigma_bundle}
    Taking $f=\mathrm{id}_M$ gives $P_f=\Sigma$, acting on itself by left multiplication, with momentum map $t$ and projection $s$. We call this one the \textbf{trivial principal $\Sigma$-bundle}.
\end{example}

Following the examples above, we say that a principal $\Sigma$-bundle over $X$ is \textbf{trivializable} when it is isomorphic to a trivial bundle $P_f$ over $X$ for some $f:X\to M$.

The following statements are the groupoid counterparts of standard facts about principal group bundles. 
\begin{lemma}\label{lem:trivializations_are_sections}
    There is a one to one correspondence between trivialization of a principal $\Sigma$-bundle $P\to X$ and sections $\sigma:X\to P$ of the projection map. More precisely, the following facts hold.
    \begin{itemize}
        \item Any section $\sigma:X\to P$ induces the function $f_\sigma:=\mu\circ \sigma$ and the trivialization
        \[
        \Phi_\sigma: P \to \Sigma\times_M X,\quad p\to ([p:\sigma(\pi(p))],\pi(p)),
        \]
        where $[p:\sigma(\pi(p))]$ denotes the unique $g\in \Sigma$ such that $g\cdot \sigma(\pi(p)) = p$.
        \item A trivialization $\Phi: P\to \Sigma\times_M X$, where the fibered product is with respect to the source map of $\Sigma$ and a smooth map (not necessarily surjective) $f:X\to M$, induces a global section
        \[
        \sigma_\Phi:X\to P,\quad x \to \Phi^{-1}(1_{f(x)},x).
        \]
        \item The two constructions above are inverse of each other.
    \end{itemize}
\end{lemma}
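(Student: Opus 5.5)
The plan is to prove the three bullet points in order, relying only on the defining properties of a principal $\Sigma$-bundle: the action is free and proper, it is transitive along the $\pi$-fibres, and the division map $P\times_X P\to \Sigma\tensor[_s]{\times}{_\mu}P$, $(p',p)\mapsto([p':p],p)$, is smooth (see the Remark above).

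\emph{First bullet.} Given a section $\sigma$, put $f_\sigma=\mu\circ\sigma$. For each $p$, the points $p$ and $\sigma(\pi(p))$ lie in the same $\pi$-fibre. Freeness and transitivity then give a unique $g=[p:\sigma(\pi(p))]$ with $s(g)=\mu(\sigma(\pi(p)))=f_\sigma(\pi(p))$. Hence $\Phi_\sigma$ lands in $\Sigma\tensor[_s]{\times}{_{f_\sigma}}X$. It is smooth because it is the composite of $p\mapsto (p,\sigma(\pi(p)))\in P\times_XP$ with the division map and the first projection.

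The inverse is $(g,x)\mapsto g\cdot\sigma(x)$, which is smooth as a composite of smooth maps. Next I check that $\Phi_\sigma$ is a morphism of principal bundles onto $P_{f_\sigma}$ (Example~\ref{ex:trivial_bundle_over_f}):
\begin{itemize}
\item $\mathrm{pr}_2\circ\Phi_\sigma=\pi$;
\item $t([p:\sigma(\pi(p))])=\mu(p)$, since $\mu(g\cdot q)=t(g)$;
\item equivariance, $[h p:\sigma(\pi(p))]=h[p:\sigma(\pi(p))]$, by uniqueness and associativity of the action.
\end{itemize}

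\emph{Second bullet.} Given a trivialization $\Phi:P\to\Sigma\tensor[_s]{\times}{_f}X$, the point $(1_{f(x)},x)$ lies in the target, since $s(1_{f(x)})=f(x)$. So $\sigma_\Phi$ is well defined and smooth. It is a section because $\Phi$ covers the identity of $X$, that is, $\pi=\mathrm{pr}_2\circ\Phi$.

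\emph{Third bullet.} Starting from $\sigma$, we compute $\Phi_\sigma(\sigma(x))=([\sigma(x):\sigma(x)],x)=(1_{\mu(\sigma(x))},x)$. Hence $\sigma_{\Phi_\sigma}=\sigma$, and the function $f$ recovered is $f_\sigma$. Conversely, start from $\Phi$ and write $\Phi(p)=(g,x)$ with $x=\pi(p)$. Equivariance of $\Phi$ gives $\Phi(g\cdot\sigma_\Phi(x))=g\cdot(1_{f(x)},x)=(g,x)$, so $p=g\cdot\sigma_\Phi(x)$, i.e. $g=[p:\sigma_\Phi(\pi(p))]$. Thus $\Phi=\Phi_{\sigma_\Phi}$. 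Moreover $f(x)=t(1_{f(x)})=\mu(\sigma_\Phi(x))$, because $\Phi$ intertwines the moment maps; so the two functions $f$ agree as well.

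The only non-formal point, and the one I expect to be the main obstacle, is the smoothness of $p\mapsto[p:\sigma(\pi(p))]$. I would settle it by invoking the smoothness of the division map, which is exactly where properness of the action enters.
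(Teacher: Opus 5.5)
Your proposal is correct. The paper states this lemma without proof, as the groupoid counterpart of the classical fact for principal group bundles, and your argument is the standard one it has in mind: smoothness of $\Phi_\sigma$ via the division map, the explicit inverse $(g,x)\mapsto g\cdot\sigma(x)$, and, for the converse, equivariance plus injectivity of $\Phi$, with the moment-map condition recovering $f=\mu\circ\sigma_\Phi$.

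Two minor points. First, in the third bullet you should establish $\mu(\sigma_\Phi(x))=f(x)$ before forming $g\cdot\sigma_\Phi(x)$, since that identity is what makes the product defined. Second, smoothness of the division map actually follows from freeness alone, because $\pi$ is already assumed to be a submersion whose fibres are the orbits. Freeness makes $a_P$ injective, so the action map is a bijective immersion between manifolds of the same dimension; properness is therefore not really what is used at that step.
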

\begin{lemma}\label{lem:change_of_trivialization}
    Let $P$ be a principal $\Sigma$-bundle over $X$ and $\Phi:P\to \Sigma\times X$ be a trivialization with respect to $f:X\to M$. There is a one to one correspondence between trivializations $\Psi$ of $P$ and maps $F:X\to \Sigma$ such that $s\circ F = f$. More precisely, the following facts hold.
    \begin{itemize}
        \item Any map $F:X\to \Sigma$ induces the following trivialization over $g:= t\circ F$:
        \[
        \Psi_F:P\to \Sigma\times_M X,\quad p\to \bigl(\Phi_1(p)\,F(\pi(p))^{-1},\ \pi(p)\bigr),
        \]
        \item Any trivialization $\Psi \to \Sigma\times_M X$ over $g:X\to M$ induces the following map:
        \[
        F_\Psi:X\to \Sigma,\quad x\to [\Psi^{-1}(1_{g(x)},x):\Phi^{-1}(1_{f(x)},x)],
        \]
        which satisfies $s\circ F_\psi = f$, $t\circ F_\psi = g$.
        \item The two constructions above are inverse of each other.
    \end{itemize}
\end{lemma}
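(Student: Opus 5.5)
The plan is to use the trivializations $\Phi$ (over $f$) and $\Psi$ (over $g$) to pass to division maps, define the two constructions explicitly, and check that they are mutually inverse. I will rely throughout on the fact that $P$ is principal. For $p,q$ in the same $\pi$-fibre this gives a unique arrow $[p:q]\in\Sigma$ with $[p:q]\cdot q=p$, and this arrow depends smoothly on $(p,q)\in P\times_XP$. It also satisfies the cocycle rules $[p:q][q:r]=[p:r]$ and $[g\cdot p:q]=g[p:q]$.

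\emph{Step 1: from $F$ to $\Psi_F$.} Let $F:X\to\Sigma$ satisfy $s\circ F=f$, and write $\Phi=(\Phi_1,\pi)$, so that $s(\Phi_1(p))=f(\pi(p))$. Then the product $\Phi_1(p)F(\pi(p))^{-1}$ is defined, its source is $t(F(\pi(p)))=g(\pi(p))$, and so $\Psi_F$ lands in $\Sigma\tensor[_s]{\times}{_g}X$. Equivariance follows from $\Phi_1(h\cdot p)=h\Phi_1(p)$ together with associativity. For invertibility, the map $(k,x)\mapsto \Phi^{-1}(kF(x),x)$ is a smooth inverse. Hence $\Psi_F$ is a trivialization over $g$. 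By Lemma~\ref{lem:trivializations_are_sections}, its section is $\sigma_{\Psi_F}(x)=\Phi^{-1}(F(x),x)=F(x)\cdot\sigma_\Phi(x)$.

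\emph{Step 2: from $\Psi$ to $F_\Psi$.} Set $\sigma_\Psi(x)=\Psi^{-1}(1_{g(x)},x)$ and $\sigma_\Phi(x)=\Phi^{-1}(1_{f(x)},x)$. These are two sections of $\pi$, so $F_\Psi(x)=[\sigma_\Psi(x):\sigma_\Phi(x)]$ is well defined. It is smooth because the division map is smooth. Since $\mu\circ\sigma_\Phi=f$ and $\mu\circ\sigma_\Psi=g$, we get $s\circ F_\Psi=f$ and $t\circ F_\Psi=g$.

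\emph{Step 3: the constructions are inverse.} I expect this to be the only point needing care. The key observation is that a trivialization is determined by its section, by Lemma~\ref{lem:trivializations_are_sections}. By Step 1, $\sigma_{\Psi_F}=F\cdot\sigma_\Phi$, hence $F_{\Psi_F}(x)=[F(x)\cdot\sigma_\Phi(x):\sigma_\Phi(x)]=F(x)$ by freeness. Conversely, $\sigma_\Psi=F_\Psi\cdot\sigma_\Phi=\sigma_{\Psi_{F_\Psi}}$, so $\Psi=\Psi_{F_\Psi}$. To make this explicit, I will check that both maps have first component $p\mapsto[p:\sigma_\Psi(\pi(p))]$. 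This uses $\Phi_1(p)=[p:\sigma_\Phi(\pi(p))]$ and the cocycle identity $[p:\sigma_\Phi][\sigma_\Phi:\sigma_\Psi]=[p:\sigma_\Psi]$, together with $[\sigma_\Phi:\sigma_\Psi]=F_\Psi^{-1}$.

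The main obstacle is bookkeeping of sources and targets in the composition. Note also that the paper's formula uses $F^{-1}$ on the right, which forces the convention $F_\Psi=[\sigma_\Psi:\sigma_\Phi]$ rather than its inverse. I will verify this consistency first.
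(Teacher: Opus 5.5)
Your argument is correct and complete. The paper states this lemma without proof, as the groupoid analogue of a standard fact, and your reduction to sections via Lemma~\ref{lem:trivializations_are_sections} and the division map $[\,\cdot:\cdot\,]$ is exactly the verification one would supply; the key identities are $\Phi_1(p)=[p:\sigma_\Phi(\pi(p))]$, $\sigma_{\Psi_F}=F\cdot\sigma_\Phi$ and $[p:q][q:r]=[p:r]$, and all your source/target bookkeeping checks out. One small simplification: smoothness of $F_\Psi$ follows directly from $F_\Psi=\Phi_1\circ\sigma_\Psi$, so you do not need smoothness of the division map on all of $P\times_X P$.
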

\begin{lemma}\label{lem:locally_trivial}
    Any principal $\Sigma$-bundle is locally trivializable. That is, for all $x\in X$ there exists an open neighbourhood $U_x$, a map
    \[
    f:U_x \to M,
    \]
    and a trivialization
    \[
    \Phi: P|_{U_x} \to \Sigma \times_M U_x.
    \]
\end{lemma}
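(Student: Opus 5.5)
By Lemma~\ref{lem:trivializations_are_sections}, it suffices to produce local sections of $\pi$. Fix $x\in X$. Since $\pi:P\to X$ is a surjective submersion, we pick $p\in\pi^{-1}(x)$ and choose a submanifold $S\subset P$ through $p$ that is transverse to the fibres with $T_pS\oplus T^\pi_pP=T_pP$. For instance, take $S$ to be the image under a chart of a linear complement to $T^\pi_pP$. Then $d\pi|_{T_pS}$ is an isomorphism onto $T_xX$. By the inverse function theorem, after shrinking $S$, the restriction $\pi|_S:S\to U_x$ is a diffeomorphism onto an open neighbourhood $U_x$ of $x$. We set $\sigma:=(\pi|_S)^{-1}:U_x\to P$, which is a smooth local section.

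Next we restrict the bundle. The open set $P|_{U_x}=\pi^{-1}(U_x)$ is $\Sigma$-invariant because $\pi$ is $\Sigma$-invariant. The action on it is still free and proper, and it is transitive along the fibres of $\pi|_{P|_{U_x}}$. Hence $P|_{U_x}\to U_x$ is a principal $\Sigma$-bundle with the global section $\sigma$. Applying the first bullet of Lemma~\ref{lem:trivializations_are_sections} with $f:=\mu\circ\sigma:U_x\to M$ gives the map
\[
\Phi_\sigma: P|_{U_x}\to \Sigma\tensor[_s]{\times}{_f}U_x,\qquad p'\mapsto\bigl([p':\sigma(\pi(p'))],\pi(p')\bigr).
\]
Its candidate inverse is $(g,y)\mapsto g\cdot\sigma(y)$, which is clearly smooth.

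The only point needing care, and the main obstacle, is the smoothness of the division map $p'\mapsto[p':\sigma(\pi(p'))]$; this is presumably what Lemma~\ref{lem:trivializations_are_sections} silently relies on. I would handle it as in the Remark following the definition of principal bundles. The map $\Sigma\tensor[_s]{\times}{_\mu}P\to P\times_XP$, $(g,q)\mapsto(g\cdot q,q)$, is a smooth bijection by freeness and fibrewise transitivity. It is an immersion, since its differential is injective: the kernel would consist of $s$-vertical vectors $\xi$ with $da_P$-image zero, and the infinitesimal action $a_P$ is injective. Both spaces have the same dimension, $\dim P+\operatorname{rk}A=2\dim P-\dim X$, so the map is a local diffeomorphism. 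Being a bijective local diffeomorphism, it is a diffeomorphism, and properness is not even needed here.

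Composing its inverse with the smooth map $p'\mapsto(p',\sigma(\pi(p')))\in P\times_XP$ and projecting onto $\Sigma$ yields the division map, which is therefore smooth. Hence $\Phi_\sigma$ is a diffeomorphism intertwining the actions, the momentum maps ($t\circ\mathrm{pr}_1$ versus $\mu$) and the projections. This is the required local trivialization.
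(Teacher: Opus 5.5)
Your proof is correct and follows the route the paper has in mind. The paper states this lemma without proof as a standard fact; the implicit argument is to take a local section of the submersion $\pi$ and apply Lemma~\ref{lem:trivializations_are_sections}, which is what you do. Your extra check that the division map is smooth fills in a point the paper leaves implicit: the map is a bijective immersion between manifolds of equal dimension, and the dimensions agree, $\operatorname{rk}A=\dim P-\dim X$, because $a_P$ is an isomorphism onto $T^\pi P$. Your remark that properness is not needed for this step is also right.
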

Still proceeding by analogy with principal group bundles, one can characterize principal bundles in terms of groupoid cocycles. In fact, by Lemma~\ref{lem:locally_trivial} and Lemma~\ref{lem:change_of_trivialization}, any principal $\Sigma$-bundle $P$ comes with 
\begin{itemize}
    \item an open cover $\{U_i\}_{i\in I}$ of $X$,
    \item maps $f_i:U_i\to M$,
    \item maps $\varphi_{ij}:U_i\cap U_j\to \Sigma$,
\end{itemize}
such that
\begin{itemize}
    \item $s(\varphi_{ij}(x)) = f_i(x)$ and $t(\varphi_{ij}(x)) = f_j(x)$ for all $x\in U_i\cap U_j$;
    \item $ \varphi_{jk}(x)\cdot \varphi_{ij}(x) = \varphi_{ik}(x)$.
\end{itemize}
Any triple $(U_i,f_i,\varphi_{ij})_{i,j\in I}$ as above is called a \textbf{$\Sigma$-valued cocycle}. Two $\Sigma$-valued cocycles are \textbf{equivalent} if there is a $\Sigma$-valued cocycle containing both, and different $\Sigma$-valued cocycles arising from a principal bundle $P$ are equivalent. If a $\Sigma$-valued cocycle $(U_i,f_i,\varphi_{ij})_{i,j\in I}$ on $X$ is given, we can set
\[
P := \coprod\limits_{} \Sigma\times_{M} U_i / \sim,
\]
where $\sim$ is the equivalence relation 
\[
(g,x)\in \Sigma\times_{M} U_i\sim (h,y)\in \Sigma\times_{M} U_j\ \text{if and only if } x_i=y_j,\ g=h\cdot \varphi_{ij}(x);
\]
the resulting $P$ is a principal $\Sigma$-bundle over $X$, which is trivializable over each $f:U_i\to M$. It is also possible to check that equivalent cocycles result in isomorphic principal bundles. Putting it all together and filling in the details, one gets the following.
\begin{proposition}\label{prop:principal_bundles_are_cocycles}
    There is a one-to-one correspondence (in fact, an equivalence of categories) between principal $\Sigma$-bundles over a manifold $X$ up to isomorphisms and $\Sigma$-valued cocycles on $X$ up to equivalence.
\end{proposition}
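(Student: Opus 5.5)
The plan is to build two functors, one in each direction, and show that they are mutually inverse up to natural isomorphism. In one direction we send $P$ to a cocycle; in the other we glue a cocycle into a bundle.

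\textbf{From bundles to cocycles.} Given a principal $\Sigma$-bundle $\pi:P\to X$, Lemma~\ref{lem:locally_trivial} gives an open cover $\{U_i\}$ together with local sections $\sigma_i:U_i\to P$, which correspond to trivializations by Lemma~\ref{lem:trivializations_are_sections}. We set $f_i:=\mu\circ\sigma_i$. On overlaps the two sections $\sigma_i,\sigma_j$ lie in the same $\pi$-fibre, so freeness and transitivity give a unique arrow $\varphi_{ij}(x)$ with $\varphi_{ij}(x)\cdot\sigma_i(x)=\sigma_j(x)$. Then $s(\varphi_{ij})=f_i$, $t(\varphi_{ij})=f_j$, and uniqueness forces the cocycle identity $\varphi_{jk}\varphi_{ij}=\varphi_{ik}$. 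Smoothness of $\varphi_{ij}$ follows because $\varphi_{ij}(x)$ is the division map applied to $(\sigma_j(x),\sigma_i(x))$, and the division map is smooth by the remark following the definition of principal bundles; this is the point where properness enters. Choosing a different family of sections amounts, by Lemma~\ref{lem:change_of_trivialization}, to changing each $\sigma_i$ by a map $F_i:U_i\to\Sigma$ with $s\circ F_i=f_i$. The union of the two families is again a family of local sections of $P$, so the two resulting cocycles are contained in a common one and are therefore equivalent. An isomorphism $\varphi:P_1\to P_2$ carries sections to sections, with $\mu_2\circ\varphi=\mu_1$ and equivariance, so it carries cocycles to the same cocycles. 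Hence the assignment is well defined on isomorphism classes.

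\textbf{From cocycles to bundles.} For a cocycle we form $P=\coprod\Sigma\times_M U_i/\sim$ as in the text. The cocycle identity and $\varphi_{ii}=1$, which follows from $\varphi_{ii}\varphi_{ii}=\varphi_{ii}$, make $\sim$ an equivalence relation. The main technical point is that $P$ is a manifold, and in particular Hausdorff. Each chart $\Sigma\times_M U_i$ injects into $P$ as an open subset, and on overlaps the transition maps $(h,x)\mapsto(h\varphi_{ij}(x),x)$ are diffeomorphisms. For Hausdorffness one separates points lying over distinct points of $X$ by using $X$; points over the same $x$ lie in a single chart $\Sigma\times_M U_i$, which is Hausdorff provided $\Sigma$ is.

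The structure on $P$ is then checked chartwise. Left multiplication commutes with right multiplication by $\varphi_{ij}$, so it descends to an action. The moment map $t\circ\mathrm{pr}_1$ is compatible because $t(h\varphi_{ij})=t(h)$. The map $\pi=\mathrm{pr}_2$ is invariant, and the action is free, proper and fibrewise transitive, since each of these is a local property over $X$ that holds for $\Sigma\times_M U_i$ as in Example~\ref{ex:trivial_bundle_over_f}. If two cocycles are contained in a common cocycle, the inclusions of charts induce an equivariant map of the glued bundles covering $\mathrm{id}_X$, and hence an isomorphism.

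\textbf{Mutual inverses.} Starting from $P$ with sections $\sigma_i$, the map $[(h,x)]_i\mapsto h\cdot\sigma_i(x)$ respects $\sim$ by the definition of $\varphi_{ij}$. It is the inverse of the local trivializations, so it is an isomorphism onto $P$. Conversely, the glued bundle carries the canonical sections $x\mapsto[(1_{f_i(x)},x)]_i$, and these recover the original $\varphi_{ij}$. For the statement as an equivalence of categories, we take morphisms of cocycles to be families $F_i:U_i\to\Sigma$ intertwining the $\varphi_{ij}$; by Lemma~\ref{lem:change_of_trivialization} these correspond to bundle morphisms, and functoriality is routine.

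I expect the main obstacle to be the smoothness and Hausdorff issues: the smoothness of the $\varphi_{ij}$, which relies on the smoothness of the division map, and the manifold structure of the quotient. The rest is bookkeeping with the uniqueness of the arrow $[p:q]$.
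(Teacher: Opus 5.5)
Your proposal is correct and follows essentially the same route as the paper, which only sketches the argument in the paragraph preceding the statement: arrows comparing local sections (equivalently, trivializations, via Lemmas~\ref{lem:trivializations_are_sections} and~\ref{lem:change_of_trivialization}) give the cocycle, the quotient $\coprod \Sigma\times_M U_i/\sim$ reverses the construction, and the relation ``contained in a common cocycle'' absorbs all the choices. You additionally spell out details that the paper leaves to the reader: smoothness of the $\varphi_{ij}$ via the division map, the manifold structure and Hausdorffness of the glued space, the mutual-inverse check, and a notion of morphism of cocycles for the categorical statement.
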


\subsection{Principal $(\Sigma,\omega)$-connections}\label{subs:principal_connections}
\begin{definition}\label{def:connection_pb}
    Let $\Sigma$ be a Lie groupoid and let $\pi: P\to X$ be a principal $\Sigma$-bundle with momentum map $\mu$. A \textbf{connection on $P$} is an Ehresmann connection $\mathcal{C}\subset TP$ with respect to $\pi$, i.e. a regular distribution such that
    \[
    TP = \mathcal{C} \oplus \ker(d\pi).
    \]
\end{definition}

Of course, a connection on a principal bundle can be equivalently encoded as a splitting of the short exact sequence
\[
T^\pi P \to TP \to \pi^*TX;
\]
 this point of view makes it clear that connections do always exist. Such a splitting has two equivalent incarnations:
\begin{itemize}
    \item a \textbf{horizontal lift}
    \[
    v_x\in T_xX \mapsto \hor^{\mathcal{C}}_p(v_x) \in \mathcal{C}_p
    \]
    such that $d\pi \circ \hor^{\mathcal{C}} = \id$;
    \item a \textbf{vertical projection}
    \[
    v_p\in T_pP \to \pr^\pi(v_p) \in T^\pi P
    \]
    such that $\pr^\pi|_{T^\pi P} = \id$.
\end{itemize}

Since the infinitesimal action
\[
a_P: \mu^*A\to TP
\]
defines a $T^\pi P$-valued linear isomorphism, the vertical projection associated to a connection on $P$ can be alternatively described as a one form $\theta\in \Omega^1(P,\mu^*A)$ satisfying the normalisation
\[
\theta\circ a_P = \id .
\]
From now, when talking about connections we refer to such forms $\theta$.

We are interested in connections that are invariant under the groupoid action in some reasonable sense. Here, the theory starts differing sensibly from its group counterpart; for instance, there is no natural action of $\Sigma$ on $TP$ -- in fact, an arrow $g\in \Sigma$ acts via $dm_P$ as a linear isomorphism
\[
g:T \mu^{-1}(s(g)) \to T \mu^{-1}(t(g))
\]
and there is no natural way to extend this to an isomorphism $T_{s(g)}P \to T_{t(g)}P$. Moreover, there is no natural representation of $\Sigma$ on its Lie algebroid $A$ (which provides the coefficients of a connection $1$-form $\theta$). 

On the other hand, as we saw in Subsection~\ref{subs:curvature_gpd} (see Definition~\ref{def:adjoint_rep_gpd} and Lemma~\ref{lem:multiplicativity_properties}), a representation of $\Sigma$ on $A$ can be constructed if $\Sigma$ is equipped with a Cartan connection and, in that case, one can also construct an action of $\Sigma$ on the tangent space $TP$ to any $\Sigma$-space. In fact, if $\omega\in \Omega^1(\Sigma, t^*A)$ is a Cartan connection on $\Sigma$, then $\pr_1^*\omega$ is a Cartan connection on the action groupoid $\Sigma\ltimes P$ and, as such, it induces a representation of $\Sigma\ltimes P$ on $TP$ -- hence an action of $\Sigma$ on $TP$. Explicitly, for $g\in\Sigma$ and $v\in T_pP$ with $s(g)=\mu(p)$,
\[
g\cdot v\ =\ dm_P\bigl(\hor^\omega_g(d\mu(v)),\,v\bigr)\ \in\ T_{g\cdot p}P .
\]

\begin{definition}\label{def:princ_conn}
    Let $\theta\in \Omega^1(P,\mu^*A)$ be a connection on a principal $\Sigma$-bundle $P$, and let $\omega$ be a Cartan connection. $\theta$ is called a \textbf{principal $(\Sigma,\omega)$-connection}, and $(P,\theta)$ is called a \textbf{principal $(\Sigma,\omega)$-bundle} if
    \[
        \theta (g\cdot v) = g\cdot \theta(v)\qquad\text{for all } v\in TP .
    \]

    A morphism of \textbf{principal $(\Sigma,\omega)$-bundles} $(P_1,\theta_1)$ and $(P_2,\theta_2)$ over the same base $X$ is a morphism of the underlying principal $\Sigma$-bundles that pulls back $\theta_2$ to $\theta_1$.
\end{definition}

One can characterize principal connections as follows.
\begin{definition}\label{def:mult_conn_pb}
	Let $(\Sigma,\omega)$ be a groupoid with a Cartan connection, and 
	let $\pi: P\to X$ be a principal $\Sigma$-bundle with momentum map $\mu$. A connection $\theta\in \Omega^1(P,\mu^*A)$ is called a \textbf{multiplicative with respect to $(\Sigma,\omega)$}, if the following identity holds:
\begin{equation}\label{eq:mult_up_to_omega}
\delta\theta_{(g,p)} =  \mathrm{pr}_1^*\omega
\end{equation}
where $\delta\theta$ denotes the simplicial differential of $\theta\in \Gamma(\mathrm{Hom}(TP,\mu^*A))$ on $\Sigma\ltimes P$, i.e.
    \[
    \delta\theta_{(g,p)} = m_P^*\theta_{(g,p)} - g\cdot(\mathrm{pr}_2^*\theta)_{(g,p)}, \quad (g,p)\in \Sigma\ltimes P;
    \]
\end{definition}
 Multiplicativity of $\theta$ is equivalent to $\theta$ being a principal $(\Sigma,\omega)$-connection.

The formulas defining $g\cdot v$ and $g\cdot \alpha$ make sense for any
$\omega\in \Omega^1(\Sigma,t^*A)$ with $\omega|_{\ker(ds)} = \omega_\mathrm{MC}$, multiplicative
or not, and so does~\eqref{eq:mult_up_to_omega}; we state the equivalence in that generality.
\begin{lemma}\label{lem:principal_iff_multiplicative}
    Let $\omega\in \Omega^1(\Sigma,t^*A)$ satisfy $\omega|_{\ker(ds)} = \omega_\mathrm{MC}$, and
    let $P$ be a principal $\Sigma$-bundle. A connection $\theta\in \Omega^1(P,\mu^*A)$ satisfies
    \[
        \theta(g\cdot v) = g\cdot \theta(v)\qquad \text{for all } v\in TP
    \]
    if and only if it satisfies~\eqref{eq:mult_up_to_omega}.
\end{lemma}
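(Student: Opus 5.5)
The plan is to evaluate both sides of \eqref{eq:mult_up_to_omega} on a tangent vector to $\Sigma\ltimes P=\Sigma\tensor[_s]{\times}{_\mu}P$ at $(g,p)$, and to decompose that vector into two pieces. One piece sees only the $\Sigma$-action on $TP$. The other sees only the $s$-vertical direction, where $\omega$ is the Maurer-Cartan form. A tangent vector at $(g,p)$ is a pair $(V,v)$ with $V\in T_g\Sigma$, $v\in T_pP$ and $ds(V)=d\mu(v)$. Using the splitting of Lemma~\ref{lem:various_conn_pov}, write
\[
V=\hor^\omega_g(d\mu(v))+V^s,\qquad V^s\in\ker_g(ds).
\]
Both sides of \eqref{eq:mult_up_to_omega} are linear in $(V,v)$, so the identity holds for all $(V,v)$ if and only if it holds separately on vectors of type $(\hor^\omega_g(d\mu(v)),v)$ and on vectors of type $(V^s,0)$.

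\emph{Horizontal vectors.} On $(\hor^\omega_g(d\mu(v)),v)$ the right-hand side $\mathrm{pr}_1^*\omega$ vanishes, because the first component lies in $\ker(\omega)$. By the explicit formula for the action, $dm_P(\hor^\omega_g(d\mu(v)),v)=g\cdot v$, so $m_P^*\theta$ evaluates to $\theta(g\cdot v)$. The term $g\cdot(\mathrm{pr}_2^*\theta)$ evaluates to $g\cdot\theta(v)$. Hence \eqref{eq:mult_up_to_omega} on these vectors reads
\[
\theta(g\cdot v)-g\cdot\theta(v)=0\qquad\text{for all } v\in T_pP,
\]
which is precisely the equivariance condition.

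\emph{Vertical vectors.} On $(V^s,0)$ with $V^s\in\ker_g(ds)$, the term $\mathrm{pr}_2^*\theta$ vanishes, and $\mathrm{pr}_1^*\omega$ gives $\omega_{\mc}(V^s)=dR_{g^{-1}}V^s=:\alpha\in A_{t(g)}$. I must show that $\theta(dm_P(V^s,0_p))=\alpha$ holds automatically. Write $V^s=dR_g(\alpha)$, i.e.\ $V^s$ is the derivative of a curve $\epsilon\mapsto\gamma(\epsilon)g$ with $\gamma$ an $s$-vertical curve through $1_{t(g)}$ tangent to $\alpha$. Associativity of the action gives $m_P(\gamma(\epsilon)g,p)=\gamma(\epsilon)\cdot(g\cdot p)$, hence
\[
dm_P(V^s,0_p)=a_P(\alpha)_{g\cdot p}.
\]
The normalisation $\theta\circ a_P=\id$ then yields $\theta(a_P(\alpha))=\alpha$, so the identity holds on these vectors regardless of $\theta$.

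Combining the two cases gives the equivalence. The only delicate point is the vertical case: one must identify $dm_P(V^s,0)$ with the infinitesimal action through right translation, keeping track of base points so that $\alpha$ sits in $A_{t(g)}=A_{\mu(g\cdot p)}$. I would also check that the formula for $g\cdot v$ is well defined for a general $\omega$ with $\omega|_{\ker(ds)}=\omega_{\mc}$. It is, since $\hor^\omega_g(d\mu(v))$ and $v$ are composable, as $ds\circ\hor^\omega_g=\id$. Multiplicativity of $\omega$ is never used, consistent with the generality of the statement.
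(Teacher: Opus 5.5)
Your proof is correct and takes essentially the same route as the paper. Both arguments split $V=\hor^\omega_g(d\mu(v))+V^s$, identify $dm_P(V^s,0_p)=a_P(\alpha)_{g\cdot p}$ by associativity of the action along $\epsilon\mapsto\gamma(\epsilon)g$, and use $\theta\circ a_P=\id$, so that the $s$-vertical part of the identity holds automatically and the horizontal part is exactly equivariance; the only difference is presentational, since the paper first writes $dm_P(V,v)=g\cdot v+a_P(\alpha)$ and then argues each implication, whereas you separate the two types of vectors by linearity.
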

\begin{proof}
    Let $(\xi,v)\in T_{(g,p)}(\Sigma\ltimes P)$, so that $ds(\xi) = d\mu(v)$, and write
    $\xi = \hor^\omega_g(d\mu\, v) + \xi^v$ with $\xi^v\in \ker(ds)$. Then $\xi^v = dR_g\alpha$
    with $\alpha:=\omega(\xi^v) = \omega(\xi)\in A_{t(g)}$. Let $h(t)$ be a curve in the source
    fibre over $t(g)$ with $h(0) = 1_{t(g)}$ and $\dot h(0) = \alpha$. Since
    $m_P(h(t)g,p) = h(t)\cdot (g\cdot p)$, one has $dm_P(\xi^v,0_p) = a_P(\alpha)_{g\cdot p}$, and
    therefore
    \[
        dm_P(\xi,v)\ =\ g\cdot v + a_P(\alpha).
    \]
    If $\theta(g\cdot v) = g\cdot\theta(v)$ for all $v$, then
    $\theta(dm_P(\xi,v)) = g\cdot\theta(v) + \alpha = \omega(\xi) + g\cdot\theta(v)$, which
    is~\eqref{eq:mult_up_to_omega}. Conversely, \eqref{eq:mult_up_to_omega} applied with
    $\xi = \hor^\omega_g(d\mu\, v)$, where $\omega(\xi) = 0$, gives
    $\theta(g\cdot v) = g\cdot\theta(v)$.
\end{proof}

\begin{remark}\label{rmk:quasi_actions}
    The reader might wonder about  Definitions~\ref{def:princ_conn} and~\ref{def:mult_conn_pb} when $\omega\in \Omega^1(\Sigma,t^*A)$ is a possibly non multiplicative form satisfying
    \[
    \omega|_{\ker(ds)} = \omega_\mathrm{MC}\in \Omega(\ker(ds),t^*A)
    \]
    together with $\ker(\omega)_{1_x}=T_xM$ for every $x\in M$.
    In that setting, the actions of $\Sigma$ on $TP$ and $\mu^*A$ become quasi actions -- i.e. they are not necessarily associative. However, it is possible to see that the conditions in Definition \ref{def:princ_conn} have strong consequences on $\omega$.

    Let $\xi_g\in\ker(\omega)_g$ and $\xi_h\in\ker(\omega)_h$ be
    composable and let $u\in T_pP$ satisfy $ds(\xi_h)=d\mu(u)$, so that
    $(\xi_h,u)$ is tangent to $\Sigma\ltimes P$. Applying
    \eqref{eq:mult_up_to_omega} twice,
    \[
        \theta\bigl(dm_P(\xi_h,u)\bigr)=h\cdot\theta(u),
        \qquad
        \theta\bigl(dm_P(\xi_g,dm_P(\xi_h,u))\bigr)=g\cdot\bigl(h\cdot\theta(u)\bigr),
    \]
    the terms $\omega(\xi_g)$ and $\omega(\xi_h)$ vanishing, while applying it at
    $(gh,p)$ to the pair $(dm(\xi_g,\xi_h),u)$ gives
    \[
        \theta\bigl(dm_P(dm(\xi_g,\xi_h),u)\bigr)
        =(gh)\cdot\theta(u)+\omega\bigl(dm(\xi_g,\xi_h)\bigr).
    \]
    Associativity of the action makes the two left hand sides equal, so by
    Lemma~\ref{lem:multiplicativity_properties}
    \[
        \omega\bigl(dm(\xi_g,\xi_h)\bigr)
        = g\cdot\bigl(h\cdot\theta(u)\bigr)-(gh)\cdot\theta(u)
        = -\Cbas^\omega(g,h)\bigl(\rho(\theta(u))\bigr).
    \]
    On the other hand 
    $\omega(dm(\xi_g,\xi_h))=-\Cbas^\omega(g,h)(ds(\xi_h))$ follows from the definition of basic curvature. Subtracting, and
    using $ds(\xi_h)=d\mu(u)$ together with
    $\rho(\theta(u))=d\mu(a_P\theta(u))$,
    \[
        \Cbas^\omega(g,h)\bigl(d\mu(u-a_P\theta(u))\bigr)=0 .
    \]
    Since $u-a_P\theta(u)$ is the $\theta$-horizontal part of $u$, this says that
    $\Cbas^\omega(g,h)$ vanishes on $d\mu(\ker(\theta)_p)$, for every
    $(g,h)\in\Sigma^{(2)}$ and every $p$ with $\mu(p)=s(h)$. In particular, if the subspaces $d\mu(\ker(\theta)_p)$, $p\in\mu^{-1}(s(h))$,
    together span $T_{s(h)}M$, if follows that $\omega$ is multiplicative.
\end{remark}

Given a non necessarily principal connection $\theta$ on a principal $\Sigma$-bundle $P$, one has the usual notion of curvature -- the curvature of the distribution $\ker(\theta)\subset TP$:
\[
\Curv^\theta: \Lambda^2\ker(\theta) \to \mu^*A, \quad (v,w) \mapsto -\theta([V,W])
\]
where $V, W$ are extensions of $v,w$ to vector fields tangent to $\ker(\theta)$. The connection $\theta$ is called flat when $\Curv^\theta = 0$. We can regard $\Curv^\theta$ as a section
    \[
        \Curv^\theta\in\Gamma\bigl(P,\pi^*\Lambda^2T^*X\otimes\mu^*A\bigr),
    \]
    by composing with horizontal lifts
    \[
        \Curv^\theta(p)(v,w):=-\theta\bigl([\,\hor^\theta v,\hor^\theta w\,]\bigr)_p,
        \qquad v,w\in T_{\pi(p)}X.
    \]

The following lemma states that for principal connections the curvature $\Curv^\theta$ interacts with the curvature of the connection $\omega$ on $\Sigma$ in a way similar to~\eqref{eq:mult_up_to_omega}.
\begin{lemma}\label{lem:curv_is_mult_pb}
    Let $P$ be a principal $\Sigma$-bundle, $\omega$ a Cartan connection,
    and $(P,\theta)$ a principal $(\Sigma,\omega)$-bundle. Then, for all $(g,p)\in\Sigma\ltimes P$ and all $v,w\in\ker(\theta)_p$,
    \[
    \Curv^\theta_{gp}(g\cdot v,g\cdot w) - g\cdot \Curv^\theta_p(v,w) = \Curv^\omega_g(d\mu(v),d\mu(w)).
    \]
\end{lemma}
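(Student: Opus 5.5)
The plan is to compute the curvature of $\ker(\theta)$ at $g\cdot p$ by pushing forward horizontal vector fields along the action. The key geometric input is the multiplicativity equation~\eqref{eq:mult_up_to_omega}.

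First, regard $\ker(\mathrm{pr}_1^*\omega)\oplus$-type data on the action groupoid $\Sigma\ltimes P$. Consider the distribution
\[
\mathcal{H}:=\{(\xi,u)\in T(\Sigma\ltimes P):\ \omega(\xi)=0,\ \theta(u)=0\}.
\]
By~\eqref{eq:mult_up_to_omega}, $dm_P$ maps $\mathcal{H}$ into $\ker(\theta)$, since $\theta(dm_P(\xi,u))=\omega(\xi)+g\cdot\theta(u)=0$. Moreover $dm_P(\hor^\omega_g(d\mu\,v),v)=g\cdot v$ by definition of the action on $TP$.

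Next, fix $v,w\in\ker(\theta)_p$ and extend them to sections $V,W$ of $\ker(\theta)$. On $\Sigma\ltimes P$ define the vector fields $\tilde V_{(h,q)}:=(\hor^\omega_h(d\mu V_q),V_q)$, and similarly $\tilde W$. These are tangent to $\mathcal{H}$ and $\mathrm{pr}_2$-related to $V,W$. They are not $m_P$-related to anything globally, but $dm_P(\tilde V)$ is a section of $\ker(\theta)$ along $m_P$.

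The computation then uses the $\mu^*A$-valued form identity $m_P^*\theta=\mathrm{pr}_1^*\omega+g\cdot\mathrm{pr}_2^*\theta$. I would apply a ``$d$'' to both sides, using the pulled-back connection $\mu^*D$ on coefficients; the coefficient-connection terms vanish after restricting to vectors in $\mathcal{H}$, since all three forms vanish there. Evaluating on $\tilde V,\tilde W$ and using the formula $d\eta(X,Y)=\cdots-\eta([X,Y])$ gives
\[
-\theta\bigl(dm_P[\tilde V,\tilde W]\bigr)\ \text{(interpreted via pullback)}\ =\ -\omega([\hor^\omega V',\hor^\omega W'])+\bigl(\text{$g\cdot$ term}\bigr)(-\theta([V,W])).
\]
The left-hand side is $\Curv^\theta_{gp}(g\cdot v,g\cdot w)$, because the restriction of $d(m_P^*\theta)$ to $\mathcal{H}$ equals $m_P^*$ of the restriction of $d_{\mu^*D}\theta$ to $\ker\theta$, which is tensorial. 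The first term on the right is $\Curv^\omega_g(d\mu v,d\mu w)$, since $\mathrm{pr}_1$ maps $\tilde V$ to a horizontal lift of $d\mu V$ and the curvature of $\omega$ is the restriction of $d_{t^*D}\omega$ to $\ker\omega$, as in Proposition~\ref{prop:mc_expr_vanishing}. The remaining term is $g\cdot\Curv^\theta_p(v,w)$.

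The main obstacle is the third term, $d_{\nabla}(g\cdot\mathrm{pr}_2^*\theta)$. It involves differentiating the representation $g\mapsto g\cdot$ of $\Sigma$ on $A$ along $\xi=\hor^\omega_g(\cdot)$. One must show that the derivative of the action map along $\omega$-horizontal directions cancels against the coefficient connection. Equivalently, the representation of $\Sigma$ on $A$ is parallel along $\ker(\omega)$ with respect to $t^*D$ and $s^*D$. This should follow from the vanishing of the mixed component of $\mc_\omega$ in Proposition~\ref{prop:mc_expr_vanishing} and Lemma~\ref{lem:two_connections_on_action_gpd}, i.e.\ $D^{\delta\omega}=D^{\mathrm{pr}_1^*\omega}$. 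I would verify it by writing $g\cdot\alpha=\omega(dm(\hor^\omega_g\rho\alpha,\alpha))$ and differentiating along a horizontal curve. Since all terms are tensorial on $\mathcal{H}$, no choice of extension matters in the end.
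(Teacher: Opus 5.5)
Your approach is the paper's: restrict the curvature of both sides of $m_P^*\theta=\pr_1^*\omega+g\cdot\pr_2^*\theta$ to the common kernel $\mathcal H$. Your $\mathcal H$ is exactly the paper's $\{(\hor^\omega_g(d\mu v),v): v\in\ker(\theta)_p\}$, since $\omega(\xi)=0$ and $ds(\xi)=d\mu(u)$ force $\xi=\hor^\omega_g(d\mu u)$. Your identification of the three terms is also correct. Your last paragraph, however, is wrong and should be dropped, because the ``main obstacle'' does not exist.

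The form $g\cdot\pr_2^*\theta$ is a single one-form on $\Sigma\ltimes P$ with values in $(t\circ\pr_1)^*A$, and it vanishes on $\mathcal H$. So, whatever connection you put on the coefficients, its covariant differential on $\tilde V,\tilde W$ at $(g,p)$ is just $-(g\cdot\pr_2^*\theta)([\tilde V,\tilde W])=-g\cdot\theta([V,W]_p)$. If you expand by Leibniz, the term containing the derivative of the bundle map $g\cdot$ is wedged with $\pr_2^*\theta$. It therefore dies on $\tilde V,\tilde W$, for the very reason you gave for the coefficient-connection terms.

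In fact the whole argument is of order zero. Evaluate the multiplicativity equation at $(g,p)$ on the single vector $[\tilde V,\tilde W]_{(g,p)}$, which is tangent to $\Sigma\ltimes P$. Then identify the $m_P$- and $\pr_1$-terms by naturality of the covariant differential together with tensoriality of its restriction to $\ker(\theta)$ and $\ker(\omega)$. This also takes care of $\tilde V$ not being $\pr_1$-projectable.

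Moreover, the statement you propose to prove in order to remove the obstacle is false in general, so following that plan would stall. On the pair groupoid $M\times M$, a Cartan connection is the same as a global frame $e$ of $TM$:
\begin{itemize}
\item $\ker(\omega)_{(y,x)}$ is the graph of $e_ye_x^{-1}$;
\item the representation on $A\cong TM$ is $e_ye_x^{-1}$;
\item the induced connection is $D_X\alpha=\sum_i a_i[e_i,\alpha]$ for $X=\sum_i a_ie_i$.
\end{itemize}
Along horizontal curves through $(y,x)$, the map $e_ye_x^{-1}$ intertwines $D$-parallel transports only if the structure functions of $e$ coincide at $x$ and $y$. That is, only if $\Curv^\omega_{(y,x)}=0$.

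The vanishing of the mixed component of $\mc_\omega$ cannot yield such parallelism. That component concerns $\omega(\alpha^R)$, whereas $g\cdot\alpha$ is built from $\hor^\omega_g(\rho\alpha)$ and therefore sees the curvature.
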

\begin{proof}
    We can take the curvatures of the two sides in the equality
    \[
    (m_P^*\theta)_{(g,p)} = (\pr_1^*\omega)_{(g,p)}+ g\cdot (\pr_2^*\theta)_{(g,p)}
    \] 
    and restrict to
    \[
    \{(\mathrm{hor}_g(d\mu(v_p)),v_p):\ v_p\in (\ker(\theta))_p\}
    \]
    which is contained in both
    \[
    \ker(m_P^*\theta)_{(g,p)} = \{(\mathrm{hor}_g(d\mu(v_p)),v_p):\ v_p\in (\ker(\theta))_p\}\oplus \ker(dm_P)_{(g,p)},
    \]
    as well as $\ker(\pr_1^*\omega)$ and $\ker(\pr_2^*\theta)$.
    This gives the identity
    \[
    \Curv^\theta_{gp}(g\cdot v,g\cdot w) = \Curv^\omega_g(d\mu(v),d\mu(w))+g\cdot \Curv^\theta_p(v,w).
    \]
\end{proof}

The lemma has two consequences, recorded in the two remarks below. 
\begin{remark}\label{rmk:flat_bun_flat_grpd}
    A consequence of the multiplicativity of the curvature of a principal connection is that, given a Cartan connection $\omega$ on $\Sigma$ and a principal $(\Sigma,\omega)$-bundle $(P,\theta)$, if
    \[
    d\mu:\ker(\theta) \to TM
    \]
    is fiberwise surjective, then flatness of $\theta$ implies flatness of $\omega$ is flat over $\mu(P)$; that is, $\Curv^\omega_g=0$ for every $g$ with $s(g)\in\mu(P)$.
\end{remark}
\begin{remark}\label{rmk:curv_is_basic}
    If $(\Sigma,\omega)$ is a flat groupoid -- see Definition \ref{def:flat_gpds} -- it follows from Lemma \ref{lem:curv_is_mult_pb} that the curvature of a principal $(\Sigma,\omega)$-connection $\theta$ on $P$ is a basic form, and there exists a form $\mathrm{C}\in \Omega^2(X,P[A])$ such that 
    \[
    \Curv^\theta = \pi^*C.
    \]
    Here $P[A]\to X$ is the bundle obtained from $A$ via fiber attachment along $P$, i.e. $P[A]:=P\times_M A/\Sigma$ where the action of $\Sigma$ is the diagonal action.
\end{remark}

Similarly to Definition~\ref{def:mc_expr_gpd}  and Proposition~\ref{prop:mc_expr_vanishing} for groupoid connections, the curvature can be encoded into a $2$-form on $P$.
\begin{definition}\label{def:mc_expr_pb}
    Let $\theta\in \Omega^1(P,\mu^*A)$ be a (non necessarily principal) connection one form on a principal $\Sigma$-bundle $P$, and $\omega$ be a Cartan connection on $\Sigma$. The \textbf{Maurer-Cartan expression} of $\theta$ is the two form
    \[
    \mc_\theta := d_{\mu^*D}\theta + \frac{1}{2}\{\theta,\theta\} \in \Omega^2(P,\mu^*A)
    \]
    where
    \begin{itemize}
        \item $d_{\mu^*D}$ is the operator induced by the pullback via $\mu$ of the connection $D$ on $A$ associated to $\omega$;
        \item $\{\ ,\ \}$ is the bracket~\eqref{eq:torsion_bracket}, pointwise given by the opposite of the torsion of $D$,
            \[
                \{a,b\}\ =\ -D_{\rho(a)}b+D_{\rho(b)}a+[a,b] ,
            \]
            as in Definition~\ref{def:mc_expr_gpd}. 
    \end{itemize}
\end{definition}

The following proposition is essentially Proposition 5.18, its proof, and Remark 5.19 in~\cite{AccCr2023}.
\begin{proposition}\label{prop:mc_expr_vanishing_pb}
	Let $(\Sigma,\omega)$ be a groupoid with connection, $P$ be a principal $\Sigma$-bundle, and $\theta$ be a connection one-form on $P$. Then, if $\theta$ is multiplicative and flat, then
	\[
	\mc_\theta = 0;
	\]
	moreover, the converse holds true if $\Sigma$ is $s$-connected (and hence the fibers of $\pi:P\to X$ are connected). More precisely, under the splitting
	\[
	\Omega^2(P,\mu^*A) = \Gamma(\pi^*\Lambda^2TX,\mu^*A)\oplus \Gamma(\pi^*TX\otimes \mu^*A,\mu^*A)\oplus \Gamma(\Lambda^2 \mu^*A,\mu^*A)
	\]
	we have that
	\begin{itemize}
		\item the first component of $\mc_\theta$ is $\Curv^\theta$;
		\item the second component of $\mc_\theta$ vanishes if $\theta$ is multiplicative, and the converse holds under $s$-connectedness assumptions; 
            \item the third component of $\mc_\theta$ vanishes identically.
	\end{itemize}
\end{proposition}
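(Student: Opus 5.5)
We evaluate $\mc_\theta$ on each block of the splitting $TP=\ker(\theta)\oplus T^\pi P$, where $T^\pi P=a_P(\mu^*A)$. Throughout we use the formula
\[
d_{\mu^*D}\theta(Y,Z)=(\mu^*D)_Y\theta(Z)-(\mu^*D)_Z\theta(Y)-\theta([Y,Z]).
\]
The proof of Proposition~\ref{prop:mc_expr_vanishing} is the model to follow, with $(\Sigma,t,s)$ replaced by $(P,\mu,\pi)$.

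\emph{First block: two horizontal vectors.} Take $Y,Z$ extending $\hor^\theta v,\hor^\theta w$ and tangent to $\ker(\theta)$. Then $\theta(Y)=\theta(Z)=0$, so the bracket term vanishes and $d_{\mu^*D}\theta(Y,Z)=-\theta([Y,Z])$. This is $\Curv^\theta$, with no hypothesis on $\theta$.

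\emph{Third block: two vertical vectors.} Take $Y=a_P(\mu^*\alpha)$ and $Z=a_P(\mu^*\beta)$ for $\alpha,\beta\in\Gamma(A)$. Then $\theta(Y)=\mu^*\alpha$, and $a_P$ preserves brackets, so $\theta([Y,Z])=\mu^*[\alpha,\beta]$. Since $d\mu\circ a_P=\rho$, the pullback connection gives $(\mu^*D)_Y\mu^*\beta=\mu^*D_{\rho(\alpha)}\beta$. Hence
\[
d_{\mu^*D}\theta(Y,Z)=\mu^*\bigl(D_{\rho\alpha}\beta-D_{\rho\beta}\alpha-[\alpha,\beta]\bigr)=-\mu^*\{\alpha,\beta\},
\]
which cancels exactly with $\tfrac12\{\theta,\theta\}(Y,Z)=\mu^*\{\alpha,\beta\}$. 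This identity holds for any connection form.

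\emph{Mixed block: the main step.} Take $Y$ horizontal, so $\theta(Y)=0$, and $Z=a_P(\mu^*\alpha)$. Then $\{\theta(Y),\theta(Z)\}=0$ and
\[
\mc_\theta(Y,Z)=(\mu^*D)_Y\mu^*\alpha-\theta([Y,a_P\alpha])=\mu^*D_{d\mu Y}\alpha-\theta([Y,a_P\alpha]).
\]
I would interpret this as the infinitesimal version of multiplicativity. Differentiate the identity $\delta\theta=\pr_1^*\omega$ of Definition~\ref{def:mult_conn_pb} along the unit section of $\Sigma\ltimes P$, in the direction of right-invariant vector fields $(\alpha^R,0)$. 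The action of $\exp(\epsilon\alpha)$ on $TP$ from Section~\ref{subs:principal_connections} differentiates, modulo $\omega$, to the Lie derivative $\mathcal{L}_{a_P\alpha}$ corrected by the horizontal-lift term, and the action on $A$ differentiates to $\nabla^A_\alpha$. Equivariance $\theta(g\cdot v)=g\cdot\theta(v)$, via Lemma~\ref{lem:principal_iff_multiplicative}, then becomes
\[
\theta([a_P\alpha,Y])+\theta\bigl(a_P D_{d\mu Y}\alpha\bigr)=\nabla^A_\alpha\theta(Y)+\ldots .
\]
For horizontal $Y$ this reduces to $\theta([Y,a_P\alpha])=\mu^*D_{d\mu Y}\alpha$. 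The bookkeeping is cleanest if we reuse Lemma~\ref{lem:two_connections_on_action_gpd} directly. Applied to the action groupoid $\Sigma\ltimes P$ with the multiplicative connections $\delta\theta$ and $\pr_1^*\omega$, their induced algebroid connections coincide. Evaluating that equality on $(Y,\mu^*\alpha)$ yields precisely the vanishing of the displayed expression, exactly as at the end of the proof of Proposition~\ref{prop:mc_expr_vanishing}.

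\emph{Converse.} Suppose $\mc_\theta=0$. The first block gives flatness. For multiplicativity, vanishing of the mixed block says that the infinitesimal equivariance holds, i.e. the difference $\delta\theta-\pr_1^*\omega$ is annihilated by the infinitesimal generators along the $s$-fibres of $\Sigma\ltimes P$. This difference vanishes at the units, so it vanishes along every $s$-fibre through a unit by the usual ODE and flow argument; this is where $s$-connectedness of $\Sigma$ enters. The main obstacle is the identification in the mixed block: carefully differentiating the $\omega$-induced action on $TP$ to obtain the $D$-corrected Lie derivative.
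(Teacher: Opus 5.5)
Your proposal is correct and follows essentially the paper's route. The horizontal and vertical blocks are the direct computations the paper calls ``easily checked'', and you treat the mixed block as at the end of the proof of Proposition~\ref{prop:mc_expr_vanishing}, by comparing the algebroid connections induced by $\delta\theta$ and $\pr_1^*\omega$ on $\Sigma\ltimes P$; the only caveat is that Lemma~\ref{lem:two_connections_on_action_gpd} as stated covers only $P=\Sigma$, so you need its version for an arbitrary principal bundle (the formula for $D^{\delta\theta}$ in terms of $\theta$), which is the general identity from Cattafi's thesis (Proposition 5.3.4) that the paper cites, and your flow argument on connected $s$-fibres for the converse fills in what the paper only references.
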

\begin{proof}
    The first and third point are easily checked. The second point follows from a rather general identity concerning multiplicativity of forms on principal groupoid bundles, proven in Proposition 5.3.4 in~\cite{Catt2020}; see also~\cite[Proposition 5.18, its proof, and Remark 5.19]{AccCr2023}.
\end{proof}
For convenience, we state the following corollary.
\begin{corollary}\label{cor:mc_is_curvature}
    Let $(\Sigma,\omega)$ be a groupoid with Cartan connection, $P$ be a principal $\Sigma$-bundle, and $\theta$ be a principal $(\Sigma,\omega)$-connection. Then, for all $w_1,w_2 \in TP$
    \[
    \mc_\theta(w_1,w_2) = \Curv^\theta(w_1^\theta,w_2^\theta)
    \]
    where $w_1^\theta$ and $w_2^\theta$ are the components of $w_1$ and $w_2$ tangent to $\ker(\theta)$.
\end{corollary}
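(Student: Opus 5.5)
The corollary is a direct consequence of Proposition~\ref{prop:mc_expr_vanishing_pb}. The plan is to decompose $w_1,w_2$ along $TP=\ker(\theta)\oplus T^\pi P$ and check the bilinear form $\mc_\theta$ on each pair of components. Concretely, write $w_i = w_i^\theta + a_P(\theta(w_i))$, where $w_i^\theta:=w_i-a_P(\theta(w_i))\in\ker(\theta)$. This decomposition identifies $TP\cong \pi^*TX\oplus\mu^*A$, which is exactly the splitting of $\Omega^2(P,\mu^*A)$ used in Proposition~\ref{prop:mc_expr_vanishing_pb}. By bilinearity,
\[
\mc_\theta(w_1,w_2)=\mc_\theta(w_1^\theta,w_2^\theta)+\mc_\theta(w_1^\theta,a_P\theta(w_2))+\mc_\theta(a_P\theta(w_1),w_2^\theta)+\mc_\theta(a_P\theta(w_1),a_P\theta(w_2)).
\]

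The four terms are handled as follows. The first is the $\Lambda^2\pi^*TX$-component, which Proposition~\ref{prop:mc_expr_vanishing_pb} identifies with $\Curv^\theta(w_1^\theta,w_2^\theta)$. One can double-check this directly: on $\theta$-horizontal extensions $V,W$, the terms $D_{d\mu V}\theta(W)$ and the bracket term vanish, leaving $-\theta([V,W])$. The last term is the $\Lambda^2\mu^*A$-component, which vanishes identically. This is the usual Maurer--Cartan computation on fundamental vector fields: for $a_P(\alpha),a_P(\beta)$ with $\alpha,\beta\in\Gamma(A)$, one has $\theta(a_P\mu^*\alpha)=\mu^*\alpha$, and $[a_P\alpha,a_P\beta]=a_P[\alpha,\beta]$ since $a_P$ is a Lie algebra map. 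Hence $d_{\mu^*D}\theta(a_P\alpha,a_P\beta)=D_{\rho\alpha}\beta-D_{\rho\beta}\alpha-[\alpha,\beta]=-\{\alpha,\beta\}$, and this cancels against $\tfrac12\{\theta,\theta\}(a_P\alpha,a_P\beta)=\{\alpha,\beta\}$.

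The two mixed terms form the $\pi^*TX\otimes\mu^*A$-component. This is the only place the hypotheses enter: since $\omega$ is Cartan and $\theta$ is a principal $(\Sigma,\omega)$-connection, Lemma~\ref{lem:principal_iff_multiplicative} shows that $\theta$ is multiplicative in the sense of Definition~\ref{def:mult_conn_pb}. The second bullet of Proposition~\ref{prop:mc_expr_vanishing_pb} then gives the vanishing of this component, and the identity follows. Note that only the forward implication is used, so no $s$-connectedness is needed.

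The main point needing care is matching the abstract splitting of $\Omega^2(P,\mu^*A)$ in Proposition~\ref{prop:mc_expr_vanishing_pb} with the concrete decomposition via $\ker(\theta)$ and $a_P$. The splitting is induced by $\theta$ itself through the isomorphism $TP\cong\ker(\theta)\oplus a_P(\mu^*A)$ with $\ker(\theta)\cong\pi^*TX$ via $d\pi$, and I would state this explicitly. If one prefers not to rely on the cited mixed-term argument, a direct check would evaluate $\mc_\theta(\hor^\theta V,a_P\alpha)=D_{d\mu\,\hor^\theta V}\alpha-\theta([\hor^\theta V,a_P\alpha])$ and show that it vanishes by differentiating the equivariance $\theta(g\cdot v)=g\cdot\theta(v)$ along $g=\exp(\epsilon\alpha)$-type curves. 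This differentiation uses the Lie-functor description of $D$ from Proposition~\ref{prop:diff_conn}, and it is the step I expect to be the real obstacle.
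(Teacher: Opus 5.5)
Your proposal is correct and follows the paper's approach: the paper states this corollary without a separate proof, as a direct consequence of Proposition~\ref{prop:mc_expr_vanishing_pb}. There the horizontal component of $\mc_\theta$ is $\Curv^\theta$, the mixed component vanishes by multiplicativity (which Lemma~\ref{lem:principal_iff_multiplicative} supplies for a principal $(\Sigma,\omega)$-connection), and the purely vertical component vanishes identically. Your direct checks of the horizontal and vertical components, and your remark that only the forward implication is needed (so no $s$-connectedness is required), are correct.
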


The existence of principal $(\Sigma,\omega)$ connections follows from a standard argument.

\begin{proposition}\label{prp:princ_conn_exist}
    Let $P$ be a principal $\Sigma$-bundle and let $\omega$ be a Cartan connection on $\Sigma$. Then, $P$ admits a principal $(\Sigma,\omega)$-connection $\theta$.
\end{proposition}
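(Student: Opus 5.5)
The proof follows the classical "local existence plus partition of unity" argument. The key observation is that the condition of Definition~\ref{def:princ_conn} is affine in $\theta$ with respect to $C^\infty(X)$-valued coefficients. Indeed, if $\theta_1,\dots,\theta_n$ are principal $(\Sigma,\omega)$-connections defined over open sets $P|_{U_i}=\pi^{-1}(U_i)$, and $\{\phi_i\}$ is a partition of unity on $X$ subordinate to $\{U_i\}$, set
\[
\theta:=\sum_i (\pi^*\phi_i)\,\theta_i .
\]
Since $\pi\circ m_P=\pi\circ \mathrm{pr}_2$ on $\Sigma\ltimes P$, the functions $\pi^*\phi_i$ are $\Sigma$-invariant: $(\pi^*\phi_i)(g\cdot p)=(\pi^*\phi_i)(p)$. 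Moreover, the action $v\mapsto g\cdot v$ and the representation $\alpha\mapsto g\cdot\alpha$ are linear. Hence $\theta(g\cdot v)=\sum_i\phi_i(\pi(p))\,g\cdot\theta_i(v)=g\cdot\theta(v)$. The normalization $\theta\circ a_P=\mathrm{id}$ also survives, because $\sum_i\pi^*\phi_i=1$ and each $\theta_i$ satisfies it. So $\theta$ is a connection one-form that is a principal $(\Sigma,\omega)$-connection.

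It then suffices to produce principal connections locally. By Lemma~\ref{lem:locally_trivial}, cover $X$ by opens $U$ over which $P|_U\cong P_f=\Sigma\tensor[_s]{\times}{_f}U$ for some $f:U\to M$, with action by left multiplication on the first factor and momentum map $t\circ\mathrm{pr}_1$. Under such an isomorphism, principal connections correspond to principal connections, since the action on $TP$ is defined via $dm_P$ and $\hor^\omega$, both preserved by $\Sigma$-equivariant diffeomorphisms intertwining momentum maps. On $P_f$ I propose the candidate
\[
\theta_f:=\mathrm{pr}_1^*\omega\in\Omega^1(P_f,(t\circ\mathrm{pr}_1)^*A),
\]
which is the pullback of the Cartan connection to the fibred product.

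Two checks remain. The first is normalization. The infinitesimal action is $a_{P_f}(\alpha)=(\alpha^R,0)$, which is $s$-vertical in the first factor, so $\theta_f(a_{P_f}\alpha)=\omega_{\mc}(\alpha^R)=\alpha$ by Lemma~\ref{lem:various_conn_pov}. The second is equivariance, which I check through the multiplicativity formulation of Lemma~\ref{lem:principal_iff_multiplicative}. For $(g,(h,x))$ and a tangent vector $(\xi,(\eta,u))$ we have $m_{P_f}(g,(h,x))=(gh,x)$, so
\[
\theta_f\bigl(dm_{P_f}(\xi,(\eta,u))\bigr)=\omega\bigl(dm(\xi,\eta)\bigr)=\omega(\xi)+g\cdot\omega(\eta).
\]
The second equality is the multiplicativity equation of Lemma~\ref{lem:multipl_equiv_notion}, which holds because $\omega$ is Cartan. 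The right-hand side is $\mathrm{pr}_1^*\omega+g\cdot\mathrm{pr}_2^*\theta_f$, which is exactly \eqref{eq:mult_up_to_omega}.

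The main point requiring care is the transport step. One has to verify that the action of $\Sigma$ on $TP$, defined by $g\cdot v=dm_P(\hor^\omega_g(d\mu v),v)$, is natural under isomorphisms of principal $\Sigma$-bundles, so that $\Phi^*\theta_f$ is principal on $P|_U$. This follows directly from $\Phi\circ m_P=m_{P_f}\circ(\mathrm{id}\times\Phi)$ together with $\mu_{P_f}\circ\Phi=\mu$, after differentiating. It is also important that the partition of unity is pulled back from $X$ rather than chosen on $P$, because only then are the weights invariant under the action.
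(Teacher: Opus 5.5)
Your proof is correct, but it takes a different route from the paper.

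The paper argues globally. The Cartan connection makes $\Sigma\ltimes P$ act on the whole sequence $T^\pi P\to TP\to\pi^*TX$, trivially on the last term because $\pi$ is invariant. One then passes to the quotient sequence $T^\pi P/\Sigma\to TP/\Sigma\to TX$ of vector bundles over $X$. Splittings of this quotient sequence are exactly the principal $(\Sigma,\omega)$-connections, so existence reduces to the fact that a short exact sequence of vector bundles splits.

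You work locally instead. You trivialize, write down the explicit local model $\mathrm{pr}_1^*\omega$ on $P_f$ (the case $\eta_f=0$ of Proposition~\ref{prop:connections_on_trivial_bundle}), and check \eqref{eq:mult_up_to_omega} for it directly from the multiplicativity equation of $\omega$. You then transport it along the trivializations and glue with a partition of unity pulled back from $X$. That last step is exactly the observation the paper records after Lemma~\ref{lem:pconn_are_convex}.

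The paper's version is shorter, and it describes the set of all principal connections at once as the splittings of a sequence over $X$. However, it leaves several descent facts implicit:
\begin{enumerate}
\item the quotient of a representation of the free and proper groupoid $\Sigma\ltimes P$ is a smooth vector bundle over $X$;
\item splittings over $X$ lift bijectively to $\Sigma$-equivariant splittings over $P$;
\item $a_P$ intertwines the induced actions on $\mu^*A$ and $T^\pi P$, so that an equivariant vertical projection yields an equivariant $\theta$;
\item the action on $TP$ is genuinely associative, which is where multiplicativity of $\omega$ is hidden (cf. Remark~\ref{rmk:quasi_actions}).
\end{enumerate}
Your version needs none of this descent machinery. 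It uses only local triviality (Lemma~\ref{lem:locally_trivial}), Lemma~\ref{lem:principal_iff_multiplicative}, naturality of the $TP$-action under equivariant isomorphisms, and the invariance of $\pi^*\phi_i$. It also shows explicitly the one place where multiplicativity of $\omega$ enters: verifying that the local model $\mathrm{pr}_1^*\omega$ is principal.
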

\begin{proof}

    As we already pointed out, the Cartan connection $\omega$ induces a representation of $\Sigma\ltimes P$ on $TP$ and, in fact, on the short exact sequence of vector bundle
    \[
    T^\pi P\to TP \to \pi^*TX.
    \]
    Taking the quotient with respect to the action results in another sequence of vector bundles, denoted by
    \[
    T^\pi P/\Sigma \to TP/\Sigma \to TX.
    \]
    Since splittings of the latter sequence correspond precisely to principal $(\Sigma,\omega)$ connections, the existence follows.
\end{proof}
    

Finally, as for ordinary principal connection, we see that principal $(\Sigma,\omega)$ connections are closed under convex combinations.
\begin{lemma}\label{lem:pconn_are_convex}
    Let $P$ be a principal $\Sigma$-bundle and let $\omega$ be a Cartan connection on $\Sigma$. If $\theta_0$ and $\theta_1$ are principal $(\Sigma,\omega)$-connections on $P$, then
    \[
    \theta_t = t\theta_1 + (1-t)\theta_0 \in \Omega^1(P,\mu^*A),\quad t\in\mathbb{R}
    \]
    is a principal $(\Sigma,\omega)$-connection on $P$.
\end{lemma}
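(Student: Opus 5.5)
The plan is to check the two defining properties of a principal $(\Sigma,\omega)$-connection for $\theta_t$ directly, using that both are affine in $\theta$.

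First, $\theta_t$ is a connection one-form on $P$. By the discussion after Definition~\ref{def:connection_pb}, connections on $P$ correspond to one-forms $\theta\in\Omega^1(P,\mu^*A)$ satisfying the normalisation $\theta\circ a_P=\id$. This condition is affine, so
\[
\theta_t\circ a_P=t\,\theta_1\circ a_P+(1-t)\,\theta_0\circ a_P=t\,\id+(1-t)\,\id=\id
\]
for every $t\in\mathbb{R}$, not only $t\in[0,1]$. The kernel of $\theta_t$ is therefore a complement to $T^\pi P=\mathrm{Im}(a_P)$, since $v\mapsto v-a_P\theta_t(v)$ is a projection onto $\ker(\theta_t)$ along $T^\pi P$. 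Hence $\theta_t$ defines an Ehresmann connection.

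Second, $\theta_t$ is equivariant. By Lemma~\ref{lem:principal_iff_multiplicative}, it suffices to verify either $\theta_t(g\cdot v)=g\cdot\theta_t(v)$ for all $v\in TP$, or the multiplicativity equation~\eqref{eq:mult_up_to_omega}. I will use the first form. Since $\omega$ is Cartan, Lemma~\ref{lem:multiplicativity_properties} shows that the action $\alpha\mapsto g\cdot\alpha$ of $\Sigma$ on $A$ is fiberwise linear. Hence
\[
\theta_t(g\cdot v)=t\,g\cdot\theta_1(v)+(1-t)\,g\cdot\theta_0(v)=g\cdot\theta_t(v).
\]
Note that the action of $g$ on $TP$ does not depend on $\theta$ at all, since it is defined via $\omega$.

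Alternatively, one can argue with~\eqref{eq:mult_up_to_omega}: the simplicial differential $\delta$ is linear, and $\mathrm{pr}_1^*\omega$ appears with total coefficient $t+(1-t)=1$. This is the only point where a non-convex combination could fail, and the coefficients summing to $1$ handles it.

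I expect no real obstacle here. The one point to state carefully is linearity of the $\Sigma$-action on $A$, which holds because the action is built from the linear maps $dR_{g^{-1}}$, $dm$ and the horizontal lift.
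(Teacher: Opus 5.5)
Your proposal is correct and is essentially the paper's argument. The paper simply remarks that both defining conditions, the normalisation $\theta\circ a_P=\id$ and the equivariance (equivalently, the multiplicativity) condition, are affine in $\theta$, so only $t+(1-t)=1$ is used.

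Two small remarks. First, fiberwise linearity of $\alpha\mapsto g\cdot\alpha$ is not what Lemma~\ref{lem:multiplicativity_properties} gives, since that lemma concerns associativity; as you note at the end, it comes directly from the construction out of linear maps. Second, the same argument works verbatim with $t$ replaced by the pullback of any function on the base, which is $\Sigma$-invariant, and this is the form actually used in the proof of Proposition~\ref{prop:CW_indep_of_theta}.
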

Both defining conditions are affine in $\theta$, so all that is used of the coefficients is that they sum to $1$; in particular $t$ may be replaced by any function on the base, which is $\Sigma$-invariant because $\pi(g\cdot p)=\pi(p)$.

\subsection{Trivializations and principal connections}\label{subs:Trivializations_and_connections}
We start by investigating principal connections on trivial principal bundles. 

Hence, throughout this subsection we have $P_f:=\Sigma\times_M X$, where the fibered product is with respect to some map $f:X\to M$; $P$ is a principal $\Sigma$-bundle over $X$ with quotient map $\pi$ given by the second projection $\mathrm{pr}_2$ and momentum map $\mu$ given by the target map of $\Sigma$ composed with the first projection $t\circ \mathrm{pr}_1$. We denote by $\sigma_f$ the section
\[
x\mapsto (1_{f(x)},x).
\]

Let $\theta$ be a principal connection on $P_f$. First of all, the condition  $\theta\circ a_P=\id$ can be written as
\[
\theta|_{\ker d\mathrm{pr}_2} = \mathrm{pr}_1^*\omega_\mathrm{MC};
\]
alternatively, $\theta|_{\ker d\mathrm{pr}_2} = \mathrm{pr}_1^*\omega|_{\ker(ds)}$. In particular, the form
\[
\hat{\eta} := \theta - \mathrm{pr}_1^*\omega\in \Omega^1(P,\mu^*A)
\]
turns out to be horizontal -- that is, it vanishes along vertical vectors; hence, it descends to  
\[
\overline{\eta} \in \Gamma(\mathrm{Hom}(\pi^*TX,\mu^*A)).
\]
Since $\theta$ is also equivariant -- that is, it is a Cartan connection -- then $\overline{\eta}$ is equivariant as well, and descends to a one form on $X$ valued in $\mu^*A/\Sigma$, which is isomorphic to $f^*A$. Hence, we have a one form
\[
\eta_f\in \Omega^1(X,f^*A).
\]
In fact $\eta_f=\sigma_f^*\theta$, obtained by pulling back $\theta$ along the canonical section $\sigma_f$.
In other words
\begin{proposition}\label{prop:connections_on_trivial_bundle}
    Let $(\Sigma,\omega)$ be a Lie groupoid with (multiplicative) connection, $P_f = \Sigma\times_M X$ be the trivial principal bundle with respect to $f:X\to M$, and $\theta\in \Omega^1(P_f,\mu^*A)$ be a Cartan connection on $P_f$. Then
    \[
    \theta_{(\gamma,x)} = (\mathrm{pr}_1^*\omega)_{(\gamma,x)}+\gamma\cdot(\pi^*\eta_f)_{(\gamma,x)}
    \]
    with $\eta_f:=\sigma_f^*(\theta)$ or, equivalently, $\eta_f$ is the form on $X$ corresponding to the basic form $\hat{\eta} := \theta - \mathrm{pr}_1^*\omega$ on $P$. Conversely, for every $\eta\in \Omega^1(X,f^*A)$, the formula above defines a Cartan connection on $P_f$.
\end{proposition}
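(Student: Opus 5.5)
The plan is to reduce everything to the multiplicativity equation of Lemma~\ref{lem:principal_iff_multiplicative}, applied on the trivial bundle $P_f$, whose action is left multiplication on the first factor. Here ``Cartan connection on $P_f$'' means a principal $(\Sigma,\omega)$-connection.

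\emph{Step 1: the difference form.} Set $\hat\eta:=\theta-\mathrm{pr}_1^*\omega$. As observed just before the statement, $\hat\eta$ vanishes on $\ker(d\mathrm{pr}_2)$. Indeed, these vertical vectors are of the form $(\xi,0)$ with $\xi\in\ker(ds)$. On them both $\theta$ and $\mathrm{pr}_1^*\omega$ restrict to $\omega_{\mathrm{MC}}$: for $\theta$ this is the normalisation $\theta\circ a_P=\id$, using that $a_P$ is given by right-invariant vector fields.

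\emph{Step 2: equivariance of $\hat\eta$.} The Cartan connection $\pr_1^*\omega$ is multiplicative on $\Sigma\ltimes P_f$. Concretely, the multiplicativity equation of Lemma~\ref{lem:multipl_equiv_notion} for $\omega$, pulled back along $(h,(\gamma,x))\mapsto(h,\gamma)$, says that $\mathrm{pr}_1^*\omega$ satisfies~\eqref{eq:mult_up_to_omega} on $P_f$. Subtracting this from the same identity for $\theta$ gives
\[
m_P^*\hat\eta_{(h,p)}=h\cdot(\mathrm{pr}_2^*\hat\eta)_{(h,p)},
\]
so $\hat\eta$ is horizontal and $\Sigma$-equivariant.

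\emph{Step 3: recovering $\theta$ from $\eta_f$.} Write $p=(\gamma,x)=\gamma\cdot\sigma_f(x)$. For $v\in T_pP_f$, decompose $v=dm_P(\xi,u)$ with $u=d\sigma_f(d\pi v)$, which is possible since $(\gamma,x)\mapsto(\gamma,\sigma_f(x))$ is a diffeomorphism onto its image in $\Sigma\ltimes P_f$. Step~2 then gives
\[
\hat\eta(v)=\gamma\cdot\hat\eta(d\sigma_f\,d\pi v).
\]
Since $\mathrm{pr}_1\circ\sigma_f$ is the unit section and $\omega$ vanishes on $TM\subset T\Sigma|_M$, we have $\sigma_f^*\mathrm{pr}_1^*\omega=0$. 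Hence $\sigma_f^*\hat\eta=\sigma_f^*\theta=\eta_f$. Substituting yields
\[
\theta=\mathrm{pr}_1^*\omega+\gamma\cdot\pi^*\eta_f .
\]
The identification of $\eta_f$ with the form induced by the basic form $\hat\eta$ is exactly this computation.

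\emph{Step 4: the converse.} Given $\eta\in\Omega^1(X,f^*A)$, define $\theta$ by the formula in the statement. On vertical vectors, $\pi^*\eta$ vanishes, so $\theta=\mathrm{pr}_1^*\omega$ restricts to $\omega_{\mathrm{MC}}$; this gives $\theta\circ a_P=\id$. Multiplicativity~\eqref{eq:mult_up_to_omega} splits into two parts. The $\omega$-part is the multiplicativity of $\omega$. The $\eta$-part is associativity of the representation $\Sigma\curvearrowright A$ from Lemma~\ref{lem:multiplicativity_properties}, namely $(h\gamma)\cdot a=h\cdot(\gamma\cdot a)$, together with $\pi\circ m_P=\pi\circ\mathrm{pr}_2$. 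Lemma~\ref{lem:principal_iff_multiplicative} then shows $\theta$ is principal.

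The main obstacle is the bookkeeping of which action ``$\gamma\cdot$'' is used on $f^*A$ versus $\mu^*A$, and the tangent-vector decomposition in Step~3. To handle it, I would first check the identity at points of $\sigma_f(X)$, where $\gamma$ is a unit, and then propagate to all of $P_f$ by equivariance.
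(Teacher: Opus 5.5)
Your proposal is correct and follows the paper's argument: both show that $\hat\eta=\theta-\mathrm{pr}_1^*\omega$ is horizontal and equivariant and identify its descent to $X$ with $\sigma_f^*\theta$; you make this explicit through $p=\gamma\cdot\sigma_f(x)$ and the multiplicativity equation, where the paper simply invokes $\mu^*A/\Sigma\cong f^*A$. You also prove two things the paper leaves implicit, namely that $\mathrm{pr}_1^*\omega$ itself satisfies \eqref{eq:mult_up_to_omega} on $P_f$ and the converse via associativity of the adjoint representation; the only small correction is that the decomposition in Step 3 comes from $m_P\circ(\mathrm{pr}_1,\sigma_f\circ\pi)=\mathrm{id}_{P_f}$, not from that map being a diffeomorphism onto its image.
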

We now discuss what happens after a change of trivialization.
\begin{lemma}\label{lem:change_of_section}
    Let $\sigma$ be a section of $P_f$ and let 
    \[
    G:= \mathrm{pr}_1 \circ \sigma,\quad g:=\mu\circ \sigma = t\circ \mathrm{pr}_1 \circ \sigma.
    \]
    Then
    \[
    \eta_g = G^*\omega + G\cdot\eta_f.
    \]
\end{lemma}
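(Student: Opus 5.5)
The plan is to compute $\eta_g=\sigma_g^*\theta$, where $\sigma_g:x\mapsto(1_{g(x)},x)$ is the canonical section of $P_g$. We transport $\theta$ along the change of trivialization induced by $\sigma$ and then use the explicit formula of Proposition~\ref{prop:connections_on_trivial_bundle}.

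By Lemma~\ref{lem:trivializations_are_sections}, the section $\sigma=(G,\mathrm{id})$ induces the trivialization $\Phi_\sigma:P_f\to P_g$, $(\gamma,x)\mapsto(\gamma G(x)^{-1},x)$. It satisfies $\Phi_\sigma\circ\sigma=\sigma_g$. Here $s\circ G=f$ and $t\circ G=g$, which justifies the notation. Identifying $P_g$ with $P_f$ via $\Phi_\sigma$, the form $\eta_g$ attached to $\theta$ in the new trivialization is $\sigma^*\theta$, since pulling back along $\Phi_\sigma^{-1}$ and then along $\sigma_g$ gives pullback along $\sigma$. It therefore suffices to compute $\sigma^*\theta$ directly on $P_f$.

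By Proposition~\ref{prop:connections_on_trivial_bundle}, $\theta_{(\gamma,x)}=(\mathrm{pr}_1^*\omega)_{(\gamma,x)}+\gamma\cdot(\pi^*\eta_f)_{(\gamma,x)}$. We pull this back along $\sigma=(G,\mathrm{id})$. Since $\mathrm{pr}_1\circ\sigma=G$ and $\pi\circ\sigma=\mathrm{id}$, for $v\in T_xX$ we get
\[
(\sigma^*\theta)_x(v)=\omega_{G(x)}(dG(v))+G(x)\cdot\eta_f(v),
\]
which is $G^*\omega+G\cdot\eta_f$. The coefficients are consistent: $\omega_{G(x)}$ takes values in $A_{t(G(x))}=A_{g(x)}$, and the adjoint action of $G(x)$ maps $A_{f(x)}$ to $A_{g(x)}$. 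Thus both terms lie in $\Omega^1(X,g^*A)$.

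The main subtlety is the bookkeeping in the first step: making sure that $\eta_g$ means the form produced by Proposition~\ref{prop:connections_on_trivial_bundle} after transporting $\theta$ to $P_g$ along $\Phi_\sigma$, and that the canonical section of $P_g$ corresponds to $\sigma$. Concretely, one checks $\Phi_\sigma(G(x),x)=(1_{g(x)},x)$. One also checks that $\Phi_\sigma$ is $\Sigma$-equivariant and intertwines the momentum maps: $t(\gamma G^{-1})=t(\gamma)$, and left multiplication commutes with right multiplication by $G^{-1}$. This guarantees that $(\Phi_\sigma^{-1})^*\theta$ is again a principal $(\Sigma,\omega)$-connection on $P_g$, so that Proposition~\ref{prop:connections_on_trivial_bundle} applies to it. Once this is settled, the computation above is a single pullback.
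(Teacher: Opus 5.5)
Your proposal is correct and follows the paper's argument. The paper likewise identifies $\eta_g$ with $\sigma^*\theta$ through the change of trivialization $(G,g)$. It phrases the computation as $\sigma^*\theta=(G,\sigma_f)^*m_P^*\theta$, using $\sigma=G\cdot\sigma_f$ and the multiplicativity equation, which is equivalent to your direct pullback of the formula in Proposition~\ref{prop:connections_on_trivial_bundle}; your explicit checks that $\Phi_\sigma\circ\sigma=\sigma_g$ and that $\Phi_\sigma$ is an isomorphism of principal bundles spell out bookkeeping the paper leaves implicit.
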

\begin{proof}
    Notice that 
    \[
    \sigma = m_P\circ (G,\sigma_f) = G\cdot \sigma_f;
    \]
    hence, by Lemma~\ref{lem:change_of_trivialization} $(G,g)$ is the change of trivialization from $\sigma_f$ to $\sigma$.

    Moreover, we get 
    \[
    \sigma^*\theta = (G,\sigma_f)^* m_P^*\theta;
    \]
    using the multiplicativity equation for $\theta$ and the identity
    \[
    \theta_{(\gamma,x)} = (\mathrm{pr}_1^*\omega)_{(\gamma,x)}+\gamma\cdot(\pi^*\eta_f)_{(\gamma,x)}
    \]
    the claim follows:
    \[
    \sigma^*\theta = \eta_g = G^*\omega + G\cdot \eta_f.
    \]
\end{proof}
Combining the Lemma above together with the correspondence between isomorphism classes principal bundles and $\Sigma$-valued cocycles up to equivalence from Proposition~\ref{prop:principal_bundles_are_cocycles}, we get the following result.
\begin{proposition}\label{prop:cocycles_with_connection}
    Let $(\Sigma,\omega)$ be a Lie groupoid equipped with a (multiplicative) connection. There is a one to one correspondence between isomorphism classes of principal $\Sigma$-bundles with Cartan connection $\theta$ and maximal $\Sigma$-valued cocycles $(U_i,f_i,\varphi_{ij})_{i,j\in I}$ equipped with
    \[
    \eta_i\in \Omega^1(U_i,f_i^*A)
    \]
    satisfying
    \[
    \eta_j|_{U_i\cap U_j} =  \varphi_{ij}^*\omega + \varphi_{ij}\cdot\eta_i.
    \]
\end{proposition}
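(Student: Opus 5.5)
The plan is to build the correspondence in both directions on top of Proposition~\ref{prop:principal_bundles_are_cocycles}, and to use Proposition~\ref{prop:connections_on_trivial_bundle} and Lemma~\ref{lem:change_of_section} for the connection data.

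For the direction from bundles to cocycles, I would start from a principal $(\Sigma,\omega)$-bundle $(P,\theta)$. I would choose local sections $\sigma_i:U_i\to P$; by Lemma~\ref{lem:trivializations_are_sections} each one gives a trivialization $P|_{U_i}\cong P_{f_i}$ with $f_i=\mu\circ\sigma_i$. The transition maps $\varphi_{ij}$ are then defined by $\sigma_j=\varphi_{ij}\cdot\sigma_i$, which matches the convention $s\circ\varphi_{ij}=f_i$, $t\circ\varphi_{ij}=f_j$. I would set $\eta_i:=\sigma_i^*\theta\in\Omega^1(U_i,f_i^*A)$. On $U_i\cap U_j$, Lemma~\ref{lem:change_of_section} is applied inside the trivialization $P_{f_i}$, with $G=\varphi_{ij}$, $f=f_i$ and $g=f_j$. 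It yields exactly $\eta_j=\varphi_{ij}^*\omega+\varphi_{ij}\cdot\eta_i$. Two further checks are needed. First, refining or enlarging the cover keeps the data compatible, so we may pass to a maximal cocycle. Second, an isomorphism $\varphi$ of principal $(\Sigma,\omega)$-bundles sends sections to sections and satisfies $\varphi^*\theta_2=\theta_1$. Hence $(\varphi\circ\sigma_i)^*\theta_2=\sigma_i^*\theta_1$, and isomorphic bundles give the same maximal data.

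For the converse, I would start from $(U_i,f_i,\varphi_{ij},\eta_i)$. I would glue $P$ from the pieces $P_{f_i}=\Sigma\times_M U_i$ as in the construction preceding Proposition~\ref{prop:principal_bundles_are_cocycles}. On each piece, Proposition~\ref{prop:connections_on_trivial_bundle} gives the principal connection
\[
\theta_i=\mathrm{pr}_1^*\omega+\gamma\cdot\pi^*\eta_i .
\]
The key step is to show that these local forms glue. This means checking that the gluing map $(\gamma,x)\mapsto(\gamma\varphi_{ij}(x)^{-1},x)$ pulls $\theta_j$ back to $\theta_i$. Using the multiplicativity equation for $\omega$ from Lemma~\ref{lem:multipl_equiv_notion}, I would compute the pullback of $\mathrm{pr}_1^*\omega$ as $\omega(d\gamma)$ plus a correction. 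That correction is the $\gamma$-translate of $(\varphi_{ij}^{-1})^*\omega$, or equivalently $-\gamma\varphi_{ij}^{-1}\cdot\varphi_{ij}^*\omega$, using $\omega\circ d\mathrm{i}$ and multiplicativity. This correction cancels against $\gamma\varphi_{ij}^{-1}\cdot\eta_j=\gamma\cdot\eta_i+\gamma\varphi_{ij}^{-1}\cdot\varphi_{ij}^*\omega$. Here the compatibility condition is used, together with the fact that the adjoint action from Lemma~\ref{lem:multiplicativity_properties} is a genuine representation. An alternative that avoids the computation is to note that the pullback of $\theta_j$ is a principal connection on $P_{f_i}$. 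By the uniqueness part of Proposition~\ref{prop:connections_on_trivial_bundle} it is then determined by its pullback along $\sigma_{f_i}$. That pullback is $\sigma_j$-data transported back, which equals $\eta_i$ by Lemma~\ref{lem:change_of_section} read in reverse.

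Finally, I would check that the two constructions are mutually inverse up to isomorphism and equivalence. Starting from $(P,\theta)$, the glued bundle is isomorphic to $P$ via the sections, and this isomorphism pulls back $\theta$ correctly, since both connections have the same local forms $\eta_i$. Starting from the cocycle data, the canonical sections $\sigma_{f_i}$ of the glued bundle recover the $\eta_i$.

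I expect the main obstacle to be the gluing verification, which requires careful bookkeeping of the inverse and of the action of $\Sigma$ on $A$. I would handle it through the uniqueness argument rather than the direct computation.
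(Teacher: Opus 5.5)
Your proposal is correct and follows essentially the paper's route. The paper deduces the statement by combining the change-of-section formula of Lemma~\ref{lem:change_of_section} with the bundle--cocycle correspondence of Proposition~\ref{prop:principal_bundles_are_cocycles}; you make explicit the steps it leaves implicit, namely gluing the local connections of Proposition~\ref{prop:connections_on_trivial_bundle} and checking that the two constructions are mutually inverse. Your uniqueness-based gluing check also goes through: the gluing map carries the section $x\mapsto(\varphi_{ij}(x),x)$ of $P_{f_i}$ to the canonical section $\sigma_{f_j}$, and $\varphi_{ij}\cdot$ is a linear isomorphism, so comparison with the compatibility condition forces the pulled-back connection to have local form $\eta_i$.
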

\section{The Weil algebra of a Lie algebroid with connection}\label{sec:weil}

\subsection{The bowtie construction}\label{subs:bowtie}
The presence of a connection $D$ on a Lie algebroid $A$ allows to introduce the adjoint $A$-connection $\nabla$ from Proposition~\ref{prop:adj_rep} on the complex $\rho:A\to TM$. The operators $D$ and $\nabla$ can be combined to obtain a bracket on $TM\oplus A$.

\begin{proposition}\label{prop:adj_alg}
    Let $(A,D)$ be a Lie algebroid with connection. By setting, for all $(X,\alpha)$, $(Y,\beta)$ in $\Gamma(TM\oplus A)$
    \begin{equation}\label{eq:anchor_bowtie}
    \rho_\oplus(X,\alpha) = X+\rho(\alpha)
    \end{equation}
    and 
    \begin{equation}\label{eq:bracket_bowtie}
    [(X,\alpha),(Y,\beta)]_\oplus = ([X,Y]+\nabla_\alpha Y-\nabla_\beta X,[\alpha,\beta]+D_X \beta - D_Y \alpha)
    \end{equation}
    the triple $TM\bowtie A = (TM\oplus A, \rho_\oplus,[\ ,\ ]_\oplus)$ becomes an almost Lie algebroid such that both the inclusions are almost Lie algebroid maps.  
\end{proposition}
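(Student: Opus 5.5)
We need to check four things: the bracket is well defined as a map on sections, it is skew-symmetric, it satisfies the Leibniz rule with respect to the anchor $\rho_\oplus$, and the anchor is bracket preserving, $\rho_\oplus[\,\cdot,\cdot\,]_\oplus=[\rho_\oplus\cdot,\rho_\oplus\cdot]$. For an almost Lie algebroid we do not need the Jacobi identity, and we do not expect it in general, since its failure will involve $\curv^D$ and $\cbas^D$. Once these are checked, the statement about the inclusions is immediate. Setting $X=Y=0$ gives $[(0,\alpha),(0,\beta)]_\oplus=([\alpha,\beta]_A\text{-part},\ \nabla$-terms$)$. More precisely, it gives $(0,[\alpha,\beta])$, because $\nabla_\alpha 0=0$; and $\rho_\oplus(0,\alpha)=\rho(\alpha)$. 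Setting $\alpha=\beta=0$ gives $([X,Y],0)$ with anchor the identity.

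Skew-symmetry is visible from \eqref{eq:bracket_bowtie}: each component is antisymmetric under $(X,\alpha)\leftrightarrow(Y,\beta)$.

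For the Leibniz rule, by skew-symmetry it suffices to replace $(Y,\beta)$ by $f(Y,\beta)$. We expand using three facts: the Leibniz rules of $[\ ,\ ]$ on $TM$ and on $A$; the facts that $\nabla_\alpha$ and $D_X$ are derivations in their second argument, $\nabla_\alpha(fY)=f\nabla_\alpha Y+\rho(\alpha)(f)Y$ and $D_X(f\beta)=fD_X\beta+X(f)\beta$; and the $C^\infty(M)$-linearity of $\nabla_{f\beta}X=f\nabla_\beta X$ and $D_{fY}\alpha=fD_Y\alpha$ in the lower argument. Collecting terms, the extra summands are $X(f)Y+\rho(\alpha)(f)Y$ in the first slot and $\rho(\alpha)(f)\beta+X(f)\beta$ in the second. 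Together these equal $\rho_\oplus(X,\alpha)(f)\,(Y,\beta)$, as required.

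The anchor condition is the only step needing the specific form of $\nabla$ from Proposition~\ref{prop:adj_rep}, and it is where I would be careful with signs. I would compute $\rho_\oplus$ of the bracket as
\[
[X,Y]+\nabla^{TM}_\alpha Y-\nabla^{TM}_\beta X+\rho[\alpha,\beta]+\rho D_X\beta-\rho D_Y\alpha .
\]
Substituting $\nabla^{TM}_\alpha Y=[\rho(\alpha),Y]+\rho(D_Y\alpha)$ and $\nabla^{TM}_\beta X=[\rho(\beta),X]+\rho(D_X\beta)$, the $\rho D$ terms cancel pairwise. Using $\rho[\alpha,\beta]=[\rho\alpha,\rho\beta]$, what remains is
\[
[X,Y]+[\rho\alpha,Y]+[X,\rho\beta]+[\rho\alpha,\rho\beta]=[X+\rho\alpha,\,Y+\rho\beta].
\]
This completes the verification.
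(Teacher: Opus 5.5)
Your proposal is correct and is the ``direct inspection'' the paper appeals to: you check skew-symmetry, derive the Leibniz rule from the derivation properties of $\nabla$ and $D$ together with their $C^\infty(M)$-linearity in the lower slot, and get anchor compatibility from the cancellation of the $\rho\circ D$ terms after substituting $\nabla^{TM}_\alpha Y=[\rho(\alpha),Y]+\rho(D_Y\alpha)$. Only your first sentence about setting $X=Y=0$ is garbled before you correct it to $(0,[\alpha,\beta])$, and that corrected computation also uses $D_0\beta=D_0\alpha=0$.
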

\begin{proof}
    Both the Leibniz identity and the fact that $\rho_\oplus$ is compatible with the bracket follow by direct inspection.
\end{proof}

\begin{proposition}\label{prop:adj_alg_Jacobi}
    Let $u_k=(X_k,\alpha_k)\in\Gamma(TM\oplus A)$ for $k=1,2,3$. The
    Jacobiator of $TM\bowtie A$ is
    \begin{equation}\label{eq:bowtie_jacobiator1}
        \mathrm{Jac}(u_1,u_2,u_3)\ =\ \bigl(-\rho(\mathcal{J}(u_1,u_2,u_3)),\ \mathcal{J}(u_1,u_2,u_3)\bigr),
    \end{equation}
    where
    \begin{equation}\label{eq:bowtie_jacobiator2}
        \mathcal{J}\ =\ -\sum_{\mathrm{cyc}}
        \Bigl(\curv^D(X_1,X_2)\alpha_3+\cbas^D(\alpha_1,\alpha_2)X_3\Bigr),
    \end{equation}
    and the sum runs over the cyclic permutations of the indices. In particular the
    bracket satisfies the Jacobi identity if and only if $(A,D)$ is a flat
    algebroid.
\end{proposition}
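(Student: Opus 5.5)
The plan is to compute the Jacobiator componentwise, reduce everything to the tensors $\curv^D$ and $\cbas^D$, and exploit trilinearity. First observe that $\mathrm{Jac}(u_1,u_2,u_3)=\sum_{\mathrm{cyc}}[[u_1,u_2]_\oplus,u_3]_\oplus$ is $C^\infty(M)$-trilinear. This holds because $TM\bowtie A$ is an almost Lie algebroid: the Leibniz rule holds and $\rho_\oplus$ preserves brackets, by Proposition~\ref{prop:adj_alg}. Being trilinear and alternating, $\mathrm{Jac}$ decomposes into the four "types" $(TM,TM,TM)$, $(TM,TM,A)$, $(TM,A,A)$ and $(A,A,A)$. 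I would compute each type separately on pure elements, i.e.\ on pairs of the form $(X,0)$ or $(0,\alpha)$.

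The two extreme types are easy. For $(X_1,0),(X_2,0),(X_3,0)$ the bracket is just the vector field bracket, so the Jacobiator vanishes. For $(0,\alpha_k)$, the $A$-component is the Jacobiator of $A$, which is $0$. The $TM$-component is $\sum_{\mathrm{cyc}}-\nabla_{\alpha_3}\rho[\alpha_1,\alpha_2]+\ldots$; a cleaner route is to note that the $TM$-component of $[(0,\alpha),(0,\beta)]_\oplus$ is $0$. Expanding, the terms come out as $\nabla_{[\alpha_1,\alpha_2]}0$, so the whole Jacobiator vanishes, consistent with both sums in \eqref{eq:bowtie_jacobiator2} being zero on this type.

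For the mixed type $(X_1,0),(X_2,0),(0,\alpha)$, I will expand:
\[
[[X_1,X_2],\alpha]_\oplus=(-\nabla_\alpha[X_1,X_2],\,-D_{[X_1,X_2]}\alpha)
\]
and
\[
[[X_2,\alpha],X_1]_\oplus=[(\nabla_\alpha X_2,D_{X_2}\alpha),(X_1,0)]_\oplus,
\]
and similarly for the third term. The $A$-component collects $D_{X_1}D_{X_2}\alpha-D_{X_2}D_{X_1}\alpha-D_{[X_1,X_2]}\alpha$ together with terms $D_{\nabla_\alpha X_i}$. The latter should cancel, leaving $\curv^D(X_1,X_2)\alpha$ up to sign. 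The $TM$-component should then be $-\rho$ of that, using the definitions of $\nabla^{TM}$ and the identity $\rho\circ\cbas^D=-R_{\nabla^{TM}}$ from Proposition~\ref{prop:cbas_identities}. Alternatively, I can avoid that identity by directly using $\rho_\oplus(\mathrm{Jac})=0$, which holds since $\rho_\oplus$ is bracket preserving and vector fields satisfy Jacobi: this forces the $TM$-component to equal $-\rho$ of the $A$-component in every type. That shortcut proves \eqref{eq:bowtie_jacobiator1} once $\mathcal J$ is identified as the $A$-component.

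For the type $(X,0),(0,\alpha_1),(0,\alpha_2)$, the $A$-component collects
\[
D_X[\alpha_1,\alpha_2]-[D_X\alpha_1,\alpha_2]-[\alpha_1,D_X\alpha_2]+D_{\nabla_{\alpha_1}X}\alpha_2-D_{\nabla_{\alpha_2}X}\alpha_1,
\]
which is exactly $\cbas^D(\alpha_1,\alpha_2)X$ from Definition~\ref{def:inf_basic_curvature}, up to the sign in \eqref{eq:bowtie_jacobiator2}. Summing over cyclic permutations of general $u_k=(X_k,\alpha_k)$ then produces \eqref{eq:bowtie_jacobiator2}: each mixed pair $(X_i,X_j,\alpha_k)$ or $(\alpha_i,\alpha_j,X_k)$ appears once per cyclic term. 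The main obstacle is sign and bookkeeping in these two mixed computations, in particular checking that the $\nabla$-terms cancel to give exactly $\curv^D$ and $\cbas^D$; I would do them carefully on pure elements.

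Finally, for the "in particular" statement: if $\curv^D=\cbas^D=0$ then $\mathcal J=0$. Conversely, if $\mathrm{Jac}=0$, evaluating on pure elements of the two mixed types isolates $\curv^D(X_1,X_2)\alpha$ and $\cbas^D(\alpha_1,\alpha_2)X$ individually, so both vanish and $(A,D)$ is flat in the sense of Definition~\ref{def:flat_algebroid}.
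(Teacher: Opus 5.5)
Your proposal is correct and is essentially the paper's proof. Both use tensoriality of the Jacobiator, its vanishing on the two pure types, the anchor identity $\mathrm{pr}_1\circ\mathrm{Jac}=-\rho\circ\mathrm{pr}_2\circ\mathrm{Jac}$, the two mixed computations $\mathcal J=-\curv^D(X,Y)\gamma$ and $\mathcal J=-\cbas^D(\beta,\gamma)X$, and cyclic reassembly via total antisymmetry. When writing it out, use $[(Z,0),(0,\gamma)]_\oplus=(-\nabla_\gamma Z,\,D_Z\gamma)$, since two of your intermediate brackets carry the opposite sign; also note that in the $(TM,TM,A)$ case no $D_{\nabla_\gamma X_i}$ terms actually appear in the $A$-component, because the only candidates are of the form $D_{\nabla_\gamma X_i}0$.
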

\begin{proof}
    Because of the Leibniz identity, $\mathcal{J}$ is completely determined by its values on sections of $A$ and $TM$; moreover, both sides
    of~\eqref{eq:bowtie_jacobiator1} are multilinear. On three sections of $TM$ the Jacobiator $\mathrm{Jac}$ coincides with the Jacobiator of vector fields and it identically vanishes; the same holds true for three sections of $A$. 

    Before handling mixed terms, notice that by Proposition~\ref{prop:adj_alg} $\rho_\oplus$ is
    compatible with the bracket and hence it carries $\mathrm{Jac}$
    to the Jacobiator of vector fields; it follows that 
    \[
    \rho \circ \mathrm{pr}_2 \circ \mathrm{Jac} = - \mathrm{pr}_1\circ \mathrm{Jac}.
    \]

    Since
    $[(X,0),(Y,0)]_\oplus=([X,Y],0)$ and
    $[(Z,0),(0,\gamma)]_\oplus=(-\nabla_\gamma Z,\,D_Z\gamma)$, 
    \[
    \mathcal{J}\bigl((X,0),(Y,0),(0,\gamma)\bigr) = -\curv^D(X,Y)\gamma.
    \]

    Moreover, since $[(0,\beta),(0,\gamma)]_\oplus=\bigl(0,[\beta,\gamma]\bigr)$ and $[(W,\delta),(0,\gamma)]_\oplus=\bigl(-\nabla_\gamma W,\ [\delta,\gamma]+D_W\gamma\bigr)$, we have 
    \[
    \mathcal{J}\bigl((X,0),(0,\beta),(0,\gamma)\bigr) = -\cbas^D(\beta,\gamma)X ,
    \]
    by Definition~\ref{def:inf_basic_curvature}.
\end{proof}
The statement above can be understood within the framework of matched pairs, introduced in~\cite{Lu97} and further studied in~\cite{Mok97} and~\cite{Mackenzie2005}. A matched pair of Lie agebroids $E$ and $F$ is the datum of an $A$ connection $\nabla^F$ on $F$ and an $F$-connection $\nabla^E$ on $E$ satisfying certain compatibility conditions, for which we refer to~\cite{Mok97}; when $E=A$, $F=TM$, and we are given an $A$-connection $\nabla$ on $TM$ and a connection $D$ on $A$, the conditions boil down to $D$ being infinitesimally multiplicative, $\nabla$ being the adjoint connection induced by $D$, and $D$ being flat. On the other hand, in~\cite{Mok97} it is also shown that $\nabla^F$ and $\nabla^E$ form a matched pair if and only if the bracket on $E\oplus F$ defined by
    \begin{equation*}
    [(e_1,f_1),(e_2,f_2)] = ([e_1,e_2]^E+\nabla^E_{f_1} e_2-\nabla^E_{f_2} e_1,[e_1,e_2]^F+\nabla^F_{e_1} f_2-\nabla^F_{e_2} f_1)
    \end{equation*}
satisfies the Jacobi identity.

Below, an element of $\Gamma(\Lambda^pT^*M\otimes \Lambda^qA^*)$ is identified with the element of $\Lambda^{p+q}(TM\oplus A)^*$ evaluated on base directions first -- i.e. its value on
    \[
        (X_1,\dots,X_p,\alpha_1,\dots,\alpha_q),
        \qquad X_i\in TM,\quad \alpha_j\in A,
    \]
    is the corresponding component. 

For later use, we need to describe the Chevalley-Eilenberg operator
\[
\Omega^\bullet(TM\bowtie A) \to \Omega^{\bullet+1}(TM\bowtie A)
\]
on $TM\bowtie A$. By~\cite[Prop.~4.6]{LaurStienXu2010}, if $TM\bowtie A$ is a Lie algebroid the Chevalley-Eilenberg differential can be presented as the differential of a double complex that projects to both $(\Omega^\bullet(M),d_{\mathrm{dR}})$ and $(\Omega^\bullet(A),d_{\mathrm{CE}})$. Here, $d_\mathrm{dR}$ denotes the de Rham differential on $\Omega^\bullet(M)$ and $d_\mathrm{CE}$ denotes the Chevalley-Eilenberg differential on $\Omega^\bullet(A)$. 
   
More precisely: 
\begin{itemize}
    \item the connection $D$ on $A$ induces a dual connection $D^*$ on $A^*$ and on all the exterior powers $\Lambda^pA^*$, hence for all $p\in \mathbb{N}$ one has operators 
    \[
    d_{D^*}: \Omega^\bullet(M,\Lambda^p A^*) \to \Omega^{\bullet+1}(M,\Lambda^p A^*), 
    \]
    where, for $p=0$, we set $d_{D^*} = d_\mathrm{dR}$, the de Rham differential on $M$;
    \item similarly the $A$-connection $\nabla$ on $TM$ induces a dual $A$-connection $\nabla^*$ on $T^*M$ and on all the exterior powers $\Lambda^qT^*M$, hence for all $q\in \mathbb{N}$ one has operators 
    \[
    d_{\nabla^*}: \Omega^\bullet(A,\Lambda^q T^*M)\to \Omega^{\bullet+1}(A,\Lambda^q T^*M),
    \]
    where, for $q=0$, we set $d_{\nabla^*} = d_\mathrm{CE}$, the Chevalley-Eilenberg differential of $A$;
    \item the operator ${d}_\oplus := d_{D^*} + (-1)^p d_{\nabla^*}$,
    \[
    {d}_\oplus:
    \Gamma(\Lambda^pT^*M\otimes\Lambda^qA^*) \to \Gamma(\Lambda^{p+1}T^*M\otimes\Lambda^qA^*)\oplus \Gamma(\Lambda^{p}T^*M\otimes\Lambda^{q+1}A^*),
    \]
    which induces an operator
    \[
    d_\oplus: \Omega^\bullet(TM\bowtie A) \to \Omega^{\bullet+1}(TM\bowtie A),
    \]
    which is precisely the Chevalley-Eilenberg operator of $TM\bowtie A$.
\end{itemize}
Putting all together, we obtain a bi-graded algebra structure on $\Omega^\bullet(TM\bowtie A)$ 
and the operator
\begin{equation}\label{eq:diff_adjoint_algebroid}
{d}_\oplus := d_{D^*} + (-1)^p d_{\nabla^*}: \Omega^\bullet(TM\bowtie A)\to \Omega^{\bullet+1}(TM\oplus A),
\end{equation}
which satisfies the Leibniz identity and 
comes with the maps of complexes
\[
\Omega^\bullet(M)\overset{p_L}{\longleftarrow} \Omega^\bullet(TM\bowtie A) \overset{p_R}{\longrightarrow} \Omega^\bullet(A)
\]
dual to the inclusions
\[
TM\overset{i_L}{\longrightarrow} TM\oplus A \overset{i_R}{\longleftarrow} A
\]

\begin{remark}\label{rmk:jacobi_via_d_oplus}
Notice that, since an almost Lie algebroid satisfies the Jacobi identity if and only if its Chevalley-Eilenberg operator squares to zero, the remarks above can be used to give a proof of Proposition~\ref{prop:adj_alg_Jacobi}, by using Lemma~\ref{lem:dplus_square_curv} below.
\end{remark}

\begin{remark}\label{rmk:isotropy_adjoint}[The isotropy of $TM\bowtie A$]
The almost Lie algebroid $TM\bowtie A$ is transitive. Its isotropy vector bundle is isomorphic to the vector bundle $A$ equipped with a structure of bundle of almost Lie algebras $(A,\{\ ,\ \})$. Explicitly, the isomorphism is given by
\[
\alpha\in A \mapsto (-\rho(\alpha),\alpha)\in TM\oplus A
\]
and, given a point $x\in M$, the bracket on $A_x$ is
 \[
    \{\ ,\ \}_x: A_x\times A_x \to A_x,\quad (\alpha_x,\beta_x) \to - D_{\rho(\alpha)} \beta + D_{\rho(\beta)}\alpha + [\alpha,\beta],
\]
where $\alpha,\beta\in \Gamma(A)$ are any extensions of $\alpha_x,\beta_x$. This is precisely the bracket introduced in the proof of Proposition~\ref{prop:flat_Cartan_action}. 


\end{remark}

\subsection{The Weil algebra}\label{subs:weil_algebra}

Here, we discuss the Weil algebra of a Lie algebroid $A$ over $M$. It was introduced in~\cite{AbCr2009Weil}; the description in terms of a connection $D$ on $A$, which is the one we present, is that of~\cite{AbCr2011RepUpHomAlg}. The main difference is presentation is a different choice of bidegree (the total degree stays the same): for instance, $\Gamma(S^1A^*)$ has bidegree $(0,2)$ below and $(1,1)$ in~\cite[Definition 3.8]{AbCr2009Weil}.

The Weil algebra $W^\bullet(A,D)$ is the bi-graded algebra
\begin{equation}\label{eq:Weil_alg_KT}
W^{r,s}(A,D) = \bigoplus\limits_{\substack{u+w = r,\ 2v = s}} \Gamma(\Lambda^uT^*M \otimes S^v(A^*)\otimes \Lambda^w(A^*)) 
\end{equation}
which is isomorphic to 
\[
\bigoplus\limits_{\substack{2v = s}} \Gamma(\Lambda^r(TM\oplus A)^*\otimes S^v(A^*))
\]
and admits as sub-algebra the algebra $\Omega^\bullet(TM\bowtie A)$ of linear forms on the almost Lie algebroid $TM\bowtie A$ -- Proposition \ref{prop:adj_alg}. Additionally, $W^\bullet(A,D)$ can be given the structure of differential graded algebra with grading given by the sum $r+s$. 

Before describing the differential we observe that there is a tautological isomorphism $\iota:\Lambda^1A^*\to S^1A^*$ -- the identity on $A^*$. In what follows, we will use the same notation for its inverse. Moreover, we will extend $\iota$ to a map defined on $\mathrm{Hom}(\Lambda^k(TM\oplus A),\Lambda^1A^*)$. That is, we consider
\begin{equation*}
\iota: \mathrm{Hom}(\Lambda^k(TM\oplus A), \Lambda^1A^*)\to \mathrm{Hom}(\Lambda^k(TM\oplus A), S^1A^*)
\end{equation*}
 characterized by the formula
\begin{equation}\label{eq:tautological_shift}
\iota(\eta\otimes \alpha) = (-1)^{k}\eta\otimes \iota(\alpha),\quad \eta\in \Lambda^k(TM\oplus A)^*,\ \alpha\in \Lambda^1A^*
\end{equation}
together with $\mathbb{R}$-linearity; concisely,
\[
\iota(\xi) = (-1)^k\cdot \iota\circ \xi, \quad \xi\in \mathrm{Hom}(\Lambda^k(TM\oplus A), \Lambda^1A^*).
\]
\begin{remark}
    Notice that $\iota$ acts on the output of an element of $\mathrm{Hom}(\Lambda^k(TM\oplus A),\Lambda^1A^*)$. When presenting $\mathrm{Hom}(\Lambda^k(TM\oplus A),\Lambda^1A^*)$ as $\Lambda^k(TM\oplus A)^*\otimes A^*$, by convention we understand $\iota$ as acting on the last copy of $A^*$.
\end{remark}

With this in mind, the differential $d_W$ on the Weil algebra is given by
    \begin{equation}\label{eq:diff_Weil}
        d_W: W^\bullet(A,D) \to W^{\bullet+1}(A,D)
    \end{equation}
    acting as 
    \[
    d_W f = d_\mathrm{dR} f + d_\mathrm{dR} f\circ \rho
    \]
    on functions $f\in C^\infty(M)$ and as the following sum of three derivations given on generators  $\eta\in \Omega^1(M)$, $\alpha\in \Omega^1(A)$, and $a\in \Gamma(S^1A^*)$.
    \begin{itemize}
        \item $d_{\iota}:W^{r,s}(A,D)\to W^{r-1,s+2}$ such that
        \begin{equation}\label{eq:diff_shift}
        d_{\iota}(\eta) = -\iota(\eta\circ\rho),\ d_{\iota}(\alpha) = \iota(\alpha),\ d_{\iota}(a) = 0.
        \end{equation}
        \item $d_\mathrm{cad}: W^{r,s}(A,D) \to W^{r+1,s}(A,D)$ such that
        \begin{equation}\label{eq:diff_ad}
            d_\mathrm{cad}(\eta) = d_\oplus(\eta),\ d_\mathrm{cad}(\alpha) = d_\oplus(\alpha),\ d_\mathrm{cad}(a) = -\iota\left((d_{\nabla^{A^*}}+d_{D^*})(\iota(a))\right).
        \end{equation}
        \item $d_\mathrm{curv}:W^{r,s}(A,D)\to W^{r+3,s-2}$ such that
        \begin{equation}\label{eq:diff_curv}
        d_\mathrm{curv}(\eta) = 0,\ d_\mathrm{curv}(\alpha) = 0,\ d_\mathrm{curv}(a) = \langle a, \curv^D \rangle + \langle a, \cbas^D \rangle.
        \end{equation}
    \end{itemize}
    Here $d_{\nabla^{A^*}}$ is the Chevalley-Eilenberg differential of $A$ with coefficients in $A^*$, taken with respect to the dual of the adjoint $A$-connection $\nabla^A$ of Proposition~\ref{prop:adj_rep},
    \[
    (\nabla^{A^*}_\alpha \xi)(\beta) = \rho(\alpha)\left(\xi(\beta)\right) - \xi\left(\nabla^A_\alpha\beta\right),\quad \alpha,\beta\in \Gamma(A),\ \xi\in\Gamma(A^*).
    \]
 \begin{remark}
     Locally, on open subsets $U\subset M$ supporting coordinates charts, the Weil algebra is freely generated as a dga by $C^\infty(U)$ and $\Omega^1(A|_U)$; indeed, one can express elements of $\Omega^1(U)$ and $\Gamma(S^1A^*|_U)$ in terms of the former and their differentials; one checks, by a direct computation on generators, that the formulas given above do define a dga. 

     Since below we often need formulas involving elements in $\Gamma(S^1A^*)$, we mostly work with the set of generators consisting of $C^\infty(M)$, $\Omega^1(A)$, $\Omega^1(M)$, and $\Gamma(S^1A^*)$.
 \end{remark}

    While $d_\iota$ squares to $0$, $d_\mathrm{cad}$ does so if and only if $d_\mathrm{curv}$ vanishes identically -- in fact, we can describe the relation between $d_\mathrm{cad}$ and $d_\mathrm{curv}$ rather precisely. First of all

\begin{lemma}\label{lem:relation_differentials}
    For $\eta\in \Omega^1(M)$ and $\alpha\in \Omega^1(A)$
    \begin{align}
        d_W(\eta) &= d_\oplus(\eta) -\iota(\eta\circ \rho),\label{eq:dW_on_eta}\\
        d_W(\alpha) &= d_\oplus(\alpha)+\iota(\alpha).\label{eq:dW_on_alpha}
    \end{align}
\end{lemma}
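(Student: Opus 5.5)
The plan is to unwind the definition of $d_W$ as the sum $d_\iota+d_\mathrm{cad}+d_\mathrm{curv}$ of three derivations and evaluate each summand on the two kinds of degree-one generators. Both $\eta\in\Omega^1(M)$ and $\alpha\in\Omega^1(A)$ have total degree one and bidegree $(1,0)$. For such elements $d_W$ is computed as the sum of the three derivations applied to the generator, because these generators are not products.

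First, $d_\mathrm{curv}$ vanishes on $\eta$ and on $\alpha$ by \eqref{eq:diff_curv}, so it contributes nothing. Second, $d_\mathrm{cad}$ acts on $\eta$ and $\alpha$ as $d_\oplus$ by \eqref{eq:diff_ad}. Here we use the identification of $\Gamma(\Lambda^pT^*M\otimes\Lambda^qA^*)$ with forms on $TM\bowtie A$, under which $\eta$ is a $1$-form on $TM\oplus A$ vanishing on $A$ and $\alpha$ one vanishing on $TM$. The operator $d_\oplus=d_{D^*}+(-1)^pd_{\nabla^*}$ is exactly the Chevalley–Eilenberg operator of $TM\bowtie A$. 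Third, $d_\iota$ gives $-\iota(\eta\circ\rho)$ and $\iota(\alpha)$ respectively, by \eqref{eq:diff_shift}. Summing the three contributions yields \eqref{eq:dW_on_eta} and \eqref{eq:dW_on_alpha}.

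The only point needing care is consistency of conventions. The term $\eta\circ\rho$ should be read as an element of $\Gamma(A^*)=\Lambda^1A^*$, and $\iota$ on $\Lambda^1A^*$ (the case $k=0$ of \eqref{eq:tautological_shift}) is the tautological identification with $S^1A^*$, with no sign. So $-\iota(\eta\circ\rho)$ lies in $W^{0,2}$, as required by the bidegree of $d_\iota$.

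As a sanity check I would also verify that the formula is compatible with the action on functions, $d_Wf=d_\mathrm{dR}f+d_\mathrm{dR}f\circ\rho$. This means checking that $d_W(f\eta)=d_Wf\wedge\eta+f\,d_W\eta$ agrees with the right-hand side of \eqref{eq:dW_on_eta} for $f\eta$. This holds because $d_\oplus$ satisfies the Leibniz rule with $d_\oplus f=d_\mathrm{dR}f+d_\mathrm{dR}f\circ\rho$, while $\iota(\,\cdot\circ\rho)$ is $C^\infty(M)$-linear.

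I expect no real obstacle: the statement is essentially a restatement of the definitions. The mild subtlety is only the sign conventions in $\iota$ and in $d_\oplus$ on bidegree $(0,1)$ versus $(1,0)$ components.
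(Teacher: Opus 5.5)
Your proposal is correct and is exactly the paper's argument: $d_\mathrm{curv}$ vanishes on both generators by \eqref{eq:diff_curv}, $d_\mathrm{cad}$ acts as $d_\oplus$ by \eqref{eq:diff_ad}, and the remaining terms come from $d_\iota$ via \eqref{eq:diff_shift}. Your extra Leibniz check on $f\eta$ is harmless but unnecessary, since the lemma just reads the definitions off on generators.
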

\begin{proof}
    By~\eqref{eq:diff_curv} the term $d_\mathrm{curv}$ vanishes in both cases, by~\eqref{eq:diff_ad} the term $d_\mathrm{cad}$ is $d_\oplus$ in both cases, and the remaining terms are those in~\eqref{eq:diff_shift}.
\end{proof}

\begin{lemma}\label{lem:dplus_square_curv}
    The following identity holds:
    \[
    d_\oplus^2(\alpha) = -d_\mathrm{curv}d_\iota(\alpha),\quad \alpha\in \Omega^1(A).
    \]
\end{lemma}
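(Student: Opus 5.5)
The plan is to compute both sides as explicit trilinear forms on $TM\oplus A$ and compare them component by component, using the Jacobiator of $TM\bowtie A$ from Proposition~\ref{prop:adj_alg_Jacobi}.

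\emph{Left hand side.} We regard $\alpha\in\Omega^1(A)$ as the element of $\Omega^1(TM\bowtie A)$ which vanishes on $TM$. Since $d_\oplus$ is the Chevalley-Eilenberg operator of the almost Lie algebroid $TM\bowtie A$, we can use the standard Koszul formula for it. Proposition~\ref{prop:adj_alg} says that $\rho_\oplus$ is bracket preserving, so $d_\oplus^2 f=0$ on functions. Expanding $d_\oplus^2\alpha$ with the Koszul formula, all terms of the form $\rho_\oplus(u_i)\rho_\oplus(u_j)\alpha(u_k)$ cancel against the terms $\rho_\oplus([u_i,u_j]_\oplus)\alpha(u_k)$, again by compatibility of the anchor with the bracket. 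What remains is
\[
(d_\oplus^2\alpha)(u_1,u_2,u_3) = -\alpha\bigl(\mathrm{Jac}(u_1,u_2,u_3)\bigr),
\]
where the overall sign is the one dictated by the Koszul formula and will be rechecked. By~\eqref{eq:bowtie_jacobiator1} the $A$-component of $\mathrm{Jac}$ is $\mathcal{J}$, so this equals $-\alpha(\mathcal{J}(u_1,u_2,u_3))$. Substituting~\eqref{eq:bowtie_jacobiator2} then gives
\[
(d_\oplus^2\alpha)(u_1,u_2,u_3) = \sum_{\mathrm{cyc}}\alpha\bigl(\curv^D(X_1,X_2)\alpha_3+\cbas^D(\alpha_1,\alpha_2)X_3\bigr).
\]
Decomposing by bidegree, this has a component in $\Gamma(\Lambda^2T^*M\otimes A^*)$, namely $\alpha\circ\curv^D$, and a component in $\Gamma(T^*M\otimes\Lambda^2A^*)$, namely $\alpha\circ\cbas^D$. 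These components are read off with the convention fixed above, where base directions are evaluated first; the cyclic sum collapses to a single term on arguments of the form $(X_1,X_2,\alpha_3)$ and $(X_3,\alpha_1,\alpha_2)$.

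\emph{Right hand side.} By~\eqref{eq:diff_shift} we have $d_\iota(\alpha)=\iota(\alpha)\in\Gamma(S^1A^*)$. Then by~\eqref{eq:diff_curv},
\[
-d_\mathrm{curv}d_\iota(\alpha) = -\langle\iota(\alpha),\curv^D\rangle-\langle\iota(\alpha),\cbas^D\rangle .
\]
Here $\langle a,\curv^D\rangle$ means the pairing of $a$ with the $A$-valued output of $\curv^D$, which gives an element of $\Lambda^2T^*M\otimes A^*$; similarly $\langle a,\cbas^D\rangle$ lies in $\Lambda^2A^*\otimes T^*M$. These are exactly the two bidegree components found for the left hand side, so the identity reduces to a sign match in each bidegree.

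\emph{Main obstacle.} The difficulty is entirely in the signs, and there are three sources to reconcile:
\begin{itemize}
\item the sign of the Koszul formula relating $d^2$ to the Jacobiator, which should be checked on a test case such as a Lie algebra;
\item the convention for $\langle a,\cdot\rangle$ in~\eqref{eq:diff_curv}, including how the factors $\Lambda^2A^*\otimes T^*M$ are reordered to the base-first ordering $T^*M\otimes\Lambda^2A^*$, which costs a sign $(-1)^2=+1$;
\item the minus sign in the definition~\eqref{eq:bowtie_jacobiator2} of $\mathcal{J}$.
\end{itemize}
I would fix these by evaluating both sides directly on $(X,Y,\gamma)$ and on $(X,\beta,\gamma)$. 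On these arguments the left hand side is literally $-\alpha(\mathcal{J})$, and the proof of Proposition~\ref{prop:adj_alg_Jacobi} already computes $\mathcal{J}$ there as $-\curv^D(X,Y)\gamma$ and $-\cbas^D(\beta,\gamma)X$ respectively. It then remains to check that these values agree with the stated pairing convention for $d_\mathrm{curv}$.
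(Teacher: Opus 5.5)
Your reduction is a legitimate alternative to the paper's argument. As written, however, it proves the identity with the wrong sign, and once Proposition~\ref{prop:adj_alg_Jacobi} is taken as input, that sign is the only thing left to prove. Your Koszul expansion is fine: the anchor terms cancel because $\rho_\oplus$ preserves brackets. What it actually gives is
\[
(d_\oplus^2\alpha)(u_1,u_2,u_3)=\alpha\Bigl(\sum_{\mathrm{cyc}}\bigl[[u_1,u_2]_\oplus,u_3\bigr]_\oplus\Bigr),
\]
so your $-\alpha(\mathrm{Jac})$ holds for $\mathrm{Jac}=\sum_{\mathrm{cyc}}[u_1,[u_2,u_3]_\oplus]_\oplus$. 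The paper never states its convention, but the values $\mathcal{J}=-\curv^D(X,Y)\gamma$ and $\mathcal{J}=-\cbas^D(\beta,\gamma)X$ in the proof of Proposition~\ref{prop:adj_alg_Jacobi} force the other one. Indeed, with $\sum_{\mathrm{cyc}}[u_1,[u_2,u_3]_\oplus]_\oplus$ the $A$-components come out as $+\curv^D(X,Y)\gamma$ and $+\cbas^D(\beta,\gamma)X$. Feeding \eqref{eq:bowtie_jacobiator2} into your $-\alpha(\mathcal{J})$ therefore gives $d_\oplus^2\alpha=+\langle\alpha,\curv^D\rangle+\langle\alpha,\cbas^D\rangle=+d_\mathrm{curv}d_\iota(\alpha)$, the opposite of the statement. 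Neither of your proposed repairs detects this. A Lie algebra test only compares the Koszul sign with the Jacobiator convention you already chose. The pairing in \eqref{eq:diff_curv} is the plain one, $\langle a,\curv^D\rangle(X,Y,\gamma)=a(\curv^D(X,Y)\gamma)$, and reordering to base-first costs $+1$ as you note, so there is no sign there to absorb the discrepancy.

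The fix is to stop importing $\mathcal{J}$ with an unknown convention and evaluate the Koszul formula directly. On $((X,0),(Y,0),(0,\gamma))$ all first-order terms cancel, leaving $\alpha\bigl(D_{[X,Y]}\gamma+D_YD_X\gamma-D_XD_Y\gamma\bigr)=-\alpha(\curv^D(X,Y)\gamma)$. On $((X,0),(0,\beta),(0,\gamma))$ one gets $-\alpha(\cbas^D(\beta,\gamma)X)$. Equivalently $d_\oplus^2\alpha=+\alpha(\mathcal{J})$, and then your bidegree comparison goes through.

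For comparison, the paper never uses the Jacobiator. It splits $d_\oplus^2\alpha=d_{D^*}^2\alpha+\bigl(d_{D^*}d_\mathrm{CE}\alpha-d_{\nabla^*}d_{D^*}\alpha\bigr)$ by bidegree, using $d_\mathrm{CE}^2=0$. It then verifies by hand that $d_{D^*}^2\alpha=-\langle\alpha,\curv^D\rangle$ and $d_{D^*}d_\mathrm{CE}\alpha-d_{\nabla^*}d_{D^*}\alpha=-\langle\alpha,\cbas^D\rangle$. Remark~\ref{rmk:jacobi_via_d_oplus} even runs the implication the other way, from this lemma to Proposition~\ref{prop:adj_alg_Jacobi}. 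Your direction is not circular, since that proposition is proved directly. It is also more conceptual: it rests on the general fact that $d^2$ on one-forms of an almost Lie algebroid is dual to the Jacobiator. The cost is that all the computational content moves into that proposition, whose proof is terse. The paper's version instead stays in the bigraded language that is reused in Lemma~\ref{lem:kappa_carrying_step}.
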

\begin{proof}
    We have that
    \[
    d_\oplus^2(\alpha) = d_{D^*}^2(\alpha) + d_{D^*}d_\mathrm{CE}(\alpha)-d_{\nabla^*}d_{D^*}(\alpha).
    \]

    The statement follows from the two identities
    \begin{equation}\label{eq:flat_from_diff}
    d_{D^*}^2(\alpha) = - \langle \alpha, \curv^D \rangle.
    \end{equation}
    and
    \begin{equation}\label{eq:mult_from_diff}
    d_{D^*}d_\mathrm{CE}(\alpha)-d_{\nabla^*}d_{D^*}(\alpha) = -\langle \alpha, \cbas^D \rangle
    \end{equation}

    We begin with proving~\eqref{eq:flat_from_diff}, which is standard. The $3$-form $d_{D^*}(d_{D^*}(\alpha))$ is the differential of $d_{D^*}(\alpha)$ with respect to the connection $D^*$ on $A^*$ (the dual connection of $D$); hence
    \[
    d_{D^*}^2(\alpha)(X,Y) = D_X^*(d_{D^*}\alpha(Y))-D_Y^*(d_{D^*}\alpha(X)) - d_{D^*}\alpha([X,Y])
    \]
    for all vector fields $X,Y$ on $M$. When pairing the above expression with some $c\in \Gamma(A)$, we obtain
    \begin{align*}
        L_X(\langle c, d_{D^*}\alpha(Y) \rangle) - \langle D_X c, d_{D^*}\alpha(Y) \rangle\\
        L_Y(\langle c, d_{D^*}\alpha(X) \rangle) - \langle D_Y c, d_{D^*}\alpha(X) \rangle\\
        -L_{[X,Y]} \langle c, \alpha \rangle + \langle D_{[X,Y]} c, \alpha\rangle
    \end{align*}
    simply by using the Leibniz rule for $L$, $D$, and the dual connection $D^*$ on the evaluation $\langle\ ,\ \rangle$:
    \[
    L_Z\langle e, f \rangle = \langle D_Z e, f \rangle +\langle e, D^*_Zf \rangle,\quad Z\in \mathcal{X}(M),\ e,f\in \Gamma(A).
    \]
    Using this same formula again yields
    \[
    \langle c, d_{D^*}^2(\alpha)(X,Y) \rangle = \langle -D_XD_Yc + D_YD_Xc + D_{[X,Y]}c, \alpha \rangle
    \]
    which, by definition, is
    \[
    \langle  -\curv^D(X,Y)(c), \alpha \rangle
    \]
    proving~\eqref{eq:flat_from_diff}.

    To prove~\eqref{eq:mult_from_diff}, we start by computing $d_{D^*}d_\mathrm{CE}(\alpha)$. Let $X\in \mathcal{X}(M)$, $b,c\in \Gamma(A)$. Then
    \[
    (d_{D^*}d_\mathrm{CE}(\alpha))(X,b,c) = (D^*_Xd_\mathrm{CE}\alpha)(b,c),
    \]
    which, using again the Leibniz rule for $L$, $D$, and $D^*$ on the evaluation, is equal to
    \[
    L_X[d_\mathrm{CE}\alpha(b,c)] -d_\mathrm{CE}\alpha(D_Xb,c)-d_\mathrm{CE}\alpha(b,D_Xc).
    \]
    Expanding $d_\mathrm{CE}\alpha$ and using the definition of $\nabla$ and $\cbas^D$, we find the expression above to be equal to 
    \begin{align*}
    -\langle \alpha, (\cbas^D(b,c)(X))-D^*_X\alpha([b,c]) -D^*_{\nabla_bX}\alpha(c)\\ +D^*_{\nabla_cX}\alpha(b)+L_{\rho(b)}(D^*_X\alpha(c))-L_{\rho(c)}(D^*_X\alpha(b))
    \end{align*}
    We also have that $D^*_Y\alpha(d) = (d_{D^*}\alpha(d))(Y)$ for all $Y\in \mathcal{M}$ and $d\in \Gamma(A)$, hence we can manipulate the expression 
    \[
    D^*_X\alpha([b,c]) -D^*_{\nabla_bX}\alpha(c)+D^*_{\nabla_cX}\alpha(b)+L_{\rho(b)}(D^*_X\alpha(c))-L_{\rho(c)}(D^*_X\alpha(b))
    \]
    and obtain
    \[
    \begin{gathered}
    (d_{D^*}\alpha([b,c]))(X)-(d_{D^*}\alpha(c))(\nabla_bX)+(d_{D^*}\alpha(b))(\nabla_cX)\\
    +L_{\rho(b)}(d_{D^*}\alpha(c))-L_{\rho(c)}(d_{D^*}\alpha(b))
    \end{gathered}
    \]
    Using the Leibniz rule for $\nabla$, $\nabla^*$, and the evaluation, the last expression is equal to 
    \[
    (\nabla^*_b(d_{D^*}\alpha(c)))(X) -(\nabla^*_c(d_{D^*}\alpha(b)))(X) -(d_{D^*}\alpha([b,c]))(X)
    \]
    which is nothing but
    \[
    (d_{\nabla^*}d_{D^*}(\alpha))(b,c,X).
    \]
    Putting all together, we have 
    \[
    d_{D^*}d_\mathrm{CE}\alpha(X,b,c) = d_{\nabla^*}d_{D^*}\alpha(b,c,X) -\langle \alpha, \cbas^D(b,c,X) \rangle 
    \]
    concluding the proof.
\end{proof}

\section{The connection-curvature construction}\label{sec:construction}

In this section we explain how to construct characteristic classes living in the base of a principal $(\Sigma,\omega)$-bundle. Our approach is closely inspired to Kamber and Tondeur's treatment of classical Chern-Weil theory from~\cite{KamTond1975}. The classical theory is recovered from ours by observing that, if $G$ is a Lie group, a principal connection $\theta$ on a principal $G$-bundle $P$ makes $(P,\theta)$ into a principal $(G,\omega_\mathrm{MC})$-bundle, where $\omega_\mathrm{MC}$ is the Maurer-Cartan form.

\subsection{The map $\Theta^*:\Omega^\bullet(TM\bowtie A)\to \Omega^\bullet(P)$}\label{subs:Theta_star}
We begin by constructing the groupoid version of the algebra map
\[
\theta^*:C^\bullet(\mathfrak{g})\to \Omega^\bullet(P)
\]
associated to a connection $\theta$ on a principal $G$-bundle for some Lie group $G$ with Lie algebra $\mathfrak{g}$. Here, we do not need the connection $\omega$ to be multiplicative. In fact, it is sufficient to work at the infinitesimal level: thus, we fix a Lie algebroid $A$ and a connection $D$ on the vector bundle $A$. We will use $\nabla$ to denote the adjoint $A$-connection induced by $D$ on the complex $A\overset{\rho}{\to} TM$.

Rather than defining a map on $\Omega^\bullet(A)$, we will work on the almost algebroid $TM\bowtie A$ from Proposition~\ref{prop:adj_alg}. Recall that the underlying vector bundle of $TM\bowtie A$ is $TM\oplus A$, the anchor is $\mathrm{id}+\rho$, and the bracket on $\Gamma(TM\bowtie A)$ is obtained from the connection $D$ and $\nabla$ in such a way that both $TM$ and $A$ are subalgebroids. $TM\bowtie A$ is itself a Lie algebroid if and only if $D$ is infinitesimally multiplicative flat.

Let $(P,\theta)$ be a pair consisting of a principal $\Sigma$-bundle and a connection one-form $\theta\in \Omega^1(P,\mu^*A)$. Recall that, given a surjective submersion $f:P\to B$, and an anchored vector bundle $E\to B$, $\rho_E:E\to TB$, a one form $\eta\in \Omega^1(P,f^*E)$ is said to be \textbf{anchored} if
\[
\rho_E\circ \eta = df.
\]
Equivalently, $\eta$ defines a map of vector bundles anchored over $P$
\[
\begin{tikzcd}
TP \arrow{d}\arrow["\eta"]{r}& E \arrow{d}\\
P  \arrow["f"]{r}& B
\end{tikzcd}.
\]
and in turn such a map induces (and is equivalently described in terms of) a map of vector bundles
\[
\begin{tikzcd}
TP \arrow{d}\arrow["\hat{\eta}"]{r}& f^*E \arrow{d}\\
P  \arrow["f"]{r}& P
\end{tikzcd}.
\]
In general, a connection $\theta\in \Omega^1(P,\mu^*A)$ on a principal $\Sigma$-bundle $P$ is not anchored: $\rho\circ \theta\neq d\mu$. However, it induces a canonical anchored one form with values in $TM\oplus A$.
\begin{definition}\label{def:anch_form_conn}
Given a connection $\theta$ on $P$, \textbf{the anchored $1$-form $\Theta\in \Omega^1(P,TM\oplus A)$ associated to $\theta$} is defined by setting
\[
\Theta := (\an_\theta,\theta).
\]
here, 
$\an_\theta:=d\mu-\rho\circ \theta$.
\end{definition}

For later use, we immediately prove the following
\begin{lemma}\label{lem:anchored_form_is_equiv}
    Let $(\Sigma,\omega)$ be a groupoid with Cartan connection and $\theta$ be a principal $(\Sigma,\omega)$-connection on $P$. The associated anchored one-form $\Theta$ is $\Sigma$-equivariant, i.e.
    \[
    g\cdot \Theta(w) = \Theta(g\cdot w),\quad w\in TP
    \]
    where
    \begin{itemize}
        \item the representation on the left hand side is the adjoint representation of $\Sigma$ on the complex $A\overset{\rho}{\longrightarrow} TM$ induced by the Cartan connection $\omega$;
        \item the representation on the right hand side is the adjoint representation of $\Sigma\ltimes P$ on the complex $\mu^*A \overset{a_P}{\longrightarrow} TP$ induced by the Cartan connection $\pr^*_1\omega$.
    \end{itemize}
\end{lemma}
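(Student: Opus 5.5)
We check the two components of $\Theta=(\an_\theta,\theta)$ separately. The $A$-component is exactly the defining property of a principal $(\Sigma,\omega)$-connection (Definition~\ref{def:princ_conn}), namely $\theta(g\cdot w)=g\cdot\theta(w)$. The real work is the $TM$-component: we must show
\[
d\mu(g\cdot w)-\rho(\theta(g\cdot w)) \;=\; g\cdot d\mu(w)-g\cdot\rho(\theta(w)),
\]
where on the right $g$ acts on $TM$ through the adjoint quasi-representation $v\mapsto dt(\hor^\omega_g(v))$, which is a genuine representation by Lemma~\ref{lem:multiplicativity_properties}.

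The plan rests on two naturality facts.

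First, we show that $d\mu$ intertwines the two actions: $d\mu(g\cdot w)=g\cdot d\mu(w)$. Write $g\cdot w=dm_P(\hor^\omega_g(d\mu(w)),w)$ and use $\mu(m_P(h,p))=t(h)$. Differentiating gives
\[
d\mu(g\cdot w)=dt\bigl(\hor^\omega_g(d\mu(w))\bigr)=g\cdot d\mu(w).
\]

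Second, we show that $\rho$ is equivariant: $\rho(g\cdot\alpha)=g\cdot\rho(\alpha)$. This was noted right after the definition of the adjoint quasi-representation; it also follows by applying $dt$ to $dR_{g^{-1}}dm(\hor^\omega_g\rho(\alpha),\alpha)$.

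Combining these with the $A$-component identity gives
\[
\rho(\theta(g\cdot w))=\rho(g\cdot\theta(w))=g\cdot\rho(\theta(w)).
\]
Subtracting this from the first fact yields the $TM$-component identity.

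It remains to confirm that this is compatible with the action on the complex $\mu^*A\to TP$ that the statement refers to. The action of $\Sigma\ltimes P$ on $TP$ induced by $\pr_1^*\omega$ is exactly $w\mapsto dm_P(\hor^\omega_g(d\mu w),w)$, and its action on $\mu^*A$ is the pullback of the adjoint action on $A$. So the map $\Theta$ is compared between the correct representations, and no further compatibility with $a_P$ is needed for the identity itself.

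The main obstacle is the bookkeeping in the first fact. One has to check that the horizontal lift in $\pr_1^*\omega$ on the action groupoid is indeed $(\hor^\omega_g(d\mu w),w)$. This holds because $\ker(\pr_1^*\omega)\cap T(\Sigma\ltimes P)$ consists of the pairs $(\xi,w)$ with $\xi\in\ker\omega$ and $ds\,\xi=d\mu\,w$, and the source of $\Sigma\ltimes P$ is $\pr_2$. After that, a single chain-rule computation with $\mu\circ m_P=t\circ\pr_1$ finishes the argument.
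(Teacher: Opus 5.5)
Your proof is correct and follows the paper's argument: the $A$-component is the defining property of a principal $(\Sigma,\omega)$-connection, and the $TM$-component follows from $d\mu(g\cdot w)=dt(\hor^\omega_g(d\mu(w)))=g\cdot d\mu(w)$ together with the equivariance of $\theta$ and $\rho$. Your check of the horizontal lift on $\Sigma\ltimes P$, using $\mu\circ m_P=t\circ\pr_1$, spells out a step that the paper states without comment.
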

\begin{proof}
    The second component of $\Theta$ is $\theta$, which is $\Sigma$-equivariant by definition. We need to show that $\an_\theta = d\mu-\rho\circ \theta$ is $\Sigma$-equivariant as well.
    
    First, notice for all $(g,p)\in \Sigma\ltimes P$ and $w\in T_pP$, we have
    \[
    g\cdot w := (g,p)\cdot w = dm_P (\hor_g^\omega (d\mu(w)),w).
    \]
    On the other hand, for all
    $x\in M$, $g\in \Sigma$ with $s(g) = x$ and $v\in T_xM$, one has
    \[
    g\cdot v = dt\left(\hor_g^\omega (v)\right).
    \]
    We deduce that 
    \[
    d\mu (g\cdot w) = g\cdot d\mu(w)
    \]
    for all $(g,p)\in \Sigma\ltimes P$ and $w\in T_pP$ -- i.e. $\mu$ is $\Sigma$-equivariant. Since both $\theta$ and $\rho$ are $\Sigma$-equivariant as well,
    \[
    \an_\theta(g\cdot w) = d\mu(g\cdot w)-\rho\bigl(\theta(g\cdot w)\bigr)
    = g\cdot d\mu(w)-g\cdot\rho\bigl(\theta(w)\bigr) = g\cdot \an_\theta(w) .
    \]
\end{proof}

Given the anchored form $\Theta$, we have the corresponding map
\[
\begin{tikzcd}
TP \arrow{d}\arrow["\Theta"]{r}& TM\oplus A \arrow{d}\\
P  \arrow["\mu"]{r}& M
\end{tikzcd}
\]
and the lift 
\[
\begin{tikzcd}
TP \arrow{d}\arrow["\hat{\Theta}"]{r}& TP\oplus \mu^*A \arrow{d}\\
P  \arrow["\id"]{r}& P
\end{tikzcd}
\]
defined by
\[
\hat{\Theta}: v\in T_pP \mapsto \left(v-a_P\circ \theta(v), \theta(v)\right)
\]
and satisfying
\[
(d\mu,\id) \circ \hat{\Theta} = \Theta.
\]
\begin{remark}\label{rmk:image_hat_Theta}
	The image of $\hat{\Theta}$ is the vector sub-bundle
	\[
	\ker(\theta)\oplus\mu^*A \subset TP\oplus \mu^*A.
	\]
\end{remark}
Notice that $TP\oplus \mu^*A$ is an anchored vector bundle over $TP$, with anchor given by
\[
TP\to TP\oplus \mu^*A,\quad  v \mapsto (v,a_P(v)).
\]
In fact, we can endow $TP\oplus \mu^*A$ with a structure of almost algebroid as in Proposition~\ref{prop:adj_alg} by using the pull-back connection 
\[
D^\mu:=\mu^*D: \mathcal{X}(P)\times \Gamma(\mu^*A) \to \Gamma(\mu^*A).
\]
\begin{proposition}\label{prop:anchored_form_connection}
	 For all $X,Y\in \mathcal{X}(P)$ it holds
		\[
		[\hat{\Theta}(X),\hat{\Theta}(Y)] -\hat{\Theta}\left([X,Y]\right)  = \left(-a_P \circ \mc_\theta(X,Y), \mc_\theta(X,Y)\right),
		\]
        hence
        \[
         [{\Theta}(X),{\Theta}(Y)] - {\Theta}\left([X,Y]\right) = \left( -\rho\circ \mc_\theta(X,Y), \mc_\theta(X,Y)\right).
        \]
		In particular, the obstructions for $\Theta$ to be compatible with the Lie brackets of $TP$ and $TM\bowtie A$ takes values in the isotropy bundle of $TM\bowtie A$. 
\end{proposition}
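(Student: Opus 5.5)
We will prove the first identity by a direct computation in the almost Lie algebroid $TP\bowtie \mu^*A$ built, as in Proposition~\ref{prop:adj_alg}, from the pullback connection $D^\mu=\mu^*D$ on the action algebroid $\mu^*A=A\ltimes P$. The second identity then follows by applying the bundle map $(d\mu,\id):TP\oplus\mu^*A\to TM\oplus A$. This map is compatible with anchors and, on $\mu$-related sections, with the brackets, because $D^\mu$ and the adjoint connection of $(\mu^*A,D^\mu)$ are pulled back from $D$ and $\nabla$. We also need $d\mu\circ a_P=\rho$.

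Write $\hat\Theta(X)=(X^h,\theta(X))$ with $X^h:=X-a_P\theta(X)$, and similarly for $Y$. By \eqref{eq:bracket_bowtie} the bracket $[\hat\Theta(X),\hat\Theta(Y)]$ has two components:
\[
\bigl([X^h,Y^h]+\nabla_{\theta(X)}Y^h-\nabla_{\theta(Y)}X^h,\ [\theta(X),\theta(Y)]+D^\mu_{X^h}\theta(Y)-D^\mu_{Y^h}\theta(X)\bigr).
\]
Here $\nabla_\beta Z=[a_P\beta,Z]+a_P(D^\mu_Z\beta)$ is the adjoint $\mu^*A$-connection on $TP$. We will expand both components, substituting $X^h=X-a_P\theta(X)$ and using the following facts:
\begin{itemize}
\item $a_P$ is a Lie algebroid morphism $\mu^*A\to TP$;
\item $D^\mu_{a_P\beta}=D_{\rho(\beta)}$ on pulled-back sections;
\item the definition of $\{\ ,\ \}$ from \eqref{eq:torsion_bracket}.
\end{itemize}

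For the $\mu^*A$-component, the plan is to collect terms so that we obtain
\[
D^\mu_X\theta(Y)-D^\mu_Y\theta(X)-\theta([X,Y])+\{\theta(X),\theta(Y)\}+\theta([X,Y]),
\]
that is, $\mc_\theta(X,Y)+\theta([X,Y])$. Concretely, $[\theta(X),\theta(Y)]-D_{\rho\theta(X)}\theta(Y)+D_{\rho\theta(Y)}\theta(X)$ is exactly $\{\theta(X),\theta(Y)\}$. The second component of $\hat\Theta([X,Y])$ is $\theta([X,Y])$, so the difference is $\mc_\theta(X,Y)$. For the $TP$-component, we expect the terms $[a_P\theta(X),\cdot]$ hidden in $\nabla$ to cancel against the cross terms of $[X^h,Y^h]$. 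What should remain is
\[
[X,Y]-a_P\bigl(D^\mu_X\theta(Y)-D^\mu_Y\theta(X)+\{\theta(X),\theta(Y)\}\bigr).
\]
Subtracting $[X,Y]-a_P\theta([X,Y])$ then gives $-a_P\mc_\theta(X,Y)$.

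A quicker route for the $TP$-component is to use that the anchor $(v,\beta)\mapsto v+a_P\beta$ of $TP\bowtie\mu^*A$ preserves brackets (Proposition~\ref{prop:adj_alg}). Since $\hat\Theta$ composed with this anchor is the identity, applying the anchor to the left-hand side gives $0$. Hence the first component equals $-a_P$ of the second, and only the $\mu^*A$-component needs a genuine computation. The final clause follows from Remark~\ref{rmk:isotropy_adjoint}: elements of the form $(-\rho(a),a)$ make up the isotropy bundle of $TM\bowtie A$.

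The main obstacle is bookkeeping in the $\mu^*A$-component. Sections like $\theta(X)$ are not pulled back from $M$, so $[\ ,\ ]$ on $\Gamma(\mu^*A)$, $D^\mu$ and $\{\ ,\ \}$ must all be handled through the Leibniz rules. I would first verify tensoriality of both sides in $X$ and $Y$, which holds since $\mc_\theta$ is a tensor. I would then check the identity locally on $X,Y$ chosen either $\pi$-vertical, of the form $a_P(\mu^*\alpha)$, or in $\ker\theta$. This reduces everything to pulled-back sections, where the relations above apply directly.
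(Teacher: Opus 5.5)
Your proposal is correct and follows essentially the paper's route. Like the paper, you expand $[\hat\Theta(X),\hat\Theta(Y)]-\hat\Theta([X,Y])$ in $TP\bowtie\mu^*A$ built from $D^\mu=\mu^*D$, identify the $\mu^*A$-component as $d_{\mu^*D}\theta+\tfrac12\{\theta,\theta\}$, and then pass to $\Theta$ via $(d\mu,\id)$. Your shortcut for the $TP$-component is valid and somewhat cleaner than the paper's hand expansion. It uses that the anchor $(v,\beta)\mapsto v+a_P\beta$ preserves brackets and composes with $\hat\Theta$ to give the identity, which forces the $TP$-component to be $-a_P$ of the $\mu^*A$-component. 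The paper instead reconciles this only at the end of its explicit computation.
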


\begin{proof}
    It is enough to prove the formula involving $\hat{\Theta}$, since the other one follows from
    \[
    (d\mu,\hat{\mu})\circ \hat{\Theta} = \Theta.
    \]
	
	We first compute $[\hat{\Theta}(X),\hat{\Theta}(Y)]$, and get
	\begin{align*}
	\left([X-a_P\circ\theta(X),Y-a_P\circ \theta(Y)] -\nabla^\mu_{\theta(Y)}(X-a_P\circ \theta(X)) +\nabla^\mu_{\theta(X)}(Y-a_P\circ \theta(Y)) \right.,\\
	\left. [\theta(X),\theta(Y)]+D^\mu_{X-a_P\circ\theta(X)}\theta(Y)-D^\mu_{Y-a_P\circ\theta(Y)}\theta(X)\right).
	\end{align*}
	Then, we compute $\hat{\Theta}\left([X,Y]\right)$, obtaining
	\[
	 \left([X,Y]-a_P\circ \theta([X,Y]),\theta([X,Y])\right)
	\] 
	
	We claim that, subtracting these two terms, we get
	\[
	\left(-a_P \circ \mc_\theta, \mc_\theta\right)
	\]
	By direct calculation, for $ [\hat{\Theta}(X),\hat{\Theta}(Y)]-\hat{\Theta}\left([X,Y]\right) $ we have that the $TP$-component is given by
		\begin{align*}
		[a_P\circ\theta(X),a_P\circ\theta(Y)]-[a_P\circ\theta(X),Y]  -[X,a_P\circ\theta(Y)] \\
		 -\nabla^\mu_{\theta(Y)}(X-a_P\circ \theta(X)) +\nabla^\mu_{\theta(X)}(Y-a_P\circ \theta(Y)) +a_P\circ \theta([X,Y]).
		\end{align*}
	By using the identity
	\[
	\nabla^\mu_{\theta(V)}(W) = [a\circ \theta(V),W]+a_P\circ D^\mu_W \theta(V),\quad V,W\in \mathcal{X}(P)
	\]
	on the above expressions, we get
	\begin{align*}
	+[a_P\circ\theta(X),a_P\circ\theta(Y)] +a_P\circ D^\mu_Y\theta(X) -a_P \circ D^\mu_X\theta(Y) \\
	+a_P\circ D^\mu_{a_P\circ \theta(X)}\theta(Y) -a_P\circ D^\mu_{a_P\circ \theta(Y)}\theta(X)-2[a_P\circ \theta(X), a_P\circ \theta(Y)] +a_P\circ \theta([X,Y]).
	\end{align*}
	which, using that $a_P$ is compatible with the bracket, gives
	\begin{align*}
	a_P\circ \left( D_Y\theta(X)-D^\mu_X\theta(Y) + D^\mu_{a_P\circ \theta(X)}\theta(Y) - D^\mu_{a_P\circ \theta(Y)}\theta(X)+\theta[X,Y] - [\theta(X),\theta(Y)]\right).
	\end{align*}
	We now observe that the expression within brackets is precisely minus the $\mu^*A$-component of $[\hat{\Theta}(X),\hat{\Theta}(Y)] - \hat{\Theta}\left([X,Y]\right)$, consistently with the $TP$-component being $-a_P\circ\mc_\theta(X,Y)$.
    The fact that such expression is precisely $-\mc_\theta$ follows since, by definition,
	\[
	\theta[X,Y]-D^\mu_X\theta(Y)+D^\mu_Y\theta(X) = - d_{\mu^*D} \theta
	\]
	and
	\[
	D^\mu_{a_P\circ \theta(X)}\theta(Y) - D^\mu_{a_P\circ \theta(Y)}\theta(X) - [\theta(X),\theta(Y)] = -\frac{1}{2}\{\theta,\theta\}(X,Y).
	\]
\end{proof}

\begin{corollary}\label{cor:flatness_anchored}
	Let $\omega$ be a Cartan connection on $\Sigma$, $D$ the induced infinitesimally Cartan connection on the associated Lie algebroid $A$, and $\theta$ a principal connection on $P$. Then the maps
	\[
	\hat{\Theta}: TP \to TP\bowtie \mu^*A, \quad \Theta: TP \to TM\bowtie A
	\]
    from Proposition~\ref{prop:anchored_form_connection} are compatible with the brackets if and only if $\theta$ is flat.
\end{corollary}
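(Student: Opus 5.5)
The corollary follows from Proposition~\ref{prop:anchored_form_connection} combined with Corollary~\ref{cor:mc_is_curvature}. By that proposition, for all $X,Y\in\mathcal{X}(P)$ the failure of bracket compatibility is
\[
[\hat{\Theta}(X),\hat{\Theta}(Y)]-\hat{\Theta}([X,Y]) = \bigl(-a_P\circ\mc_\theta(X,Y),\ \mc_\theta(X,Y)\bigr),
\]
and similarly for $\Theta$ with $\rho$ in place of $a_P$. The map $(u,\alpha)\mapsto(-a_P(\alpha),\alpha)$ is injective, as is $(u,\alpha)\mapsto(-\rho(\alpha),\alpha)$, since the second component records $\alpha$. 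Hence $\hat{\Theta}$ is compatible with the brackets if and only if $\mc_\theta(X,Y)=0$ for all vector fields $X,Y$, and likewise for $\Theta$. Here $\mc_\theta$ is a tensor, so this holds if and only if $\mc_\theta=0$ pointwise.

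It then remains to show that $\mc_\theta=0$ is equivalent to flatness of $\theta$. Since $\omega$ is Cartan and $\theta$ is a principal $(\Sigma,\omega)$-connection, Corollary~\ref{cor:mc_is_curvature} gives $\mc_\theta(w_1,w_2)=\Curv^\theta(w_1^\theta,w_2^\theta)$. Projection onto $\ker(\theta)$ is surjective. Therefore $\mc_\theta=0$ if and only if $\Curv^\theta$ vanishes on $\Lambda^2\ker(\theta)$, which is the definition of $\theta$ being flat.

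One point needs care: ``compatible with the brackets'' for a bundle map covering $\mu$, as $\Theta$ does, has to be read on sections of the form $\Theta(X)$, which are sections of $\mu^*(TM\oplus A)$. The second formula of Proposition~\ref{prop:anchored_form_connection} is already stated in that sense, via $(d\mu,\id)\circ\hat{\Theta}=\Theta$. I expect no real obstacle: all the work is contained in Proposition~\ref{prop:anchored_form_connection} and Corollary~\ref{cor:mc_is_curvature}, and the only thing to verify is the injectivity of the isotropy embedding noted above.
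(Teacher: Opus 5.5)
Your proof is correct and is essentially the paper's argument. Both first reduce bracket compatibility to $\mc_\theta=0$ via Proposition~\ref{prop:anchored_form_connection}; you additionally spell out that $\alpha\mapsto(-a_P(\alpha),\alpha)$ is injective. Both then use that $\theta$ is principal to identify $\mc_\theta$ with $\Curv^\theta$: the paper cites Proposition~\ref{prop:mc_expr_vanishing_pb}, and your Corollary~\ref{cor:mc_is_curvature} is just its restatement for principal connections.
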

\begin{proof}
    By Proposition~\ref{prop:anchored_form_connection}, each of $\hat\Theta$ and $\Theta$ is compatible with the brackets if and only if $\mc_\theta=0$. Since $\theta$ is principal, it is multiplicative, and Proposition~\ref{prop:mc_expr_vanishing_pb} then gives $\mc_\theta=0$ if and only if $\theta$ is flat.
\end{proof}

Recall now that, if $P$ is a principal group bundle, then connections correspond to algebra maps 
\begin{equation}\label{eq:conn_are_alg_maps_grp}
\theta^*:C^\bullet(\mathfrak{g}^*)\to \Omega^\bullet(P),
\end{equation}
which are dga maps when $\theta$ is a flat principal connection. Corollary \ref{cor:flatness_anchored} generalizes this -- see corollary \ref{cor:flat_conn_dga_map} below. 

Thus, let $\Sigma$ be a Lie groupoid with Lie algebroid $A$, and let $D$ be a connection on $A$ and $P$ be a principal $\Sigma$-bundle. Any connection $\theta$ on $P$ induces the map of graded algebras 
\begin{equation}\label{eq:flat_connection_map}
    \Theta^*:\Omega^\bullet(TM\bowtie A)\to \Omega^\bullet(P),\quad (\eta,\alpha) \to \eta \circ \an?{\theta} + \alpha\circ \theta
\end{equation}
where now $\Omega^\bullet(TM\bowtie A)$ comes equipped with the degree one operator 
\[
d_\oplus: \Omega^\bullet(TM\bowtie A) \to C^{\bullet+1}(TM\bowtie A)
\]
given by~\eqref{eq:diff_adjoint_algebroid} and corresponding to the Chevalley-Eilenberg operator of $TM\bowtie A$. 

Recall that $(\Omega^\bullet(TM\bowtie A),d_\oplus)$ is a dga if and only if $TM\bowtie A$ is a Lie algebroid, i.e. when $D$ is multiplicative and flat -- see Proposition \ref{prop:adj_alg_Jacobi}. Then, corollary~\ref{cor:flatness_anchored} implies
\begin{corollary}\label{cor:flat_conn_dga_map}
    Let $(\Sigma,\omega)$ be a Lie groupoid with Cartan connection, $P$ be a principal $\Sigma$ bundle and $\theta$ be a principal $(\Sigma,\omega)$-connection on $P$. The algebra map \eqref{eq:flat_connection_map} satisfies
    \[
    d_\mathrm{dR}\Theta^* = \Theta^*d_\oplus
    \]
    if and only if $\theta$ is flat.
\end{corollary}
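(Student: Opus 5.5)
Since $\Theta^*$ and $d_\mathrm{dR}$, $d_\oplus$ are all (graded) derivations over the algebra map $\Theta^*$ (i.e.\ $d_\mathrm{dR}\Theta^*-\Theta^*d_\oplus$ is a $\Theta^*$-derivation of degree one), it suffices to compare the two sides on generators of $\Omega^\bullet(TM\bowtie A)$: functions $f\in C^\infty(M)$, elements $\eta\in\Omega^1(M)$ and elements $\alpha\in\Omega^1(A)$, i.e.\ on $\Gamma((TM\oplus A)^*)$ together with functions. I will first check that on functions both sides agree unconditionally: $d_\oplus f=(df,df\circ\rho)$, and $\Theta^*(d_\oplus f)=df\circ\an_\theta+df\circ\rho\circ\theta=df\circ d\mu=d(\mu^*f)=d_\mathrm{dR}\Theta^*f$. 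So the whole statement reduces to degree-one generators $\xi\in\Gamma((TM\oplus A)^*)$.

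For such $\xi$, I will evaluate both sides on $X,Y\in\mathcal{X}(P)$ using the Koszul-type formula for the Chevalley-Eilenberg operator $d_\oplus$ of the anchored almost algebroid $TM\bowtie A$ (anchor $\rho_\oplus$, bracket $[\ ,\ ]_\oplus$), pulled back along the anchored map $\Theta$. Concretely, working with the lift $\hat\Theta:TP\to TP\bowtie\mu^*A$ and $\mu^*\xi$, one has
\[
(\Theta^*d_\oplus\xi)(X,Y)=\rho_\oplus\hat\Theta(X)\langle\xi,\hat\Theta Y\rangle-\rho_\oplus\hat\Theta(Y)\langle\xi,\hat\Theta X\rangle-\langle\xi,[\hat\Theta X,\hat\Theta Y]\rangle,
\]
and since $\Theta$ is anchored, $\rho_\oplus\circ\hat\Theta=\id_{TP}$ (indeed $(v-a_P\theta v)+a_P\theta v=v$). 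Comparing with $d_\mathrm{dR}(\Theta^*\xi)(X,Y)=X\langle\xi,\Theta Y\rangle-Y\langle\xi,\Theta X\rangle-\langle\xi,\Theta[X,Y]\rangle$ gives
\[
(d_\mathrm{dR}\Theta^*\xi-\Theta^*d_\oplus\xi)(X,Y)=\bigl\langle\xi,[\Theta X,\Theta Y]-\Theta[X,Y]\bigr\rangle=\bigl\langle\xi,(-\rho\,\mc_\theta(X,Y),\mc_\theta(X,Y))\bigr\rangle
\]
by Proposition~\ref{prop:anchored_form_connection}. The step requiring care is justifying the pulled-back Koszul formula: $d_\oplus$ is defined via $d_{D^*}+(-1)^pd_{\nabla^*}$, so I would verify (by checking on $\eta$ and $\alpha$ separately, using that $TM\bowtie\mu^*$-pullback of $D,\nabla$ is $D^\mu,\nabla^\mu$) that it coincides with the standard CE formula of the almost algebroid, and that pullback along $\hat\Theta$ intertwines it with the bracket on $TP\bowtie\mu^*A$. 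This is the main obstacle, essentially bookkeeping of signs.

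Hence $d_\mathrm{dR}\Theta^*=\Theta^*d_\oplus$ iff $\langle\xi,(-\rho\,\mc_\theta,\mc_\theta)\rangle=0$ for all $\xi$, iff $\mc_\theta=0$ (take $\xi$ ranging over $\Omega^1(A)$). Finally, as in Corollary~\ref{cor:flatness_anchored}, since $\theta$ is a principal $(\Sigma,\omega)$-connection, Corollary~\ref{cor:mc_is_curvature} (or Proposition~\ref{prop:mc_expr_vanishing_pb}) shows $\mc_\theta(w_1,w_2)=\Curv^\theta(w_1^\theta,w_2^\theta)$, so $\mc_\theta=0$ iff $\theta$ is flat, concluding the proof.
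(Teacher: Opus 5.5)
Your proof is correct and follows the paper's route. The paper obtains this corollary by dualizing Proposition~\ref{prop:anchored_form_connection} and Corollary~\ref{cor:flatness_anchored}: bracket-compatibility of $\Theta$ holds iff $\mc_\theta=0$, which holds iff $\theta$ is flat since $\theta$ is principal. You make that dualization explicit through the Koszul formula on generators; the sign bookkeeping you flag is covered by the paper's identification of $d_\oplus$ with the Chevalley--Eilenberg operator of $TM\bowtie A$.
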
  
By dualizing Proposition~\ref{prop:anchored_form_connection} and Corollary~\ref{cor:flatness_anchored}, and introducing the notation
\begin{equation}\label{eq:obstruction}
R_\Theta: \Omega^\bullet(TM\bowtie A) \to \Omega^{\bullet+1}(P),\quad   R_\Theta := d_\mathrm{dR}\circ \Theta^* - \Theta^*\circ d_\oplus,
\end{equation}
where $d_{\mathrm{dR}}:\Omega^\bullet(P)\to \Omega^{\bullet+1}(P)$ denotes the de Rham differential on $P$, we have also the following
\begin{corollary}\label{cor:def_obstruction}
The principal connection $\theta$ is flat if and only if 
\[
R_\Theta: \Omega^\bullet(TM\bowtie A) \to \Omega^{\bullet+1}(P)
\]
vanishes identically.
\end{corollary}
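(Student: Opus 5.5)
The plan is to reduce the statement to Proposition~\ref{prop:anchored_form_connection}, by dualizing it through the usual Cartan formula for the de Rham differential. The first step is to note that $R_\Theta = d_\mathrm{dR}\circ\Theta^*-\Theta^*\circ d_\oplus$ is a derivation of degree $+1$ relative to the algebra map $\Theta^*$:
\[
R_\Theta(\xi\wedge\zeta)=R_\Theta(\xi)\wedge\Theta^*\zeta+(-1)^{|\xi|}\Theta^*\xi\wedge R_\Theta(\zeta).
\]
This holds because both $d_\mathrm{dR}$ and $d_\oplus$ satisfy the graded Leibniz rule and $\Theta^*$ is multiplicative. It follows that $R_\Theta$ vanishes identically if and only if it vanishes on the local generators, namely on $C^\infty(M)$, on $\Omega^1(M)$ and on $\Omega^1(A)$, i.e.\ on $\Omega^0$ and $\Omega^1$ of $TM\bowtie A$.

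The second step handles the generators. On a function $f$ we have $\Theta^*f=\mu^*f$ and $d_\oplus f=(df,\,df\circ\rho)$. Hence
\[
\Theta^*d_\oplus f = df\circ(d\mu-\rho\theta)+df\circ\rho\circ\theta = d(\mu^*f),
\]
so $R_\Theta(f)=0$; equivalently, $\Theta$ is anchored. For a one-form $\xi\in\Gamma((TM\oplus A)^*)$ and vector fields $X,Y$ on $P$, the Chevalley--Eilenberg formula for $d_\oplus$ and the Cartan formula for $d_\mathrm{dR}$ give
\[
R_\Theta(\xi)(X,Y) = -\xi\bigl([\Theta X,\Theta Y]-\Theta[X,Y]\bigr).
\]
To make sense of the right-hand side one works with the pullback $\hat\Theta$ into $TP\bowtie\mu^*A$, where brackets of sections are defined, and then uses $\mu$-relatedness. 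The anchor terms cancel because $\Theta$ is anchored. This is where I expect the main bookkeeping effort: the pullback of sections along $\mu$, together with the fact that the bracket on $TP\bowtie\mu^*A$ is the pullback bracket built from $\mu^*D$, must be handled carefully. I would argue through $\hat\Theta$ and the identity $(d\mu,\id)\circ\hat\Theta=\Theta$, exactly as in the proof of Proposition~\ref{prop:anchored_form_connection}.

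Given this formula, Proposition~\ref{prop:anchored_form_connection} yields
\[
R_\Theta(\xi)(X,Y)=-\xi\bigl(-\rho\,\mc_\theta(X,Y),\,\mc_\theta(X,Y)\bigr).
\]
So $R_\Theta$ vanishes on all one-forms if and only if $\mc_\theta=0$. For the forward direction, take $\xi$ ranging over $\Omega^1(A)$, which are pointwise arbitrary covectors on $A$; this detects every value of $\mc_\theta$. Finally, since $\theta$ is principal, Corollary~\ref{cor:mc_is_curvature} (equivalently Proposition~\ref{prop:mc_expr_vanishing_pb}) identifies $\mc_\theta$ with $\Curv^\theta$ on horizontal components. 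Therefore $\mc_\theta=0$ if and only if $\theta$ is flat, which together with the derivation argument proves both directions. This is also the content of Corollaries~\ref{cor:flatness_anchored} and~\ref{cor:flat_conn_dga_map}, of which the statement is a reformulation.
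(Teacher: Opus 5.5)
Your argument is correct and is essentially the paper's. The paper obtains this corollary simply by dualizing Proposition~\ref{prop:anchored_form_connection} and Corollary~\ref{cor:flatness_anchored}, and your reduction to generators via the $\Theta^*$-derivation property of $R_\Theta$ just makes that dualization explicit.

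There is one harmless sign slip: the de Rham and Chevalley--Eilenberg formulas actually give $R_\Theta(\xi)(X,Y)=+\xi\bigl([\Theta X,\Theta Y]-\Theta[X,Y]\bigr)=\xi\bigl(-\rho\,\mc_\theta(X,Y),\mc_\theta(X,Y)\bigr)$. This agrees with Corollary~\ref{cor:calc_obstruction} and does not affect the vanishing criterion.
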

    Notice that the value taken by $R_\Theta$ can be expressed in terms of the curvature of $\theta$ via the calculations of Proposition~\ref{prop:anchored_form_connection}; for later use, we spell out some observations in the corollary below.
\begin{corollary}\label{cor:calc_obstruction}
    The following identities hold.
    \begin{itemize}
        \item $R_\Theta(\eta) = -\eta(\rho\circ \mc_\theta)$, $\eta\in \Omega^1(M)\hookrightarrow C^1(TM\oplus A)$;
        \item $R_\Theta(\alpha) = \alpha(\mc_\theta)$, $\alpha\in \Omega^1(A)\hookrightarrow C^1(TM\oplus A)$;
        \item for a degree $k$-cochain of the form
    \[
        \sigma = \sigma^{1}\wedge \dots \wedge \sigma^{k},\quad \sigma^i \in C^1(TM\oplus A),\ 1\leq i\leq k,
    \]
    we have
    \[
    R_\Theta (\sigma) = \sum\limits^{k}_{i=1} (-1)^{i-1}\,\Theta^*(\sigma^1) \wedge \dots \wedge R_\Theta(\sigma^i)\wedge \dots \wedge \Theta^*(\sigma^k).
    \]
    \end{itemize}
\end{corollary}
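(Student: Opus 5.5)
The plan is to reduce everything to generators and then propagate by the Leibniz rule. First, I will check that $\Theta^*$ is multiplicative: it is defined by pulling back forms along the bundle map $\Theta:TP\to TM\oplus A$ covering $\mu$. Next, I will check that $d_\mathrm{dR}$ and $d_\oplus$ are both degree one derivations of their respective graded algebras. Granting these, $R_\Theta=d_\mathrm{dR}\circ\Theta^*-\Theta^*\circ d_\oplus$ is a degree one $\Theta^*$-derivation, in the sense that
\[
R_\Theta(\sigma\wedge\tau)=R_\Theta(\sigma)\wedge\Theta^*\tau+(-1)^{|\sigma|}\Theta^*\sigma\wedge R_\Theta(\tau).
\]
Indeed, the cross terms $d\Theta^*\sigma\wedge\Theta^*\tau$ and $\Theta^*d_\oplus\sigma\wedge\Theta^*\tau$ pair up, and similarly with the signs. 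Iterating this over $\sigma=\sigma^1\wedge\dots\wedge\sigma^k$ gives the third bullet, with sign $(-1)^{i-1}$ because each $\sigma^j$ has degree one. In addition, $R_\Theta$ vanishes on functions, since $\Theta^*f=\mu^*f$ and $\Theta^*(d_\oplus f)=\Theta^*(df+df\circ\rho)=df\circ(d\mu-\rho\theta)+df\circ\rho\circ\theta=d(\mu^*f)$. Consequently $R_\Theta$ is $C^\infty(M)$-linear, and it suffices to compute it on $1$-cochains.

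For the degree one generators I will use the Cartan formula for $d$ of a $1$-form. Let $\sigma\in C^1(TM\oplus A)=\Gamma((TM\oplus A)^*)$, and let $X,Y\in\mathcal{X}(P)$. Then $(\Theta^*\sigma)(X)=\langle\mu^*\sigma,\hat\Theta(X)\rangle$, where we pair through $(d\mu,\mathrm{id})$. Hence
\[
d(\Theta^*\sigma)(X,Y)=X\langle\sigma,\Theta Y\rangle-Y\langle\sigma,\Theta X\rangle-\langle\sigma,\Theta[X,Y]\rangle .
\]
On the other hand, $d_\oplus$ is the Chevalley--Eilenberg operator of the almost algebroid $TM\bowtie A$. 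Pulled back along the anchored map $\hat\Theta$ into $TP\bowtie\mu^*A$, it gives
\[
\rho_\oplus(\hat\Theta X)\langle\sigma,\hat\Theta Y\rangle-\rho_\oplus(\hat\Theta Y)\langle\sigma,\hat\Theta X\rangle-\langle\sigma,[\hat\Theta X,\hat\Theta Y]\rangle .
\]
Anchoredness gives $\rho_\oplus\circ\hat\Theta=\mathrm{id}$, since $(X-a_P\theta X)+a_P\theta X=X$. Therefore the derivative terms agree, and
\[
R_\Theta(\sigma)(X,Y)=\langle\sigma,[\hat\Theta X,\hat\Theta Y]-\hat\Theta[X,Y]\rangle .
\]
Proposition~\ref{prop:anchored_form_connection} identifies this bracket defect as $(-\rho\circ\mc_\theta(X,Y),\mc_\theta(X,Y))$. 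For $\sigma=\eta\in\Omega^1(M)$ this yields $-\eta(\rho\circ\mc_\theta)$, and for $\sigma=\alpha\in\Omega^1(A)$ it yields $\alpha(\mc_\theta)$.

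The main obstacle is the step identifying $\Theta^*\circ d_\oplus$ with the Chevalley--Eilenberg expression of the pulled-back bracket. There are two points to settle. First, the bracket of $TP\bowtie\mu^*A$ built from $\mu^*D$ must be compatible with that of $TM\bowtie A$ on pullback sections; I would check this on $\mu$-related generators using $[\mu^*\alpha,\mu^*\beta]=\mu^*[\alpha,\beta]$ and the definition of $\mu^*D$, exactly as in the proof of Proposition~\ref{prop:anchored_form_connection}. Second, the sign convention $d_\oplus=d_{D^*}+(-1)^pd_{\nabla^*}$, with base directions evaluated first, must really reproduce the Chevalley--Eilenberg formula on $1$-cochains. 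I would verify this directly in the two cases $p=1,q=0$ and $p=0,q=1$, where only the sign $(-1)^p$ matters.
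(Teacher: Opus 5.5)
Your proposal is correct and is essentially the paper's argument. The paper obtains the corollary by dualizing Proposition~\ref{prop:anchored_form_connection}: it reads $R_\Theta(\sigma)(X,Y)$ as $\sigma$ evaluated on the bracket defect $\bigl(-a_P\circ\mc_\theta(X,Y),\mc_\theta(X,Y)\bigr)$ of $\hat\Theta$. The third bullet then comes from the $\Theta^*$-derivation property of $R_\Theta$, exactly as you derive it.

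One adjustment is needed in the step you flag, the compatibility of $(d\mu,\mathrm{id})$ with the two Chevalley--Eilenberg operators on $1$-cochains. Since $\mu$ need not be a submersion, $\mu$-related vector fields may not generate $\mathfrak{X}(P)$, so do not rely on them. Instead, verify the compatibility by tensoriality on pairs built from $(V,0)$ and $(0,\mu^*b)$, with $V\in\mathfrak{X}(P)$ arbitrary. This only needs $d_\mathrm{dR}\mu^*\eta=\mu^*d_\mathrm{dR}\eta$ and, pointwise, $(\mu^*D)_{V_p}(\mu^*b)=D_{d\mu(V_p)}b$.
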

On cochains of more general form, the value of $R_\Theta$ is determined by $C^\infty(M)$-linearity.

\subsection{The lift $\kappa^{\Theta}:W^\bullet(A,D)\to \Omega^\bullet(P)$}\label{subs:kappa_Theta}
In this subsection we lift the map $\Theta^*$ defined in \eqref{eq:flat_connection_map} to the Weil algebra of the Lie algebroid with connection $(A,D)$. The advantage in doing so is that the resulting map of graded algebras
\[
\kappa^{\Theta}: W^\bullet(A,D) \to \Omega^\bullet(P),
\]
is a map of differential graded algebras.

More precisely, we devote this subsection to prove the following theorem, where $P$ is a principal $\Sigma$-bundle, $D$ is a connection on its Lie algebroid $A$.
\begin{theorem}\label{thm:conn_curv}
    A connection $\theta$ on $P$ induces a map of differential graded algebras
    \[
    \kappa^{\Theta}: (W^\bullet(A,D),d_W)\to (\Omega^\bullet(P), d_\mathrm{dR});
    \]
    such that $\kappa^{\Theta}$ fits into a commutative diagram of graded algebras
        \[
        \begin{tikzcd}
        W^\bullet(A,D)\arrow[r,"\kappa^{\Theta}"] & \Omega^\bullet(P)\\
        \Omega^\bullet(TM\bowtie A)\arrow[u,hook]\arrow[ur,"\Theta^*",swap] & 
        \end{tikzcd}
        \]
        
        Additionally, if $\omega$ is multiplicative, $D$ is the induced infinitesimal connection, and $\theta$ is principal,
        then $\ker(\kappa^{\Theta})$ contains the differential ideal generated by $\Gamma(S^1A^*)$ if and only if $\theta$ is flat.
\end{theorem}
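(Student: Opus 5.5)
The plan is the Kamber–Tondeur approach: define $\kappa^{\Theta}$ on generators, extend it multiplicatively, and check compatibility with the differentials only on generators. Since $W^\bullet(A,D)$ is the free graded-commutative $C^\infty(M)$-algebra generated by $\Omega^1(M)$, $\Omega^1(A)$ in degree one and $\Gamma(S^1A^*)$ in degree two, a graded algebra map is determined by its values on $C^\infty(M)$ and on these generators. We set $\kappa^\Theta(f)=\mu^*f$, $\kappa^\Theta(\eta)=\Theta^*\eta=\eta\circ\an_\theta$ and $\kappa^\Theta(\alpha)=\Theta^*\alpha=\alpha\circ\theta$; this forces commutativity of the triangle with $\Theta^*$.

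The value on $a\in\Gamma(S^1A^*)$ is dictated by $d_W\alpha=d_\oplus\alpha+\iota(\alpha)$ (Lemma~\ref{lem:relation_differentials}). Taking $a=\iota(\alpha)$ and requiring $d_{\mathrm{dR}}\kappa^\Theta(\alpha)=\kappa^\Theta(d_W\alpha)$ gives
\[
\kappa^\Theta(\iota(\alpha))=d_{\mathrm{dR}}\Theta^*\alpha-\Theta^*d_\oplus\alpha=R_\Theta(\alpha)=\alpha(\mc_\theta)
\]
by Corollary~\ref{cor:calc_obstruction}. We therefore define $\kappa^\Theta(a):=\langle a,\mc_\theta\rangle\in\Omega^2(P)$, up to the sign convention built into $\iota$. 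This is $C^\infty(M)$-linear in $a$, so it extends to a well-defined graded algebra map. Functions are compatible since $\Theta$ is anchored: $(d\mu-\rho\theta)+\rho\theta=d\mu$.

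Next we verify $d_{\mathrm{dR}}\kappa^\Theta=\kappa^\Theta d_W$ on the remaining generators; derivation properties then extend it to the whole algebra.
\begin{itemize}
\item For $\alpha$ the identity holds by construction.
\item For $\eta\in\Omega^1(M)$ we need $R_\Theta(\eta)=\kappa^\Theta(-\iota(\eta\circ\rho))$. By Corollary~\ref{cor:calc_obstruction}, $R_\Theta(\eta)=-\eta(\rho\circ\mc_\theta)$, which matches.
\item For $a\in\Gamma(S^1A^*)$ we must show $d_{\mathrm{dR}}\langle a,\mc_\theta\rangle=\kappa^\Theta\bigl(d_{\mathrm{cad}}a+d_{\mathrm{curv}}a\bigr)$.
\end{itemize}
For the third item, write $a=\iota(\alpha)$ locally and use $d_W^2\alpha=0$, which holds because $W$ is a dga. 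Applying $\kappa^\Theta$ gives $\kappa^\Theta(d_W\iota\alpha)=\kappa^\Theta(-d_W d_\oplus\alpha)$. The right-hand side only involves elements of $\Omega^\bullet(TM\bowtie A)$ and values of $\kappa^\Theta$ on $S^1$-generators. We expand $d_Wd_\oplus\alpha=d_\oplus^2\alpha+d_\iota d_\oplus\alpha$ and use Lemma~\ref{lem:dplus_square_curv}, $d_\oplus^2\alpha=-d_{\mathrm{curv}}d_\iota\alpha$. With these, the required identity becomes a Bianchi identity for $\mc_\theta$:
\[
d_{\mu^*D}\mc_\theta+\{\theta,\mc_\theta\}=\text{terms in }\mu^*\curv^D,\ \mu^*\cbas^D \text{ evaluated on }\Theta.
\]
We prove it by applying $d_{\mathrm{dR}}$ to $R_\Theta(\alpha)=d\Theta^*\alpha-\Theta^*d_\oplus\alpha$. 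This gives $d_{\mathrm{dR}}R_\Theta(\alpha)=-d_{\mathrm{dR}}\Theta^*d_\oplus\alpha=-R_\Theta(d_\oplus\alpha)-\Theta^*(d_\oplus^2\alpha)$. The term $R_\Theta(d_\oplus\alpha)$ is computed by the Leibniz formula of Corollary~\ref{cor:calc_obstruction}, and each of its pieces is $\kappa^\Theta$ of the corresponding piece of $d_\iota d_\oplus\alpha$. This is the main obstacle. We have to match $-\iota\bigl((d_{\nabla^{A^*}}+d_{D^*})\alpha\bigr)$ term by term against $R_\Theta$ applied to the bidegree components of $d_\oplus\alpha$, keeping track of the signs in the definition of $\iota$. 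For general $a=\sum f_i\iota(\alpha_i)$ we use $C^\infty(M)$-linearity together with compatibility on functions. This proves the first part.

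For the last claim, assume $\omega$ is Cartan and $\theta$ is principal. Since $\kappa^\Theta$ is a dga map, $\ker\kappa^\Theta$ is a differential ideal. It therefore contains the differential ideal generated by $\Gamma(S^1A^*)$ if and only if $\kappa^\Theta(a)=\langle a,\mc_\theta\rangle=0$ for all $a$, that is, if and only if $\mc_\theta=0$. By Corollary~\ref{cor:mc_is_curvature}, $\mc_\theta=\Curv^\theta$ on horizontal components, so this holds exactly when $\theta$ is flat.
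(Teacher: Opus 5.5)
Your proof is correct, and for the only nontrivial generator $a=\iota(\alpha)\in\Gamma(S^1A^*)$ it takes a genuinely different route from the paper.

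The paper does not invoke $d_W^2=0$. It expands $\kappa^{\Theta}(d_Wa)=\kappa^{\Theta}(d_{\mathrm{cad}}a)+\Theta^*(d_{\mathrm{curv}}a)$ directly. It matches the curvature term with $-\Theta^*(d_\oplus^2\alpha)$ via Lemma~\ref{lem:dplus_square_curv}, and matches $\kappa^{\Theta}(d_{\mathrm{cad}}a)$ with $-R_\Theta(d_\oplus\alpha)$ via Lemma~\ref{lem:kappa_carrying_step}. The content of the latter is the pointwise identity $\Xi(u,\sigma)=d_\oplus\alpha\bigl(u,(-\rho(\sigma),\sigma)\bigr)$.

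You instead write $d_Wa=-d_Wd_\oplus\alpha=-d_\oplus^2\alpha-d_\iota d_\oplus\alpha$. Then $\Theta^*(d_\oplus^2\alpha)$ occurs on both sides and cancels, so Lemma~\ref{lem:dplus_square_curv} is not actually needed. All that remains is $\kappa^{\Theta}(d_\iota d_\oplus\alpha)=R_\Theta(d_\oplus\alpha)$.

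Contrary to what you say, this is not an obstacle. The maps $\kappa^{\Theta}\circ d_\iota$ and $R_\Theta$ are both degree-one derivations along $\Theta^*$ on $\Omega^\bullet(TM\bowtie A)$. Both vanish on $C^\infty(M)$. They agree on $\Omega^1(A)$ by the very definition of $\kappa^{\Theta}$ on $\Gamma(S^1A^*)$, and on $\Omega^1(M)$ by your second bullet. Hence they agree on $d_\oplus\alpha$. Your own sentence that each piece of $R_\Theta(d_\oplus\alpha)$ is $\kappa^{\Theta}$ of the corresponding piece of $d_\iota d_\oplus\alpha$ already closes the argument. The term-by-term matching of $-\iota\bigl((d_{\nabla^{A^*}}+d_{D^*})\alpha\bigr)=d_{\mathrm{cad}}a$ that you then call ``the main obstacle'' belongs to the paper's route, not yours.

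As for what each approach buys:
\begin{itemize}
\item Your route is shorter. It shows that compatibility on $\Gamma(S^1A^*)$ is purely formal once $(W^\bullet(A,D),d_W)$ is known to be a dga.
\item However, it outsources the computation to that fact. By bidegree, $d_W^2\alpha=0$ is exactly Lemma~\ref{lem:dplus_square_curv} in $W^{3,0}$ together with $d_{\mathrm{cad}}(\iota\alpha)=-d_\iota d_\oplus\alpha$ in $W^{1,2}$, and the latter is the paper's pointwise identity.
\item The paper's route is self-contained on this point. It also makes visible the role of the isotropy embedding $\sigma\mapsto(-\rho(\sigma),\sigma)$ of Remark~\ref{rmk:isotropy_adjoint}.
\end{itemize}

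Your treatment of functions, of $\Omega^1(M)$ and $\Omega^1(A)$, and of the flatness criterion coincides with the paper's.
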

\begin{remark}\label{rmk:conn_curv_restated}
    When $\Sigma$ is a Lie group $G$, a more enlightening version of the theorem above states that a morphism of graded algebras
    \[
    \Lambda^\bullet(\mathfrak{g}) \to \Omega^\bullet(P)
    \]
    corresponds in a canonical way to morphism of dgas
    \[
    W^\bullet(\mathfrak{g}) \to \Omega^\bullet(P).
    \]

    Indeed, it is possible to restate Theorem~\ref{thm:conn_curv} in a similar fashion: commutative diagrams of graded algebras
    \[
    \begin{tikzcd}
        \Omega^\bullet(TM\bowtie A)\arrow[rr] && \Omega^\bullet (P)\\
        &\Omega^\bullet(M)\arrow[ul,"\id+\rho^*"]\arrow[ur,"\mu^*",swap]
    \end{tikzcd}
    \]
    correspond canonically to commutative diagrams of dgas
    \[
    \begin{tikzcd}
        W^\bullet(A,D)\arrow[rr]&& \Omega^\bullet (P)\\
        &\Omega^\bullet(M)\arrow[ul,"\id+\rho^*"]\arrow[ur,"\mu^*",swap].
    \end{tikzcd}
    \]
\end{remark}

We devote the rest of this subsection to proving Theorem~\ref{thm:conn_curv}. First of all, we define $\kappa^{\Theta}$ as a map of graded algebras
\[
\kappa^{\Theta}: W^\bullet(A,D) \to \Omega^\bullet(P).
\]
The idea is to have $\kappa^{\Theta}$ be equal to $\Theta^*$ on the image of $\Omega^1(TM\oplus A)$ and to the "obstruction map" $R_\theta$ from Corollary~\ref{cor:def_obstruction} on $\Gamma(S^1A^*)$. More precisely, $\kappa^{\Theta}$ is defined by the formulas below, where the tautological map $\iota:\Lambda^1A^*\to S^1A^*$ and its extension are discussed in Subsection~\ref{subs:weil_algebra}; see \eqref{eq:tautological_shift} and the discussion around it.
\begin{itemize}
\item On linear one-forms on $TM \oplus A$, $\kappa^{\Theta}$ acts as $\Theta^*$, i.e.:
\begin{itemize}
    \item on $1$-forms $\eta\in \Omega^1(M)$, 
    \[
    \kappa^{\Theta}(\eta) := \Theta^*(\eta) = \eta\circ (d\mu-\rho\circ \theta);
    \]
    \item on linear $1$-forms $\alpha\in \Omega^1(A)$, 
    \[
    \kappa^{\Theta}(\alpha) := \Theta^*(\alpha) = \alpha\circ \theta;
    \]
\end{itemize}
    \item On sections $a$ of $S^1A^*$, we 
    set $\kappa^{\Theta} = R_\Theta\circ \iota$:
    \[
    \kappa^{\Theta}(a) := d_\mathrm{dR}\circ \Theta^*\circ \iota (a)-\Theta^*\circ d_\oplus\circ \iota (a),
    \]
    which, by corollary~\ref{cor:calc_obstruction}, is given by
    \[
    \kappa^{\Theta}(a) = + \langle \iota(a), \mc_\theta\rangle
    \]
    \item On elements of higher degree in $W^\bullet(A,D)$, $\kappa^{\Theta}$ is defined by imposing that it is a map of graded algebras; in particular 
    \[
    \kappa^{\Theta}(\eta\otimes\alpha\otimes a) = \Theta^*(\eta)\wedge \Theta^*(\alpha)\wedge R_\Theta\circ \iota(a)
    \]
    for $\eta\otimes\alpha\otimes a\in \Gamma(\Lambda^uT^*M\otimes \Lambda^wA^*\otimes S^vA^*)$.
\end{itemize}

By definition, $\kappa^{\Theta}$ fits into a commutative diagram of graded algebras
        \[
        \begin{tikzcd}
        W^\bullet(A,D)\arrow[r,"\kappa^{\Theta}"] & \Omega^\bullet(P)\\
        \Omega^\bullet(TM\bowtie A)\arrow[u,hook]\arrow[ur,"\Theta^*",swap] & 
        \end{tikzcd}.
        \]
The second assertion of Theorem~\ref{thm:conn_curv} follows readily from this description. What one really needs to show in order to prove Theorem~\ref{thm:conn_curv} is that $\kappa^{\Theta}$ is a map of differential graded algebras
 \[
(W^\bullet(A,D), d_W) \to (\Omega^\bullet(P),d_{\mathrm{dR}})
\]
where $d_W$ is the differential of the Weil algebra. Since  $W^\bullet(A,D)$ is the free graded commutative $C^\infty(M)$-algebra generated by $\Omega^1(M)\oplus \Omega^1(A)$ in degree $1$ and $\Gamma(S^1A^*)$ in degree $2$; hence, if
\[
    \Delta:=\kappa^{\Theta}\circ d_W - d_\mathrm{dR}\circ \kappa^{\Theta}
\]
vanishes on generators and on $C^\infty(M)$, it vanishes identically.

If $f\in C^\infty(M)$, the only surviving summand of $d_W f = d_\mathrm{cad}f$, and $d_\oplus f = \left(d_\mathrm{dR}f,\ d_\mathrm{dR}f\circ \rho\right)$, so
\[
\kappa^{\Theta}(d_Wf) = d_\mathrm{dR}f\circ\left(d\mu-\rho\circ \theta\right)+d_\mathrm{dR}f\circ \rho\circ \theta = d_\mathrm{dR}f\circ d\mu = d_\mathrm{dR}\left(\mu^*f\right) = d_\mathrm{dR}\kappa^{\Theta}(f).
\]

We then look at elements of $\Omega^1(TM\bowtie A)$, where the check is simple.
\begin{lemma}\label{lem:kappa_on_one_forms}
    The following identities hold:
    \begin{itemize}
        \item $\kappa^{\Theta}(d_W\eta)=d_\mathrm{dR}\kappa^{\Theta}(\eta)$, for all $\eta\in \Omega^1(M)$;
        \item $\kappa^{\Theta}(d_W\alpha)=d_\mathrm{dR}\kappa^{\Theta}(\alpha)$ for all $\alpha\in \Omega^1(A)$.
    \end{itemize}
\end{lemma}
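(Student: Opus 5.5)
The plan is to unwind both sides using Lemma~\ref{lem:relation_differentials}, the definition of $\kappa^{\Theta}$, and the definition of the obstruction map $R_\Theta$ from \eqref{eq:obstruction}. No new computation with brackets should be needed beyond what is packaged in Corollary~\ref{cor:calc_obstruction}.

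For $\eta\in\Omega^1(M)$, Lemma~\ref{lem:relation_differentials} gives $d_W\eta=d_\oplus\eta-\iota(\eta\circ\rho)$. The first summand lies in $\Omega^2(TM\bowtie A)$, on which $\kappa^{\Theta}$ agrees with $\Theta^*$ because $\kappa^{\Theta}$ is multiplicative and restricts to $\Theta^*$ on generators. So $\kappa^{\Theta}(d_\oplus\eta)=\Theta^*(d_\oplus\eta)$.

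The second summand is a section of $S^1A^*$. By definition $\kappa^{\Theta}(a)=R_\Theta(\iota(a))$, and here $\iota$ is the inverse of the tautological identification. We must track the sign convention \eqref{eq:tautological_shift}: for $k=0$ it is trivial, so $\iota(\iota(\eta\circ\rho))=\eta\circ\rho\in\Omega^1(A)$. Hence
\[
\kappa^{\Theta}\bigl(-\iota(\eta\circ\rho)\bigr)=-R_\Theta(\eta\circ\rho)=-\langle\eta\circ\rho,\mc_\theta\rangle=-\eta(\rho\circ\mc_\theta),
\]
using Corollary~\ref{cor:calc_obstruction} for linear forms $\alpha=\eta\circ\rho$. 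The same corollary gives $R_\Theta(\eta)=-\eta(\rho\circ\mc_\theta)$, so
\[
\kappa^{\Theta}(d_W\eta)=\Theta^*(d_\oplus\eta)+R_\Theta(\eta)=d_{\mathrm{dR}}\Theta^*(\eta)=d_{\mathrm{dR}}\kappa^{\Theta}(\eta),
\]
by the very definition of $R_\Theta$.

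The case $\alpha\in\Omega^1(A)$ is identical. Here $d_W\alpha=d_\oplus\alpha+\iota(\alpha)$, so
\[
\kappa^{\Theta}(d_W\alpha)=\Theta^*(d_\oplus\alpha)+R_\Theta(\alpha)=d_{\mathrm{dR}}\Theta^*(\alpha).
\]

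The only genuine point of care, and where I expect any obstacle, is consistency of signs. The sign in \eqref{eq:tautological_shift} is $(-1)^k$, and for these one-form generators only $k=0$ occurs. We also need that $\kappa^{\Theta}$ restricted to the subalgebra $\Omega^\bullet(TM\bowtie A)$ is exactly $\Theta^*$, including on the two-forms $d_\oplus\eta$ and $d_\oplus\alpha$. This holds because both maps are algebra maps agreeing on $C^\infty(M)$ and on degree-one generators, and every element of $\Omega^2(TM\bowtie A)$ is locally a $C^\infty(M)$-combination of wedge products of such generators.

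Note that no hypothesis on $\omega$ or on principality of $\theta$ is used: the identity holds for any connection $\theta$ and any connection $D$. The reason is that the correction term $\pm\iota(\cdot)$ in $d_W$ is designed precisely to cancel the obstruction $R_\Theta$ on degree-one generators.
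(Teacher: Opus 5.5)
Your proposal is correct and is essentially the paper's proof: you split $d_W$ via Lemma~\ref{lem:relation_differentials}, use $\kappa^{\Theta}=\Theta^*$ on $\Omega^\bullet(TM\bowtie A)$ and $\kappa^{\Theta}\circ\iota=R_\Theta$ on $\Omega^1(A)$, and then cancel using $R_\Theta(\eta\circ\rho)=-R_\Theta(\eta)$ from Corollary~\ref{cor:calc_obstruction}. The only difference is that you spell out that last identity through $\mc_\theta$, where the paper simply cites the corollary.
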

\begin{proof}
    Both identities follow from the definition of $\Theta$ and $\kappa^{\Theta}$ together with Lemma~\ref{lem:relation_differentials}.

    Indeed, for any $\eta\in \Omega^1(M)$, we have
    \[
    \kappa^{\Theta} d_W(\eta) = \kappa^{\Theta} (d_\oplus\eta - \iota(\eta\circ \rho))
    \]
    which, by definition of $\kappa^{\Theta}$, is equal to
    \[
    \Theta^*(d_\oplus \eta) - R_\Theta(\eta\circ \rho).
    \]
    Moreover, 
    \[
    \Theta^*(d_\oplus \eta) = d_\mathrm{dR} \Theta^* \eta - R_\Theta(\eta).
    \]
    Finally, we observe that
    \[
    R_\Theta(\eta\circ \rho) = - R_\Theta(\eta)
    \]
    follows from Corollary~\ref{cor:calc_obstruction}. All together
    \[
    \kappa^{\Theta}d_W(\eta) = \left(d_\mathrm{dR}\Theta^*\eta - R_\Theta(\eta)\right)+R_\Theta(\eta) = d_\mathrm{dR}\kappa^{\Theta}(\eta).
    \]

    For $\alpha\in \Omega^1(A)$, by \eqref{eq:dW_on_alpha}, $d_W\alpha = d_\oplus\alpha+\iota(\alpha)$, and $\kappa^{\Theta}\circ \iota = R_\Theta$ on $\Omega^1(A)$ by the definition of $\kappa^{\Theta}$ on $\Gamma(S^1A^*)$. Hence
    \[
    \kappa^{\Theta}(d_W\alpha) = \Theta^*(d_\oplus\alpha)+R_\Theta(\alpha)
    = \left(d_\mathrm{dR}\Theta^*\alpha - R_\Theta(\alpha)\right)+R_\Theta(\alpha)
    = d_\mathrm{dR}\kappa^{\Theta}(\alpha).
    \]
\end{proof}

We are left with showing that 
\[
    \kappa^{\Theta}(d_W a)=d_\mathrm{dR}\kappa^{\Theta}(a), \quad a\in \Gamma(S^1A^*).
\]
First we observe that, by writing $a=\iota(\alpha)$, $\alpha\in \Omega^1(A)$, and recalling the definitions of $\kappa^{\Theta}$ and $d_W$, we have
\[d_\mathrm{dR}\kappa^{\Theta}(a) = -d_\mathrm{dR}\circ \Theta^*\circ d_\oplus(\alpha) = -\Theta^*\circ d_\oplus^2(\alpha) -R_\Theta\circ d_\oplus (\alpha)\]
and
\[\kappa^{\Theta}(d_W(a)) = \kappa^{\Theta}(d_\mathrm{cad}(a))+\Theta^*(\langle a, \curv^D \rangle + \langle a, \cbas^D \rangle)\]

We know that
\[
 d_\oplus^2(\alpha) = - \langle \alpha, \curv^D \rangle - \langle \alpha, \cbas^D \rangle
 \]
from Lemma~\ref{lem:dplus_square_curv}. Hence, it remains to prove the following Lemma.

\begin{lemma}\label{lem:kappa_carrying_step}
   Let $\alpha\in \Omega^1(A)$ and $a=\iota(\alpha)$. Then
    \[
    \kappa^{\Theta}\left(d_\mathrm{cad}(a)\right) = -R_\Theta\left(d_\oplus \alpha\right).
    \]
\end{lemma}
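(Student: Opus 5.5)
The plan is to reduce the statement to a pointwise algebraic identity: contracting the $2$-form $d_\oplus\alpha$ with an isotropy element of $TM\bowtie A$ should reproduce the $A^*$-valued one-form $(d_{\nabla^{A^*}}+d_{D^*})\alpha$. Both sides of the lemma are built from $\Theta$ and $\mc_\theta$. On the left, $d_\mathrm{cad}(a)=-\iota\bigl((d_{\nabla^{A^*}}+d_{D^*})\alpha\bigr)$, since $\iota(a)=\alpha$. This is an element of $\Gamma\bigl((TM\oplus A)^*\otimes S^1A^*\bigr)$, i.e. of $\mathrm{Hom}(TM\oplus A,S^1A^*)$. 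By the definition of $\kappa^{\Theta}$ on products and on $\Gamma(S^1A^*)$, such an element $\xi$ is sent to $\pm\,\Theta^*(\cdot)\wedge\langle\cdot,\mc_\theta\rangle$. Concretely, for $w_0,w_1,w_2\in TP$, $\kappa^{\Theta}(\xi)(w_0,w_1,w_2)$ is a cyclic sum of the terms $\langle\iota^{-1}\xi(\Theta w_i),\mc_\theta(w_j,w_k)\rangle$. The sign comes from convention~\eqref{eq:tautological_shift} with $k=1$, and that sign is the first thing to record.

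On the right, I will use Corollary~\ref{cor:calc_obstruction}. Write $J:A\to TM\oplus A$, $J(c)=(-\rho c,c)$, for the inclusion of the isotropy of Remark~\ref{rmk:isotropy_adjoint}. For one-forms, $R_\Theta(\sigma)=\sigma\bigl(J\,\mc_\theta\bigr)$. For a $2$-form written locally as $\sum\sigma^1\wedge\sigma^2$, the Leibniz formula of the corollary gives $R_\Theta(\sigma^1\wedge\sigma^2)=R_\Theta(\sigma^1)\wedge\Theta^*\sigma^2-\Theta^*\sigma^1\wedge R_\Theta(\sigma^2)$. Evaluated on $w_0,w_1,w_2$, this is the cyclic sum of $\sigma(\Theta w_i, J\mc_\theta(w_j,w_k))$ up to a global sign. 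This expression is $C^\infty(M)$-linear, so the decomposition into wedge products is harmless. Both sides therefore take the same shape
\[
\textstyle\sum_{\mathrm{cyc}} \Phi\bigl(\Theta w_i,\ \mc_\theta(w_j,w_k)\bigr),
\]
with $\Phi$ a bundle map $(TM\oplus A)\otimes A\to\mathbb{R}$. It suffices to show that the two $\Phi$'s agree, i.e. that for $u=(X,\beta)$ and $c\in A$
\[
(d_\oplus\alpha)\bigl(u,J(c)\bigr)=\bigl((d_{D^*}\alpha)(X)+(d_{\nabla^{A^*}}\alpha)(\beta)\bigr)(c),
\]
up to the sign bookkeeping.

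The identity itself is a short computation using the components of $d_\oplus\alpha$: $d_{D^*}\alpha\in\Omega^1(M,A^*)$ and $d_\mathrm{CE}\alpha\in\Omega^2(A)$. The left side expands to $(D^*_X\alpha)(c)-(D^*_{\rho c}\alpha)(\beta)+d_\mathrm{CE}\alpha(\beta,c)$. Expand the last two terms. One gets $\rho(\beta)\alpha(c)-\alpha([\beta,c])-\alpha(D_{\rho c}\beta)$, because the $\rho(c)\alpha(\beta)$ terms cancel. This is $(\nabla^{A^*}_\beta\alpha)(c)$, since $\nabla^A_\beta c=[\beta,c]+D_{\rho c}\beta$ by Proposition~\ref{prop:adj_rep}, and $(d_{\nabla^{A^*}}\alpha)(\beta)=\nabla^{A^*}_\beta\alpha$.

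The main obstacle is not this identity but the signs. Three conventions must be aligned: the $(-1)^k$ in the extended $\iota$, the overall minus sign in $d_\mathrm{cad}(a)$, and the ordering convention identifying $\Gamma(\Lambda^pT^*M\otimes\Lambda^qA^*)$ with forms on $TM\oplus A$ evaluated on base directions first. A further sign arises when moving $R_\Theta(\sigma^i)$ of degree $2$ past $\Theta^*\sigma^j$. I would settle these by testing on a decomposable $\xi=\eta\otimes\iota(\beta)$ and on $\sigma=\sigma^1\wedge\sigma^2$. As a sanity check, I would compare with the already verified case $\kappa^{\Theta}(d_W\alpha)=d_\mathrm{dR}\kappa^{\Theta}(\alpha)$ in Lemma~\ref{lem:kappa_on_one_forms}, where the same conventions produced consistent signs.
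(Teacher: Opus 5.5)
Your proposal is correct and is essentially the paper's own proof. Both sides become cyclic sums over $(w_i;w_j,w_k)$: the left is $+\sum_{\mathrm{cyc}}\bigl((d_{\nabla^{A^*}}+d_{D^*})\alpha\bigr)\bigl(\Theta w_i,\mc_\theta(w_j,w_k)\bigr)$, and $R_\Theta(d_\oplus\alpha)$ is $-\sum_{\mathrm{cyc}}d_\oplus\alpha\bigl(\Theta w_i,J\mc_\theta(w_j,w_k)\bigr)$, so no extra global sign survives and the lemma reduces exactly to your pointwise identity. One slip: since $J(c)$ has $TM$-component $-\rho(c)$, the middle term of your expansion should be $+(D^*_{\rho(c)}\alpha)(\beta)$ rather than $-(D^*_{\rho(c)}\alpha)(\beta)$, and it is this sign that makes the $\rho(c)\alpha(\beta)$ terms cancel as you claim.
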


\begin{proof}
Write
\[
\Xi := \left(d_{\nabla^{A^*}}+d_{D^*}\right)(\alpha)\ \in\ \Gamma\big((TM\oplus A)^*\otimes A^*\big).
\]
By~\eqref{eq:diff_ad}, $d_\mathrm{cad}(a) = -\iota(\Xi)$. Applying $\kappa^{\Theta}$ gives the three-form
\begin{equation}\label{eq:proof1}
\begin{aligned}
\kappa^{\Theta}\left(d_\mathrm{cad}(a)\right)(X,Y,Z) = {}& \Xi\big(\Theta(X),\mc_\theta(Y,Z)\big) - \Xi\big(\Theta(Y),\mc_\theta(X,Z)\big)\\
&+ \Xi\big(\Theta(Z),\mc_\theta(X,Y)\big).
\end{aligned}
\end{equation}

To manipulate the other side, set $V:=\left(-\rho\circ \mc_\theta,\ \mc_\theta\right)\in \Omega^2(P,TM\oplus A)$, so that $R_\Theta(\xi) = \xi\circ V$ for any linear one-form $\xi$ -- see Corollary~\ref{cor:calc_obstruction}. Since $R_\Theta$ is a degree $+1$ derivation along $\Theta^*$, on a two-form $\zeta$ written in a local basis $\sigma^I$ of $(TM\oplus A)^*$ as $\zeta = \tfrac12\zeta_{IJ}\,\sigma^I\wedge \sigma^J$ one gets
\begin{equation}\label{eq:proof2}
R_\Theta(\zeta) = \zeta_{IJ}\ (\sigma^I\circ V)\wedge (\sigma^J\circ \Theta),
\end{equation}
Evaluating on $(X,Y,Z)$ and using $\zeta(V,\Theta)=-\zeta(\Theta,V)$, we get
\[
-R_\Theta(\zeta)(X,Y,Z) = \zeta\big(\Theta(X),V(Y,Z)\big) - \zeta\big(\Theta(Y),V(X,Z)\big) + \zeta\big(\Theta(Z),V(X,Y)\big).
\]

Since $V(Y,Z)=\big(-\rho(\mc_\theta(Y,Z)),\mc_\theta(Y,Z)\big)$, by comparing the equation above with \eqref{eq:proof1} we see that it is sufficient to show that 
\begin{equation*}
\Xi(\Theta(X),\mc_\theta(Y,Z)) = d_\oplus\alpha\big(\Theta(X),(-\rho(\mc_\theta(Y,Z)),\mc_\theta(Y,Z))\big);
\end{equation*}
in fact, we will show that 
\begin{equation}\label{eq:proof_pointwise}
\Xi(u,\sigma) = d_\oplus\alpha\big(u,(-\rho(\sigma),\sigma)\big),\qquad u=(u_M,u_A)\in TM\oplus A,\ \sigma\in A.
\end{equation}

Both sides of~\eqref{eq:proof_pointwise} are computed from the definitions. 

For $\gamma,\beta\in\Gamma(A)$, the definition of $\nabla^{A^*}$ together with $\nabla^A_\gamma\beta=[\gamma,\beta]+D_{\rho(\beta)}\gamma$ gives
\[
\begin{aligned}
(d_{\nabla^{A^*}}\alpha)(\gamma)(\beta) &= \rho(\gamma)\big(\alpha(\beta)\big) - \alpha([\gamma,\beta]) - \alpha\big(D_{\rho(\beta)}\gamma\big)\\
&= d_\mathrm{CE}\alpha(\gamma,\beta) + \big(D^*_{\rho(\beta)}\alpha\big)(\gamma).
\end{aligned}
\]
Hence
\[
\Xi(u,\sigma) = \big(D^*_{u_M}\alpha\big)(\sigma) + d_\mathrm{CE}\alpha(u_A,\sigma) + \big(D^*_{\rho(\sigma)}\alpha\big)(u_A).
\]

On the other hand, $d_\oplus\alpha = d_{D^*}\alpha+d_\mathrm{CE}\alpha$ and 
\[
d_{D^*}\alpha(u,\sigma) = (D^*_{u_M}\alpha)(\sigma) + (D^*_{\rho(\sigma)}\alpha)(u_A),
\]
\[
d_\mathrm{CE}\alpha(u,\sigma) = d_\mathrm{CE}\alpha(u_A,\sigma).
\]
We deduce \eqref{eq:proof_pointwise} and hence
\begin{equation}\label{eq:proof3}
\kappa^{\Theta}\left(d_\mathrm{cad}(a)\right) = -R_\Theta\left(d_\oplus \alpha\right).
\end{equation}
\end{proof}

\subsection{The action induced by $\omega$ and the basic subcomplex}\label{subs:basic_subcomplex}
Both sides of the connection-curvature dga map
\[
\kappa^{\Theta}: W^\bullet(A,D)\to \Omega^\bullet(P)
\]
from Theorem~\ref{thm:conn_curv} are equipped with a natural action of $A$, as we discuss in this subsection.

First of all, we clarify some terminology. By \textbf{action of $A$ on a graded commutative dga $(\mathcal{E}^\bullet, d_\mathcal{E})$} we mean a map
\[
\Gamma(A)\to \mathrm{Der}^{-1}(E^\bullet),\quad  \epsilon \mapsto \epsilon\cont
\]
from $\Gamma(A)$ to derivations of $(\mathcal{E}^\bullet, d_\mathcal{E})$ such that, for all $\epsilon\in \Gamma(A)$, $\epsilon\cont$ is a degree $-1$ derivation and, for all $\epsilon,\epsilon'\in\Gamma(A)$,+
\[
[\epsilon \cont, \epsilon' \cont] = 0,\quad \epsilon,\epsilon'\in \Gamma(A).
\]
Given such a action, we can equip $\mathcal{E}^\bullet$ with a Cartan-like calculus by setting
\[
L_\epsilon := [d_\mathcal{E},\epsilon\cont], \epsilon \in A
\]
where on the right hand side we have the graded commutator of derivations between the differential $d_\mathcal{E}$ of $\mathcal{E}^\bullet$ and $\epsilon\cont$. For all $\epsilon$, $L_\epsilon$ is a degree zero derivation; thus, we get a map
\[
\Gamma(A)\to \mathrm{Der}^0(\mathcal{E}^\bullet), \epsilon \mapsto L_\epsilon.
\]
    If  $L$ defines a map of Lie algebras, i.e.
    \[
    L_{[\epsilon,\epsilon']} = [L_{\epsilon},L_{\epsilon'}],\quad \epsilon,\epsilon'\in \Gamma(A),
    \]
    we call (with a slight conflict with usual terminology) $(\mathcal{E},d_\mathcal{E})$ a \textbf{representation} of $A$.

Given an $A$-action on $(\mathcal{E},d_\mathcal{E})$ one has three natural sub-algebras:
\begin{itemize}
    \item the sub-algebra of \textbf{horizontal elements}
    \[
    \mathcal{E}^\bullet_\mathrm{hor} = \{e\in \mathcal{E}^\bullet:\ \epsilon\cont e = 0, \quad \text{for all } \epsilon \in \Gamma(A)\};
    \]
    \item the sub-algebra of \textbf{invariant elements}
    \[
    \mathcal{E}^\bullet_\mathrm{inv} = \{e\in \mathcal{E}^\bullet:\ L_\epsilon e = 0, \quad \text{for all } \epsilon \in \Gamma(A)\};
    \]
    \item the sub-algebra of \textbf{basic elements}
    \[
    \mathcal{E}^\bullet_\mathrm{bas} = \mathcal{E}^\bullet_\mathrm{hor}\cap \mathcal{E}^\bullet_\mathrm{inv}.
    \]
\end{itemize}
Notice that the sub-algebras of invariant elements and basic elements are in fact sub-dgas.

\begin{example} \label{ex:TM_dga}
The prototype of action of an algebroid on a dga is given by the $TM$-dga $(\Omega^\bullet(M),d_\mathrm{dR})$, where $M$ is a manifold and $\cont$, $L$ are the usual contraction and Lie derivative operators. 
\end{example}

\begin{example}[Principal bundles]~\label{ex:Cart_calc_pb}
Slightly more relevant for our purposes, the infinitesimal action of the Lie algebroid $A$ of a groupoid $\Sigma$ on a principal $\Sigma$-bundle $\pi:P\to X$ induces an action on the dga $(\Omega^\bullet(P),d_\mathrm{dR})$. The infinitesimal action $a_P: \mu^*A \to TP$ induces a map
\[
a_P: \Gamma(\mu^*A) \to \mathfrak{X}(P)
\]
which can be dualized, providing, for all $\epsilon\in \Gamma(A)$, the map
\[
\epsilon\cont: \Omega^\bullet(P) \to \Omega^{\bullet-1}(P)
\]
given by contraction of $\eta \in \Omega^\bullet(P)$ with the $\pi$-vertical vector field 
\[
\hat{\epsilon} := a_P(\mu^*\epsilon)\in \mathfrak{X}(P);
\]
accordingly, by Cartan's magic formula, the degree zero derivation $L_\epsilon$ is simply the Lie derivative along $\hat{\epsilon}$. It follows that the action on $(\Omega^\bullet(P),d_\mathrm{dR})$ is in fact a representation.

Here, the algebra of horizontal elements corresponds to horizontal forms, while basic elements are precisely basic forms on $P$ -- hence, pullbacks of forms on its base $X$ if $\Sigma$ is $s$-connected.
\end{example}

\begin{definition}\label{def:A_dga}
    We say that a graded commutative dga $(\mathcal{E}^\bullet,d_\mathcal{E})$ is an \textbf{$A$-dga} if 
    \begin{itemize}
        \item $\mathcal{E}^\bullet$ is the algebra of sections of a graded vector bundle on $M$
        \[
        \mathcal{E}^\bullet = \Gamma(E^\bullet)
        \]
        and $\mathcal{E}^0=C^\infty(M)$, $E^0=M\times \mathbb{R}$;
        \item the map $\cont$ arises as a vector bundle map, denoted in the same way,
        \[
        A\otimes E^\bullet \to E^{\bullet-1},\quad \epsilon \otimes \alpha \to \epsilon\cont \alpha;
        \]
        \item the operator $L$ defines an $A$-connection on horizontal elements.
    \end{itemize}
    
\end{definition}

Let us clarify the third point. The first two bullet points ensure that the derivations
        \[
        L_\epsilon\ =\ [d_\mathcal{E},\epsilon\cont]\ \in\ \mathrm{Der}^0(\mathcal{E}^\bullet),
        \qquad \epsilon\in\Gamma(A),
        \]
        satisfy 
        \begin{equation}\label{eq:L_is_not_linear}
            L_{f\epsilon}\ =\ f\,L_\epsilon\ +\ d_\mathcal{E}f\wedge(\epsilon\cont\,\cdot\,),
            \qquad f\in C^\infty(M).
        \end{equation}
In particular, the map $\epsilon\to L_\epsilon$ becomes $C^\infty$-linear when its image is restricted to horizontal sections. The sub-algebra of such sections can in turn be realized as the sub-algebra of sections taking values in 
    \[
    E^\bullet_\mathrm{hor}=\{e\in E^\bullet: \epsilon \cont e = 0,\quad \text{for all } \epsilon \in \Gamma(A)\}.
    \]
Then, the request that this map defines an $A$-connection boils down to the three conditions
\begin{itemize}
    \item for all $\epsilon\in \Gamma(A)$, the operator $L_\epsilon$ sends $E^\bullet_\mathrm{hor}$-valued sections to $E^\bullet_\mathrm{hor}$-valued sections;
    \item for all $f\in C^\infty(M)$ and $\epsilon \in \Gamma(A)$, $\epsilon\cont d_{\mathcal{E}}(f) = L_{\rho(\epsilon)}(f)$
    \item $E^\bullet_\mathrm{hor}$ has constant rank.
\end{itemize}
    An $A$-dga is called \textbf{flat} if the $A$-connection induced on horizontal elements is flat -- i.e., if $L$ makes $E^\bullet_\mathrm{hor}$ into a representation of $A$ in the usual sense. Notice that an $A$-dga satisfying the Cartan-like identity
    \[
    [L_\epsilon, \epsilon'\cont] = [\epsilon,\epsilon']\cont
    \]
    is a flat $A$-dga.

The example below discusses the flat $A$-dga which is central for our purposes: the Weil algebra of a Lie groupoid with Cartan connection $(\Sigma,\omega)$. 

\begin{example}[Weil algebra and basic subcomplex induced by $\omega$]\label{ex:Cart_calc_Weil_alg}
Let $(\mathcal{E}^\bullet, d_\mathcal{E}) = (W^\bullet(A,D), d_W)$, where $D$ is the infinitesimally Cartan connection induced by the Cartan connection $\omega$ on the Lie groupoid $\Sigma$. For all $\epsilon\in \Gamma(A)$ and 
\[
\eta \otimes a \otimes \alpha \in \Gamma(\Lambda^uT^*M\otimes S^v(A^*)\otimes \Lambda^w(A^*)) = W^{u+w+2v}(A,D)
\]
we set, following the Koszul rule for a derivation of degree $-1$ and recalling that the symmetric leg carries degree $2v$, so that passing $\epsilon\cont$ across $\eta\otimes a$ produces $(-1)^{u+2v}=(-1)^u$,
\[
\epsilon\cont(\eta \otimes a\otimes \alpha) := (-1)^u\,\eta\otimes a \otimes \epsilon\cont\alpha
\]
and obtain a map
\[
\epsilon\cont: W^\bullet(A,D)\to W^{\bullet-1}(A,D)
\]
by imposing $C^\infty(M)$-linearity. As for $L$, it acts on generators as follows. For all $\epsilon \in \Gamma(A)$, by definition
\[
L_\epsilon \eta = \epsilon \cont d_W \eta,\quad L_\epsilon a = \epsilon \cont d_W a
\]
for all $\eta \in \Omega^1(M)$, $a\in \Gamma(S^1A^*)$; expanding $d_W$
\[
\epsilon \cont d_W \eta = \nabla^*_\epsilon \eta
\]
\[
\epsilon \cont d_W a =  \iota(\nabla^{A^*}_\epsilon\iota(a)) + \epsilon \cont \langle a, \curv^D\rangle .
\]

On the other hand 
\[
L_\epsilon \alpha =  d_\mathrm{dR}(\alpha(\epsilon))+\rho^*d_\mathrm{dR}(\alpha(\epsilon)) + \epsilon \cont d_\mathrm{CE}\alpha -\langle d_{D^*}\alpha,\epsilon \rangle
\]

Notice that the sub-algebra of horizontal elements is the sub-algebra generated by $\Omega^1(M)$ and $\Gamma(S^1A^*)$ -- which consists of sections of the bundle $\Lambda^\bullet T^*M\otimes S^\bullet A^*$. 

On this sub-algebra there is a global counterpart of the $A$-connection $L$, namely the representation $\hcdot$ of $\Sigma$ defined on generators by
\begin{equation}\label{eq:omega_action_on_forms}
(g,\eta) \mapsto g\hcdot \eta = g\cdot \eta,\quad (g,\eta) \in \Sigma\times_M T^*M
\end{equation}
and
\begin{equation}\label{eq:omega_action_on_polys}
(g,a)\mapsto g\hcdot a = \iota(g\cdot \iota(a)) + \big\langle \iota(a),\Curv^\omega_{g^{-1}}\big\rangle,\quad (g, a)\in \Sigma\times_M S^1 A^* 
\end{equation}
where $\cdot$ denotes the co-adjoint representation of $\Sigma$ on $T^*M\to A^*$ induced by $\omega$ and $\Curv^\omega$ is the curvature of $\omega$. 

Notice that formula \eqref{eq:omega_action_on_polys} defines an associative action. The first summand is simply the co-adjoint representation; the second summand is associative because the curvature of $\omega$ is a cocycle. Indeed, the cocycle identity
\[
\Curv^\omega_{gh}(v,w) = \Curv^\omega_g(h\cdot v,h\cdot w) + g\cdot \Curv^\omega_h(v,w),\qquad v,w\in T_{s(h)}M
\]
applied to $(gh)^{-1}$ gives
\[
\big\langle \iota(a),\Curv^\omega_{h^{-1}g^{-1}}\big\rangle = \big\langle h\cdot \iota(a), \Curv^\omega_{g^{-1}}\big\rangle + g\cdot \big\langle \iota(a),\Curv^\omega_{h^{-1}}\big\rangle.
\]

Moreover, this action differentiates to the $A$-connection $L$. For $\epsilon\in\Gamma(A)$ and $x\in M$, let $\lambda\mapsto g_\lambda$ be the integral curve through $1_x$ of the right invariant vector field $\epsilon^R$, so that $s(g_\lambda)=x$. Then, for every section $\chi$ of the horizontal sub-algebra,
\[
\frac{d}{d\lambda}\Big|_{\lambda=0}\ g_\lambda^{-1}\cdot\chi\big(t(g_\lambda)\big) = (L_\epsilon\chi)(x),
\]
since the co-adjoint representation of $\Sigma$ differentiates to the $A$-connections $\nabla^*$ and $\nabla^{A^*}$ in the formulas for $L_\epsilon$ above, $(\Curv^\omega)_{1_x}=0$, and the linearisation of $\Curv^\omega$ is $\curv^D$ by Proposition~\ref{prop:curv_alg_is_lin_curv_gpd}. Consequently, when $\Sigma$ is $s$-connected, an element of the horizontal sub-algebra is $\Sigma$-invariant if and only if it is $L$-invariant.

In what follows, we will use the notation
\[
W^\bullet(A,D)_{\omega\mathrm{-bas}}
\]
for the sub-algebra of horizontal elements which are also invariant under the action of $\Sigma$ defined by \eqref{eq:omega_action_on_forms} and \eqref{eq:omega_action_on_polys}. Notice that $W^\bullet(A,D)_{\omega\mathrm{-bas}}$ is contained in the sub-dga of basic elements of $W^\bullet(A,D)$ with respect to the $A$-dga structure that we just introduced, and it coincides with it when $\Sigma$ is $s$-connected. 
\end{example}

\begin{proposition}\label{prop:omega_bas_subdga}
    The sub-algebra $W^\bullet(A,D)_{\omega\mathrm{-bas}}\subset W^\bullet(A,D)$ is a sub-dga.
\end{proposition}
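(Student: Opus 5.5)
We must show that if $\chi\in W^\bullet(A,D)$ is horizontal and $\Sigma$-invariant under $\hcdot$, then $d_W\chi$ is again horizontal and $\Sigma$-invariant. It is clear that $W^\bullet(A,D)_{\omega\mathrm{-bas}}$ is a sub-algebra: horizontality is preserved by products because each $\epsilon\cont$ is a derivation, and invariance is preserved because $\hcdot$ is defined multiplicatively on generators. The work is therefore in closure under $d_W$, and the two properties will be handled separately.

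\emph{Horizontality.} Since $\chi$ is horizontal, $L_\epsilon\chi=\epsilon\cont d_W\chi$. So horizontality of $d_W\chi$ amounts to the vanishing of $L_\epsilon\chi$ for all $\epsilon\in\Gamma(A)$. We get this from invariance by differentiation. By the computation at the end of Example~\ref{ex:Cart_calc_Weil_alg}, $(L_\epsilon\chi)(x)$ is the derivative at $\lambda=0$ of $g_\lambda^{-1}\hcdot\chi(t(g_\lambda))$. For a $\Sigma$-invariant $\chi$ this curve is constant and equal to $\chi(x)$, so $L_\epsilon\chi=0$. Here we use only the identity-component part of $\Sigma$ near the units, which is always available, so no $s$-connectedness is needed. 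Hence $\epsilon\cont d_W\chi=0$ for every $\epsilon$, which is exactly horizontality of $d_W\chi$.

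\emph{Invariance.} We must show $g\hcdot (d_W\chi)(s(g))=(d_W\chi)(t(g))$. The strategy is to prove that $d_W$ restricted to the horizontal sub-algebra $\Gamma(\Lambda^\bullet T^*M\otimes S^\bullet A^*)$ is $\Sigma$-equivariant in an appropriate sense. Since $d_W\chi$ is horizontal whenever $\chi$ is basic, the relevant part of $d_W$ on horizontal elements consists of the components landing in the horizontal part. By the formulas \eqref{eq:diff_ad} and \eqref{eq:diff_curv}, these are $d_{D^*}$ on the $T^*M$ and $S^\bullet A^*$ legs, together with $d_\mathrm{curv}$ (only its $\curv^D$ part survives, since $\cbas^D=0$). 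The components with an $A^*$ leg vanish, because they equal contractions determined by $L_\epsilon\chi=0$.

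The cleanest route to equivariance is to interpret $d_W$ on horizontal invariant elements via pullback to $\Sigma$. We consider the trivial principal bundle $P=\Sigma$ (Example~\ref{ex:trivial_Sigma_bundle}) with the principal connection $\theta=\omega$, which is multiplicative by Lemma~\ref{lem:multipl_equiv_notion}. We then apply Theorem~\ref{thm:conn_curv}: $\kappa^{\Omega}$ intertwines $d_W$ and $d_\mathrm{dR}$. For horizontal $\chi$, $\kappa^\Omega(\chi)$ should be given pointwise at $g$ by evaluating $g^{-1}\hcdot\chi(t(g))$ on $ds$ of horizontal vectors. Concretely, $\Theta$ on $\ker\omega$ is $(dt,0)=(g\cdot ds,0)$, and $\kappa^\Omega(a)=\langle\iota(a),\Curv^\omega\rangle$, which matches the correction term in \eqref{eq:omega_action_on_polys}. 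Invariance of $\chi$ then means $\kappa^\Omega(\chi)=s^*\chi'$, where $\chi'$ is the corresponding form on $M$, i.e.\ $\kappa^\Omega(\chi)$ is $s$-basic. Applying $d_\mathrm{dR}$ preserves $s$-basicness, so $\kappa^\Omega(d_W\chi)$ is $s$-basic. Reading the same identification backwards, using that $d_W\chi$ is horizontal, gives invariance of $d_W\chi$.

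\emph{Main obstacle.} The main difficulty is this identification: verifying carefully that for horizontal $\chi$, $\kappa^\Omega(\chi)_g$ equals $(g^{-1}\hcdot\chi(t(g)))\circ ds$ with the correct signs, including the curvature correction. Here Corollary~\ref{cor:mc_is_curvature} and Lemma~\ref{lem:curv_is_mult_pb} control how $\mc_\omega$ transforms. If this bookkeeping becomes unwieldy, the fallback is a direct generator-level check: show $g\hcdot$ commutes with $d_{D^*}$ and with $a\mapsto\langle a,\curv^D\rangle$ modulo terms killed by invariance, using the cocycle identity of Lemma~\ref{lem:Curv_cocycle} and Proposition~\ref{prop:curv_alg_is_lin_curv_gpd}.
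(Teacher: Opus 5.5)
Your horizontality step is correct and is the same as the paper's: for an invariant horizontal $\chi$ you get $L_\epsilon\chi=0$ by differentiating the $\hcdot$-action along right-invariant curves through the units, so $\epsilon\cont d_W\chi=0$.

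\textbf{The gap is in the invariance step, at ``reading the identification backwards''.} For horizontal $\psi$, your own computation shows that $\kappa^\Omega(\psi)_g$ is determined by the image of $g^{-1}\hcdot\psi_{t(g)}$ in $\Lambda^\bullet T^*_{s(g)}M$, after discarding every term with a positive-degree polynomial factor. This is because the $S^1A^*$-generators are replaced by $\langle\iota(a),\mc_\omega\rangle$; on $\ker\omega$ this is $\Curv^\omega$, and it vanishes at the units. So $\kappa^\Omega$ is far from injective on horizontal elements. Knowing that $\kappa^\Omega(d_W\chi)$ is $s$-basic therefore does not control the components of $d_W\chi$ carrying a polynomial factor, except indirectly through $\Curv^\omega$.

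When $\omega$ is flat (Lie groups, action groupoids, flat Lie group bundles) the method sees nothing of these components. Indeed, the second half of Theorem~\ref{thm:conn_curv}, applied to the principal bundle $(\Sigma,\omega)$ itself, says $\kappa^\Omega$ then kills the whole differential ideal generated by $\Gamma(S^1A^*)$. For $G\ltimes M$, for instance, your argument cannot see the component $-\sum_j\rho(\epsilon_j)\cont\chi\otimes\iota(\alpha^j)$ of $d_\mathrm{Cart}\chi$, and its invariance is exactly what must be proved. A smaller slip: your list of horizontal components of $d_W$ omits the $d_\iota$-term $-\iota(\eta\circ\rho)$ coming from $T^*M$-generators, which is of the same polynomial type.

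\textbf{What is missing is an action of $\Sigma$ on the Weil algebra itself, not on its image in $\Omega^\bullet(\Sigma)$.} The paper attaches to each local bisection $\mathfrak b:U\to\Sigma$ a dga map $\Phi_{\mathfrak b}:W(A,D)|_{\varphi_{\mathfrak b}(U)}\to W(A,D)|_U$. It is defined on functions and on all of $\Omega^1(A)$, with a correction term built from $\beta_{\mathfrak b}=\mathfrak b^*\omega$ that mixes $A^*$-generators into $T^*M$-generators. The paper shows that on horizontal elements $\Phi_{\mathfrak b}(\psi)_x=\mathfrak b(x)^{-1}\hcdot\psi_{\varphi_{\mathfrak b}(x)}$. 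Invariance of $\chi$ then gives $\Phi_{\mathfrak b}\chi=\chi|_U$, hence $\Phi_{\mathfrak b}(d_W\chi)=d_W\chi|_U$, and horizontality of $d_W\chi$ turns this into invariance.

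The map $\kappa^\Omega$ and the left translation $g\mapsto\mathfrak b(t(g))g$ do appear in the paper's proof, but only at the units. There $\kappa^\Omega$ is injective on the exterior part $\Lambda^\bullet(T^*M\oplus A^*)$, and it is used just to identify the non-polynomial components of $\Phi_{\mathfrak b}(a)$; the polynomial component is read off directly.

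Your fallback points in this direction but does not work as stated. The action $g\hcdot$ acts on a single fibre, while $d_W$ is a differential operator, so you must first extend $g$ to a local bisection and build its action on the whole Weil algebra. You also need to control how that action interacts with the non-horizontal components of $d_W$ on generators. This is the actual content of the proof.
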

\begin{proof}
    When $\Sigma$ is $s$-connected, $W^\bullet(A,D)_{\omega\mathrm{-bas}}$ is the algebra of basic elements with respect to the $A$-dga structure of Example \ref{ex:Cart_calc_Weil_alg}, and there is nothing to prove. For the general case, we first need to shed some more light on \eqref{eq:omega_action_on_forms} and \eqref{eq:omega_action_on_polys}. Namely, we will construct, for every bisection $\mathfrak{b}$, a dga map
    \[
    \Phi_\mathfrak{b}:  W(A,D)|_{t\circ \mathfrak{b}(U)} \to W(A,D)|_U
    \]
    with the property that, for every $\chi\in \Lambda^\bullet T^*M\otimes S^\bullet A^*$
    \[
    \Phi_\mathfrak{b}(\chi)_x = \mathfrak{b}(x)^{-1}\hat{\cdot} \chi_{\varphi_\mathfrak{b}(x)};
    \]
    since every arrow $g$ lies on a local bisection, and since the differential of $\chi \in W^\bullet(A,D)_{\omega\mathrm{-bas}}\subset \Lambda^\bullet T^*M\otimes S^\bullet A^*$ satisfies
    \[
    \epsilon \cont d_W \chi = L_\epsilon \chi  = 0,
    \]
    this will imply the claim.

    Let $\mathfrak{b}:U\to \Sigma$ be a bisection of $\Sigma$; we set 
    \[
    \varphi_\mathfrak{b}:=t\circ \mathfrak{b}:U\to t\circ \mathfrak{b}(U),\quad V:= {\varphi_\mathfrak{b}}(U)
    \]
    and
    \[
    \beta_\mathfrak{b}:=b^*\omega \in \Omega^1(U, \varphi_\mathfrak{b}^*A).
    \]
    For each $x\in U$, $\mathfrak{b}$ induces maps
    \[
    \nu_x:=\mathfrak{b}(x)\cdot: A_x \to A_{\varphi_\mathfrak{b}(x)},\quad  \lambda_x:= \mathfrak{b}(x)\cdot: T_xM \to T_{\varphi_\mathfrak{b}(x)}M
    \]
    satisfying $\rho\circ \nu_x = \lambda_x \circ \rho$. Additionally, we have 
    \[
    d\varphi_\mathfrak{b} = \lambda + \rho\beta_\mathfrak{b}.
    \]
    We can assume that $U$ supports a coordinate chart, and treat $W(A,D)|_{V}$ and $W(A,D)|_{U}$ as freely generated by functions and linear forms on $A$.

    The claimed dga map 
    \[
    \Phi_\mathfrak{b}:  W(A,D)|_{V} \to W(A,D)|_U
    \]
    induced by the bisection $\mathfrak{b}$ is then the unique one such that 
    \[
    \Phi_\mathfrak{b}(f) = f\circ \varphi_\mathfrak{b},\quad \Phi_\mathfrak{b}(\alpha) = \nu^* \varphi_\mathfrak{b}^*\alpha + \langle \varphi_\mathfrak{b}^*\alpha,\beta_\mathfrak{b}+\beta_\mathfrak{b}\circ \rho \rangle 
    \]
    for all $f\in C^\infty(V)$ and $\alpha\in \Gamma(\Lambda^1A^*|_V)$. That such a map exists --- that is, that these assignments are compatible with $d_W$ --- is a direct computation on generators. This implies
    \[
    \Phi_\mathfrak{b}(\eta) = \lambda^*\varphi_\mathfrak{b}^*\eta,\quad \Phi_\mathfrak{b}(a) = \nu^* \varphi_\mathfrak{b}^*a + \langle \varphi_\mathfrak{b}^*\iota(a), \Curv^\omega_{\mathfrak{b}}\rangle
    \]
    for all $\eta\in \Omega^1(V)$, $a\in \Gamma(S^1A^*|_V)$. 
    
    The former identity can be checked via a direct calculation. We have 
    \[
    d_W\Phi_\mathfrak{b}(f) = d(f\circ \varphi_\mathfrak{b})+d(f\circ \varphi_\mathfrak{b})\circ \rho 
    \]
    and 
    \[
    \Phi_\mathfrak{b}(df)+\nu^*\varphi_\mathfrak{b}^*(df\circ \rho) + \langle \varphi_\mathfrak{b}^*(df\circ \rho),\beta_\mathfrak{b}+\beta_\mathfrak{b}\circ \rho \rangle,
    \]
    obtained using the definition of $\Phi_\mathfrak{b}$ and that on smooth functions $g$ $d_Wg = dg+dg\circ \rho$.
    In the first equality we substitute
    \[
    d(f\circ \varphi_\mathfrak{b}) = df \circ (\lambda + \rho\beta_\mathfrak{b});
    \]
    imposing $\Phi_\mathfrak{b}d_Wf=d_W\Phi_\mathfrak{b}(f)$ and simplifying by using $\rho\circ \nu = \lambda \circ \rho$ yields
    \[
    \Phi_\mathfrak{b}(df) = \lambda^*df
    \]
    as desired. 

    We are now going to prove the identity
    \[
    \Phi_\mathfrak{b}(a) - \nu^* \varphi_\mathfrak{b}^*a = \langle \iota(a), \Curv^\omega_{\mathfrak{b}}\rangle.
    \]
    Recall that
    \[
    d_W \alpha = \iota(\alpha)+d_{D^*}\alpha+d_\mathrm{CE}\alpha.
    \]
    Thus 
    \[
    \Phi_\mathfrak{b}(\iota(\alpha)) = d_W(\Phi_\mathfrak{b}(\alpha))-\Phi_\mathfrak{b}(d_{D^*}\alpha)-\Phi_\mathfrak{b}(d_\mathrm{CE}\alpha).
    \]
    It follows from the formulas defining $\Phi_\mathfrak{b}(\alpha)$ and $d_W$ that the only term of $\Phi_\mathfrak{b}(\iota(\alpha))$ belonging in $\Gamma(S^1A^*|_U)$ is $
    \nu^* \varphi_\mathfrak{b}^*\iota(\alpha)$
    as predicted by the formula that we want to prove. It remains to show that the other terms of $\Phi_\mathfrak{b}(\alpha)$ reduce to $\langle \iota(a), \Curv^\omega_{\mathfrak{b}}\rangle$.
    
    First we isolate such terms by using Theorem \ref{thm:conn_curv} on the principal $(\Sigma,\omega)$-bundle consisting of $(\Sigma,\omega)$ itself, which yields a dga map
    \[
    \kappa^\Omega: (W^\bullet(A,D),d_W) \to (\Omega^\bullet(\Sigma),d_\mathrm{dR})
    \]
    which pairs $\Gamma(S^1A^*)$ with $\mc_\omega$, and $\Omega^1(M)$ and $\Omega^1(A)$ with $dt-\rho\circ \omega$ and $\omega$ respectively.
    Since $\mc_\omega$ vanishes identically at points of $M$, and since at such points $dt-\rho\circ \omega = ds$, we have
    \[
    \kappa^\Omega\circ \Phi_\mathfrak{b}(\iota(\alpha))|_U=\langle\Phi_\mathfrak{b}(\iota(\alpha)),(ds,\omega)\rangle;
    \]
    but $\ker(\omega)$ at $M$ is exactly $TM$, hence the right hand side computes precisely the terms of $\Phi_\mathfrak{b}(\iota(\alpha))$ which do not lie in $\Gamma(S^1A^*|_U)$. 

    The bisection $\mathfrak{b}$ induces the diffeomorphism
    \[
    \Psi_\mathfrak{b}:t^{-1}(U) \to t^{-1}(V), \quad g\mapsto \mathfrak{b}(t(g))\cdot g
    \]
    which, restricted at units, is just $\mathfrak{b}$. We claim that
    \[
    \Psi_\mathfrak{b}^*\circ \kappa^\Omega = \kappa^\Omega\circ \Phi_\mathfrak{b}
    \]
    which will conclude because
    \[
    \Psi_\mathfrak{b}^*\circ \kappa^\Omega (\iota(\alpha))|_{U} = \langle \iota(\alpha), \Curv^\omega_\mathfrak{b} \rangle.
    \]

    Since both $\Psi_\mathfrak{b}^*\circ \kappa^\Omega$ and $\kappa^\Omega\circ \Phi_\mathfrak{b}$ are dga maps, it is enough to check that they coincide on functions and elements of $\Omega^1(A)$. On functions $\kappa^\Omega (f) = t^*f$ hence, by using $t\circ \Psi_\mathfrak{b} = \varphi_\mathfrak{b}\circ t$ 
    \[
    \Psi_\mathfrak{b}^*\circ \kappa^\Omega(f) = (\varphi_\mathfrak{b}\circ t)^*f = \kappa^\Omega(\Phi_\mathfrak{b}(f)).
    \]
    On linear forms $\alpha\in \Omega^1(A)$,
    \[
    \kappa^\Omega(\Phi_\mathfrak{b}(\alpha)) = \Phi_\mathfrak{b}(\alpha),(dt-\rho\circ \omega,\omega)
    \]
    which gives
    \[
    \langle \nu^*(\varphi_\mathfrak{b}^*\alpha),\omega\rangle + \langle \varphi_\mathfrak{b}^*\alpha, \langle \beta_\mathfrak{b} , dt\rangle \rangle.
    \]
    The multiplicativity of $\omega$ yields
    \[
    \Psi^*_\mathfrak{b}\omega = \beta_\mathfrak{b}\circ dt + \nu\cdot \omega,
    \]
    which gives
    \[
    \kappa^\Omega(\Phi_\mathfrak{b}(\alpha)) = \langle \varphi_\mathfrak{b}^*\alpha,\Psi^*\omega \rangle = \Psi^*_\mathfrak{b}\langle \alpha,\omega\rangle
    \]
\end{proof}

\subsection{The characteristic map}\label{subs:characteristic_map}
Throughout this subsection $(\Sigma,\omega)$ is a Lie groupoid with Cartan connection, $D$ is the connection it induces on $A$, and $(P,\theta)$ is a principal $(\Sigma,\omega)$-bundle over $X$ with momentum map $\mu$. 
\begin{theorem}\label{thm:map_on_basics}
    The map $\kappa^\Theta$ sends $W^\bullet(A,D)_{\omega\mathrm{-bas}}$ into the basic forms on $P$, and therefore restricts to a dga map
    \[
    \kappa^{\Theta}: (W^\bullet(A,D)_{\omega\mathrm{-bas}},d_W) \longrightarrow (\Omega^\bullet (X),d_\mathrm{dR}).
    \]
\end{theorem}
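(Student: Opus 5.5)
Since $\kappa^\Theta$ is already a dga map by Theorem~\ref{thm:conn_curv} and $W^\bullet(A,D)_{\omega\mathrm{-bas}}$ is a sub-dga by Proposition~\ref{prop:omega_bas_subdga}, the plan is to show that $\kappa^\Theta$ of an $\omega$-basic element is a \emph{basic} form on $P$ in the strong sense: horizontal ($\pi$-vertical vectors contract to zero) and invariant under the $\Sigma$-action on $P$. Forms with these two properties are exactly $\pi^*$ of forms on $X$, because $\pi:P\to X$ is a surjective submersion whose fibres are the $\Sigma$-orbits. This holds even without $s$-connectedness, since we will check invariance under the global action rather than only under Lie derivatives. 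Injectivity of $\pi^*$ then lets us regard $\kappa^\Theta$ as landing in $\Omega^\bullet(X)$. Compatibility with $d_\mathrm{dR}$ is then automatic, because $\pi^*$ commutes with $d$.

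\textbf{Horizontality.} An $\omega$-basic element lies in the subalgebra generated by $\Omega^1(M)$ and $\Gamma(S^1A^*)$. It therefore suffices to show that $\kappa^\Theta(\eta)=\eta\circ\an_\theta$ and $\kappa^\Theta(a)=\langle\iota(a),\mc_\theta\rangle$ vanish when contracted with vertical vectors $a_P(\epsilon)$; the claim for products follows since contraction is a derivation. For the generators $\eta$, note that $\an_\theta(a_P\epsilon)=d\mu(a_P\epsilon)-\rho(\epsilon)=0$, using $\theta\circ a_P=\id$ and $d\mu\circ a_P=\rho$. For the generators $a$, Corollary~\ref{cor:mc_is_curvature} gives $\mc_\theta(w_1,w_2)=\Curv^\theta(w_1^\theta,w_2^\theta)$. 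The $\ker(\theta)$-component of a vertical vector is zero, so $\mc_\theta$ is horizontal. Here we use that $\theta$ is principal and $\omega$ is Cartan.

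\textbf{Invariance.} For $(g,p)$ with $s(g)=\mu(p)$, we must compare $\kappa^\Theta(\chi)$ at $g\cdot p$, evaluated on $g\cdot w_i$, with $\kappa^\Theta(\chi)$ at $p$, evaluated on the $w_i$. Because the form is horizontal, it suffices to check this on $\ker(\theta)$, and in fact on lifts of vectors of $TX$. This is the step where we must be careful: we should verify that the tangent map of $p\mapsto g\cdot p$ (defined along a local bisection through $g$) agrees with $g\cdot w$ modulo vertical vectors. Both cover $d\pi$ since $\pi(g\cdot p)=\pi(p)$, so horizontality makes the difference irrelevant. We then compute on generators restricted to $\ker(\theta)$:
\begin{itemize}
\item \emph{The $\eta$ generators.} By Lemma~\ref{lem:anchored_form_is_equiv}, $\an_\theta(g\cdot w)=g\cdot\an_\theta(w)$. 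Hence $\kappa^\Theta(\eta)_{gp}(g\cdot w)=(g^{-1}\cdot\eta)(\an_\theta w)$, which is $\kappa^\Theta(g^{-1}\hcdot\eta)$ evaluated at $p$.
\item \emph{The $a$ generators.} By Lemma~\ref{lem:curv_is_mult_pb}, $\Curv^\theta_{gp}(g v,g w)=g\cdot\Curv^\theta_p(v,w)+\Curv^\omega_g(d\mu v,d\mu w)$. Pairing with $\iota(a)$ gives the coadjoint term plus a curvature correction.
\end{itemize}

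The main obstacle is matching this correction against \eqref{eq:omega_action_on_polys}, which involves $\Curv^\omega_{g^{-1}}$ rather than $\Curv^\omega_g$. To handle it, I would use the cocycle identity of Lemma~\ref{lem:Curv_cocycle} at $(g^{-1},g)$: it gives $\Curv^\omega_{g^{-1}}(g\cdot v,g\cdot w)=-g^{-1}\cdot\Curv^\omega_g(v,w)$. This converts the correction into $\langle g^{-1}\cdot\iota(a)$-type terms, after also using $d\mu$ versus $\an_\theta$ on $\ker(\theta)$, where $d\mu=\an_\theta$. The outcome is that $\kappa^\Theta(a)$ pulled back by the action equals $\kappa^\Theta(g^{-1}\hcdot a)$. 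Multiplicativity of $\kappa^\Theta$ extends this to the whole horizontal subalgebra, so $\Sigma$-invariant elements $\chi$ (those with $g\hcdot\chi=\chi$) map to invariant forms. Signs in the curvature convention will need to be tracked carefully here.
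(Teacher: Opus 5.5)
Your proposal is correct and follows essentially the same route as the paper. Horizontality on generators is Lemma~\ref{lem:comp_with_action}. Invariance on generators restricted to $\ker(\theta)$, via Lemma~\ref{lem:anchored_form_is_equiv}, Corollary~\ref{cor:mc_is_curvature} and Lemma~\ref{lem:curv_is_mult_pb}, is Lemma~\ref{lem:kappa_equivariant}. Descent uses the global action, so no $s$-connectedness is needed.

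One simplification: in your pullback formulation the target is $\kappa^\Theta_p(g^{-1}\hcdot a)$, and $g^{-1}\hcdot a$ already contains the term $\langle\iota(a),\Curv^\omega_g\rangle$. This matches the correction from Lemma~\ref{lem:curv_is_mult_pb} on the nose, so you do not need the cocycle identity. The paper needs it only because it compares $\kappa^\Theta_{g\cdot p}(g\hcdot a)$ with $g\cdot\kappa^\Theta_p(a)$.
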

The proof relies on the two lemmata below, which prove that $\kappa^\Theta$ is compatible with both the $A$-dga structure on $(W^\bullet(A,D),d_W)$ and the $\Sigma$-action on horizontal elements \eqref{eq:omega_action_on_forms}-\eqref{eq:omega_action_on_polys}.

\begin{lemma}\label{lem:comp_with_action}
    For every $\epsilon\in\Gamma(A)$,
    \[
    \kappa^{\Theta} \circ (\epsilon \cont) = \epsilon \cont \kappa^{\Theta}
    \]
    and 
    \[
    \kappa^{\Theta} \circ (L_\epsilon) = L_\epsilon \kappa(\Theta).
    \]
\end{lemma}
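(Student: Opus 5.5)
The plan is to prove the contraction identity first and then deduce the Lie derivative identity formally. Both sides of $\kappa^{\Theta}\circ(\epsilon\cont)=\epsilon\cont\kappa^{\Theta}$ are derivations of degree $-1$ along the algebra map $\kappa^{\Theta}$. On the left, $\epsilon\cont$ is a derivation of $W^\bullet(A,D)$ and $\kappa^{\Theta}$ is an algebra map. On the right, $\epsilon\cont$ is contraction with $\hat\epsilon=a_P(\mu^*\epsilon)$ on $\Omega^\bullet(P)$. Both sides are also $C^\infty(M)$-linear: $\kappa^{\Theta}(f)=\mu^*f$, and contraction is tensorial. It therefore suffices to check the identity on the generators $C^\infty(M)$, $\Omega^1(M)$, $\Omega^1(A)$ and $\Gamma(S^1A^*)$.

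On functions both sides vanish for degree reasons. For $\eta\in\Omega^1(M)$, the left side is zero because $\epsilon\cont\eta=0$ by definition of the action on $W$. The right side is $\eta\bigl(d\mu(\hat\epsilon)-\rho(\theta(\hat\epsilon))\bigr)=\eta(\rho(\epsilon)-\rho(\epsilon))=0$, using $d\mu\circ a_P=\rho$ and the normalisation $\theta\circ a_P=\id$. For $\alpha\in\Omega^1(A)$, both sides equal $\mu^*(\alpha(\epsilon))$, again by $\theta(\hat\epsilon)=\mu^*\epsilon$. For $a\in\Gamma(S^1A^*)$, the left side is zero since $\epsilon\cont a=0$. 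The right side is $\hat\epsilon\cont\langle\iota(a),\mc_\theta\rangle$, and Corollary~\ref{cor:mc_is_curvature} shows that $\mc_\theta$ only sees the $\ker(\theta)$-components of its arguments. Since $\hat\epsilon$ is vertical with $\theta(\hat\epsilon)=\mu^*\epsilon$, its $\ker(\theta)$-component $\hat\epsilon-a_P\theta(\hat\epsilon)$ vanishes, so $\hat\epsilon\cont\mc_\theta=0$. This is the only place where principality of $\theta$ and the Cartan property of $\omega$ enter.

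The Lie derivative identity then follows from Theorem~\ref{thm:conn_curv}, which makes $\kappa^{\Theta}$ a dga map. We compute
\[
\kappa^{\Theta}L_\epsilon=\kappa^{\Theta}(d_W\,\epsilon\cont+\epsilon\cont d_W)=d_\mathrm{dR}\,\epsilon\cont\kappa^{\Theta}+\epsilon\cont d_\mathrm{dR}\kappa^{\Theta}=L_{\hat\epsilon}\kappa^{\Theta},
\]
and by Example~\ref{ex:Cart_calc_pb} the last operator is the $L_\epsilon$ of $\Omega^\bullet(P)$.

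The main obstacle is the $S^1A^*$ generator. One must be careful that the identification $\kappa^{\Theta}(a)=\langle\iota(a),\mc_\theta\rangle$ from Corollary~\ref{cor:calc_obstruction}, combined with Corollary~\ref{cor:mc_is_curvature}, really gives the vanishing of $\mc_\theta$ on vertical vectors. The safest route is to invoke the second and third bullets of Proposition~\ref{prop:mc_expr_vanishing_pb} directly: the components of $\mc_\theta$ involving $\mu^*A\cong T^\pi P$ vanish because $\theta$ is multiplicative. Sign conventions in the Koszul rule for $\epsilon\cont$ on products are harmless, since only generators need checking.
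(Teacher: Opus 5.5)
Your proposal is correct and follows the paper's own argument. You check the contraction identity on generators using $\theta\circ a_P=\id$, $d\mu\circ a_P=\rho$ and the horizontality of $\mc_\theta$ (the one place where principality of $\theta$ enters), then deduce the Lie derivative identity from $\kappa^{\Theta}$ being a dga map; your remark that both sides are degree $-1$ derivations along $\kappa^{\Theta}$, which justifies reducing to generators, makes explicit a step the paper leaves implicit.
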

\begin{proof}
    The second identity follows from the first and the fact that $\kappa^{\Theta}$ is a dga map. The first identity can be checked on generators.
    By 
    \[
    \theta\circ a_P = \id,
    \]
    one has that $\epsilon \cont \kappa^{\Theta} (\alpha) = \alpha(\epsilon)$ for all $\alpha \in \Omega^1(A)$ and $\epsilon \in \Gamma(A)$. Taking also into account that
    \[
    d\mu \circ a_P = \rho,
    \]
    it follows that, for all $\eta\in \Omega^1(M)$, $\kappa^{\Theta} (\eta)$ vanishes on the image of the infinitesimal action, as desired. Finally, since $\mc_\theta$ is horizontal, we see that for all $\phi \in \Gamma(S^1A^*)$, $\kappa^{\Theta} (\phi)$ vanishes on the image of the infinitesimal action as well, which concludes the proof.
\end{proof}
We then need to show that $\kappa^{\Theta}$ is equivariant with respect to the action of $\Sigma$ on horizontal elements of the Weil algebra -- see Example \ref{ex:Cart_calc_Weil_alg}.
\begin{lemma}\label{lem:kappa_equivariant}
   For every $(g,p)\in\Sigma\ltimes P$ and every horizontal $\gamma$ at $\mu(p)$,
   \[
   \kappa^{\Theta}_{g\cdot p}(g\hcdot \gamma) = g\cdot \kappa^{\Theta}_p(\gamma),
   \]
   where $g$ acts on the left by~\eqref{eq:omega_action_on_forms}--\eqref{eq:omega_action_on_polys}, and on the right by the co-adjoint representation of $(\Sigma\ltimes P,\pr_1^*\omega)$, that is, $(g\cdot\beta)(w_1,\dots,w_k)=\beta(g^{-1}\cdot w_1,\dots,g^{-1}\cdot w_k)$ for $w_1,\dots,w_k\in T_{g\cdot p}P$.
\end{lemma}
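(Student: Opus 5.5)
Since both sides of the claimed identity are multiplicative in $\gamma$, I will reduce to generators. The horizontal sub-algebra is generated over $C^\infty(M)$ by $\Omega^1(M)$ and $\Gamma(S^1A^*)$. The map $\kappa^{\Theta}$ is a map of graded algebras. The action $\hcdot$ on horizontal elements is an action by algebra automorphisms, and the co-adjoint action of $(\Sigma\ltimes P,\pr_1^*\omega)$ on forms is as well. So it suffices to check the identity on functions, on $\eta\in T^*_{\mu(p)}M$, and on $a\in S^1A^*_{\mu(p)}$. For functions it reduces to $\mu(g\cdot p)=t(g)$, which holds since $\mu$ is equivariant.

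For $\eta$ the check is immediate from Lemma~\ref{lem:anchored_form_is_equiv}. Indeed, $\kappa^{\Theta}(\eta)=\eta\circ\an_\theta$, and $\an_\theta(g^{-1}\cdot w)=g^{-1}\cdot\an_\theta(w)$. Hence for $w\in T_{g\cdot p}P$
\[
(g\cdot\kappa^{\Theta}_p(\eta))(w)=\eta\bigl(g^{-1}\cdot\an_\theta(w)\bigr)=(g\cdot\eta)(\an_\theta(w))=\kappa^{\Theta}_{g\cdot p}(g\hcdot\eta)(w),
\]
using that the co-adjoint action on $T^*M$ is dual to the adjoint action on $TM$.

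The main step is the generator $a\in S^1A^*$. Here $\kappa^{\Theta}(a)=\langle\iota(a),\mc_\theta\rangle$. By Corollary~\ref{cor:mc_is_curvature}, $\mc_\theta(w_1,w_2)=\Curv^\theta(w_1^\theta,w_2^\theta)$. I will evaluate $g\cdot\kappa^{\Theta}_p(a)$ on $w_1,w_2\in T_{g\cdot p}P$, which gives $\langle\iota(a),\Curv^\theta_p((g^{-1}\cdot w_1)^\theta,(g^{-1}\cdot w_2)^\theta)\rangle$.

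Since $\theta$ is equivariant, $\ker(\theta)$ is preserved by the action. Because the action of $g^{-1}$ sends the vertical bundle to the vertical bundle (via $a_P$ and the adjoint action on $A$), $(g^{-1}\cdot w)^\theta=g^{-1}\cdot w^\theta$. I then apply Lemma~\ref{lem:curv_is_mult_pb} with the arrow $g^{-1}$ at the point $g\cdot p$ and the vectors $w_i^\theta$:
\[
\Curv^\theta_p(g^{-1}\cdot w_1^\theta,g^{-1}\cdot w_2^\theta)=g^{-1}\cdot\Curv^\theta_{g\cdot p}(w_1^\theta,w_2^\theta)+\Curv^\omega_{g^{-1}}\bigl(d\mu(w_1^\theta),d\mu(w_2^\theta)\bigr).
\]

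Pairing with $\iota(a)$ treats the two terms separately. The first term gives $\langle g\cdot\iota(a),\Curv^\theta_{g\cdot p}(w_1^\theta,w_2^\theta)\rangle$, which is $\kappa^{\Theta}_{g\cdot p}$ applied to $\iota(g\cdot\iota(a))$. For the second term I note that $d\mu(w^\theta)=\an_\theta(w)$ on $\theta$-horizontal vectors. So it equals $\kappa^{\Theta}_{g\cdot p}$ of the element $\langle\iota(a),\Curv^\omega_{g^{-1}}\rangle\in\Lambda^2T^*_{t(g)}M$, since $\kappa^{\Theta}$ acts on $\Lambda^2T^*M$ by pulling back along $\an_\theta$. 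Summing the two terms reproduces exactly \eqref{eq:omega_action_on_polys}.

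The obstacle is bookkeeping: matching sign and normalization conventions, namely the factor in $\kappa^{\Theta}(a)=\langle\iota(a),\mc_\theta\rangle$ and the sign in the curvature cocycle identity. Before finalizing, I would check them against the cocycle identity used in Example~\ref{ex:Cart_calc_Weil_alg}.
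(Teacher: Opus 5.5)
Your proposal is correct and follows essentially the same route as the paper: reduction to generators, equivariance of $\mathfrak{an}_\theta$ (Lemma~\ref{lem:anchored_form_is_equiv}) for $\eta$, and Corollary~\ref{cor:mc_is_curvature} together with Lemma~\ref{lem:curv_is_mult_pb} for $a$. The one difference is that you apply Lemma~\ref{lem:curv_is_mult_pb} to the arrow $g^{-1}$ at the point $g\cdot p$, which produces the term $\langle\iota(a),\Curv^\omega_{g^{-1}}\rangle$ of \eqref{eq:omega_action_on_polys} directly; the paper instead applies it to $g$ at $p$ and then needs the cocycle identity $\Curv^\omega_g(g^{-1}\cdot u_1,g^{-1}\cdot u_2)=-g\cdot\Curv^\omega_{g^{-1}}(u_1,u_2)$ to convert, so in your version the sign check against the cocycle identity that you defer is not needed.
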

\begin{proof}
    Both actions are by algebra automorphisms and $\kappa^\Theta_p$, $\kappa^\Theta_{g\cdot p}$ are morphisms of algebras, so it suffices to take either $\gamma=\eta$ in $T^*_xM$ or in $\gamma = a$ in $S^1A^*_x$, where $x:=\mu(p)=s(g)$.

    Let $\eta\in T^*_xM$. Then $\kappa^\Theta_p(\eta)=\eta\circ\an_\theta$, and $\an_\theta$ is $\Sigma$-equivariant by Lemma~\ref{lem:anchored_form_is_equiv}. Hence, for $w\in T_{g\cdot p}P$,
    \[
    \bigl(g\cdot\kappa^\Theta_p(\eta)\bigr)(w)=\eta\bigl(\an_\theta(g^{-1}\cdot w)\bigr)=\eta\bigl(g^{-1}\cdot\an_\theta(w)\bigr)=(g\cdot\eta)\bigl(\an_\theta(w)\bigr)=\kappa^\Theta_{g\cdot p}(g\cdot\eta)(w).
    \]

    Let $a\in S^1A^*_x$. The $2$-form $\kappa^\Theta_{g\cdot p}(g\hcdot a)$ vanishes on $\pi$-vertical vectors by Lemma~\ref{lem:comp_with_action}, and so does $g\cdot\kappa^\Theta_p(a)$; it is therefore enough to compare the two forms on $w_1,w_2\in\ker(\theta)_{g\cdot p}$. 
    
    Let $v_i:=d\pi(w_i)\in T_{\pi(p)}X$ and $u_i:=d\mu(w_i)\in T_{t(g)}M$; then,
    \[
    \an_\theta(w_i)=d\mu(w_i)=u_i .
    \]
    By Corollary~\ref{cor:mc_is_curvature} and Lemma~\ref{lem:curv_is_mult_pb},
    \[
    \begin{aligned}
    \bigl(g\cdot\kappa^\Theta_p(a)\bigr)(w_1,w_2)
    &=\bigl\langle\iota(a),\Curv^\theta_p(g^{-1}\cdot w_1,g^{-1}\cdot w_2)\bigr\rangle\\
    &=\bigl\langle g\cdot\iota(a),\Curv^\theta_{g\cdot p}(w_1,w_2)\bigr\rangle
      -\bigl\langle g\cdot\iota(a),\Curv^\omega_g(g^{-1}\cdot u_1,g^{-1}\cdot u_2)\bigr\rangle .
    \end{aligned}
    \]
    On the other hand
    \[
    \kappa^\Theta_{g\cdot p}(g\hat{\cdot} a)(w_1,w_2)=\bigl\langle g\cdot\iota(a),\Curv^\theta_{g\cdot p}(w_1,w_2)\bigr\rangle+
    \big\langle \iota(a),\Curv^\omega_{g^{-1}}\left(u_1, u_2\right)\big\rangle.
    \]
    Using the cocycle identity for $\Curv^\omega$ and that $\Curv^\omega$ vanishes at units, we deduce that 
    \[
    \Curv^\omega_{g}\left(g^{-1}\cdot u_1, g^{-1}\cdot u_2\right)+g\cdot \Curv^\omega_{g^{-1}}\left(u_1, u_2\right) = 0,
    \]
    which concludes the proof.
\end{proof}

The two lemmata above show that, for every $\chi\in W^\bullet(A,D)_{\omega\mathrm{-bas}}$ the form $\kappa^\Theta(\chi)$ on $P$ is basic; hence, the restriction of the map $\kappa^\Theta$ to $W^\bullet(A,D)_{\omega\mathrm{-bas}}$ takes values in $\Omega^\bullet_\mathrm{bas}(P)$. By Lemma~\ref{lem:kappa_equivariant} such forms are not merely $L$-invariant but $\Sigma$-invariant; since $\pi$ is $\Sigma$-invariant and each $v\mapsto g\cdot v$ is a linear isomorphism $T_pP\to T_{g\cdot p}P$, a horizontal $\Sigma$-invariant form descends through any local section of $\pi$ and the descents through different sections agree, so $\Omega^\bullet_\mathrm{bas}(P)\cong\Omega^\bullet(X)$ with no connectedness hypothesis. Since $\kappa^\Theta$ is a map of dgas by Proposition \ref{prop:omega_bas_subdga} and Theorem \ref{thm:conn_curv}, this concludes the proof of Theorem \ref{thm:map_on_basics}.

Let us denote by $H^\bullet(W^\bullet(A,D)_{\omega\mathrm{-bas}})$ the cohomology of the basic sub-complex of the Weil algebra with $A$-dga structure induced by $\omega$.
\begin{definition}\label{def:main_CW_map}
    Let $(\Sigma,\omega)$ be a Lie groupoid with Cartan connection, and let $(P,\theta)$ be a principal $(\Sigma,\omega)$-bundle. The \textbf{Chern-Weil map} associated to $(P,\theta)$ is the cohomology map
    \[
    \kappa^{\mathrm{CW}}: H^\bullet(W^\bullet(A,D)_{\omega\mathrm{-bas}}) \to H^\bullet_\mathrm{dR}(X)
    \]
    induced by $\kappa^{\Theta}$ via Theorem \ref{thm:map_on_basics}. When necessary for clarity we write $\kappa^{\mathrm{CW}}_{(P,\theta)}$.
\end{definition}

Finally, we have the following, which explains the choice of notation. 
\begin{proposition}\label{prop:CW_indep_of_theta}
    Let $(\Sigma,\omega)$ be a Lie groupoid with Cartan connection, and let $P$ be a principal $\Sigma$-bundle over $X$. The Chern-Weil map $\kappa^{\mathrm{CW}}$ does not depend on the $(\Sigma,\omega)$-connection $\theta$.
\end{proposition}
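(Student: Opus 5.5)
The plan is to use the standard homotopy argument via the cylinder $P\times\mathbb{R}$. Given two principal $(\Sigma,\omega)$-connections $\theta_0,\theta_1$ on $P$, consider the principal $\Sigma$-bundle $\tilde P:=P\times\mathbb{R}\to X\times\mathbb{R}$. It carries the action $g\cdot(p,\tau)=(g\cdot p,\tau)$, momentum map $\mu\circ\mathrm{pr}_P$, and projection $\pi\times\mathrm{id}$. On $\tilde P$ define
\[
\tilde\theta:=\tau\,\mathrm{pr}_P^*\theta_1+(1-\tau)\,\mathrm{pr}_P^*\theta_0 ,
\]
where $\tau$ is the coordinate on $\mathbb{R}$.

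The first step is to check that $\tilde\theta$ is a principal $(\Sigma,\omega)$-connection on $\tilde P$. The normalisation $\tilde\theta\circ a_{\tilde P}=\id$ holds because the infinitesimal action is tangent to the $P$-factor. For equivariance, note that the induced action on $T\tilde P=TP\times T\mathbb{R}$ is $g\cdot(v,c)=(g\cdot v,c)$, since $d\mu$ kills $\partial_\tau$ and $\omega$-horizontal lifts are unaffected. Hence $\mathrm{pr}_P^*\theta_i$ are equivariant. The function $\tau$ is $\Sigma$-invariant, being pulled back from the base $X\times\mathbb{R}$. By the remark after Lemma~\ref{lem:pconn_are_convex}, affine combinations with base-function coefficients summing to $1$ remain principal, so $\tilde\theta$ is principal.

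Next, apply Theorem~\ref{thm:map_on_basics} to $(\tilde P,\tilde\theta)$. This gives a dga map $\kappa^{\tilde\Theta}:W^\bullet(A,D)_{\omega\mathrm{-bas}}\to\Omega^\bullet(X\times\mathbb{R})$. Then show that for the inclusions $j_c:X\to X\times\mathbb{R}$, $x\mapsto(x,c)$, with $c=0,1$, one has
\[
j_c^*\circ\kappa^{\tilde\Theta}=\kappa^{\Theta_c}.
\]
To see this, let $\tilde j_c:P\to\tilde P$ be the corresponding equivariant inclusion covering $j_c$. Then $\tilde j_c^*\tilde\theta=\theta_c$ and $\tilde\mu\circ\tilde j_c=\mu$, so $\tilde j_c^*\tilde\Theta=\Theta_c$.

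Since $\kappa^{\tilde\Theta}$ is built functorially from $\tilde\Theta$, we get $\tilde j_c^*\circ\kappa^{\tilde\Theta}=\kappa^{\Theta_c}$ on $\Omega^\bullet(P)$. On generators of degree one this holds by definition. On $\Gamma(S^1A^*)$ we have $\kappa(a)=\langle\iota(a),\mc\rangle$, and pullback commutes with forming the Maurer--Cartan expression: $\tilde j_c^*\mc_{\tilde\theta}=\mc_{\theta_c}$, because $d_{\mu^*D}$ and the bracket $\{\ ,\ \}$ are natural under maps intertwining the momentum maps. Alternatively, $R_\Theta=d\Theta^*-\Theta^*d_\oplus$ is manifestly natural. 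Passing to basic forms, the identification $\Omega^\bullet_\mathrm{bas}\cong\Omega^\bullet$ of the base is compatible with $\tilde j_c$ and $j_c$.

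Finally, homotopy invariance of de Rham cohomology gives $j_0^*=j_1^*$ on $H^\bullet_\mathrm{dR}(X\times\mathbb{R})$. Therefore
\[
\kappa^{\mathrm{CW}}_{(P,\theta_0)}=j_0^*\circ\kappa^{\mathrm{CW}}_{(\tilde P,\tilde\theta)}=j_1^*\circ\kappa^{\mathrm{CW}}_{(\tilde P,\tilde\theta)}=\kappa^{\mathrm{CW}}_{(P,\theta_1)} .
\]
The main obstacle is the naturality step, namely that $\kappa^\Theta$ commutes with pullback along equivariant maps of principal bundles intertwining momentum maps. I would handle it through the formula $\kappa^\Theta(a)=R_\Theta(\iota(a))$: $\Theta^*$ clearly commutes with such pullbacks, and $d_\mathrm{dR}$ does as well. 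The rest is routine.
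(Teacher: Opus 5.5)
Your proposal is correct and follows essentially the same argument as the paper: the cylinder bundle $\mathrm{pr}_X^*P\cong P\times\mathbb{R}$, the convex combination $(1-t)\,\mathrm{pr}_P^*\theta_0+t\,\mathrm{pr}_P^*\theta_1$ (principal by the remark after Lemma~\ref{lem:pconn_are_convex}), naturality along the inclusions $i_0,i_1$, and homotopy invariance of de Rham cohomology. The only difference is presentational: the paper invokes the naturality result Lemma~\ref{lem:CW_natural}, whereas you verify the naturality step directly through $\tilde j_c^*\Theta=\Theta_c$ and $\tilde j_c^*\mc_{\tilde\theta}=\mc_{\theta_c}$.
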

The proof is completely analogous to the classical one; first, one shows that $\kappa^{\mathrm{CW}}$ is functorial.
\begin{lemma}\label{lem:CW_natural}
    Let $(\Sigma,\omega)$ be a Lie groupoid with Cartan connection, let $(P,\theta)$ be a principal $(\Sigma,\omega)$-bundle over $X$, and let $f:X'\to X$ be a smooth map. Then the pullback $f^*P:=P\times_X X'$ is a principal $\Sigma$-bundle over $X'$ and the form $\pr_1^*\theta$ is a principal $(\Sigma,\omega)$-connection. Moreover
    \[
    \kappa^{\mathrm{CW}}_{(f^*P,\,\pr_1^*\theta)} = f^*\circ \kappa^{\mathrm{CW}}_{(P,\theta)} .
    \]
\end{lemma}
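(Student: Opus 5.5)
The plan has three parts: verify that $f^*P$ is a principal $\Sigma$-bundle, verify that $\pr_1^*\theta$ is a principal $(\Sigma,\omega)$-connection, and establish the naturality of $\kappa^{\mathrm{CW}}$. I would take $f^*P=P\times_X X'$ with momentum map $\mu'=\mu\circ\pr_1$, projection $\pr_2$, and action $g\cdot(p,x')=(g\cdot p,x')$. This action is well defined because $\pi(g\cdot p)=\pi(p)$. Freeness follows at once from freeness on $P$. For properness and transitivity along the fibres, I would check that the division map of $f^*P$ is the pullback of the division map of $P$ along $f^*P\times_{X'}f^*P\to P\times_X P$, so its smoothness is inherited. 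The infinitesimal action is $a_{f^*P}(\alpha)=(a_P(\alpha),0)$, which is tangent to the fibre product, so $\pr_1^*\theta\circ a_{f^*P}=\theta\circ a_P=\id$. Hence $\pr_1^*\theta$ is a connection one-form.

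For equivariance, I would first observe that the $\omega$-induced action of $\Sigma$ on $T(f^*P)$ is $g\cdot(v,w)=(g\cdot v,w)$. This holds because $dm_{f^*P}(\hor^\omega_g(d\mu\, v),(v,w))=(dm_P(\hor^\omega_g(d\mu\,v),v),w)$, and $(g\cdot v,w)$ is again tangent to the fibre product since $d\pi(g\cdot v)=d\pi(v)$. With this, $\pr_1^*\theta(g\cdot(v,w))=\theta(g\cdot v)=g\cdot\theta(v)$, which gives the principal property of Definition~\ref{def:princ_conn}.

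For naturality, I would show the chain-level identity $\kappa^{\Theta'}=\pr_1^*\circ\kappa^{\Theta}$ as maps $W^\bullet(A,D)\to\Omega^\bullet(f^*P)$, where $\Theta'$ is the anchored form of $\pr_1^*\theta$. Both sides are algebra maps, so it suffices to compare them on generators. On $C^\infty(M)$ both give $\pr_1^*\mu^*f$. On $\Omega^1(M)$ and $\Omega^1(A)$ they agree because $\Theta'=\Theta\circ d\pr_1$, using $\an_{\pr_1^*\theta}=d\mu'-\rho\circ\pr_1^*\theta=\pr_1^*\an_\theta$. On $\Gamma(S^1A^*)$ they agree because $\mc_{\pr_1^*\theta}=\pr_1^*\mc_\theta$, which follows from the naturality of $d_{\mu^*D}$ and of the pointwise bracket $\{\ ,\ \}$ under pullback along $\pr_1$, since $\mu'^*D=\pr_1^*\mu^*D$. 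Restricting to $W^\bullet(A,D)_{\omega\mathrm{-bas}}$, I would use Theorem~\ref{thm:map_on_basics}: a basic form $\beta$ on $P$ with $\beta=\pi^*\bar\beta$ pulls back to $\pr_1^*\pi^*\bar\beta=\pr_2^*f^*\bar\beta$. Hence the descended form of $\kappa^{\Theta'}(\chi)$ is $f^*$ of the descended form of $\kappa^\Theta(\chi)$, and passing to cohomology gives the claim. The only mildly delicate step is the identification of the $\omega$-induced action on $T(f^*P)$; once that is settled, everything else is formal naturality of pullback.
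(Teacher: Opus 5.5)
Your proposal is correct and follows the same route as the paper: you give $f^*P$ its principal structure via the action on the first factor, you use equivariance of $\pr_1$ to show that $\pr_1^*\theta$ is principal, and you obtain the chain-level identity $\kappa^{\Theta'}=\pr_1^*\circ\kappa^{\Theta}$ from $\Theta'=\Theta\circ d\pr_1$ and $\mc_{\pr_1^*\theta}=\pr_1^*\mc_\theta$. You also spell out two steps the paper leaves implicit: the $\omega$-induced action $g\cdot(v,w)=(g\cdot v,w)$ on $T(f^*P)$, and the descent identity $\pr_1^*\pi^*\bar\beta=\pr_2^*f^*\bar\beta$.
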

\begin{proof}
    Notice that $f^*P$ is a smooth manifold because $\pi_P\to X$ is a surjective submersion; when endowed with the action
    \[
    g\cdot(p,x')=(g\cdot p,x')
    \]
    it becomes a principal $\Sigma$-bundle over $X'$ with momentum map $\mu\circ\pr_1$. Observe that the projection $\pr_1:f^*P\to P$ is $\Sigma$-equivariant; this implies that $\mathrm{pr}_1^*\theta$ is a principal connection on $P$. 

    Moreover,
    \[
    \Theta_{\mathrm{pr}_1^*P} = \bigl(d(\mu\circ \mathrm{pr}_1)-\rho\circ \mathrm{pr}_1^*\theta,\ \mathrm{pr}_1^*\theta\bigr) = \Theta_P\circ d\mathrm{pr}_1 
    \]
    and
    \[
    \mc_{\mathrm{pr}_1^*\theta} = \mathrm{pr}_1^*\mc_\theta
    \]
    directly from the definition of the Maurer-Cartan expression.
    Thus, 
    \[
    \kappa^{\Theta_{f^*P}} = \mathrm{pr}_1^*\circ \kappa^{\Theta_P}
    \]
    which concludes.
\end{proof}
\begin{proof}[Proof of Proposition \ref{prop:CW_indep_of_theta}]
    As in the classical case, we use that connections are closed under convex combinations, i.e. Lemma \ref{lem:pconn_are_convex} -- and use that the inclusions of $X$ into $X\times \mathbb{R}$ that fix the second coordinate are homotopic to each other.

    Explicitly, $\mathrm{pr}_X:X\times \mathbb{R}\to X$ induces the principal bundle 
    \[
    \hat{P}:=\mathrm{pr}_X^*P
    \]
    and, for any pair of principal connections $\theta_0,\theta_1$ on $P$, the lemma above implies that
    \[
    \hat{\theta} = (1-t)\hat{\theta}_0+t\hat{\theta}_1\in \Omega^1(\hat{P}, (\mu\circ \mathrm{pr}_1)^*A)
    \]
    is a principal connection on $\hat{P}$, where $\hat{\theta}_0:=\mathrm{pr}_P^*\theta_0$, $\hat{\theta_1}:=\mathrm{pr}_P^*\theta_1$, and $t$ is the coordinate on the second factor. The corresponding Chern-Weil map is denoted by $\hat{{\kappa}}^\mathrm{CW}$. 

    For $\epsilon\in [0,1]$, let $i_\epsilon: x\in X\mapsto (x,\epsilon) \in X\times \mathbb{R}$ be the inclusions. Since each $i_\epsilon$ is a section of $\mathrm{pr}_X$, we have $i_\epsilon^*\hat{P} = P$ for all $\epsilon$; the natural map $i_\epsilon^*\hat{P}\to \hat{P}$ is $\hat{i}_\epsilon: p\in P \mapsto (p,\epsilon)\in \hat{P}$, hence
    \[
    \hat{i}_\epsilon^*\hat{\theta} = (1-\epsilon)\hat{\theta}_0+\epsilon\hat{\theta}_1,
    \]
    which is a connection on $P$ by Lemma \ref{lem:pconn_are_convex}. We then have $\hat{i}_0^*\hat{\theta}=\theta_0$ and $\hat{i}_1^*\hat{\theta} =\theta_1$. By Lemma \ref{lem:CW_natural} above, $\kappa^\mathrm{CW}_{P,\theta_0} = i_0^*\hat{\kappa}^\mathrm{CW}$ and $\kappa^\mathrm{CW}_{P,\theta_1} = i_1^*\hat{\kappa}^\mathrm{CW}$. Since $i_0$ and $i_1$ are smoothly homotopic, they induce the same map in de Rham cohomology, and we are done. 
\end{proof}

The Chern-Weil map is also invariant under isomorphisms of principal bundles over the same
base. Together with
Proposition~\ref{prop:CW_indep_of_theta} this makes $\kappa^{\mathrm{CW}}$ an
invariant of the isomorphism class of $P$.

\begin{lemma}\label{lem:CW_iso_invariant}
    Let $(\Sigma,\omega)$ be a Lie groupoid with Cartan connection, let
    $F:P\to P'$ be an isomorphism of principal $\Sigma$-bundles over $X$, and let
    $\theta'$ be a principal $(\Sigma,\omega)$-connection on $P'$. Then $F^*\theta'$
    is a principal $(\Sigma,\omega)$-connection on $P$ and
    \[
        \kappa^{\mathrm{CW}}_{(P,F^*\theta')}=\kappa^{\mathrm{CW}}_{(P',\theta')} .
    \]
\end{lemma}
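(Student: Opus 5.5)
The plan is to show that $F^*\theta'$ is a principal $(\Sigma,\omega)$-connection, and then to prove the stronger, chain-level identity $\kappa^{\Theta}=F^*\circ\kappa^{\Theta'}$ of dga maps $W^\bullet(A,D)\to\Omega^\bullet(P)$, where $\Theta$ and $\Theta'$ are the anchored forms of $F^*\theta'$ and $\theta'$.

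\textbf{Step 1: $F^*\theta'$ is a principal connection.} Since $F$ is $\Sigma$-equivariant and satisfies $\mu'\circ F=\mu$, differentiating $F(h\cdot p)=h\cdot F(p)$ along curves $h$ in the source fibre gives $dF\circ a_P=a_{P'}$. Hence $F^*\theta'\circ a_P=\theta'\circ a_{P'}=\id$, so $F^*\theta'$ is a connection one-form. For equivariance, differentiate $m_{P'}\circ(\id\times F)=F\circ m_P$ at the pair $(\hor^\omega_g(d\mu\, v),v)$, which is tangent to $\Sigma\ltimes P$. This yields $dF(g\cdot v)=g\cdot dF(v)$, where we use $d\mu'\circ dF=d\mu$ so that the horizontal lifts match. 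Then
\[
(F^*\theta')(g\cdot v)=\theta'(g\cdot dF(v))=g\cdot\theta'(dF(v))=g\cdot (F^*\theta')(v).
\]
The same equivariance can be read off from the multiplicativity equation~\eqref{eq:mult_up_to_omega} via Lemma~\ref{lem:principal_iff_multiplicative}.

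\textbf{Step 2: naturality of the anchored form and of $\mc$.} Using $\mu'\circ F=\mu$, we get
\[
\Theta=(d\mu-\rho\circ F^*\theta',\,F^*\theta')=\Theta'\circ dF,
\]
so $\Theta^*=F^*\circ\Theta'^*$ on $\Omega^\bullet(TM\bowtie A)$. The Maurer-Cartan expression is built from $d_{\mu^*D}$ and a pointwise bracket. Since the pullback connections satisfy $F^*(\mu'^*D)=\mu^*D$, naturality of covariant exterior derivatives under pullback gives $\mc_{F^*\theta'}=F^*\mc_{\theta'}$. This is exactly the argument used in the proof of Lemma~\ref{lem:CW_natural}.

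\textbf{Step 3: comparison of the characteristic maps.} Both $\kappa^{\Theta}$ and $F^*\circ\kappa^{\Theta'}$ are maps of graded algebras. By Step 2 they agree on functions (via $\mu^*f=F^*\mu'^*f$), on $\Omega^1(M)$, on $\Omega^1(A)$, and on $\Gamma(S^1A^*)$, where $\kappa(a)=\langle\iota(a),\mc\rangle$. Since these elements generate $W^\bullet(A,D)$, the two maps coincide. Restricting to $W^\bullet(A,D)_{\omega\mathrm{-bas}}$ and passing to $\Omega^\bullet(X)$, note that $F$ covers $\id_X$, that is $\pi'\circ F=\pi$. So if $\kappa^{\Theta'}(\chi)=\pi'^*\beta$, then
\[
\kappa^{\Theta}(\chi)=F^*\pi'^*\beta=\pi^*\beta.
\]
Thus the two chain maps to $\Omega^\bullet(X)$ agree, and in particular so do the induced maps in cohomology.

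The only mildly delicate point is Step 1, namely verifying that $dF$ intertwines the $\omega$-induced actions on $TP$ and $TP'$. This is a short differentiation of the equivariance identity, and the rest is formal.
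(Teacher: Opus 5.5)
Your proof is correct and follows essentially the same route as the paper. Both check that $F^*\theta'$ is principal, use $\Theta=\Theta'\circ dF$ and $\mc_{F^*\theta'}=F^*\mc_{\theta'}$ to get $\kappa^{\Theta}=F^*\circ\kappa^{\Theta'}$, and you additionally spell out two steps the paper leaves implicit: the intertwining $dF(g\cdot v)=g\cdot dF(v)$, and the descent to $X$ via $\pi'\circ F=\pi$.
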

\begin{proof}
    As in Lemma~\ref{lem:CW_natural}, it is straightforward to see that $F^*\theta'$ is a principal $(\Sigma,\omega)$-connection.

    Write $\mu$ and $\mu'$ for the momentum maps of $P$ and $P'$. By equivariance, $\mu'\circ F = \mu$ and 
    \[
    \Theta'_P = \Theta'_{P'}\circ dF
    \]
    where the right hand side denotes $(\an_{F^*\theta'},F^*\theta')$ and $\Theta'_{P'}$ denotes $(\an_{\theta'},\theta')$; moreover, directly from the definition of Maurer-Cartan expression one has $\mc_{F^*\theta'} = F^*\mc_{\theta'}$. Thus $\kappa^{\Theta'_P} = F^*\kappa^{\Theta'_{P'}}$, which concludes.
\end{proof}

\section{Examples}\label{sec:examples}

The reader will have no difficulty checking that, when $(\Sigma,\omega)$ is a Lie group $G$ with its Maurer-Cartan form, our Chern-Weil theory recovers the classical theory; in fact, our construction is designed to generalize the construction of the Chern-Weil map from \cite{KamTond1975}. Thus, we move to other examples.

\subsection{Lie group bundles and families of principal group bundles}\label{subs:LieGroupBundles}
    We first consider Lie groupoids $\Sigma\tto M$ whose source and target map coincide. Such Lie groupoids are the same as \textbf{bundle of Lie groups}: a surjective submersion $\pi: \mathbb{G} \to M$ such that all fibers carry a Lie group structure that varies smoothly with respect to $M$. Indeed, any bundle of Lie groups carries a clear Lie groupoid structure and, if $\Sigma\tto M$ is a Lie groupoid satisfying $s=t$, one gets a bundle of Lie groups by setting $\mathbb{G}:=\Sigma$ and $\pi:=s=t$. 

    The Lie algebroid 
    of a bundle of Lie groups has zero anchor map. Consequently, the Lie bracket on its sections is $C^\infty(M)$-linear and comes from a fiberwise Lie bracket. Thus, $A\to M$ is a \textbf{bundle of Lie algebras}, and will be denoted by $A_{\mathbb{G}}\to M$. 

    Among bundle of Lie groups one has the distinguished class of \textbf{Lie group bundles}, defined by requiring that $\pi:\mathbb{G} \to M$ is a fiber bundle whose fiber is a fixed Lie group $G$; in particular, the local trivializations induce isomorphisms of Lie groups on the fibers. The corresponding infinitesimal notion is that of \textbf{Lie algebra bundle}: a bundle of Lie algebras that is a fiber bundle whose fiber is a fixed Lie algebra.

    More precisely, a Lie group bundle with typical fiber a Lie group $G$ is a bundle of Lie groups together with a trivializing atlas $(U_i,\phi_i)_{i\in I}$ such that for all $i\in I$
    \[
    \begin{tikzcd}
        \mathbb{G}|_{U_i} \arrow[dr,"\pi"]\arrow[rr,"\phi_i"] && U_i\times G\arrow[dl,"\pr_1",swap]\\
        & U_i
    \end{tikzcd}
    \]
    is a groupoid morphism. This local trivializability is known to be related to the existence of Cartan connections, as we briefly recall below.

    First recall that an Ehresmann connection $H\subset T\mathbb{G}$ for $\pi$ is \textbf{complete} when, for every curve $\gamma:I\to M$, every $\lambda_0\in I$ and every $g\in\mathbb{G}_{\gamma(\lambda_0)}$, the horizontal lift of $\gamma$ through $g$ at time $\lambda_0$ is defined on the whole of $I$. When $\mathbb{G}$ is Hausdorff, horizontal lifts are unique, being integral curves of a vector field on $\gamma^*\mathbb{G}\subseteq I\times\mathbb{G}$, and we write $\widetilde\gamma_g$ for the maximal horizontal lift of $\gamma$ through $g$ at time $\lambda_0$. See also~\cite[Theorem 5]{delHoyo2016}, which shows that a submersion is locally trivial if and only if it admits a complete connection. 

    \begin{lemma}\label{lem:group_bun_Cart_conn}
        Let $\pi:\mathbb{G}\to M$ be a bundle of Lie groups with Hausdorff arrow space, and let $H\subset T\mathbb{G}$ be an Ehresmann connection for $\pi$. Then $H$ is the kernel of a Cartan connection if and only if
        \[
        \widetilde\gamma_{gh}=\widetilde\gamma_g\,\widetilde\gamma_h
        \]
        for every curve $\gamma:I\to M$, every $\lambda_0\in I$ and all $g,h\in \mathbb{G}_{\gamma(\lambda_0)}$, on every interval containing $\lambda_0$ on which $\widetilde\gamma_g$ and $\widetilde\gamma_h$ are defined.
        In particular, if $H$ is the kernel of a complete Cartan connection, parallel transport along every curve in $M$ is by isomorphisms of Lie groups.
    \end{lemma}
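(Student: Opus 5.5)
The plan is to use the characterisation of Cartan connections from Lemma~\ref{lem:multipl_equiv_notion}: $H=\ker(\omega)$ comes from a Cartan connection exactly when $H\cdot H=H$ and $d\mathrm{i}(H)=H$. For a bundle of Lie groups, $s=t=\pi$, so $H$ is automatically complementary to both $\ker(ds)$ and $\ker(dt)$. Composable pairs are simply pairs in the same fibre, and $\Sigma^{(2)}=\mathbb{G}\times_M\mathbb{G}$. Also $J^1$-compatibility is automatic, so only closedness of $H$ under $dm$ (and under $d\mathrm{i}$, which I expect to follow from closedness under $dm$ together with $H$ being tangent to the units) needs to be related to the lift condition.

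For the forward direction, assume $H$ is multiplicative and fix $\gamma$, $\lambda_0$, $g,h$. Consider the curve $c(\lambda):=\widetilde\gamma_g(\lambda)\,\widetilde\gamma_h(\lambda)$, which is defined wherever both lifts are, since both lie over $\gamma(\lambda)$. It projects to $\gamma$ and passes through $gh$ at $\lambda_0$. Its velocity is $dm(\dot{\widetilde\gamma}_g,\dot{\widetilde\gamma}_h)\in H\cdot H\subset H$, so $c$ is a horizontal lift of $\gamma$ through $gh$. By uniqueness of horizontal lifts (Hausdorffness), $c=\widetilde\gamma_{gh}$ on that interval.

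For the converse, given $v_g\in H_g$ and $w_h\in H_h$ with $d\pi(v_g)=d\pi(w_h)=:u$, choose a curve $\gamma$ with $\dot\gamma(\lambda_0)=u$. Then $v_g=\dot{\widetilde\gamma}_g(\lambda_0)$ and $w_h=\dot{\widetilde\gamma}_h(\lambda_0)$. Differentiating $\widetilde\gamma_{gh}=\widetilde\gamma_g\widetilde\gamma_h$ at $\lambda_0$ gives $dm(v_g,w_h)=\dot{\widetilde\gamma}_{gh}(\lambda_0)\in H_{gh}$, so $H\cdot H\subset H$, and equality holds by dimension. For inverses, applying the hypothesis with $h=g^{-1}$ and $g$ gives $\widetilde\gamma_g\widetilde\gamma_{g^{-1}}=\widetilde\gamma_{1}$. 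I then need $\widetilde\gamma_1$ to be the unit section along $\gamma$, that is, $H$ tangent to the units. This follows from the case $g=h=1$: $\widetilde\gamma_1=\widetilde\gamma_1^2$ in each group forces $\widetilde\gamma_1(\lambda)=1$. Hence $\widetilde\gamma_{g^{-1}}=\widetilde\gamma_g^{-1}$, and differentiating gives $d\mathrm{i}(H)=H$.

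The "in particular" statement follows directly. Completeness makes parallel transport $\tau_\gamma:\mathbb{G}_{\gamma(a)}\to\mathbb{G}_{\gamma(b)}$, $g\mapsto\widetilde\gamma_g(b)$, defined everywhere, and the identity just proved shows it is a group homomorphism. It is a diffeomorphism with inverse given by transport along the reversed curve, so it is an isomorphism of Lie groups. The main subtlety is the domain bookkeeping on maximal intervals; I will handle it by working on the intersection of the domains, where uniqueness of integral curves applies.
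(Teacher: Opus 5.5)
Your proof is correct and follows the paper's argument essentially step by step. The forward direction takes the product of lifts and invokes uniqueness of horizontal lifts; the converse differentiates the identity, uses the cases $g=h=1$ and $h=g^{-1}$ to get tangency to the units and $d\mathrm{i}(H)\subseteq H$, and then applies Lemma~\ref{lem:multipl_equiv_notion}. The only cosmetic difference is that you obtain the reverse inclusion $H\subseteq H\cdot H$ by a fibrewise dimension count, whereas the paper writes $\xi=dm(\xi,\upsilon)$ with $\upsilon$ tangent to the unit section.
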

    \begin{proof}
        Suppose that $H=\ker(\omega)$ with $\omega$ a multiplicative form, and that $\widetilde\gamma_g$ and $\widetilde\gamma_h$ are defined on an interval $J$. Since $s=t=\pi$, the curve $\lambda\mapsto\widetilde\gamma_g(\lambda)\widetilde\gamma_h(\lambda)$ is defined on $J$, lies over $\gamma$ and passes through $gh$ at time $\lambda_0$. Its velocity $dm(\widetilde\gamma_g',\widetilde\gamma_h')$ lies in $H\cdot H=H$ by Lemma~\ref{lem:multipl_equiv_notion} and, by uniqueness of horizontal lifts, it is $\widetilde\gamma_{gh}$ on $J$.

        Conversely, suppose that the identity from the statement holds. Choosing $\xi\in H_g$ and $\zeta\in H_h$ with $d\pi(\xi)=d\pi(\zeta)$ and  $\gamma$ with $\gamma(0)=\pi(g)$ and $\gamma'(0)=d\pi(\xi)$, we have $\xi=\widetilde\gamma_g'(0)$ and $\zeta=\widetilde\gamma_h'(0)$. Differentiating the identity from the statement at $0$, we have $dm(\xi,\zeta)=\widetilde\gamma_{gh}'(0)\in H$. On the other hand for $h=g^{-1}$ the identity yields $\widetilde\gamma_{g^{-1}}=\widetilde\gamma_g^{-1}$ and, again differentiating at $0$ $di(\xi)=\widetilde\gamma_{g^{-1}}'(0)\in H$. Hence, both $H\cdot H\subseteq H$ and $di(H)\subseteq H$ hold. To obtain the reverse inclusions, one observes $\xi=dm(\xi,\upsilon)$, with $\upsilon$ the vector tangent to the unit section over $d\pi(\xi)$ and that the manifold of units is tangent to $H$ -- as one sees by evaluating the identity from the statement at $g=h=1_{\gamma(\lambda_0)}$. By Lemma~\ref{lem:multipl_equiv_notion}, $H$ is the kernel of a Cartan connection, and we are done.
    \end{proof}

    In fact, one also has the following
    \begin{lemma}[{\cite[Corollary 2.19]{FerMar2023} for the first case; \cite[Proposition 4.4]{LaurStienXu2009} and \cite[Proposition 2.18]{FerMar2023} for the second}]\label{lem:mult_conn_complete}
        Let $\pi:\mathbb{G}\to M$ be a bundle of Lie groups with Hausdorff total space, and let $\omega$ be a Cartan connection on $\mathbb{G}$. If the fibres of $\pi$ are connected, or if $\mathbb{G}$ is a Lie group bundle, then $\omega$ is complete.
    \end{lemma}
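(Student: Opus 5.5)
We must prove: for a bundle of Lie groups $\pi:\mathbb{G}\to M$ with Hausdorff total space and Cartan connection $\omega$, $\omega$ is complete if the fibres are connected or if $\mathbb{G}$ is a Lie group bundle. Here completeness refers to the Ehresmann connection $H=\ker(\omega)$ for $\pi$: horizontal lifts of curves exist on the whole interval.

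Fix a curve $\gamma:I\to M$ and $\lambda_0\in I$, and let $S\subset \mathbb{G}_{\gamma(\lambda_0)}$ be the set of $g$ whose maximal lift $\widetilde\gamma_g$ is defined on all of $I$. The plan is to show that $S$ is a subgroup of the fibre and then that $S$ is everything.

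\emph{Step 1: $S$ is a subgroup.} The unit lifts globally, since $\widetilde\gamma_{1}(\lambda)=1_{\gamma(\lambda)}$: the unit manifold is tangent to $H$, as noted after Definition~\ref{def:connection_gpd}. By Lemma~\ref{lem:group_bun_Cart_conn}, if $g,h\in S$ then $\lambda\mapsto\widetilde\gamma_g(\lambda)\widetilde\gamma_h(\lambda)$ is a horizontal lift through $gh$ defined on $I$, so $gh\in S$. Likewise $\widetilde\gamma_g^{-1}$ is a horizontal lift through $g^{-1}$, because $di(H)=H$, so $g^{-1}\in S$.

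\emph{Step 2: $S$ contains a neighbourhood of $1$.} Restrict to a compact subinterval $J\ni\lambda_0$; exhausting $I$ by such intervals suffices. The lift of the unit is defined on $J$, and lifts are integral curves of a time-dependent vector field on $\gamma^*\mathbb{G}$. By continuous dependence of flows on initial conditions, together with compactness of $J$ and Hausdorffness of $\mathbb{G}$, lifts through all $g$ in some neighbourhood $U_J$ of $1_{\gamma(\lambda_0)}$ exist on $J$. Let $S_J$ be the set of $g$ whose lift is defined on $J$. Step~1 applies verbatim to $S_J$, so $S_J$ is an open subgroup.

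\emph{Case 1: connected fibres.} An open subgroup of a connected Lie group is the whole group, so $S_J=\mathbb{G}_{\gamma(\lambda_0)}$ for every compact $J$, and hence $S$ is the whole fibre. This is the first case.

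\emph{Case 2: Lie group bundles.} Here the main obstacle is the components not reached by Step~2. I would use local trivializations $\phi_i:\mathbb{G}|_{U_i}\cong U_i\times G$ that are groupoid morphisms, and argue by subdividing $J$ into finitely many pieces each lying in some $U_i$; it then suffices to treat $\gamma$ inside one chart. In the chart, $\omega$ becomes a multiplicative connection on $U_i\times G$, so horizontal lifts are governed by a time-dependent multiplicative flow on $G$. The identity component is handled by Case~1. For a general $g$, pick any continuous path $c(\lambda)=\phi_i^{-1}(\gamma(\lambda),g_0)$ with $g_0$ the $G$-coordinate of $g$. Then compare $\widetilde\gamma_g$ with $c\cdot\widetilde\gamma_{e}$ using conjugation: the curve $\lambda\mapsto c(\lambda)^{-1}\widetilde\gamma_g(\lambda)$ stays in the identity component. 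Via multiplicativity, it should satisfy a horizontal-lift-type ODE twisted by the smooth family of automorphisms $\mathrm{Ad}_{c(\lambda)}$, whose solutions exist globally by the identity-component argument. This twisted comparison is the step I expect to need the most care. If it becomes messy, I would fall back on citing \cite[Proposition 4.4]{LaurStienXu2009} and \cite[Proposition 2.18]{FerMar2023}, as in the statement.
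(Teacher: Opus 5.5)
Your Steps 1--2 and Case 1 are correct and give a self-contained proof of the connected-fibre case; the paper itself gives no argument here and only cites the literature. Note that, with no hypothesis on the fibres, the same steps show that for every compact $J$ and every initial time, $S_J$ is an open subgroup and so contains the identity component of the fibre. You will need this below.

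\textbf{The gap is in Case 2, at the step you flag.} Take a curve $c$ over $\gamma$ through $g$ and set $k:=c^{-1}\widetilde\gamma_g$. The multiplicativity equation gives $\omega(\dot{\widetilde\gamma}_g)=\omega(\dot c)+\mathrm{Ad}_c\,\omega(\dot k)$, so horizontality of $\widetilde\gamma_g$ says
\[
\omega(\dot k)=\zeta(\lambda):=-\mathrm{Ad}_{c(\lambda)^{-1}}\,\omega(\dot c(\lambda)).
\]
This is not a horizontal-lift equation twisted by automorphisms; it is an inhomogeneous equation. The identity-component argument does not apply to it. If $k_1,k_2$ are solutions, then $\omega$ of the derivative of $k_1k_2$ is $\zeta+\mathrm{Ad}_{k_1}\zeta$, so the set of initial values admitting solutions on $J$ is not a subgroup. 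Moreover, every solution has the form $k_1\widetilde\gamma_x$ with $k_1$ the solution through the unit. But $k_1=c^{-1}\widetilde\gamma_g$ is exactly the curve whose existence on $J$ you are trying to prove, so as stated the reduction is circular. (Also, $c\cdot\widetilde\gamma_e=c$, and no conjugation actually occurs.)

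\textbf{Either of the following closes it.}

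(a) Variation of constants. Let $\tau_\lambda$ be parallel transport along $\gamma$ on identity components. By the remark above it is defined on all of $J$, and by Lemma~\ref{lem:group_bun_Cart_conn} it is a group isomorphism. Write $k(\lambda)=\tau_\lambda(m(\lambda))$, using the chart section $c$, which is defined on $J$. The equation becomes
\[
\dot m=dR_m\bigl((d\tau_\lambda)_1^{-1}\zeta(\lambda)\bigr)
\]
in the single group $\mathbb{G}^0_{\gamma(\lambda_0)}$. Time-dependent right-invariant equations have solutions on all of $J$: right-translate the solution through $1$, which has a uniform existence time on compact $J$.

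(b) Endpoint argument, closer to your chart idea. Suppose $\widetilde\gamma_g$ exists only up to $\lambda_1$. Use a groupoid-morphism trivialization at $\gamma(\lambda_1)$ to choose $h$ over $\gamma(\lambda_1)$ whose local lift, just before $\lambda_1$, has $G$-coordinate in the same component of $G$ as that of $\widetilde\gamma_g$. Then $k:=\widetilde\gamma_h(\lambda_2)^{-1}\widetilde\gamma_g(\lambda_2)$ lies in the identity component, so its lift from time $\lambda_2$ exists on $J$. By Lemma~\ref{lem:group_bun_Cart_conn}, the product of the lifts of $h$ and $k$ is a horizontal lift through $\widetilde\gamma_g(\lambda_2)$, and it extends $\widetilde\gamma_g$ past $\lambda_1$. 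Local triviality is used precisely in the existence of $h$, and this is what fails in Remark~\ref{rmk:CC_Lie_group_bundles_hypothesis}.
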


    \begin{proposition}[{\cite[Proposition 2.18 and Corollary 2.19]{FerMar2023}}]\label{prop:CC_Lie_group_bundles}
        Any Lie group bundle admits a Cartan connection. Conversely, let $\pi:\mathbb{G}\to M$ be a bundle of Lie groups over a connected $M$, with Hausdorff total space, carrying a Cartan connection $\omega$. If $\omega$ is complete, or if the fibres of $\pi$ are connected, then $\mathbb{G}$ is a Lie group bundle.
    \end{proposition}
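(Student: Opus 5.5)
The proposition has two halves, and I would handle them separately.

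\emph{Existence.} I would build a Cartan connection on a Lie group bundle $\mathbb{G}$ with typical fibre $G$ by gluing local pieces. Over each trivializing open set $U_i$, the groupoid isomorphism $\phi_i:\mathbb{G}|_{U_i}\to U_i\times G$ lets us pull back the trivial distribution $TU_i\times 0$. This distribution is multiplicative because $U_i\times G$ is a product of groupoids, so $\phi_i^*(TU_i\times 0)$ is the kernel of a Cartan connection $\omega_i$ on $\mathbb{G}|_{U_i}$. Next I would glue the $\omega_i$ with a partition of unity $\{\psi_i\}$ on $M$, setting $\omega:=\sum_i(\pi^*\psi_i)\,\omega_i$. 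Two facts make this work:
\begin{itemize}
\item The multiplicativity equation of Lemma~\ref{lem:multipl_equiv_notion} is affine in $\omega$.
\item Since $s=t=\pi$, the pulled-back functions $\pi^*\psi_i$ take the same value on $g$, $h$ and $gh$, so they pass through the equation.
\end{itemize}
The normalisation $\omega|_{\ker ds}=\omega_{\mathrm{MC}}$ also survives because $\sum_i\psi_i=1$. This is the same convexity argument as in Lemma~\ref{lem:pconn_are_convex}.

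\emph{Converse.} Assume first that $\omega$ is complete; the case of connected fibres reduces to this one by Lemma~\ref{lem:mult_conn_complete}. Fix $x_0\in M$ and set $G:=\mathbb{G}_{x_0}$. By Lemma~\ref{lem:group_bun_Cart_conn}, parallel transport $\tau_\gamma:\mathbb{G}_{\gamma(0)}\to\mathbb{G}_{\gamma(1)}$ along any curve $\gamma$ is a Lie group isomorphism. Because $M$ is connected, every fibre is therefore isomorphic to $G$.

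To get local trivializations, I would take a chart $U$ around a point $x$ that is diffeomorphic to a ball, and transport radially along straight segments from $x$. This defines
\[
\phi_U:\mathbb{G}|_U\to U\times\mathbb{G}_x,\qquad g\mapsto\bigl(\pi(g),\tau_{\gamma_{\pi(g)}}^{-1}(g)\bigr),
\]
where $\gamma_y$ denotes the segment from $x$ to $y$. This map is a bijection, and it is a groupoid morphism fibrewise by Lemma~\ref{lem:group_bun_Cart_conn}. To see that it is smooth with smooth inverse, I would note that $(y,h)\mapsto\tau_{\gamma_y}(h)$ is the time-one flow of a smoothly parametrized family of horizontal lifts. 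Equivalently, it is the flow of the horizontal lift of the radial vector field. Smooth dependence on initial conditions and parameters then gives smoothness of this map, and its inverse is the analogous flow in reverse.

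Finally, I would compose $\phi_U$ with a fixed isomorphism $\mathbb{G}_x\cong G$, obtained by transport along a path from $x_0$. This produces an atlas of the required form, so $\mathbb{G}$ is a Lie group bundle.

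\emph{Main obstacle.} The delicate step is making sure that completeness together with Hausdorffness really gives global, smooth parallel transport defined on all of $\mathbb{G}_x\times U$, rather than only on a neighbourhood of the unit section. Hausdorffness is what makes horizontal lifts unique, and completeness is what keeps them from escaping in finite time. To settle this I would first check it carefully along the radial flow, and if needed fall back on the characterization in~\cite[Theorem 5]{delHoyo2016}.
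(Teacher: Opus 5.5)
Your proposal follows the paper's proof. Existence uses local Cartan connections on $\mathbb{G}|_{U_i}$, pulled back from the product $U_i\times G$ through group-isomorphic trivializations and glued with $\pi^*$ of a partition of unity. The converse reduces to the complete case via Lemma~\ref{lem:mult_conn_complete} and then trivializes over chart domains by parallel transport, which is by group isomorphisms by Lemma~\ref{lem:group_bun_Cart_conn}.

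One justification is missing in the gluing step. The multiplicativity equation is \emph{not} affine in $\omega$ in general. The action $g\cdot\alpha=dR_{g^{-1}}dm(\hor^\omega_g(\rho(\alpha)),\alpha)$ depends on $\omega$ through $\hor^\omega$, so the term $g\cdot\pr_2^*\omega$ is quadratic in $\omega$. For the same reason the analogy with Lemma~\ref{lem:pconn_are_convex} does not apply as stated: there the action is fixed by a given $\omega$ and only $\theta$ varies. The equation is affine here only because the anchor vanishes, so that $g\cdot\alpha=\mathrm{Ad}_g\alpha$ for every choice of $\omega$. The paper makes exactly this remark before gluing, and you should state it too.

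There is also a minor slip in the converse. The horizontal lift of the radial vector field vanishes along $\mathbb{G}_x$, so its flow never transports anything out of the fibre over $x$. Keep your first formulation instead: the time-one flow of $(y,g)\mapsto(0,\hor^\omega_g(y))$ on $U\times\mathbb{G}|_U$, with $y$ read as a constant vector in the chart. This flow exists up to time one by completeness, and its integral curves are unique because $\mathbb{G}$ is Hausdorff.
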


    \begin{proof}
        The connection forms of~\cite{FerMar2023} are $\mathfrak{g}$-valued, while ours take values in $t^*A$; the first half is therefore proved here, the two conventions differing by the identifications $\ell_i$ below.

        Let $\pi:\mathbb{G}\to M$ be a Lie group bundle with typical fiber $G$. Let $\omega^G_\mathrm{MC}$ be the Maurer-Cartan form on $G$, and let $(U_i,\phi_i)_{i\in I}$ be an atlas of trivialisations which are group isomorphisms on the fibres. Put $f_i:=\pr_2\circ\phi_i$, and let $\ell_i(x):A_x\to\mathfrak{g}$ be the derivative at the identity of $\phi_i|_{\mathbb{G}_x}$. A morphism of Lie groups $\alpha:K\to G$ satisfies $\alpha\circ L_k=L_{\alpha(k)}\circ\alpha$, hence
        \[
        \alpha^*\omega^G_\mathrm{MC}=d\alpha_e\circ\omega^K_\mc ,
        \]
        with $\omega^K_\mc$ the Maurer-Cartan form of $K$. Applied to $\phi_i|_{\mathbb{G}_x}$, this shows that
        \[
        \omega_i:=\ell_i^{-1}\circ f_i^*\omega_\mathrm{MC}\in\Omega^1\bigl(\mathbb{G}|_{U_i},t^*A\bigr)
        \]
        restricts to each fibre as its Maurer-Cartan form, so $\omega_i$ is a connection on $\mathbb{G}|_{U_i}$. Its kernel is tangent to the submanifolds $\phi_i^{-1}(U_i\times\{a\})$, which are closed under multiplication and inversion, as well as tangent to the unit manifold when $a$ is the unit. Thus, $\omega_i$ is multiplicative by Lemma~\ref{lem:multipl_equiv_notion}. In the multiplicativity equations
        \[
        m^*\omega_i=\pr_1^*\omega_i+g\cdot\pr_2^*\omega_i
        \]
        the representation $\cdot$ does not depend on $\omega_i$, because by $s=t$ it is just the adjoint action of the fibers. Hence, if $(\chi_i)_{i\in I}$ is a partition of unity subordinate to $(U_i)_{i\in I}$ and we set
        \[
        \omega := \sum\limits_{i}\pi^*(\chi_i)\, \omega_i,
        \]
        we see that $\omega$ restricts to $\omega^G_\mathrm{MC}$ on the fibers and, using that $\pi\circ m=\pi\circ\pr_1=\pi\circ\pr_2$, it is multiplicative.
        
        Conversely, by Lemma~\ref{lem:mult_conn_complete} one can assume that $\omega$ is complete, and follow ~\cite[Proposition 2.18]{FerMar2023}: on an open subset of the base supporting a chart, parallel transport yields a trivialization of $\mathbb{G}$, which is by isomorphisms of Lie groups due to Lemma~\ref{lem:group_bun_Cart_conn}.
    \end{proof}
    \begin{remark}\label{rmk:CC_Lie_group_bundles_hypothesis}
        Bundles of discrete groups provide counterexamples to the converse statement in Proposition~\ref{prop:CC_Lie_group_bundles}; see the discussion after~\cite[Corollary 2.19]{FerMar2023} and~\cite[Example 5.8]{LauMarcut2026}. For instance, over $M=\mathbb{R}$, one has the bundle of Lie groups
        \[
        \mathbb{G}\to \mathbb{R}\times
        \]
        with fibre 
        \[
        \mathbb{G}_t = \{e\},\quad t\leq 0
        \]
        and
        \[
        \mathbb{G}_t = \mathbb{Z}/2,\quad t>0
        \]
        It is a bundle of Lie groups with Hausdorff total space and zero dimensional fibres, so $A=0$ and $\omega=0$ is a Cartan connection, but it is obviously not a Lie group bundle.
    \end{remark}

    \begin{proposition}[{\cite[Proposition 2.10 and Corollary 2.12]{FerMar2023}}]\label{prop:LAB_from_mult_conn}
        Let $\pi:\mathbb{G}\to M$ be a bundle of Lie groups carrying a
        Cartan connection. Then its Lie algebroid
        $A_{\mathbb{G}}\to M$ is a Lie algebra bundle.
    \end{proposition}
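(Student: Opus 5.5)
The plan is to differentiate the Cartan connection on the bundle of Lie groups and check that the resulting connection on $A_{\mathbb{G}}$ acts on sections by derivations of the fiberwise bracket. Once that is established, parallel transport along curves in $M$ identifies the Lie algebras $(A_x,[\ ,\ ]_x)$ with one another. This yields local trivializations by Lie algebra isomorphisms.

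\emph{Step 1: $D$ acts by derivations.} Let $\omega$ be the Cartan connection and $D$ the induced infinitesimally Cartan connection of Proposition~\ref{prop:diff_conn}. Since the anchor vanishes, $\rho=0$, and the adjoint connection of Proposition~\ref{prop:adj_rep} reduces to
\[
\nabla^A_\alpha\beta=[\alpha,\beta],\qquad \nabla^{TM}_\alpha X=0 .
\]
Substituting this into Definition~\ref{def:inf_basic_curvature}, the basic curvature becomes
\[
\cbas^D(\alpha,\beta)X=D_X[\alpha,\beta]-[D_X\alpha,\beta]-[\alpha,D_X\beta],
\]
because the terms $D_{\nabla_\alpha X}\beta$ and $D_{\nabla_\beta X}\alpha$ vanish. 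Infinitesimal multiplicativity, $\cbas^D=0$, is therefore exactly the statement that each $D_X$ is a derivation of the $C^\infty(M)$-linear bracket of $\Gamma(A_{\mathbb{G}})$.

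\emph{Step 2: parallel transport preserves the bracket.} Fix a curve $\gamma$ in $M$ and two $D$-parallel sections $\alpha(\lambda),\beta(\lambda)$ of $\gamma^*A$. By Step 1,
\[
D_{\dot\gamma}[\alpha,\beta]=[D_{\dot\gamma}\alpha,\beta]+[\alpha,D_{\dot\gamma}\beta]=0 ,
\]
so $[\alpha,\beta]$ is parallel as well. Parallel transport in a vector bundle is defined along every curve, with no completeness issue, so it gives Lie algebra isomorphisms between fibres over the same connected component.

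\emph{Step 3: local trivializations.} Take a chart $U\ni x_0$ diffeomorphic to a ball, and transport $A_{x_0}$ along radial segments from $x_0$. This produces a smooth vector bundle trivialization $A|_U\cong U\times A_{x_0}$. By Step 2 its restriction to each fibre is an isomorphism of Lie algebras, so $A_{\mathbb{G}}$ is a Lie algebra bundle. If $M$ is not connected, the typical fibre may vary from component to component; I would either note that the definition allows this, or add a connectedness hypothesis.

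I expect the only delicate point to be Step 1. There one must be sure that the paper's sign conventions in $\nabla^A$ and $\cbas^D$ really collapse to the derivation identity when $\rho=0$. This can be double-checked against Proposition~\ref{prop:cbas_identities}. Smoothness of the radial transport in Step 3 is standard ODE dependence on parameters.
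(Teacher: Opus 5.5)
Your proposal is correct and is essentially the paper's argument. The paper also differentiates $\omega$ to $D$ and uses $\rho=0$ to make $\nabla^{TM}$ vanish, so that $\cbas^D=0$ becomes the derivation identity $D_X[\alpha,\beta]=[D_X\alpha,\beta]+[\alpha,D_X\beta]$; it then trivializes $A_{\mathbb{G}}$ locally by parallel transport, which is by Lie algebra isomorphisms. Your extra details are the radial transport on a ball chart and the caveat that the typical fibre may change between connected components of $M$; both fill in points the paper leaves implicit.
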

    \begin{proof}
        Let $D$ be the infinitesimally Cartan connection on $A_{\mathbb{G}}$
        induced by the Cartan connection via
        Proposition~\ref{prop:diff_conn}. The anchor of $A_{\mathbb{G}}$ vanishes,
        so $\nabla^{TM}$ of Proposition~\ref{prop:adj_rep} vanishes too; moreover, $\cbas^D=0$ yields
        \[
        D_X[\alpha,\beta]=[D_X\alpha,\beta]+[\alpha,D_X\beta].
        \]
        Consequently, the parallel transports induced by $D$ are isomorphisms of Lie
        algebras, and can be used to locally trivialize $A_{\mathbb{G}}$. 
    \end{proof}
Let us also give a characterization of Lie group bundles admitting a \textit{flat} Cartan connections. We will show that the existence of a flat Cartan connections corresponds to a reduction of the structure group of the fiber bundle $\mathbb{G}\to M$ from $\mathrm{Diff}({G})$ to $\mathrm{Aut}(G)$ with the discrete topology -- i.e. to the existence of
a trivializing atlas $(U_i,\phi_i)_{i\in I}$
    \[
    \begin{tikzcd}
        \mathbb{G}|_{U_i} \arrow[dr,"\pi"]\arrow[rr,"\phi_i"] && U_i\times G\arrow[dl,"\pr_1",swap]\\
        & U_i
    \end{tikzcd}
    \]
    such that the transition functions
    \[
    \begin{tikzcd}
    U_{ij}\times G \arrow[dr,"\pr_1"]\arrow[rr,"\phi_j\circ\phi_i^{-1}"] && U_{ij}\times G\arrow[dl,"\pr_1",swap]\\
    &U_{ij}:= U_i\cap U_j
    \end{tikzcd}
    \]
    restrict to the fibers as automorphisms of $G$ which are \textbf{locally constant} in the base point. We call such an atlas a \textbf{flat structure} on $\mathbb{G}$.
    Of course when such an atlas exists, differentiating its transition functions at the identity gives an atlas of $A_{\mathbb{G}}\to M$ whose transition functions are Lie algebra automorphisms of $\mathfrak{g}$, locally constant in the base point.

    \begin{proposition}\label{prop:flat_CC_Lie_group_bundles}       
        Any Lie group bundle equipped with a flat structure carries a canonical flat Cartan connection, determined by that flat structure. Conversely, a Lie group bundle $\pi:\mathbb{G} \to M$ with a flat Cartan connection admits a canonical flat structure.
        
    \end{proposition}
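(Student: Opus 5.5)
For the first assertion, I would fix a flat structure $(U_i,\phi_i)_{i\in I}$ and on each $\mathbb{G}|_{U_i}$ take the local Cartan connection
\[
\omega_i:=\ell_i^{-1}\circ f_i^*\omega^G_\mathrm{MC},\qquad f_i:=\pr_2\circ\phi_i ,
\]
exactly as in the proof of Proposition~\ref{prop:CC_Lie_group_bundles}. The kernel of $\omega_i$ is the tangent distribution to the foliation by the leaves $\phi_i^{-1}(U_i\times\{a\})$, so $\omega_i$ is flat and multiplicative.

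The key step is to show that the $\omega_i$ glue without a partition of unity. On $U_{ij}$ the map $\phi_j\circ\phi_i^{-1}$ has the form $(x,a)\mapsto(x,\psi_{ij}(x)(a))$ with $\psi_{ij}$ locally constant and valued in $\mathrm{Aut}(G)$. Such a map sends horizontal leaves $U'\times\{a\}$ (with $U'\subset U_{ij}$ connected) to horizontal leaves. Hence $\ker\omega_i=\ker\omega_j$ on $U_{ij}$. Both forms restrict to the Maurer-Cartan form on fibres, so by Lemma~\ref{lem:various_conn_pov} a connection is determined by its kernel, and therefore $\omega_i=\omega_j$. I would therefore define $\omega$ by $\omega|_{U_i}=\omega_i$. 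Flatness and multiplicativity are local, so $\omega$ inherits both. Canonicity then follows from the same argument: equivalent flat atlases are related by locally constant automorphisms, so they produce the same kernel.

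For the converse, let $\omega$ be a flat Cartan connection. By Lemma~\ref{lem:mult_conn_complete} it is complete. By flatness, $\ker\omega$ is integrable, so parallel transport along curves depends only on their homotopy class. By Lemma~\ref{lem:group_bun_Cart_conn}, this parallel transport acts by Lie group isomorphisms. I would cover $M$ by contractible (say, ball) chart domains $U_i$, fix $x_i\in U_i$ and an isomorphism $\mathbb{G}_{x_i}\cong G$, and define $\phi_i$ by transporting along any path in $U_i$ to $x_i$. This gives a well-defined trivialization, smooth by smooth dependence of flows on initial data, whose fibre maps are group isomorphisms and whose horizontal leaves are the integral leaves of $\ker\omega$. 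On a connected component of $U_{ij}$, the transition $\phi_j\circ\phi_i^{-1}$ sends horizontal leaves to horizontal leaves. Therefore its fibre component $\psi_{ij}(x)\in\mathrm{Aut}(G)$ satisfies $\partial_x\psi_{ij}=0$, which means it is locally constant. Canonicity here means that different choices of basepoints and identifications change the atlas only by constant automorphisms. Applying the first construction to this atlas recovers $\omega$, because both connections have kernel $\ker\omega$.

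The main obstacle is well-definedness of the transport trivialization. Independence of the path requires flatness together with completeness, to rule out lifts escaping in finite time, and Hausdorffness for uniqueness of lifts. I would handle it as in \cite[Theorem 5]{delHoyo2016}: on a contractible $U_i$, a complete flat Ehresmann connection has trivial holonomy, and its leaf space gives a global product structure $\mathbb{G}|_{U_i}\cong U_i\times \mathbb{G}_{x_i}$.
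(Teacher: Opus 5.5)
Your proposal is correct and follows essentially the paper's proof. In the forward direction you build local forms $\omega_i=\ell_i^{-1}\circ f_i^*\omega^G_\mathrm{MC}$ and glue them using local constancy of the transition automorphisms; in the converse, complete parallel transport (Lemmas~\ref{lem:mult_conn_complete} and~\ref{lem:group_bun_Cart_conn}) gives trivializations by group isomorphisms whose horizontal slices lie in leaves of $\ker\omega$, which forces locally constant transitions. The differences are minor. You glue by showing $\ker\omega_i=\ker\omega_j$ and invoking Lemma~\ref{lem:various_conn_pov}, whereas the paper computes $\omega_j=\ell_j^{-1}\circ d(\psi_{ji})_e\circ f_i^*\omega_\mathrm{MC}=\omega_i$ directly. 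You get path-independence of transport on contractible domains from flatness, whereas the paper uses the chart-wise transport of \cite[Proposition 2.18]{FerMar2023}.
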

    \begin{proof}
        Let $\pi:\mathbb{G} \to M$ be a Lie group bundle with typical fiber $G$, and let $\omega_\mathrm{MC}$ be the Maurer-Cartan form of $G$.
        
        If $\mathbb{G}$ carries a flat structure, let $\mathcal{U}:=(U_i,\phi_i)_{i\in I}$ be an atlas defining it, and write $\phi_j\circ\phi_i^{-1}(x,a)=(x,\psi_{ji}(a))$, with $\psi_{ji}\in\mathrm{Aut}(G)$ constant on each connected component of $U_i\cap U_j$. Put $f_i:=\pr_2\circ\phi_i$, and let $\ell_i(x):A_x\to\mathfrak{g}$ be the derivative at the identity of the group isomorphism $\phi_i|_{\mathbb{G}_x}:\mathbb{G}_x\to G$. Set
        \[
        \omega_i:=\ell_i^{-1}\circ f_i^*\omega_\mathrm{MC}\in\Omega^1\bigl(\mathbb{G}|_{U_i},t^*A\bigr).
        \]
        By the proof of Proposition~\ref{prop:CC_Lie_group_bundles}, each $\omega_i$ is a Cartan connection on $\mathbb{G}|_{U_i}$ whose kernel is tangent to the submanifolds $\phi_i^{-1}(U_i\times\{a\})$, and $\psi_{ji}^*\omega_\mathrm{MC}=d(\psi_{ji})_e\circ\omega_\mathrm{MC}$. On $U_i\cap U_j$ we have $f_j=\psi_{ji}\circ f_i$ and $\ell_j=d(\psi_{ji})_e\circ\ell_i$, with $\psi_{ji}$ locally constant, so
        \[
        \omega_j=\ell_j^{-1}\circ d(\psi_{ji})_e\circ f_i^*\omega_\mathrm{MC}=\omega_i ,
        \]
        and the $\omega_i$ glue to a Cartan connection $\omega$ on $\mathbb{G}$. Its kernel is involutive, so $\omega$ is flat. Observe that, for any other atlas compatible with the flat structure, the induced connection $\omega$ is the same.
        

        Conversely, let $\omega$ be a flat Cartan connection on $\mathbb{G}$ and $H:=\ker(\omega)$ the corresponding involutive distribution. A Lie group bundle has Hausdorff total space by local trivializability, so by Lemma~\ref{lem:mult_conn_complete} $\omega$ is complete. Using parallel transport on an open subset of the base, following~\cite[Proposition 2.18]{FerMar2023}, one obtains, for each $x\in M$ an open subset $U_x\subset M$ and a trivialization $\Phi_x:U_x\times\mathbb{G}_x\to\mathbb{G}|_{U_x}$ which is an isomorphism on each fiber. Each section $y\mapsto\Phi_x(y,g)$ lies in a leaf of $H$. Fixing group isomorphisms $\mathbb{G}_x\cong G$, the inverses of the maps $\Phi_U$ yields an atlas of $\mathbb{G}$ whose charts are group isomorphisms on the fibers; the transition functions send each $U_x\cap U_{x'}\times\{a\}$ to some $U_x\cap U_{x'}\times\{a'\}$ (after restricting to connected components). It follows that the transition functions are automorphisms of $G$ locally constant in the base point, as we wanted.
    \end{proof}
    \begin{remark}\label{rmk:flat_CC_Lie_group_bundles}
    Regardless of connectivity and completeness, not every Lie group bundle carries a flat Cartan connections. For instance, one can regard
    \[
    \mathbb{G}=T\mathbb{S}^2\to\mathbb{S}^2
    \]
    as a bundle of abelian groups. A flat Cartan connection would yield a parallelization of $\mathbb{S}^2$.
    \end{remark}
    \begin{remark}
        A Lie group bundle can carry several flat structures, hence several flat Cartan connections. On trivial bundles, this can be achieved by acting with a family of elements of $\mathrm{Aut}(G)$ depending on the base point.
        
        For instance, one has an obvious flat structure on $\mathbb{S}^1\times \mathbb{R}$ whose flat Cartan connection is the canonical one $\mathbb{S}^1$. On the other hand, setting $\psi(\vartheta):=e^{\sin\vartheta}$, $\vartheta\in \mathbb{S}^1$, one can change trivialization by 
        \[
        \phi'(\vartheta,g):=\bigl(\vartheta,\psi(\vartheta)g\bigr).
        \]
        The leaves of the flat Cartan connection corresponding to the associated flat structure are different from the previous one; they are the curves cut out by $\psi(\vartheta)g=c$, for $c\in \mathbb{R}$.
    \end{remark}

    \begin{remark}\label{rmk:LGB_flat_is_trivial_action}
        As we already saw in Proposition \ref{prop:flat_Cartan_action}, a flat infinitesimally Cartan connection on a Lie algebroid over a simply connected manifold makes it into an action algebroid. The groupoid counterpart, Proposition~\ref{prop:flat_Cartan_action_global}, asks in addition that the source fibres be simply connected and that the connection be complete, and yields an action groupoid. In the case of Lie group and Lie algebra bundles, the action is the trivial one.
    \end{remark}

    We now move to the Chern-Weil construction. First, let us fix a Lie algebra bundle $A_{\mathbb{G}}$ and an infinitesimally Cartan connection $D$. The structure described in the previous sections simplifies as follows.
    
    \vspace{.1cm}
    
    The adjoint representation on $\rho: A_{\mathbb{G}}\to TM$ reduces to the fiberwise adjoint representation of $A_{\mathbb{G}}$ on itself, since $\nabla^{TM}$ from Proposition \ref{prop:adj_rep} vanishes identically. It is also worth noticing that this representation is completely independent from $D$.

    \vspace{.1cm}
    
    The corresponding bowtie almost algebroid $TM\bowtie A_{\mathbb{G}}$ has the following structure operations
        \[
            \rho_\oplus (X, \alpha) = X
        \]
        \[
        [(X,\alpha), (Y,\beta)] = ([X,Y],[\alpha,\beta]+D_X\beta-D_Y\alpha)
        \]
        and its isotropy bundle is precisely $A_{\mathbb{G}}$ with its Lie bracket -- which, as above, does not depend on $D$. 
        
        Accordingly, the associated Chevalley-Eilenberg operator $d_\oplus$ reduces to the (graded) sum of the differential of $D$ and the Chevalley-Eilenberg differential of $A$ (which acts fiberwise). 

    \vspace{.1cm}
        
    Finally, the differential \eqref{eq:diff_Weil} on $W^\bullet (A_{\mathbb{G}},D)$ is the sum of the following derivations (described via their action on generators $\eta\in \Omega^1(M)$, $\alpha\in \Omega^1(A_{\mathbb{G}})$ and $\phi \in \Gamma(S^1A_{\mathbb{G}}^*)$):
        \begin{itemize}
        \item the differential $d_{\overline{W}}$: 
        \begin{equation}\label{eq:diff_Weil_of_alg}
        d_{\overline{W}}(\eta) = 0,\ d_{\overline{W}}(\alpha) = \iota(\alpha)+d_\mathrm{CE}\alpha,\ d_{\overline{W}}(a) = \iota(d_{\overline{\mathrm{cad}}}(\iota(a)))
        \end{equation}
        where $d_{\overline{\mathrm{cad}}}$ denotes the differential associated to the coadjoint representation of the fiber of $A_{\mathbb{G}}$;
        \item the de Rham differential of $M$;
        \item the derivations $d_{D^*}$ associated to the connection $D$ -- acting as the $0$ map on $\Omega^1(M)$;
        \item the curvature term
        \begin{equation}\label{eq:diff_curv_lgb}
        d_\mathrm{curv}(\eta) = 0,\ d_\mathrm{curv}(\alpha) = 0,\ d_\mathrm{curv}(\phi) = \langle \phi, \curv^D \rangle.
        \end{equation}
    \end{itemize}
    When $\omega$ is flat, the last derivation vanishes identically and $d_{D^*}$ becomes a differential. 

The differential $d_{\overline{W}}$ above is in fact a differential on the algebra
\[
\overline{W}^{k}(A_{\mathbb{G}}) = \Gamma\left(\bigoplus\limits_{k=p+2q}\Lambda^pA_{\mathbb{G}}^*\otimes S^qA_{\mathbb{G}}^*\right)
\]
which can be thought of as the "Weil algebra bundle" of $A_{\mathbb{G}}$ and comes with a natural map of dga's
\[
(W^\bullet(A_{\mathbb{G}}), d_W) \to (\overline{W}^\bullet(A_{\mathbb{G}}),d_{\overline{W}}).
\]

Now let $\mathbb{G}\to M$ be a Lie group bundle with fiber $G$, and $A_{\mathbb{G}}$ be its Lie algebra bundle; as usual $\mathfrak{g}$ denotes the Lie algebra of $G$, which is the typical fiber of $A_{\mathbb{G}}$.

Let $\pi:\mathbb{P}\to \mathbb{X}$ be a principal $\mathbb{G}$-bundle with moment map  $\mu: \mathbb{P}\to M$. Throughout this subsection $\mu$ is assumed to be a submersion, so that its fibres are manifolds. The notation is motivated by the fact that the fiber $G$ of $\mathbb{G}$ acts principally over the fibers $P_m := \mu^{-1}(m)$. The quotient of the action is the submanifold $X_m:=\pi(\mu^{-1}(m))\subset \mathbb{X}$. In other words, $\mathbb{P}$ can be thought of as a smooth family of principal $G$-bundles.

In this context, it makes sense to define a principal $\mathbb{G}$-connection as a smooth family of principal $G$-connections along the fibers of the momentum map $\mu$. Then, we define a \textbf{principal $\mathbb{G}$-connection} to be a foliated one form $\bar{\theta}\in \Omega^1_{\mu\mathrm{-fol}}(P,A_{\mathbb{G}})$ which is a principal $G$-connection when restricted to the $\mu$-fibers $P_m$, $m\in M$. Given a principal $G$-connection, the classical connection curvature construction yields, for all $m\in M$, the dga map
\[
\bar{\kappa}_{\bar{\theta}_m} : (W^\bullet (\mathfrak{g}), d_W) \to (\Omega^\bullet(P_m),d_\mathrm{dR})
\]
which, after passing to basic cohomologies, gives the classical Chern-Weil map
\[
\bar{\kappa}^\mathrm{CW}_m: S^\bullet(\mathfrak{g}^*)\to H^{2\bullet}_\mathrm{dR}(X_m).
\]
In fact, if $(\overline{W}^\bullet(A_{\mathbb{G}}), d_{\overline{W}})$ denotes the "Weil algebra bundle" introduced above, then $\bar{\theta}$ induces a dga map
\[
(\overline{W}^\bullet(A_{\mathbb{G}}), d_{\overline{W}}) \to (\Omega^\bullet_{\mu\mathrm{-fol}}(P),d_\mathrm{dR})
\]
by pairing elements of $\Gamma(\Lambda^\bullet A_{\mathbb{G}}^*)$ with $\theta$ and elements of $\Gamma(S^\bullet A_{\mathbb{G}}^*)$ with the Maurer-Cartan expression
\[
\mc_{\bar{\theta}}:= d_\mu\bar{\theta} + \frac{1}{2}[\bar{\theta},\bar{\theta}].
\]
Here, $[\ ,\ ]$ is the Lie bracket on $A_{\mathbb{G}}$ while $d_\mu$ denotes the foliated differential -- notice that $d_\mu\bar{\theta}$ is well defined because $A_{\mathbb{G}}$ is trivial along the $\mu$-fibers. If one assumes that the submanifolds $\{X_m\}_{m\in M}$ form a regular foliation $\mathcal{F}$ in $\mathbb{X}$, then we see that the map above induces a characteristic map with values in foliated cohomology
\[
\bar{\kappa}^\mathrm{CW}:  \Gamma_{\mathbb{G}\mathrm{-inv}}(S^\bullet(A_{\mathbb{G}}^*))\to H^{2\bullet}_\mathcal{F}(\mathbb{X}).
\]


We now explore the interaction between $\bar{\kappa}^\mathrm{CW}$ and the Chern-Weil map induced by a Cartan connection. Below, $\omega$ is a Cartan connection on $\mathbb{G}$ -- see Proposition \ref{prop:CC_Lie_group_bundles}.


\begin{lemma}\label{lem:LGB_connections}
    Let $\mathbb{P}$ be a principal $\mathbb{G}$-bundle over $\mathbb{X}$ such that the foliation $\mathcal{F}$ is regular. 
    The following facts hold true.
    \begin{itemize}
        \item The action of $\mathbb{G}$ on $T\mathbb{P}$ induced by $\omega$ descends to the normal bundle $T\mathbb{P}/T^\mu\mathbb{P}$.
        \item The short exact sequence
        \[
        \Omega^1(T\mathbb{P}/T^\mu\mathbb{P}, \mu^*A_{\mathbb{G}}) \to \Omega^1(\mathbb{P},\mu^*A_{\mathbb{G}}) \to \Omega^1_{\mu\mathrm{-fol}}(\mathbb{P},\mu^*A_{\mathbb{G}})
        \]
        is equivariant with respect to the action induced by $\omega$.
        \item The restriction map
        \[
        \Omega^1(\mathbb{P},\mu^*A_{\mathbb{G}}) \to \Omega^1_{\mu\mathrm{-fol}}(\mathbb{P},\mu^*A_{\mathbb{G}})
        \]
        sends principal $(\mathbb{G},\omega)$-connections to principal $\mathbb{G}$-connections.
        \item If $\bar{\theta}$ is a principal $\mathbb{G}$-connection, the set of extensions of $\bar{\theta}$ to principal $(\mathbb{G},\omega)$ connections is an affine space modeled on the space of $\omega$-invariant elements of $\Omega^1(T\mathbb{P}/T^\mu\mathbb{P}, \mu^*A_{\mathbb{G}})$.
        \item If $\theta$ is a principal $(\mathbb{G},\omega)$-connection extending the principal $\mathbb{G}$-connection $\bar{\theta}$, then $\mc_\theta$ extends $\mc_{\bar{\theta}}$.
    \end{itemize}
\end{lemma}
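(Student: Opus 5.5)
The plan is to exploit the special structure of a bundle of Lie groups: $s=t=\pi$, the anchor of $A_{\mathbb{G}}$ vanishes, and $\mu(g\cdot p)=\mu(p)$. Since $\mu$ is $\mathbb{G}$-invariant and the action preserves $\mu$-fibres, $\mu$-vertical vectors are sent to $\mu$-vertical vectors. The formula $g\cdot v=dm_P(\hor^\omega_g(d\mu v),v)$ restricts on $T^\mu\mathbb{P}$ to the ordinary differential of the fibrewise action $P_m\to P_m$, $p\mapsto g\cdot p$, which does not depend on $\omega$.

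Hence $T^\mu\mathbb{P}$ is a subrepresentation of the $\omega$-representation of $\mathbb{G}\ltimes\mathbb{P}$ on $T\mathbb{P}$, and the quotient $T\mathbb{P}/T^\mu\mathbb{P}\cong\mu^*TM$ inherits an action. This gives the first bullet. By Lemma~\ref{lem:anchored_form_is_equiv}, $d\mu$ is equivariant, so the induced action is the $\omega$-adjoint action of $\mathbb{G}$ on $TM$, namely $g\cdot v=dt(\hor^\omega_g v)$.

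For the second bullet, I would dualize the exact sequence $T^\mu\mathbb{P}\to T\mathbb{P}\to T\mathbb{P}/T^\mu\mathbb{P}$ and tensor with $\mu^*A_{\mathbb{G}}$, which carries the (fibrewise adjoint) representation. Equivariance of both maps is then immediate because each is induced by an equivariant bundle map.

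For the third bullet, I would restrict the principal condition $\theta(g\cdot v)=g\cdot\theta(v)$ to $v\in T^\mu\mathbb{P}$. This yields exactly $\mathrm{Ad}$-equivariance of $\bar\theta_m$ on $P_m$. The normalization $\theta\circ a_P=\mathrm{id}$ restricts as well, since $a_P$ takes values in $T^\mu\mathbb{P}$ because $d\mu\circ a_P=\rho=0$.

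For the fourth bullet, the set of extensions is nonempty by the following argument. Using Proposition~\ref{prp:princ_conn_exist}, pick any principal $(\mathbb{G},\omega)$-connection $\theta_0$. Its restriction $\bar\theta_0$ differs from $\bar\theta$ by an equivariant horizontal foliated form $\bar\tau$. I would extend $\bar\tau$ equivariantly by choosing an equivariant complement $T^\mu\mathbb{P}\oplus C$; such a complement exists by the averaging argument of Proposition~\ref{prp:princ_conn_exist}, namely splitting the quotient sequence over $\mathbb{X}$, using properness. I would declare $\tau=0$ on $C$, and then $\theta_0-\tau$ extends $\bar\theta$. Two extensions differ by an equivariant form vanishing on $T^\mu\mathbb{P}$, i.e. an invariant element of $\Omega^1(T\mathbb{P}/T^\mu\mathbb{P},\mu^*A_{\mathbb{G}})$. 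Conversely, adding such an element preserves both the normalization (since it kills $a_P$) and equivariance, since both conditions are affine. I expect the main obstacle to be producing this equivariant complement: one has to check that the regularity of $\mathcal{F}$ makes the quotient by $\mathbb{G}$ of $T^\mu\mathbb{P}\to T\mathbb{P}$ a subbundle inclusion over $\mathbb{X}$, so that a splitting exists.

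For the last bullet, I would compute $\mc_\theta=d_{\mu^*D}\theta+\tfrac12\{\theta,\theta\}$ on pairs of $\mu$-vertical vectors. Take extensions $X,Y$ tangent to the $\mu$-fibres; then $[X,Y]$ is also $\mu$-vertical. The pullback connection $\mu^*D$ differentiates along $d\mu X=0$, so it reduces to the trivial differential along fibres, i.e. $d_\mu$. Finally, since $\rho=0$, the bracket satisfies $\{a,b\}=[a,b]$, the fibrewise bracket. Hence $\mc_\theta|_{T^\mu\mathbb{P}}=d_\mu\bar\theta+\tfrac12[\bar\theta,\bar\theta]=\mc_{\bar\theta}$.
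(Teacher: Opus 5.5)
Your proposal is correct and follows the paper's proof essentially step by step. The only variation is in the fourth bullet: you extend $\bar\theta_0-\bar\theta$ by zero on a $\mathbb{G}$-invariant complement of $T^\mu\mathbb{P}$ upstairs, whereas the paper descends the difference to a foliated form in $\Omega^1(\mathcal{F},P[A_{\mathbb{G}}])$, extends it on $\mathbb{X}$ and pulls back; this is the same argument carried out on the base.

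The obstacle you anticipate is harmless. Since $\mu$ is a submersion, $T^\mu\mathbb{P}$ is an invariant constant-rank subrepresentation for the submersion groupoid $\mathbb{G}\ltimes\mathbb{P}\cong\mathbb{P}\times_{\mathbb{X}}\mathbb{P}$. It therefore descends to a subbundle over $\mathbb{X}$ without any appeal to regularity of $\mathcal{F}$. That regularity is in any case automatic here: the $\mathbb{G}$-invariant submersion $\mu$ factors through a submersion $\mathbb{X}\to M$, and its fibres are exactly the $X_m$.
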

\begin{proof}
    For the first point, it is sufficient to observe that the action of $\mathbb{G}$ on $T\mathbb{P}$ induced by $\omega$ is given by the formula
    \[
    g\cdot v_p = dm_P(\hor^\omega_g(d\mu(v_p)), v_p),\quad v_p\in T_p\mathbb{P}
    \]
    and hence it restricts to $T^\mu\mathbb{P}$ as the differential of the action of $G$ on the $\mu$-fibers.
    
    Then, the second point follows because the sequence
    \[
    T^\mu\mathbb{P} \to T\mathbb{P} \to T\mathbb{P}/T^\mu\mathbb{P}
    \]
    is equivariant by construction.

    The third point follows from the second. 
    
    For the fourth point, first observe that the difference of two principal $(\mathbb{G},\omega)$-connections extending the same $\bar\theta$ is $\omega$-equivariant. To check that the set of such connections is non-empty, consider any principal $(\mathbb{G},\omega)$-connection $\theta_0$. Consider the difference
    \[
    \bar{\theta}-\bar{\theta}_0 \in \Omega^1_{\mu-\mathrm{fol}}(P,A_\mathbb{G}),
    \]
    where $\bar{\theta}_0$ denotes the restriction to the $\mu$-fibers. The form $\bar{\theta}-\bar{\theta}_0$ vanishes on $\pi$-vertical vectors in $\ker(d\mu)$, and is $\mathbb{G}$-equivariant; hence, it descends to a foliated form $\bar{\eta}\in \Omega^1(\mathcal{F},P[A_\mathbb{G}])$. Any foliated form admits an extension, hence there exists $\eta\in  \Omega^1(\mathbb{X},P[A_\mathbb{G}])$ which restricts to $\bar{\eta}$ along $\mathcal{F}$. The sum
    \[
    \theta=\theta_0+\pi^*\eta
    \]
    coincides with $\bar\theta$ when restricted to the $\mu$-fibers, and is a principal $(\mathbb{G},\omega)$-connection by construction.

    For the last point we just need to check that
    \[
    d_{\mu^*D}\theta|_{\Lambda^2T^\mu P} = d_\mu\bar{\theta}
    \]
    where $d_{\mu^*D}$ is the operator induced by the pullback of the connection $D$ on $A_{\mathbb{G}}$ associated to $\omega$. To that avail we recall that the pullback connection satisfies
    \[
    \mu^*D_X(\mu^*\alpha) = \mu^*(D_{d\mu(X)}\alpha)
    \]
    for any section $\alpha$ of $A_{\mathbb{G}}$ and any $\mu$-projectable vector field $X$, and that sections of the form $\mu^*\alpha$ generate the $C^\infty(\mathbb{P})$-module of sections of $\mu^*A_{\mathbb{G}}$. 
\end{proof}

If $\theta$ is a principal $(\mathbb{G},\omega)$-connection on $P$ that restricts to $\bar{\theta}$, the last point in the previous lemma together with the structure of $(W^\bullet(A_{\mathbb{G}}),d_W)$ described above imply that we have a commutative diagram of dga's
    \[
    \begin{tikzcd}
        (W^\bullet(A_{\mathbb{G}}),d_W) \arrow[r, "\kappa^{\mathrm{CW}}_\Theta"]\arrow[d] & (\Omega^\bullet(P),d_\mathrm{dR})\arrow[d]\\
        (\overline{W}^\bullet(A_{\mathbb{G}}),d_{\overline{W}} ) \arrow[r, "\kappa^{\mathrm{CW}}_{\bar{\theta}}"] &(\Omega^\bullet_{\mu\mathrm{-fol}}(P),d_\mu)
    \end{tikzcd}
    \]
Thus, we deduce
\begin{proposition}\label{prop:LGB_diagram}
    Let $\mathbb{P}$ be a principal $\mathbb{G}$-bundle, $\theta$ be a principal $(\mathbb{G},\omega)$-connection on $P$, and $\bar{\theta}$ be the principal $\mathbb{G}$-connection on $P$ obtained by restriction to the $\mu$-fibers. The following diagram is commutative
    \[
    \begin{tikzcd}
        H^\bullet(W^\bullet(A_{\mathbb{G}})_{\omega\mathrm{-bas}}) \arrow[r, "\kappa^{\mathrm{CW}}"]\arrow[d] & H^\bullet_\mathrm{dR}(X)\arrow[d]\\
        \Gamma_{\mathbb{G}\mathrm{-inv}}(S^\bullet(A_{\mathbb{G}})) \arrow[r, "\bar{\kappa}^{\mathrm{CW}}"] & H^\bullet_\mathcal{F}(X)
    \end{tikzcd}
    \]
\end{proposition}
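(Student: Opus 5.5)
The strategy is to obtain the commutative square of cohomology maps from the chain-level commutative square of dgas displayed just before the statement. That square compares $\kappa^{\Theta}$ with the fibrewise map $\bar\kappa_{\bar\theta}$. I would restrict both horizontal arrows to the appropriate basic subcomplexes and then pass to cohomology.

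\emph{Step 1.} I would verify the chain-level square on generators. Both composites are algebra maps, so it suffices to check functions, $\Omega^1(M)$, $\Omega^1(A_{\mathbb G})$ and $\Gamma(S^1A_{\mathbb G}^*)$.
\begin{itemize}
\item A function $f$ goes to $\mu^*f$. Its restriction to $\mu$-fibres is locally constant, which is consistent with $f$ mapping to a degree zero element of $\overline W$ that is treated as a fibrewise constant.
\item A form $\eta\in\Omega^1(M)$ goes to $\eta\circ(d\mu-\rho\theta)=\mu^*\eta$, since $\rho=0$. This vanishes on $T^\mu\mathbb P$, and $\eta$ maps to $0$ in $\overline W$.
\item A form $\alpha$ goes to $\alpha\circ\theta$, whose restriction is $\alpha\circ\bar\theta$.
\item A section $a$ goes to $\langle\iota(a),\mc_\theta\rangle$, whose restriction is $\langle\iota(a),\mc_{\bar\theta}\rangle$ by the last point of Lemma~\ref{lem:LGB_connections}.
\end{itemize}
I also need the vertical arrow $W\to\overline W$ to be a dga map. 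It kills $\Omega^1(M)$ and $d_{D^*}$-terms, which are exactly the pieces of $d_W$ absent from $d_{\overline W}$. The curvature term $\langle a,\curv^D\rangle$ lies in $\Omega^2(M)\otimes\cdots$ and is also killed.

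\emph{Step 2.} I would check compatibility with the basic subcomplexes.
\begin{itemize}
\item An $\omega$-basic element of $W$ is horizontal, so it lies in $\Gamma(\Lambda^\bullet T^*M\otimes S^\bullet A^*_{\mathbb G})$. Its image in $\overline W$ is its $\Lambda^0T^*M$-component, a section of $S^\bullet A^*_{\mathbb G}$.
\item $\Sigma$-invariance under \eqref{eq:omega_action_on_polys} restricted to this component gives fibrewise $\mathrm{Ad}^*$-invariance. Here I use that $g\in\mathbb G_x$ acts on $A_x$ by the fibrewise adjoint action, that the curvature correction $\langle\iota(a),\Curv^\omega_{g^{-1}}\rangle$ has positive $T^*M$-degree and so is killed by the projection, and that $g\cdot$ on $T^*M$ does not matter for the $\Lambda^0$ part.
\item The image is therefore in $\Gamma_{\mathbb G\text{-inv}}(S^\bullet A^*_{\mathbb G})$. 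On it $d_{\overline W}$ vanishes, as in the classical case: horizontal invariant polynomials are $d_{\overline W}$-closed. So the left vertical arrow in cohomology is well defined.
\item On the right-hand side, basic forms on $\mathbb P$ are pullbacks from $\mathbb X$, and restricting them to $\mu$-fibres corresponds to restricting forms on $\mathbb X$ to the leaves of $\mathcal F$. This realises the map $H^\bullet_{\mathrm{dR}}(\mathbb X)\to H^\bullet_{\mathcal F}(\mathbb X)$.
\end{itemize}

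\emph{Step 3.} With these identifications, every arrow in the square of the statement is induced by a chain map, and the chain square commutes by Step 1. The cohomology square therefore commutes.

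I expect the main obstacle to be the precise identification of the lower-right corner. One has to check that fibrewise-basic foliated forms on $\mathbb P$ correspond to foliated forms on $(\mathbb X,\mathcal F)$ and that $\bar\kappa^{\mathrm{CW}}$ is computed through this identification. A companion point is making sure the projection $W_{\omega\text{-bas}}\to\overline W$ lands in invariant polynomials; this is exactly where the degree argument on the $\Curv^\omega$ term in Step 2 is needed.
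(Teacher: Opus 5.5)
Your proposal is correct and follows the paper's route. The paper likewise obtains the statement from the chain-level commutative square of dgas, using the last point of Lemma~\ref{lem:LGB_connections} and the decomposition of $d_W$ for Lie algebra bundles, and then passes to cohomology. Your extra checks make explicit what the paper leaves implicit: the verification on generators, the dga property of $W\to\overline W$, and the degree argument that the $\Lambda^0T^*M$-component of an $\omega$-basic element is an $\mathrm{Ad}^*$-invariant, hence $d_{\overline W}$-closed, polynomial.
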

The map 
$\bar{\kappa}^{\mathrm{CW}}$, packs invariants of the $\mu$-fibers seen as principal $G$-bundles; as a consequence, while independent from $\omega$, it carries less information than $\kappa^\mathrm{CW}$, as the following example shows.

\begin{example} \label{ex:LGB_obstruction}
    Let the Cartan connection $\omega$ on $\mathbb{G}$ be flat. Then, for all $p\in \mathbb{N}$, $S^p(A_{\mathbb{G}}^*)$ is a flat bundle when equipped with the flat connection $d_{D^*}$ induced by $\omega$, and so is the bundle $S^p(A_{\mathbb{G}}^*)_{\mathbb{G}\mathrm{-inv}}$ of fiberwise invariant polynomials. One has
    \[
    H^n(W^\bullet(A_{\mathbb{G}},D)_{\omega-\mathrm{bas}})\cong \bigoplus\limits_{q+2p=n} H^{q}(M,(S^p(A_{\mathbb{G}}^*)_{\mathbb{G}\mathrm{-inv}}, d_{D^*}))
    \]
    When the holonomy of $d_{D^*}$ is trivial, then we get the tensor decomposition
    \[
    H^\bullet(W^\bullet(A_{\mathbb{G}},D)_{\omega-\mathrm{bas}})\cong H^\bullet_\mathrm{dR}(M) \otimes (\ker d_{D^*})_{\mathbb{G}\mathrm{-inv}}.
    \]

    Let now $\mathbb{P}$ be a principal $\mathbb{G}$-bundle with $M=\mathbb{X}$ and $\mu=\pi$, and assume that the holonomy of $d_{D^*}$ is trivial. It follows that the map $\bar{\kappa}^{\mathrm{CW}}$ vanishes identically in positive polynomial degrees. On the other hand, $\kappa^\mathrm{CW}$ acts as the identity on $H^\bullet_\mathrm{dR}(M)$; its non trivial part is the one acting on $(\ker d_{D^*})_{\mathbb{G}\mathrm{-inv}}$, which might be non-zero and represent an obstruction to the trivializability of $\mathbb{P}$. Equivalently, $\kappa^\mathrm{CW}$ obstructs the existence a global section of $\mu$.
    
    One can build simple examples where $\kappa^\mathrm{CW}$ is non-trivial by turning principal group bundles into principal groupoid bundles where the groupoid is a trivial bundle of Lie groups. In this case, the Chern-Weil map obtained from the groupoid carries the same information as the classical Chern-Weil map.

    Indeed, if $P$ is a principal $G$-bundle over $M$, then $P$ is also a principal $\mathbb{G}$-bundle where $\mathbb{G}\to M$ is the second projection
    \[
    G\times M \to M.
    \]
    The cohomology of the basic subcomplex of the Weil algebra of $(G\times M,\pr_1^*\omega_\mathrm{MC})$ becomes
    \[
    H^\bullet_\mathrm{dR}(M) \otimes S^\bullet_G(\mathfrak{g}^*).
    \]
    Moreover, there is a one-to-one correspondence between principal $G$-connections and principal $(G\times M,\pr_1^*\omega^G_{\mathrm{MC}})$-connections on $P$ -- see subsection \ref{subs:ActionsAndCartanModel} below. The map $\kappa^{\mathrm{CW}}$ is just the tensor product of the identity with the classical Chern-Weil map. 


\end{example}

\begin{remark}
In general, the cohomology of $H^\bullet(W^\bullet(A_{\mathbb{G}},D)_{\omega-\mathrm{bas}})$ depends on the Cartan connection.
Consider the trivial Lie group bundle $\mathbb G=S^1\times\mathbb R\to S^1$. Let $e$ be the standard frame of
$A_{\mathbb G}$ and $\alpha=e^*$.
The flat Cartan connections
$\omega_\lambda=(dx+\lambda x\,d\theta)\otimes e$ induce
\[
d_{D^{\lambda *}}(f\alpha^p)
=(f'-p\lambda f)\,d\theta\otimes\alpha^p.
\]
Since the fibers are abelian, all fiberwise polynomials are invariant
under the adjoint action. However, the holonomy on
$S^p(A_{\mathbb G}^*)$ is multiplication by $\exp(p\lambda)$.
For $\lambda\ne0$ and $p>0$, both $H^0$ and $H^1$ of the resulting
local system vanish. Hence, following Example \ref{ex:LGB_obstruction} above,
\[
H^\bullet\bigl(
W^\bullet(A_{\mathbb G},D^\lambda)_{\omega_\lambda\text{-bas}}
\bigr)
\cong
\begin{cases}
H^\bullet_{\mathrm{dR}}(S^1)\otimes\mathbb R[a],&\lambda=0,\\
H^\bullet_{\mathrm{dR}}(S^1),&\lambda\ne0,
\end{cases}
\qquad a=\iota(\alpha),\quad |a|=2.
\]
\end{remark}

\subsection{Actions and the Cartan model}\label{subs:actionGroupoids}\label{subs:ActionsAndCartanModel}
Let $G$ be a Lie group acting on a manifold $M$, $G\ltimes M \tto M$ be the corresponding action groupoid, and $\mathfrak{g}\ltimes M$ be the associated Lie algebroid. 

We have already encountered the canonical Cartan connection on $G\ltimes M$: the pullback $\pr_1^*\omega_\mathrm{MC}$ of the Maurer-Cartan form on $G$ appeared in Proposition \ref{prop:flat_Cartan_action_global}. Its infinitesimal counterpart is the pullback of the standard flat connection on $\mathfrak{g}\ltimes M$, see Proposition \ref{prop:flat_Cartan_action}.

Let us describe the Weil algebra of $\mathfrak{g}\ltimes M$ equipped with such flat connection. As an algebra, it is given by the tensor product
    \[
    \Omega^\bullet (M) \otimes W^\bullet (\mathfrak{g})
    \]
    of the de Rham algebra of $M$ with the Weil algebra of $\mathfrak{g}$. The differential $d_W$ is Kalkman's differential in the BRST model for equivariant cohomology \cite{Kalkman93}:
    \[
    d_\mathrm{Kalk} = d_\mathrm{dR}\otimes \mathrm{id} + \mathrm{id}\otimes d_{\overline{W}}\  +\ d_\mathrm{act}
    \]
    where 
    \[
    d_\mathrm{act} = \sum\limits^k_{i=1} L_{\rho(\epsilon_i)}\otimes \alpha^i - \sum\limits^k_{j=1} \rho(\epsilon_j)\cont \otimes \iota(\alpha^j).
    \]
    Here, $k=\dim(\mathfrak{g})$, $\epsilon_1,\dots,\epsilon_n$ is a basis of $\mathfrak{g}$ with dual basis $\alpha_1,\dots,\alpha^n$, $L$ and $\cont$ denote respectively the Lie derivative and the contraction with a vector field; recall also that the anchor $\rho$ of $\mathfrak{g}\ltimes M$ is the infinitesimal
    action $a_M:\mathfrak{g}\to\mathfrak{X}(M)$.

    The Cartan model for equivariant cohomology appears here as the basic subcomplex of the canonical $\mathfrak{g}$-dga structure carried by $(W^\bullet(\mathfrak{g}\ltimes M),d_\mathrm{Kalk})$. The structure operations of the $\mathfrak{g}$-dga structure are:
    \begin{itemize}
        \item the map
            \[
         \cont: \mathfrak{g} \to \mathrm{Der}^{-1}(W^\bullet(\mathfrak{g}\ltimes M),d_W)
            \]
            sending each $\epsilon$ in $\mathfrak{g}$ to the derivation defined on generators by
            \[
             \alpha = \alpha(\epsilon),\quad \alpha \in \Lambda^1\mathfrak{g}^*
            \]
            and
            \[
            \epsilon \cont \eta = 0,\ \epsilon \cont \phi = 0, \quad \eta \in \Omega^1(M),\ \phi \in S^1\mathfrak{g}^*.
            \]
        \item the map
        \[
         L: \mathfrak{g} \to \mathrm{Der}^{0}(W^\bullet(\mathfrak{g}\ltimes M),d_W)
        \]
        defined by the Cartan-like formula
        \[
        L_\epsilon = [d,\epsilon \cont],\quad \epsilon \in \mathfrak{g}. 
        \]
    \end{itemize}
    The sub-algebra of basic elements with respect to the $\mathfrak{g}$-dga structure above is the algebra $S^\bullet(\mathfrak{g}^*)$-valued $\mathfrak{g}$-invariant forms on $M$ equipped with the Cartan differential. More precisely, basic elements form the algebra
    \[
    \mathcal{C}^\bullet := (\Omega^\bullet(M)\otimes S^\bullet(\mathfrak{g}^*))_{\mathfrak{g}\mathrm{-inv}}
    \]
    where:
    \begin{itemize}
        \item $\eta\in \Omega^\bullet(M)$ belongs to $\mathcal{C}$ if and only if its Lie derivative along fundamental vector fields vanishes;
        \item $\alpha\in S^\bullet(\mathfrak{g}^*)$ belongs to $\mathcal{C}$ if and only if it is invariant under the representation induced by the coadjoint action.
    \end{itemize}
    On $\mathcal{C}^\bullet$ one has the \textbf{Cartan differential} $d_\mathrm{Cart}$ obtained by restricting $d_\mathrm{Kalk}$; explicitly
    \[
    d_\mathrm{Cart}(\chi) :\  \epsilon \mapsto d_\mathrm{dR}(\chi(\epsilon))-\epsilon\cont \chi(\epsilon), \quad \epsilon \in \mathfrak{g}
    \]
    where one interprets $\chi \in \mathcal{C}^\bullet$ and its differential as a forms on $M$ with value in polynomial functions on $\mathfrak{g}$.

    When the infinitesimal action is induced by a group action of $G$ on $M$, one can "integrate" $L$ to the induced action of $G$ on $\Omega^\bullet(M)$ and $ S^\bullet(\mathfrak{g}^*)$, resulting in the $G$-basic subcomplex
    \[
    \big(\mathcal{C}^\bullet_{\mathrm{Cart}}:=\Omega^\bullet(M)\otimes S^\bullet(\mathfrak{g}))_{{G}\mathrm{-inv}}, d_\mathrm{Cart}\big)
    \]
    which is called the \textbf{Cartan model} for the equivariant cohomology of $M$. Indeed, when $G$ is compact the cohomology of the complex above is equal to equivariant cohomology of the $G$-manifold $M$~\cite[\S 4]{GoerZol2019}.

Let $\Sigma=G\ltimes M \tto M$ be the action groupoid associated to an action of the Lie group $G$ on $M$. There is a one to one correspondence between principal $G\ltimes M$-bundles over $X$ and principal $G$-bundle over $X$ together with an $M$-valued $G$-equivariant map. Indeed, if $P$ is a principal $G$-bundle over $X$ and $\mu: P\to M$ is a $G$-equivariant map, then $G\ltimes M$ acts on $P$ along $\mu$ by setting
\[
(g,\mu(p))\cdot p := g\cdot p,\quad g\in G,\ p\in P
\]
and the action is principal over $X$. Conversely, if $P$ is a principal $G\ltimes M$-bundle over $X$ with momentum map $\mu$, then one sets
\[
g\cdot p := (g,\mu(p))\cdot p, \quad g\in G,\ p\in P
\]
and observes that $\mu$ is $G$-equivariant.

Let $\omega=\pr_1^*\omega_\mathrm{MC}$ be the canonical Cartan connection on $G\ltimes M$. It is immediate to see that the induced action of $G\ltimes M$ on $TM$ is simply the linearization of the $G$-actions on $M$.  It follows that the basic subcomplex of the Weil algebra of $G\ltimes M$ induced by $\omega$ is
    \[
    \left(\left(\Omega^\bullet (M) \otimes S^\bullet (\mathfrak{g})\right)_{G\mathrm{-inv}}, d_\mathrm{Cart}\right),
    \]
    i.e. the Cartan model for equivariant cohomology -- see subsection \ref{subs:actionGroupoids} -- whose cohomology we denote by $H^\bullet_G(M)$.

    If $P$ is a principal $G\ltimes M$-bundle, the action induced by $\omega$ on $TP$ is the linearization of the $G$-action on $P$. Thus, a principal $(G\ltimes M,\omega)$-connection can be identified with an ordinary principal connection $\theta\in \Omega^1(P,\mathfrak{g})$ on $P$. The characteristic map 
    \[
    H^\bullet_G(M) \to H^\bullet (X)
    \]
    fits then into the following commutative diagram
    \[
    \begin{tikzcd}
        H^\bullet_G(M)\arrow[r,"\mu^*"]\arrow[rd,"\kappa_\mathrm{CW}",swap] & H^\bullet_G (P)\arrow[d] \\
        & H^\bullet_\mathrm{dR}(X)
    \end{tikzcd}
    \]
    where $\mu^*$ is the pullback of polynomial valued $G$-invariant forms via $\mu$, while the vertical map is induced by the projector to horizontal forms in $P$ with respect to $\theta$ and the classical Chern-Weil construction (in particular, it is an isomorphism for compact $G$).
    
\subsection{Cartan geometries}\label{subs:CartanConnections}
Here we restrict our attention to \textbf{transitive} Lie groupoids $\Sigma \tto M$, i.e. Lie groupoids such that the anchor map $(t,s):\Sigma \to M\times M$ is a surjective submersion. There is an equivalence of categories between transitive Lie groupoids and principal group bundles. At the level of objects, the equivalence can be described as follows.
\begin{itemize}
    \item One associates to a transitive Lie groupoid $\Sigma\tto M$ a principal $G$-bundle $Q\to M$ by choosing $x\in M$, setting the group $G$ to be the \textbf{isotropy group} of $\Sigma$ at $x$, i.e. $G=s^{-1}(x)\cap t^{-1}(x)$ and taking $Q=t^{-1}(x)$. While this assignment depends on the choice of a basepoint, the transitivity of $\Sigma$ ensures that different basepoint result in isomorphic principal bundles.
    \item One associates to a principal $G$-bundle $Q\to M$ its \textbf{gauge groupoid} $\G_Q$, defined as the quotient of the pair groupoid $Q\times Q\tto Q$ with respect to the diagonal action.
\end{itemize}

Building on this equivalence, in \cite[Theorem 3.3]{Cattafi2021} (see also~\cite{Catt2020}) the author characterizes Cartan connections on transitive groupoids in terms of data on principal group bundles. 
Below, we recall the definition of Cartan connection and outline how the correspondence works.

The following definitions appear in \cite{AccDeMelStr25}.
\begin{definition}\label{def:admin_data}
    An \textbf{admissible data} for a Cartan geometry is a pair $(G,V)$ consisting of a Lie group $G$ and a representation $V\in \mathrm{Rep}(G)$ that extends the adjoint representation on the Lie algebra $\mathfrak{g}$ of $G$ -- i.e. there is an inclusion of $G$-representations $\mathfrak{g}\hookrightarrow V$. 
    
    A \textbf{morphism of admissible data} $(G_1,V_1) \to (G_2,V_2)$ is a Lie group morphism $\Phi:G_1\to G_2$ together with a map of representations
    \[
    \Psi:V_1 \to V_2
    \]
    that extends the linearization $\phi:\mathfrak{g}_1\to \mathfrak{g}_2$ of $\Phi$. 
\end{definition}
\begin{definition}\label{def:cartan_connection}
    Let $Q\to M$ be a principal $G$-bundle and $(G,V)$ an admissible data for a Cartan geometry. A \textbf{Cartan connection} on $Q$ is a one form $\Omega\in \Omega^1(Q,V)$ such that
    \begin{itemize}
    \item $\Omega$ is a pointwise isomorphism;
    \item $\Omega$ is compatible with the infinitesimal action, i.e.
    \[
    \Omega\circ a_Q = \mathrm{id};
    \]
    \item $\Omega$ is $G$-equivariant, i.e.
    \[
    \Omega(g\cdot v) = g\cdot \Omega(v), \quad v\in TQ,\ g\in G.
    \]
    \end{itemize}
    The pair $(Q,\Omega)$ is also called a \textbf{Cartan geometry}. 
    
    Given two Cartan geometries $(Q_1,\Omega_1)$ and $(Q_2,\Omega_2)$ with admissible data $(G_1,V_1)$ and $(G_2,V_2)$ a \textbf{morphism of Cartan geometries} $(Q_1,\Omega_1) \to (Q_2,\Omega_2)$ consist of 
    \begin{itemize}
        \item a morphism $(\Phi,\Psi):(G_1,V_1)\to (G_2,V_2)$ called the \textbf{underlying morphism} of admissible data;
        \item  a morphism of principal bundles \[
        \begin{tikzcd}
        Q_1 \arrow[d]\arrow[r,"F"] & Q_2\arrow[d]\\
        M_1 \arrow[r,"f"] & M_2
        \end{tikzcd}
        \]
        whose underlying morphism of Lie groups is $\Phi:G_1\to G_2$ ;
    \end{itemize}
    such that
    \[
    F^*\Omega_2 = \Psi\circ \Omega^1.
    \]
\end{definition}
\begin{remark}\label{rmk:modelGeometry}
    It is customary to require the admissible data $(G,V)$ to be a \textbf{model geometry} -- in the sense of e.g. \cite{Sharpe1997}. That is, one assumes $V$ to carry a Lie bracket $\{\ ,\ \}$ such that $\mathfrak{g}$ is a subalgebra of $\mathfrak{h}:= (V, \{\ ,\ \})$. With such a set up one is able to define the \textbf{curvature} of a Cartan connection $\Omega$ with underlying data $(G,\mathfrak{h})$ by means of the Maurer-Cartan expression
    \[
    d\Omega + \frac{1}{2}\{\Omega,\Omega\}.
    \]
\end{remark}

Recall that the Lie algebroid of the gauge groupoid $\G_Q$ of a principal bundle $Q$ can be identified with the vector bundle $TQ/G\to M$ whose sections are $G$-invariant vector fields, equipped with the Lie bracket of vector fields and the differential of the projection to $M$ as anchor. If $(Q,\Omega)$ is a Cartan geometry with underlying data $(G,V)$, we see that, as a vector bundle over $M$, $TQ/G$ is isomorphic to the bundle 
\[
Q[V] = Q\times V / G.
\]
Moreover 
\begin{lemma}\label{lem:QV_representation}
    Let $(Q,\Omega)$ be a Cartan geometry with underlying data $(G,V)$. Then the associated vector bundle $Q[V]$ is naturally a representation of $\mathcal{G}_Q$, where for $[p:q]\in \G_Q$ and $[(q,v)]\in Q[V]$,
        \[
        [p:q] [(q,v)] = [(p,v)] \in Q[V].
        \]
\end{lemma}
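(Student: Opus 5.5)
The plan is to verify three things directly: that the formula gives a well-defined map $\G_Q\times_M Q[V]\to Q[V]$, that it is smooth, and that it satisfies the axioms of a representation (unit, associativity, linearity on fibres). No connection data enters the formula itself; $\Omega$ will only matter later, when $Q[V]$ is identified with $TQ/G$. Throughout I use the conventions that $\G_Q=(Q\times Q)/G$ with $[p:q]$ the class of $(p,q)$, source $s[p:q]=\pi(q)$, target $t[p:q]=\pi(p)$, and multiplication $[p:q][q:r]=[p:r]$.

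\textbf{Well-definedness.} Given $[p:q]$ and an element of the fibre $Q[V]_{\pi(q)}$, I first represent the latter with first entry exactly $q$. This is possible because $G$ acts transitively on $\pi^{-1}(\pi(q))$: any $[(q',v')]$ with $\pi(q')=\pi(q)$ equals $[(q,g\cdot v')]$ for the unique $g$ with $q'g^{-1}=q$ (the precise form depends on the left/right convention for $Q\times V/G$). Once $q$ is fixed, $v$ is uniquely determined by freeness.

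Next I check independence of the representative of the arrow. Replacing $(p,q)$ by $(pg,qg)$ forces the representative of the vector to become $(qg,g^{-1}\cdot v)$. The output is then $[(pg,g^{-1}\cdot v)]=[(p,v)]$, so it does not change. This compatibility between the diagonal action and the diagonal quotient defining $Q[V]$ is the only point requiring care, and I expect it to be the main (mild) obstacle.

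\textbf{Smoothness.} I would write the action as the map induced on quotients by
\[
(Q\times Q)\times_Q (Q\times V)\to Q\times V,\qquad ((p,q),(q,v))\mapsto (p,v).
\]
This map is smooth and $G$-equivariant. Alternatively, I can argue locally: choose local sections of $Q$, and in the resulting trivializations the action is given by the transition data acting linearly on $V$. Since the relevant quotient maps are surjective submersions, smoothness descends.

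\textbf{Representation axioms.} Linearity on fibres is immediate, since $v\mapsto v$ is linear and the vector space structure on $Q[V]_x$ is the one transported from $V$ via any chosen point $q$. For the unit, $[q:q][(q,v)]=[(q,v)]$. For associativity,
\[
[p:q]\bigl([q:r][(r,v)]\bigr)=[p:q][(q,v)]=[(p,v)]=[p:r][(r,v)],
\]
and the right-hand side is $\bigl([p:q][q:r]\bigr)[(r,v)]$.

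Finally, I would remark that under $TQ/G\cong Q[V]$ induced by $\Omega$, the inclusion $\mathfrak g\hookrightarrow V$ corresponds to the inclusion of the isotropy $Q[\mathfrak g]$. This prepares for comparing the representation above with the adjoint representation induced by the associated Cartan connection on $\G_Q$.
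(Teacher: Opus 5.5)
Your verification is correct and complete. Normalizing the vector's representative to have first entry $q$, checking that a $G$-translate of $(p,q)$ leaves $[(p,v)]$ unchanged, descending smoothness, and verifying unit and associativity are all that is needed; for smoothness, your local-section description is the cleanest route, since it also shows that $(p,q,v)\mapsto([p:q],[(q,v)])$ is a surjective submersion onto $\G_Q\times_M Q[V]$. The paper gives no proof of this lemma: it treats it as the standard fact that an associated bundle is a representation of the gauge groupoid, with $\Omega$ playing no role in the formula, so your argument is exactly the routine check it leaves implicit.
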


The following Proposition is proven in~\cite[Theorem 3.3]{Cattafi2021}; see also~\cite{Catt2020}.

\begin{proposition}\label{prop:gauge_construction}
    Let $(Q,\Omega)$ be a Cartan geometry with underlying data $(G,V)$. The one form
    \[
    (v_p,v_q) \mapsto \Omega(v_p)-\Omega(v_q)
    \]
    descends to the gauge groupoid $\mathcal{G}_Q$ as a Cartan connection $\omega\in \Omega^1(\mathcal{G}_Q,t^*Q[V])$.
    Moreover, local automorphisms of $(Q,{\Omega})$ are in one to one correspondence with local bisections of $\G_Q$ tangent to $\ker({\omega})$.
\end{proposition}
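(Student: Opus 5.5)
The plan is to build $\omega$ on the pair groupoid $Q\times Q\tto Q$ and then push it down along the quotient $Q\times Q\to \G_Q=(Q\times Q)/G$. I will use the convention that $[p:q]$ has target $\pi(p)$ and source $\pi(q)$.

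\emph{Step 1: descent.} Define
\[
\tilde\omega_{(p,q)}(v_p,v_q):=\bigl[(p,\Omega(v_p)-\Omega(v_q))\bigr]\in Q[V]_{\pi(p)} .
\]
This is $\pi(p)$-valued, i.e.\ a section of the pullback of $Q[V]$ by the target. Two checks are needed. First, it is invariant under the diagonal action: by $G$-equivariance of $\Omega$, translating by $g$ replaces $(p,\Omega(v_p)-\Omega(v_q))$ by $(pg,g^{-1}(\Omega(v_p)-\Omega(v_q)))$, which is the same class in $Q[V]$. Second, it vanishes on the generators $(a_Q(\xi)_p,a_Q(\xi)_q)$ of the diagonal orbits, because $\Omega\circ a_Q=\mathrm{id}$. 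Hence $\tilde\omega$ is basic and descends to a form $\omega\in\Omega^1(\G_Q,t^*Q[V])$. The Lie algebroid $TQ/G$ is identified with $Q[V]$ via $\Omega$, which is a pointwise isomorphism.

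\emph{Step 2: $\omega$ is a connection, i.e.\ $\omega|_{\ker ds}=\omega_{\mc}$.} The $s$-fibre through $[p:q]$ is the image of $Q\times\{q\}$, so $s$-vertical vectors are represented by $(v_p,0)$, and $\omega$ sends them to $[(p,\Omega(v_p))]$. Right translation by $[p:q]^{-1}=[q:p]$ maps the curve $[p(\lambda):q]$ to $[p(\lambda):p]$, whose velocity at the unit is the $G$-invariant vector field class of $v_p$ in $TQ/G$. Under $\Omega$ this class is exactly $[(p,\Omega(v_p))]$, so $\omega$ agrees with $\omega_{\mc}$. Complementarity of $\ker\omega$ to both the $s$- and $t$-fibres follows from the same description, together with the symmetry $(v_p,v_q)\leftrightarrow(v_q,v_p)$ and the fact that $\Omega$ is an isomorphism.

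\emph{Step 3: multiplicativity.} I will use the subgroupoid criterion of Lemma~\ref{lem:multipl_equiv_notion}. Upstairs, $\ker\tilde\omega=\{(v_p,v_q):\Omega(v_p)=\Omega(v_q)\}$ has the following properties:
\begin{itemize}
\item it is closed under $(p,q)\cdot(q,r)=(p,r)$, since $\Omega(v_p)=\Omega(v_q)=\Omega(v_r)$;
\item it is closed under inversion, which swaps the two components;
\item it contains the tangent spaces to the diagonal, i.e.\ to the units.
\end{itemize}
Multiplication in $\G_Q$ is induced from that of the pair groupoid, and the quotient map is a groupoid morphism. Therefore $\ker\omega$ is a subgroupoid of $T\G_Q$, and $\omega$ is a Cartan connection. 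As a consistency check, I expect the induced adjoint representation on $Q[V]$ to be the one in Lemma~\ref{lem:QV_representation}: the horizontal lift of $v$ at $[p:q]$ is the class of $(v_p,v_q)$ with $\Omega(v_p)=\Omega(v_q)$, and transporting $\Omega$-values along it gives $[(q,w)]\mapsto[(p,w)]$.

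\emph{Step 4: bisections versus automorphisms.} Let $F$ be a local automorphism of $(Q,\Omega)$ covering a diffeomorphism $f$. Set $\mathfrak b(x):=[F(q):q]$ for any $q$ over $x$; this is well defined because $F$ is $G$-equivariant, and $t\circ\mathfrak b=f$, so $\mathfrak b$ is a local bisection. Its tangent vectors are classes of $(dF v,v)$, and these lie in $\ker\omega$ precisely because $F^*\Omega=\Omega$.

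Conversely, given a local bisection $\mathfrak b$, define $F(q)$ as the unique $p$ with $[p:q]=\mathfrak b(\pi(q))$. This $F$ is $G$-equivariant, smooth, and covers the diffeomorphism $t\circ\mathfrak b$. Tangency of $\mathfrak b$ to $\ker\omega$ translates back into $\Omega(dF v)=\Omega(v)$ modulo vertical corrections, which vanish by $\Omega\circ a_Q=\mathrm{id}$ and equivariance. The two constructions are visibly inverse.

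The main obstacle I expect is Step~4: one must check that tangency of $\mathfrak b$ gives $F^*\Omega=\Omega$ on all of $TQ$, not merely on some lift. The image of $d\mathfrak b$ only sees classes of pairs $(dF v,v)$ modulo the diagonal vertical directions, so I will use $\Omega\circ a_Q=\mathrm{id}$ together with equivariance of $F$ to control the vertical part.
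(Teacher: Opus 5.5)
Your proof is correct. The paper gives no argument of its own for this statement; it defers entirely to \cite[Theorem 3.3]{Cattafi2021}, which works with multiplicative forms valued in a prescribed representation. Your proposal is therefore a self-contained verification rather than a variant of something in the text.

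In that multiplicative-forms framework, the natural route is the following. View $\tilde\omega$ on the pair groupoid $Q\times Q$ as the simplicial coboundary $t^*\Omega-s^*\Omega$ of the $0$-cochain $\Omega$, for the trivial representation on $Q\times V$. The multiplicativity equation then holds upstairs because $\Omega(v_p)-\Omega(v_r)=(\Omega(v_p)-\Omega(v_q))+(\Omega(v_q)-\Omega(v_r))$. One then descends the form and the representation together, and the latter becomes the representation of Lemma~\ref{lem:QV_representation}. However, the multiplicativity equation in Lemma~\ref{lem:multipl_equiv_notion} is written with the quasi-representation induced by $\omega$ itself. Using it directly along that route therefore requires identifying the two representations, so your ``consistency check'' becomes a necessary step. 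Your subgroupoid criterion avoids this: it is a condition on $\ker\omega$ alone, involves no representation, and rests on the same additive identity. The identification with Lemma~\ref{lem:QV_representation} then comes out as a corollary.

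Two steps deserve one more line each.

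In Step~3, the fact that the quotient map $\varpi\colon Q\times Q\to\G_Q$ is a groupoid morphism is not enough by itself. You also need composable pairs in $\ker\omega$ to lift to composable pairs in $\ker\tilde\omega=d\varpi^{-1}(\ker\omega)$. Lifts $(v_p,v_q)$ and $(v_q',v_r)$ of a composable pair satisfy $d\pi(v_q)=d\pi(v_q')$, so $v_q'-v_q=a_Q(\eta)_q$ for some $\eta\in\mathfrak g$. Subtracting the diagonal vector $(a_Q(\eta)_q,a_Q(\eta)_r)\in\ker d\varpi$ from the second lift makes the two lifts composable.

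In Step~4, the obstacle you anticipate does not arise. By construction $\mathfrak b\circ\pi=\varpi\circ(F,\mathrm{id}_Q)$, so $\pi^*\mathfrak b^*\omega=(F,\mathrm{id}_Q)^*\tilde\omega$. Its value on $v\in T_qQ$ is $[(F(q),\Omega(dF\,v)-\Omega(v))]$. Since $\pi$ is a submersion, $\mathfrak b^*\omega=0$ if and only if $F^*\Omega=\Omega$ on all of $TQ$. This handles both directions of the correspondence at once, vertical vectors included, with no separate appeal to $\Omega\circ a_Q=\mathrm{id}$.
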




\begin{example}\label{exm:Klein_geometries}
A \textbf{Klein geometry} is a pair of Lie groups $(G,H)$ such that $G\hookrightarrow H$ is an embedded Lie subgroup. The pair induces a principal $G$-bundle by taking $H$ and action of $G$ by left multiplication; the base is the right coset space $G\backslash H$. Then, the {right invariant} Maurer-Cartan form $\omega^H_\mathrm{MC}$ {$(v_h)=dR_{h^{-1}}(v_h)$} of $H$ {satisfies $\omega^H_\mathrm{MC}\circ a_Q=\id$ for the left $G$-action, and} defines a Cartan connection. In this case, the underlying admissible data is $(\mathfrak{g},\mathfrak{h})$; hence it is canonically model geometry, see Remark\ref{rmk:modelGeometry}. The curvature of the Cartan connection $\omega^H_\mathrm{MC}$ is its Maurer-Cartan expression, and thus it vanishes. One can show that flat Cartan geometries are locally of this type -- see \cite{Sharpe1997}.

As discussed in \cite{AccDeMelStr25}, the gauge groupoid of a Klein geometry is canonically isomorphic to the action groupoid  $(H\ltimes G\backslash H)$ where the left action of $G$ on $G\backslash H$ is 
\[
h\cdot [k] = [k\cdot h^{-1}], \quad k,h\in H. 
\]
Under such isomorphism the Cartan connection induced by $\omega^H_\mathrm{MC}$ via Proposition \ref{prop:gauge_construction} corresponds to the canonical Cartan connection $\pr_1^*\omega^H_\mathrm{MC}$ that appeared in Proposition \ref{prop:flat_Cartan_action_global} and subsection \ref{subs:ActionsAndCartanModel}.
\end{example}

Below we describe a sub-algebra of the algebra $S^\bullet(TQ/G)^*$ of polynomials in on $TQ/G$ that consists of closed elements in the Weil algebra of $(\G_Q,\omega)$; such algebra first appeared in \cite{AccDeMelStr25} in relation to cohomological invariants of Cartan geometries.

We define $S^\bullet_\mathrm{char}(TQ/G)^*$ as the sub-algebra of $S^\bullet(TQ/G)^*$ consisting of polynomials on $TQ/G$ that are
\begin{itemize}
    \item invariant under the representation of $\G_Q$ on $S^\bullet(TQ/G)^*$ -- i.e. for all $g\in \G_Q$ it holds
    \[
    g\cdot \gamma_{s(g)} = \gamma_{t(g)};
    \]
    \item annihilated by $\mc_\omega$ -- i.e. if $\gamma \in \Gamma(S^\bullet(TQ/G)^*)_\mathrm{bas}$ then
    \[
    \gamma\circ (\mc_\omega,-,\ldots,
    -) = 0 \in \Omega^2(\G_Q)\otimes \Gamma(S^{\bullet -1}((TQ/G)^*))
    \] 
    \item closed under the differential of the connection on $S^\bullet(TQ/G)^*$ induced by the infinitesimal connection associated to $\omega$.
\end{itemize}

Then, if we denote by $D$ the infinitesimal connection associated to $\omega$, we have
\begin{lemma}\label{lem:char_polys_are_closed}
    The sub-algebra $S^\bullet_\mathrm{char}(TQ/G)^*\subset W^\bullet(TQ/G,D)$ consists of elements that are closed under the differential $d_W$ \eqref{eq:diff_Weil}.
\end{lemma}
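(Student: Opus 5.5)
The plan is to compute $d_W\gamma$ for $\gamma\in S^\bullet_\mathrm{char}(TQ/G)^*$ by splitting it into the three derivations $d_\iota$, $d_\mathrm{cad}$, $d_\mathrm{curv}$ of \eqref{eq:diff_Weil}, and to show that each defining condition of $S^\bullet_\mathrm{char}$ kills exactly one piece. Write $A=TQ/G$. Since $d_W$ is a derivation and $\gamma$ is a sum of products of generators $a\in\Gamma(S^1A^*)$ with coefficients in $C^\infty(M)$, it suffices to understand $d_W$ on $\Gamma(S^vA^*)$ through its action on functions and on $S^1A^*$. By \eqref{eq:diff_shift} we have $d_\iota(a)=0$. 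Moreover $\gamma$ has no $\Lambda^\bullet T^*M$ or $\Lambda^\bullet A^*$ legs, so $d_\iota\gamma=0$.

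\textbf{The $d_\mathrm{cad}$ part.} On polynomials, $d_\mathrm{cad}$ is the derivation extending $a\mapsto-\iota\bigl((d_{\nabla^{A^*}}+d_{D^*})\iota(a)\bigr)$, together with $d_Wf=df+df\circ\rho$ on the coefficient functions. After collecting terms, $d_\mathrm{cad}\gamma$ splits, up to the sign conventions of \eqref{eq:tautological_shift}, into two components:
\begin{itemize}
\item a $T^*M\otimes S^vA^*$ component equal to $d_{D^*}\gamma$, the covariant derivative of $\gamma$ for the connection induced by $D$ on $S^vA^*$;
\item an $A^*\otimes S^vA^*$ component equal to $d_{\nabla^{A^*}}\gamma$, the coadjoint $A$-connection applied to $\gamma$.
\end{itemize}
The first component vanishes by the third condition defining $S^\bullet_\mathrm{char}$. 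For the second, I differentiate the $\G_Q$-invariance $g\cdot\gamma_{s(g)}=\gamma_{t(g)}$ along curves $g_\lambda$ in a source fibre through $1_x$. As explained in Example~\ref{ex:Cart_calc_Weil_alg}, the coadjoint representation induced by $\omega$ differentiates to $\nabla^{A^*}$. This gives $\nabla^{A^*}_\epsilon\gamma=0$ for all $\epsilon$, and only the infinitesimal version of invariance is needed, so no connectedness assumption enters. The main bookkeeping obstacle is tracking the signs of $\iota$ on higher symmetric powers. I would first verify the formula on $S^1$ and then extend it by the derivation property, using that both $d_{D^*}$ and $d_{\nabla^{A^*}}$ are derivations on $S^\bullet A^*$.

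\textbf{The $d_\mathrm{curv}$ part.} By \eqref{eq:diff_curv}, $d_\mathrm{curv}\gamma$ is the derivation obtained by contracting one symmetric slot of $\gamma$ with $\curv^D+\cbas^D$. Because $\omega$ is a Cartan connection, $D$ is infinitesimally Cartan (Proposition~\ref{prop:diff_conn}), so $\cbas^D=0$. For the curvature term I combine two earlier results:
\begin{itemize}
\item by Proposition~\ref{prop:mc_expr_vanishing}, $\mc_\omega$ reduces to $\Curv^\omega$, which is its only nonzero component;
\item by Proposition~\ref{prop:curv_alg_is_lin_curv_gpd}, $\curv^D$ is the linearization of $\Curv^\omega$ at the units.
\end{itemize}
The second condition says $\gamma(\mc_\omega,-,\dots,-)=0$ on all of $\G_Q$, hence $\gamma(\Curv^\omega_g,-,\dots)=0$ for every arrow $g$. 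Differentiating this along source fibres at the units, where $\Curv^\omega$ vanishes, therefore yields $\gamma(\curv^D,-,\dots,-)=0$. Equivalently, $\langle\gamma,\curv^D\rangle=0$ in each slot, which kills $d_\mathrm{curv}\gamma$.

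Summing the three contributions gives $d_W\gamma=0$. The step I expect to require the most care is the differentiation at the units in the curvature part, in particular getting the correct derivative of $\Curv^\omega_g$ in the $A$-direction. The cleanest route is probably to pull everything back along $\kappa^\Omega$ for the principal bundle $(\G_Q,\omega)$ itself, as was done in the proof of Proposition~\ref{prop:omega_bas_subdga}.
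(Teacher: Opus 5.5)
Your proposal is correct and follows essentially the same route as the paper. Like the paper, you split $d_W=d_\iota+d_\mathrm{cad}+d_\mathrm{curv}$ and treat the pieces as follows:
\begin{itemize}
\item $d_\iota$ kills polynomials;
\item the two components of $d_\mathrm{cad}$ vanish by $D$-closedness and by differentiating $\G_Q$-invariance to $\nabla^{A^*}$;
\item $d_\mathrm{curv}$ vanishes because $\cbas^D=0$ and because annihilation of $\mc_\omega=\Curv^\omega$ linearizes at the units, where $\Curv^\omega_{1_x}=0$, to annihilation of $\curv^D$.
\end{itemize}
You spell out that last differentiation more explicitly than the paper does, and the suggested detour through $\kappa^\Omega$ is not needed.
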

\begin{proof}
    Recall that $d_W$ is given by
    \[
    d_W = d_\iota+d_\mathrm{cad}+d_\mathrm{curv}
    \]
    where the three operators on the right hand side act on generators as shown in equations \eqref{eq:diff_shift}, \eqref{eq:diff_ad}, and \eqref{eq:diff_curv} respectively.

    First, any element of $S^\bullet(TQ/G)^*\subset W^\bullet(TQ/G,D)$ is closed under $d_\iota$. 
    
    Moving to $d_\mathrm{cad}$, we see that it acts on elements of $S^\bullet(TQ/G)^*$ as the sum of the differential of the connection induced by $D$ and the differential induced by the adjoint representation of $TQ/G$ on itself induced by $\omega$. Elements in $S^\bullet_\mathrm{char}(TQ/G)^*$ are closed under the differential induced by $D$ by definition; furthermore, they are invariant under the action of $\G_Q$ on $S^\bullet(TQ/G)^*$, which differentiates to the $TQ/G$-connection on $S^\bullet(TQ/G)^*$ induced by the adjoint. Hence, elements in $S^\bullet_\mathrm{char}(TQ/G)^*$ are closed under $d_\mathrm{cad}$.
    
    Finally, $d_\mathrm{curv}$ pairs an element of $TQ/G^*$ with the curvature $\curv^D$ of $D$. By Proposition \ref{prop:curv_alg_is_lin_curv_gpd}, $\curv^D$ is the linearization of $\Curv^\omega$; then, the fact that elements of $S^\bullet(TQ/G)^*$ are annihilated by $\mc_\omega$ combined with Proposition \ref{prop:mc_expr_vanishing} implies that they are closed under $d_\mathrm{curv}$.
\end{proof}

\begin{remark}\label{rmk:flat_char_polys_are_usual}
    The condition involving $\mc_\omega$ is empty when dealing with a flat groupoid. Consequently, when $(Q,\Omega)\cong (H,\omega_\mathrm{MC}^H)$ is a Klein geometry -- Example \ref{exm:Klein_geometries} above -- one sees that $S^\bullet_\mathrm{char}(TQ/G)^*\cong S_H^\bullet(\mathfrak{h})$, the algebra of $H$-invariant polynomials on $H$; see also \cite{AccDeMelStr25}.
\end{remark}

As recalled in Subsection \ref{subs:CartanConnections}, transitive groupoids with Cartan connections can be described in terms of Cartan geometries -- see Proposition \ref{prop:gauge_construction}. The following proposition from \cite{AccDeMelStr25} explains how principal bundles with connection behave under such correspondence; we state it with the two conditions on $\mu$ and $\theta$ that the correspondence requires.

Below, $(Q,\Omega)$ denotes the Cartan geometry associated to $(\Sigma,\omega)$, and $(G,V)$ its underlying admissible data -- see Definition \ref{def:admin_data}.
\begin{proposition}[{\cite[Proposition 3.5]{AccDeMelStr25}}]\label{prop:ADS_correspondence}
    Let $(Q,\Omega)\to M$ be a Cartan geometry with underlying data $(G,V)$, and let $(\G_Q,\omega)$ denote its gauge groupoid equipped with the Cartan connection $\omega$ induced by $\Omega$ -- see Proposition \ref{prop:gauge_construction}. There is a one to one correspondence between isomorphism classes of Cartan geometries with underlying data $(G,V)$ and isomorphism classes of principal $(\mathcal{G}_Q,{\omega})$-bundles $(P,\theta)$ whose momentum map $\mu$ is a submersion and which satisfy $\ker(\theta)\oplus\ker(d\mu)=TP$.
\end{proposition}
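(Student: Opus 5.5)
The plan is to construct the two directions explicitly and then check that they are mutually inverse up to isomorphism. For the direction from bundles to Cartan geometries, I start with a principal $(\G_Q,\omega)$-bundle $(P,\theta)$ over $X$ with $\mu$ a submersion and $\ker(\theta)\oplus\ker(d\mu)=TP$. I fix the basepoint $x\in M$ used to identify $\G_Q$ with the gauge groupoid of $Q$, so that $G$ is the isotropy group at $x$. I then set $Q':=\mu^{-1}(x)$. This is a submanifold because $\mu$ is a submersion. The isotropy group $G$ acts on $Q'$ freely and properly, by restricting the principal action. Transitivity of $\G_Q$, together with transitivity of the action along the $\pi$-fibres, shows that $\pi|_{Q'}:Q'\to X$ is a principal $G$-bundle. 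On $Q'$ I define
\[
\Omega'\ :=\ \iota_x\circ \theta|_{TQ'}\ +\ \iota_x\circ\,(\text{correction})\in\Omega^1(Q',V),
\]
where $A_x=(TQ/G)_x\cong V$ through $Q[V]$ and the choice of a point of $Q$ over $x$. The natural guess is that $\theta|_{TQ'}$ already suffices, since it takes values in $\mu^*A|_{Q'}=Q'\times A_x\cong Q'\times V$. The condition $\ker(\theta)\oplus\ker(d\mu)=TP$ then says exactly that $\theta$ restricts to an isomorphism on $T_pQ'=\ker(d\mu)_p$, which gives pointwise bijectivity. The normalisation $\Omega'\circ a_{Q'}=\id$ follows from $\theta\circ a_P=\id$ restricted to isotropy directions, together with the inclusion $\mathfrak g\hookrightarrow V$. $G$-equivariance follows from Definition~\ref{def:princ_conn}, because for $g$ in the isotropy group and $v\in\ker(d\mu)$ the action reads $g\cdot v=dm_P(\hor^\omega_g(0),v)=dm_P(0,v)$. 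This is the ordinary differential of the $G$-action, and on $A_x\cong V$ the adjoint representation induced by $\omega$ is the given $G$-representation (Lemma~\ref{lem:QV_representation}).

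For the converse direction, I start from a Cartan geometry $(Q',\Omega')$ over $X$ with data $(G,V)$. I form the associated bundle $P:=Q\times_G Q'$, where $G$ acts diagonally. The gauge groupoid $\G_Q$ acts on $P$ by $[p:q]\cdot[q,q']=[p,q']$, with momentum map $[q,q']\mapsto[q]\in M$ and projection $[q,q']\mapsto[q']\in X$. Principality follows from the standard description of $\G_Q$-bundles as $Q\times_G(\cdot)$, and $\mu$ is a submersion. The connection is then defined by descending the form $(v_q,v_{q'})\mapsto \Omega(v_q)-\Omega'(v_{q'})$, mirroring Proposition~\ref{prop:gauge_construction}. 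This form is $G$-basic, and its values land in $V$, which descends to $\mu^*Q[V]=\mu^*A$. The normalisation $\theta\circ a_P=\id$ holds because $a_P$ only moves the $Q$-factor, and $\Omega\circ a_Q=\id$. Multiplicativity in the sense of \eqref{eq:mult_up_to_omega} is checked by pulling back along $Q\times Q\times Q'\to \G_Q\ltimes P$: the $\Omega$ terms telescope to $\pr_1^*\omega$. By Lemma~\ref{lem:principal_iff_multiplicative} this makes $\theta$ principal. Finally, $\ker(\theta)\cap\ker(d\mu)=0$ because on $\ker(d\mu)$ only the $Q'$-factor varies and $\Omega'$ is a pointwise isomorphism, and a dimension count then gives the direct sum.

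To finish, I verify that the two constructions are mutually inverse up to isomorphism. Starting from $(P,\theta)$, the map $Q\times_G\mu^{-1}(x)\to P$, $[q,p]\mapsto [q:q_0]\cdot p$, is an isomorphism of principal bundles, and it pulls $\theta$ back to the descended form. Starting from $(Q',\Omega')$, the fibre $\mu^{-1}(x)$ of $Q\times_G Q'$ is $\{q_0\}\times Q'$, which returns $Q'$. Compatibility with morphisms is formal. The main obstacle is the identification of $\theta|_{\mu^{-1}(x)}$ with a $V$-valued form and the matching of the representations. One must check that the representation of the isotropy group on $A_x$ induced by $\omega$, via $\hor^\omega$, coincides with the given $G$-action on $V$ under $Q[V]$; this point also fixes the sign in the descended form. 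I would settle it by computing $\hor^\omega$ on $\G_Q$ directly from the formula in Proposition~\ref{prop:gauge_construction}.
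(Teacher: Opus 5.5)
Your route is the one the paper sketches right after the statement, with the verifications deferred to \cite{AccDeMelStr25}:
\begin{itemize}
\item in one direction $(P,\theta)\mapsto(\mu^{-1}(x),-\theta|_{\mu^{-1}(x)})$;
\item in the other $(R,\chi)\mapsto\bigl((Q\times R)/G,\theta^\chi\bigr)$, with $\theta^\chi$ descended from $\pr_Q^*\Omega-\pr_R^*\chi$.
\end{itemize}
Your checks are the right ones: principality, multiplicativity by telescoping, transversality, and the inverse map $[q,p]\mapsto[q:q_0]\cdot p$. You also correctly isolate the one real input: the representation of $\G_Q$ on $A\cong Q[V]$ induced by $\omega$ is the one of Lemma~\ref{lem:QV_representation}.

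The forward direction, however, has a sign error that you suspected but did not resolve. The Cartan connection on $Q'=\mu^{-1}(x)$ is $\Omega'=-\theta|_{TQ'}$, transported to $V$ via $q_0$, and no further correction term is needed. Your claim that $\Omega'\circ a_{Q'}=\id$ follows from $\theta\circ a_P=\id$ is false for $+\theta$. Here is why:
\begin{itemize}
\item With the paper's left-action conventions, $G$ acts on $Q'$ through the homomorphism $g\mapsto[q_0:gq_0]$; the map $g\mapsto[gq_0:q_0]$ is an anti-homomorphism.
\item The curve $\lambda\mapsto[q_0:\exp(\lambda\xi)q_0]$ has velocity represented by $(0,a_Q(\xi)_{q_0})$. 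Modulo the diagonal $G$-orbit this equals $(-a_Q(\xi)_{q_0},0)$, which is $s$-vertical at the unit, and $\omega$ sends it to $-\xi$.
\item Hence the fundamental vector field of $\xi$ on $Q'$ is $a_P(-\xi)$, and $\theta$ takes the value $-\xi$ on it.
\end{itemize}

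The sign cannot be moved into the descended form instead. The combination $\pr_Q^*\Omega-\pr_{Q'}^*\Omega'$ is forced by $\theta\circ a_P=\id$ together with $G$-basicness, as you checked yourself. On $\ker(d\mu)\subset T(Q\times_G Q')$ only the $Q'$-factor moves, so the descended form restricts there to $-\Omega'$. With $+\theta$ your round trip would therefore return $-\Omega'$, which is not even normalised.

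With $-\theta|_{TQ'}$, your pointwise-isomorphism and equivariance arguments go through unchanged, since a global sign does not affect them. The round trip then closes. Pull $\theta$ back along $[q,p]\mapsto[q:q_0]\cdot p$ and apply \eqref{eq:mult_up_to_omega} at $([q:q_0],p)$ to $(\xi,v_p)$, where $\xi$ is represented by $(v_q,0)$ and $v_p\in\ker(d\mu)$. After transport to $V$ this gives exactly $\Omega(v_q)-\Omega'(v_p)$.
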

Concretely, one associates to a principal $(\G_Q,\omega)$-bundle $(P,\theta)$ with moment map $\mu$ the Cartan geometry $(\mu^{-1}(x),-\theta|_{\mu^{-1}(x)})$, where $A_x$ is identified with $V$ through $\Omega$ at a point of $Q$ over $x$; a different choice of basepoint $x\in M$ yields an isomorphic geometry. Conversely, a Cartan geometry $(R,\chi)$ with underlying data $(G,V)$ induces the principal $(\G_Q,\omega)$-bundle $(P^\chi,\theta^\chi)$ where 
\[
P^\chi := (Q\times R)/G
\]
and $\theta^\chi$ is the form induced on the quotient by
\[
\pr_Q^*\Omega - \pr_R^*\chi \in \Omega^1(Q\times R,V).
\]

In \cite{AccDeMelStr25} the authors of the present paper expanded upon the classical Chern-Weil construction for Cartan geometries from \cite{AleMichCartan} by exploiting the above Proposition, and introducing a characteristic map for principal $(\G_Q,\omega)$-bundles. Such map is the restriction of the Chern-Weil map from Definition \ref{def:main_CW_map}
after restricting to the sub-algebra of the domain $S^\bullet_\mathrm{char}(TQ/G)^*$.

Indeed, combining Theorem \ref{thm:map_on_basics}, Definition \ref{def:main_CW_map}, and Proposition \ref{prop:ADS_correspondence}, we have the following corollary, which is Theorem 1.1 in \cite{AccDeMelStr25}.
\begin{corollary}\label{cor:ADS_map}
    Let $(Q,\Omega)$ and $(P,\Omega')$ be Cartan geometries over $M$ and $B$ respectively, with the same underlying admissible data. Then, there is a characteristic map
    \[
    \kappa_P: S^\bullet_\mathrm{char}(TQ/G)^* \to H^\bullet_\mathrm{dR}(B)
    \]
    which coincides with the classical characteristic map of $(P,\Omega')$ \cite[2.10, 2.11 and 3.4]{AleMichCartan} when $(Q, \Omega)$ is the Cartangeometry associated to a Klein geometry.
\end{corollary}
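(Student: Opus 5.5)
The plan is to deduce the corollary from the main theorem by assembling three earlier results. Start from the Cartan geometry $(Q,\Omega)$ with admissible data $(G,V)$. By Proposition~\ref{prop:gauge_construction} it produces the transitive groupoid with Cartan connection $(\G_Q,\omega)$. The Lie algebroid of $\G_Q$ is $TQ/G\cong Q[V]$, and we let $D$ be the induced infinitesimally Cartan connection. Given the second Cartan geometry $(P,\Omega')$ over $B$ with the same admissible data, Proposition~\ref{prop:ADS_correspondence} (in its converse direction) supplies a principal $(\G_Q,\omega)$-bundle $(P^{\Omega'},\theta^{\Omega'})$ over $B$. Here
\[
P^{\Omega'}=(Q\times P)/G,
\]
and $\theta^{\Omega'}$ descends from $\pr_Q^*\Omega-\pr_P^*\Omega'$. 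Theorem~\ref{thm:map_on_basics} and Definition~\ref{def:main_CW_map} then yield
\[
\kappa^{\mathrm{CW}}:H^\bullet(W^\bullet(TQ/G,D)_{\omega\mathrm{-bas}})\to H^\bullet_{\mathrm{dR}}(B),
\]
which depends only on the bundle by Proposition~\ref{prop:CW_indep_of_theta} and Lemma~\ref{lem:CW_iso_invariant}.

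Next, I would check that $S^\bullet_\mathrm{char}(TQ/G)^*$ maps into this cohomology.

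1. Its elements lie in the polynomial part $\Gamma(S^\bullet(TQ/G)^*)$, so they are horizontal, since contraction only acts on the $\Lambda^\bullet A^*$ leg.
2. They must be invariant under the action~\eqref{eq:omega_action_on_polys}. That action equals the coadjoint action plus a term pairing with $\Curv^\omega_{g^{-1}}$. The first definining condition of $S^\bullet_\mathrm{char}$ gives invariance under the coadjoint part. The annihilation by $\mc_\omega$, together with Proposition~\ref{prop:mc_expr_vanishing} (which identifies $\mc_\omega$ with $\Curv^\omega$), kills the correction term. This is the step needing care: the correction is a derivation-type contribution on higher symmetric powers. I would verify it on generators and propagate it using the algebra-automorphism property, as in the proof of Lemma~\ref{lem:kappa_equivariant}.
3. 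Lemma~\ref{lem:char_polys_are_closed} shows the elements are $d_W$-closed.

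Together these give a well-defined map $S^\bullet_\mathrm{char}(TQ/G)^*\to H^\bullet(W_{\omega\mathrm{-bas}})$. Composing with $\kappa^{\mathrm{CW}}$ defines $\kappa_P$.

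Finally, for the Klein case, take $(Q,\Omega)=(H,\omega^H_\mathrm{MC})$. By Example~\ref{exm:Klein_geometries}, $(\G_Q,\omega)\cong (H\ltimes G\backslash H,\pr_1^*\omega^H_\mathrm{MC})$, and by Remark~\ref{rmk:flat_char_polys_are_usual}, $S^\bullet_\mathrm{char}\cong S^\bullet_H(\mathfrak{h}^*)$. For $\gamma$ of polynomial degree $k$, $\kappa^\Theta(\gamma)$ is $\gamma$ evaluated on $\mc_\theta$, and by Corollary~\ref{cor:mc_is_curvature} this is $\gamma$ evaluated on the curvature of $\theta$. Using the explicit form $\theta=\pr_Q^*\Omega-\pr_P^*\Omega'$ and the flatness of $\omega^H_\mathrm{MC}$, the curvature pulled back via a local section reduces, up to sign, to the Cartan curvature $d\Omega'+\tfrac12\{\Omega',\Omega'\}$ of $(P,\Omega')$. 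Hence $\kappa_P(\gamma)$ is the class of $\gamma(\mathrm{curv}\,\Omega')$, descended to $B$. The Klein comparison therefore reduces to matching with the definition in \cite{AleMichCartan}. The signs ($-\theta$ in the correspondence and the even degree of $\gamma$) should cancel, and checking that cancellation is the main bookkeeping obstacle.
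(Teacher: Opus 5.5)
Your proposal follows the paper's route. The paper obtains the corollary by combining Proposition~\ref{prop:ADS_correspondence} with Theorem~\ref{thm:map_on_basics} and Definition~\ref{def:main_CW_map}, restricting to $S^\bullet_\mathrm{char}(TQ/G)^*$ via Lemma~\ref{lem:char_polys_are_closed}. It does not prove the Klein comparison itself; it takes it from \cite{AccDeMelStr25}.

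Your step~2 correctly supplies something the paper leaves implicit: the inclusion $S^\bullet_\mathrm{char}\subset W^\bullet(A,D)_{\omega\mathrm{-bas}}$. Apply the $\mc_\omega$-condition at $g^{-1}$ to get $\gamma_{s(g)}\bigl(\Curv^\omega_{g^{-1}}(v,w),-,\dots,-\bigr)=0$. Then every term of the expanded product in \eqref{eq:omega_action_on_polys} that contains a curvature factor vanishes.

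The one thing to correct is your sign bookkeeping in the Klein part. Work on $Q\times P$ with $\theta=\pr_Q^*\Omega-\pr_P^*\Omega'$, and write $K_\Omega=d\Omega+\tfrac12\{\Omega,\Omega\}$. For $U,W\in\ker\theta$ one gets
\[
\Curv^\theta(U,W)=d\theta(U,W)=K_\Omega(U_Q,W_Q)-K_{\Omega'}(U_P,W_P).
\]
The bracket terms cancel because $\Omega(U_Q)=\Omega'(U_P)$ on $\ker\theta$. In the Klein case this leaves $-K_{\Omega'}$.

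A polynomial $\gamma$ of degree $k$ therefore picks up $(-1)^k$, one factor per curvature insertion. The evenness of the form degree $2k$ plays no role, so the sign does not cancel for the reason you give. Literal agreement with the classical map depends on how you identify $S^\bullet_\mathrm{char}(TQ/G)^*$ with $S^\bullet_H(\mathfrak{h}^*)$. You can identify through $\Omega$ at a point of $Q$, or through the action-groupoid picture of Example~\ref{exm:Klein_geometries}, where the inverse in $h\cdot[k]=[kh^{-1}]$ can supply a compensating sign. You must fix this convention explicitly. Otherwise you only get agreement up to the automorphism $\gamma\mapsto(-1)^k\gamma$.
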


\subsection{Jet groupoids of Lie pseudogroups}\label{subs:infinite_jets}

We already encountered the first prolongation $J^1\Sigma \tto M$ of a Lie groupoid $\Sigma \tto M$: the space of arrows of $J^1\Sigma$ consists of first jets of bisections of $\Sigma$. By taking higher jets, one discovers a tower of Lie groupoids 
\[
\dots \to J^{k+1}\Sigma \overset{\pr}{\rightarrow} J^{k}\Sigma \to \dots \to J^1\Sigma \overset{\pr}{\rightarrow} \Sigma \tto M
\]
where every arrow is a surjective and submersive Lie groupoid map. One also has a groupoid isomorphisms
\[
J^\infty\Sigma \cong \underset{\leftarrow}{\mathrm{lim}} (J^k\Gamma)
\]
between the groupoid of infinite jets of bisections of $\Sigma$ and the projective limit of the tower above. Consequently, $J^\infty\Sigma \tto M$ carries a canonical structure of \textbf{profinite Lie groupoid}. Essentially, such a structure is a kind of tame infinite dimensional smooth structure that allows one to extend much of the basics of differential geometry, as well as the theory Lie theory of groupoids, to $J^\infty\Sigma\tto M$. Here, we limit ourselves to recall a few key concepts; we refer to \cite[Appendix A]{AccCr2023} for a complete discussion.

\begin{itemize}
    \item A \textbf{tower of manifolds} is a sequence of finite dimensional manifolds $M_k$ together with surjective submersions $p^{k,k-1}:M_k\to M_{k-1}$.
    \item A \textbf{(normal) pf-atlas} on a set $M$ together with a bijection $M\to \underset{\leftarrow}{\mathrm{lim}} (M_k)$ into the inverse limit of a tower of manifolds, and a pf-manifold structure on $M$ can be appropriately defined as an equivalence class of such pf-atlases (Definitions A.1 and A.3 in \cite{AccCr2023}); below, we always work with a canonical pf-atlas. A pf-manifold carries a tangent bundle is $TM=\varprojlim TM_k$ and a module of vector fields $\mathfrak{X}(M)\cong\mathrm{Der}(C^\infty(M))$ (\S A.3 in \cite{AccCr2023}).
    \item A \textbf{direct pf-vector bundle} $E\to M$ can be defined as a pf-manifold admitting an atlas consisting of a tower of vector bundles over a tower of bases (\cite[Definition A.14]{AccCr2023}). One example is given by the infinite jet $J^\infty A=\varprojlim J^kA$ of the Lie algebroid $A$ of a Lie groupoid $\Sigma$. A \textbf{$q$-form on a pf-manifold $M$ with coefficients} in a direct pf-vector bundle $E$ is a $q$-alternating $C^\infty(M)$-multilinear map $\mathfrak{X}(M)^{\times q}\to\Gamma(E)$ that is continuous for the $\mathfrak{f}$-adic topology (\cite[Definition A.20]{AccCr2023}). For forms with real coefficients, this implies $\Omega^\bullet (M) = \bigcup\limits_{k} \Omega^\bullet(M_k)$, which can also be taken as an alternative definition.
    \item A \textbf{strict Lie pf-groupoid} $\Sigma\tto M$ is a pf-manifold together with an atlas consisting of a tower of Lie groupoids $\Sigma_k\tto M_k$ related by surjective and submersive Lie groupoid maps $p^{k,k-1}:\Sigma^k\to \Sigma^{k-1}$ (Definition A.34); it is \emph{proper}, respectively has \emph{cohomologically $l$-connected fibres}, when the $\Sigma_k$ may be chosen to have such properties. Strict actions, \textbf{strict principal bundles}, and strict representations for a strict pf groupoid are defined via the same principle (\S A.6 in \cite{AccCr2023}): they are a tower of actions (respectively principal bundles, representations) such that the tower projections are morphisms of actions (respectively principal bundles, representations).
    \item For a strict pf-groupoid, $A=\mathrm{Lie}(\Sigma)$ is defined as in the finite dimensional case; it is a \textbf{strict pf-algebroid}, and $A=\varprojlim A_k$, with $A_k=\mathrm{Lie}(\Sigma_k)$ (\S A.6 in \cite{AccCr2023}). 
    \item The Lie functor on representations can be generalized \cite[\S A.6]{AccCr2023}.
\end{itemize}
In particular, the infinite jet $J^\infty\Sigma$ of a Lie groupoid $\Sigma\tto M$ is a strict pf Lie groupoid with associated tower
\[
\ldots \to J^k\Sigma \to J^{k-1}\Sigma \to \ldots \to J^1\Sigma \to \Sigma \tto M.
\]
Since
\[
\Omega^\bullet (J^\infty\Sigma) = \bigcup\limits_{k} \Omega^\bullet(J^k\Sigma),
\]
there is a natural way to endow $J^\infty\Sigma$ wiith a de Rham/Cartan calculus. The infinite jet $J^\infty A$ of the Lie algebroid $A$ of $\Sigma$ can also be given a profinite smooth structure, with tower is 
\[
\dots \to J^{k+1}A \overset{\pr}{\rightarrow} J^{k}A \to \dots \to J^1A \overset{\pr}{\rightarrow} A \to M,
\]
which is just the tower of Lie algebroids of the jet groupoids $J^k\Sigma$; thus, $J^\infty A$ is the profinite Lie algebroid associated to the profinite Lie groupoid $J^\infty\Sigma$.

With the above concepts at hand, one can talk about connection forms $\omega\in \Omega^1(J^\infty\Sigma,t^*J^\infty A)$, and investigate their multiplicativity.
\begin{proposition}\label{prop:CartanInfJets}
    There is a canonical multiplicative flat connection $\omega^\infty$ on $J^\infty\Sigma$, uniquely defined by the property that, for a bisection $\bis^\infty:U\subset M \to J^\infty\Sigma$, there exists a bisection ${\bis}:U\to \Sigma$ such that
    \[
    \bis^\infty: x\mapsto j^\infty_x {\bis}
    \]
    if and only if the image of $\bis$ is tangent to $\omega^\infty$.

    Infinitesimally, there is a canonical infinitesimally multiplicative flat connection
    \[
    D^\infty:\mathfrak{X}(M)\times \Gamma(J^\infty A) \to \Gamma(J^\infty A)
    \]
    on $J^\infty A$ with the property that a section $\alpha^\infty\in \Gamma(J^\infty A)$ is flat if and only if there exists $\alpha\in \Gamma(A)$ such that
    \[
    \alpha^\infty: x\mapsto j^\infty_x\alpha.
    \]
    \end{proposition}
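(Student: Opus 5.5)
The plan is to build $\omega^\infty$ level by level along the tower $J^{k+1}\Sigma\to J^k\Sigma$ using the Cartan distribution, and then verify its properties through the pf-atlas. At each finite level there is the canonical \emph{Cartan form} $\omega^{k+1}\in\Omega^1(J^{k+1}\Sigma,t^*J^kA)$. A point $j^{k+1}_x\bis$ determines the subspace $d(j^k\bis)(T_xM)\subset T_{j^k_x\bis}J^k\Sigma$. A tangent vector $\xi$ at $j^{k+1}_x\bis$ projects to $d\pr(\xi)\in T_{j^k_x\bis}J^k\Sigma$. One subtracts the horizontal lift of $ds(d\pr\,\xi)$ along that subspace, obtaining an $s$-vertical vector in $J^k\Sigma$, and right translates it to $J^kA$. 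On the inverse limit these forms are compatible with the projections $J^kA\to J^{k-1}A$. By the description $\Omega^\bullet(J^\infty\Sigma)=\bigcup_k\Omega^\bullet(J^k\Sigma)$ and the continuity requirement on forms with coefficients in $J^\infty A=\varprojlim J^kA$, they therefore assemble into $\omega^\infty\in\Omega^1(J^\infty\Sigma,t^*J^\infty A)$. The restriction of $\omega^\infty$ to $\ker(ds)$ is the Maurer–Cartan form, since on $s$-vertical vectors the subtracted term vanishes. Hence $\omega^\infty$ is a connection in the sense of Lemma~\ref{lem:various_conn_pov}. By construction, its kernel at $j^\infty_x\bis$ is exactly $d(j^\infty\bis)(T_xM)$.

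Multiplicativity then follows from Lemma~\ref{lem:multipl_equiv_notion}, by checking that $\ker(\omega^\infty)$ is a subgroupoid of $TJ^\infty\Sigma$. The key identities are $j^\infty_{\varphi_{\bis_2}(x)}\bis_1\cdot j^\infty_x\bis_2=j^\infty_x(\bis_1\cdot\bis_2)$ and $(j^\infty\bis)^{-1}=j^\infty(\bis^{-1})$. They show that products and inverses of tangent spaces to holonomic bisections are again tangent to holonomic bisections. For flatness, the kernel distribution is spanned by tangent spaces of the submanifolds $j^\infty\bis(U)$. Every point lies on such a submanifold, since each infinite jet is represented by some bisection $\bis$ (a Borel-type argument). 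These submanifolds therefore give integral manifolds, so the distribution is involutive, $\Curv^{\omega^\infty}=0$, and the leaves realize Definition~\ref{def:connection_gpd}.

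The characterization follows. If $\bis^\infty=j^\infty\bis$, its image is tangent to the kernel by construction. Conversely, suppose $\bis^\infty$ is tangent to $\ker(\omega^\infty)$. Let $\bis:=\pr\circ\bis^\infty$, where $\pr$ is the projection to $\Sigma$. An induction on $k$ shows $\pr_k\circ\bis^\infty=j^k\bis$. The case $k=1$ is the statement that tangency to the Cartan distribution at level one forces $\pr_1\bis^\infty(x)=d_x\bis$. The step from $k$ to $k+1$ is the same argument applied one level up. Uniqueness holds because a connection is determined by its kernel, and holonomic bisections pass through every point with prescribed tangent spaces, so the kernel is forced.

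For the infinitesimal statement, there are two routes. One is to differentiate $\omega^\infty$ using Proposition~\ref{prop:diff_conn} levelwise, valid at each finite stage via the pf-atlas. The other, which I would follow, is to define $D^\infty$ directly as the Spencer-type operator $D^\infty_X(j^\infty\alpha)=0$, extended by Leibniz. Explicitly, $D^\infty_X\alpha^\infty=j^\infty(\text{component})$ is obtained from the classical operator $J^{k+1}A\to T^*M\otimes J^kA$ with kernel the holonomic sections. On holonomic sections, $D^\infty$ commutes with the bracket because $[j^\infty\alpha,j^\infty\beta]=j^\infty[\alpha,\beta]$, and together with the Leibniz rule this gives $\cbas^{D^\infty}=0$. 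Flatness holds because holonomic sections span every fibre. The main obstacle I expect is the analytic bookkeeping, namely continuity of $\omega^\infty$ in the $\mathfrak f$-adic topology and the fact that the level-$k$ Cartan forms only take values in $J^{k}A$ rather than $J^{k+1}A$. The shift of one level in the tower must be handled by the cofinality of the normal pf-atlas.
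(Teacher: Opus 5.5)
Your proposal is correct. The construction of $\omega^\infty$ from the level-wise Cartan forms matches the paper's sketch, with the one-step shift absorbed by the compatibility $\pr^*\omega^k=d\pr\circ\omega^{k+1}$. So do the inductive proof of the holonomy characterization and the construction of $D^\infty$ from the Spencer operators.

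Where you genuinely differ is in how multiplicativity and flatness are obtained. The paper reduces both to finite-order facts in the pf-atlas:
\begin{itemize}
\item each finite-order Cartan form $\omega^k$ is already multiplicative (quoted from \cite{Sal13});
\item involutivity of $\ker(\omega^\infty)$, and its complementarity to the $s$-fibres, come from the two prolongation conditions $d\pr(\ker(\omega^{k+1})\cap\ker(ds))=0$ and $[d\pr\ker(\omega^{k+1}),d\pr\ker(\omega^{k+1})]\subset\ker(\omega^k)$.
\end{itemize}
You instead argue directly at infinite order. Since $\ker(\omega^\infty)_{j^\infty_x\bis}=d(j^\infty\bis)(T_xM)$, and by Borel's lemma every point lies on a holonomic bisection, multiplicativity reduces to the identities $j^\infty\bis_1\cdot j^\infty\bis_2=j^\infty(\bis_1\bis_2)$ and $(j^\infty\bis)^{-1}=j^\infty(\bis^{-1})$. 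These feed into Lemma~\ref{lem:multipl_equiv_notion}, whose proof is pointwise linear algebra and so passes to $J^\infty\Sigma$. Involutivity follows because each $j^\infty\bis(U)$ is an integral manifold. This is legitimate in the pf-setting if you treat vector fields as derivations of $\bigcup_k C^\infty(J^k\Sigma)$ and pull back along $j^\infty\bis$.

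Your route is more self-contained, and it shows why there is no finite-order version: at finite order $\ker(\omega^k)$ also contains the $s$-vertical directions killed by $d\pr$, so holonomic bisections are not integral manifolds of it. The paper's route stays within finite-dimensional statements at each level of the tower. That is how forms and brackets are actually defined in the pf-setting, and it connects with the existing theory of Cartan forms and Spencer operators. Your check of $\cbas^{D^\infty}=0$ and $\curv^{D^\infty}=0$, by tensoriality plus vanishing on holonomic sections (which span every fibre), is the infinitesimal counterpart of your groupoid argument. It is more explicit than the paper's ``completely analogous''.

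One caveat: drop the claim that the holonomic images ``realize Definition~\ref{def:connection_gpd}''. Two bisections with the same infinite jet at $x$ can differ by a function flat at $x$. Then distinct integral manifolds of $\ker(\omega^\infty)$ pass through the same point without agreeing on any neighbourhood of it, so they do not assemble into a foliation. This is in line with the paper's remark that ODE uniqueness fails in the profinite setting. What you have actually proved, involutivity and $\Curv^{\omega^\infty}=0$, is exactly the flatness the paper establishes.
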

For the sake of clarifying the objects at hand, we provide a sketch of the proof.

Each finite jet groupoid $J^k\Sigma$ carries the canonical \textbf{Cartan form} $\omega^k\in \Omega^1(J^k\Sigma,J^{k-1}A)$ -- note the shift on the coefficients, see Remark~\ref{rmk:omega_k_is_not_a_connection} below -- defined by
\[
\omega^k_{j^k\bis}=d\pr -dj^{k-1}\bis \circ ds;
\]
equivalently, $\omega^k$ can be defined as the unique one-form such that, for a local bisection $\bis^k$ of $J^k\Sigma$,
\[
(\bis^k)^*\omega^k = 0 \text{ if and only if } \bis^k = j^k\bis
\]
for some local bisection $\bis$ of $\Sigma$ with the same domain.

\begin{remark}\label{rmk:omega_k_is_not_a_connection}
    The Cartan form $\omega^k$ takes values in $J^{k-1}A$, and is \emph{not} a connection on $J^k\Sigma$ in the sense of Lemma~\ref{lem:various_conn_pov}. All the statements below hold on the infinite jet and do not have a naive finite-order counterpart obtained by replacing $\infty$ with a poisitive integer $k\in \mathbb{N}$.
\end{remark}

Since
\[
\pr^*\omega^k = d\pr|_{A^k}\circ\omega^{k+1}
\]
the sequence of forms $(\omega^k)_{k\geq 1}$ canonically corresponds to a form $\omega^\infty$ on the profinite Lie groupoid $J^\infty\Sigma$ with values in $J^\infty A$, which inherits the property
\[
(\bis^\infty)^*\omega^\infty = 0 \text{ if and only if } \bis^\infty = j^\infty\bis.
\]

The profinite distribution $\ker(\omega^\infty)$ -- which canonically corresponds to the sequence $(\ker(\omega^k))_{k\geq 1}$ -- is complementary to the fibers of the source projection $s:J^\infty\Sigma \to M$ and involutive as a consequence of the following two properties, called the \textbf{prolongation conditions}:
\begin{itemize}
    \item $d\pr\left( \ker (\omega^{k+1}) \cap \ker (ds)\right) = 0$;
    \item $[d\pr\ker(\omega^{k+1}),d\pr\ker(\omega^{k+1})]\subset \ker(\omega^k)$.
\end{itemize}

Finally, the forms $\omega^k$'s are all multiplicative -- see~\cite[Proposition 6.1.14]{Sal13}; it follows that $\omega^\infty$ is multiplicative as well.

    The profinite connection $D^\infty$ is constructed out in a completely analogous way out of the canonical \textbf{Spencer operators} 
    \[
    D^k: \mathfrak{X}(M)\times \Gamma(J^k A) \to \Gamma(J^{k-1} A).
    \]
    on finite order jets, which are the infinitesimal counterpart of the forms $\omega^k$~\cite[\S 5.1]{Sal13},~\cite{CrSalStr2012}.

Notice that, in general, $(J^\infty A, D^\infty)$ and $(J^\infty\Sigma, \omega^\infty)$  are out of the scope of Propositions \ref{prop:flat_Cartan_action_global} and \ref{prop:flat_Cartan_action}, because the usual existence and uniqueness results for ODEs fail in the profinite setting.
Below, $\mathrm{Bis}_\mathrm{loc}(\Sigma)$ denotes the set of local bisections of the Lie groupoid $\Sigma$. Notice that $\mathrm{Bis}_\mathrm{loc}(\Sigma)$ is the arrow space of a groupoid
\[
\mathrm{Bis}_\mathrm{loc}(\Sigma) \tto \mathrm{Top}(X)
\]
over the topology of $X$, where the source and target of a bisection $\bis:U \to \Sigma$ are respectively $U$ and $\mathrm{Im}(t\circ \bis)$, and the multiplication is induced by the product of bisections.
\begin{definition}\label{def:Pseudogroup}
    A \textbf{pseudogroup} supported by $\Sigma$ is a subset
    \[
    \Gamma \subset \mathrm{Bis}_\mathrm{loc}(\Sigma)
    \]
    that is a subgroupoid and satisfies the following sheaf-like condition: if $\bis \in\mathrm{Bis}_\mathrm{loc}(\Sigma)$, then
    \[
    \bis \in \Gamma \text{ if and only if } \bis|_{U} \in \Gamma \text{ for all $U\in \mathrm{dom}({\bis})$.}
    \]
\end{definition}
Notice that, if $\Gamma$ is a pseudogroup supported by $\Sigma\tto M$, then for all $k$ the subset
\[
J^k\Gamma =\{j^k\bis_x:\ \bis \in \Gamma,\ x\in \mathrm{dom}(\bis)\}\in J^k\Sigma
\]
is in fact a subgroupoid.
\begin{definition}\label{def:LiePseudogroup}
    A \textbf{Lie pseudogroup} supported by $\Sigma$ is a pseudogroup $\Gamma$ supported by $\Sigma$ such that
    \begin{itemize}
        \item for all $k\geq 1$,
        \[
        J^k\Gamma \subset J^k\Sigma
        \] is a Lie subgroupoid;
        \item for all $k\geq 1$,
        \[
        \pr:J^{k+1}\Gamma \to J^k\Gamma
        \] is a surjective submersion;
        \item it holds
        \[
        J^{\infty}\Gamma \cong \lim\limits_{\leftarrow}J^k\Gamma.
        \]
    \end{itemize}
\end{definition}
Equivalently, the definition above can be formulated by saying that $J^\infty\Gamma$ is a profinite Lie subgroupoid of $J^\infty\Sigma$, see \cite{AccCr2023}. In particular, for a Lie pseudogroup $\Gamma$ supported by $\Sigma$, the infinite jet groupoid $J^\infty\Gamma$ carries a canonical profinite Lie groupoid structure, and the following proposition is almost tautological.

\begin{proposition}\label{prop:pseudogroup_restriction}
    Let $\Gamma$ be a Lie pseudogroup supported by $\Sigma$. The Cartan form $\omega^\infty$ on $J^\infty\Sigma$ restricts to a flat Cartan connection on $J^\infty\Gamma$.
\end{proposition}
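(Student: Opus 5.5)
The restriction to $J^\infty\Gamma$ will be handled one tower level at a time, transporting each defining property of $\omega^\infty$ through the inclusions $J^k\Gamma\hookrightarrow J^k\Sigma$. Since $J^\infty\Gamma=\varprojlim J^k\Gamma$ is a profinite Lie subgroupoid, its Lie algebroid is $J^\infty A_\Gamma:=\varprojlim \mathrm{Lie}(J^k\Gamma)\subset J^\infty A$. The first task is to show that the pullback of $\omega^\infty$ along $i:J^\infty\Gamma\hookrightarrow J^\infty\Sigma$ takes values in $t^*J^\infty A_\Gamma$, and not merely in $t^*J^\infty A$. The restricted form is then a candidate connection $\omega^\infty_\Gamma\in\Omega^1(J^\infty\Gamma,t^*J^\infty A_\Gamma)$. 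It restricts to the Maurer-Cartan form on $\ker(ds)$, because $\omega^\infty$ does on $J^\infty\Sigma$ and right translations preserve the subgroupoid.

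\textbf{Step 1: tangency of the kernel.} Fix a point $j^\infty_x\bis\in J^\infty\Gamma$ with $\bis\in\Gamma$. By Proposition \ref{prop:CartanInfJets}, the holonomic bisection $y\mapsto j^\infty_y\bis$ is tangent to $\ker(\omega^\infty)$. Its image lies in $J^\infty\Gamma$ because $\Gamma$ is a pseudogroup and every restriction of $\bis$ lies in $\Gamma$. Its tangent space at $j^\infty_x\bis$ is a full complement to $\ker(ds)$ inside $\ker(\omega^\infty)$. Hence $\ker(\omega^\infty)_{j^\infty_x\bis}\subset T(J^\infty\Gamma)$, and this inclusion is the key observation. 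It implies $T(J^\infty\Gamma)=\ker(\omega^\infty)\oplus\ker(ds|_{J^\infty\Gamma})$. Since $\omega^\infty$ equals $\omega_{\mathrm{MC}}$ on $\ker(ds)$ and $\ker(ds|_{J^\infty\Gamma})$ is the right translate of $J^\infty A_\Gamma$, the image of $i^*\omega^\infty$ lies in $t^*J^\infty A_\Gamma$. The kernel of the restricted form is exactly $\ker(\omega^\infty)$, so it is complementary to the source fibres. I expect this step to be the main obstacle. The argument is cleanest at the level of the tower: one checks at each finite level that $\ker(\omega^{k+1})$ at points of $J^{k+1}\Gamma$ projects via $d\pr$ into $T(J^k\Gamma)$, using holonomic lifts of elements of $\Gamma$. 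One also has to make sure this is compatible with the normal pf-atlas conventions of \cite[Appendix A]{AccCr2023}.

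\textbf{Step 2: multiplicativity and flatness.} Multiplicativity follows from Lemma \ref{lem:multipl_equiv_notion}, since the multiplicativity equation for $\omega^\infty$ holds on $J^\infty\Sigma^{(2)}$ and therefore restricts to $J^\infty\Gamma^{(2)}$. Equivalently, $\ker(\omega^\infty_\Gamma)=\ker(\omega^\infty)|_{J^\infty\Gamma}$ is closed under $dm$ and $di$ because $\ker(\omega^\infty)$ is. For flatness, $\ker(\omega^\infty)$ is involutive on $J^\infty\Sigma$ by the prolongation conditions. By Step 1 its restriction to the submanifold $J^\infty\Gamma$ is tangent to it. Brackets of vector fields tangent to both the distribution and the submanifold therefore stay in the restricted distribution, so $\Curv^{\omega^\infty_\Gamma}=0$. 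Alternatively, the leaves through points of $J^\infty\Gamma$ are locally the holonomic images $j^\infty\bis$ with $\bis\in\Gamma$, and these lie in $J^\infty\Gamma$.
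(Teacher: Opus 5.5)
Your proposal is correct. It is essentially the argument the paper has in mind: the paper gives no proof, calling the statement almost tautological once $J^\infty\Gamma$ is known to be a profinite Lie subgroupoid of $J^\infty\Sigma$. Your Step 1 makes explicit the one ingredient this hides, namely $\ker(\omega^\infty)_{j^\infty_x\bis}=d(j^\infty\bis)(T_xM)\subset T(J^\infty\Gamma)$. This follows from $\ker(\omega^\infty)\cap\ker(ds)=0$ and $ds\circ d(j^\infty\bis)=\mathrm{id}$ with no dimension count, so it is valid in the pf-setting. The tangency is genuinely needed, because being a Lie subgroupoid is not enough: for $\mathbb{R}^2\times\mathbb{R}\to\mathbb{R}$ with $\mathrm{pr}_1^*\omega_{\mathrm{MC}}$, the kernel is not tangent to the subbundle of lines spanned by $(\cos x,\sin x)$ away from the unit section, and there the pulled-back form is injective. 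Once the tangency holds, the connection property, multiplicativity and flatness restrict exactly as you describe.
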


We will keep the notation $\omega^\infty$ for the restriction of the Cartan form to $J^\infty\Gamma$. We will denote by $A^k_\Gamma$ the Lie algebroid of the $k$-jet groupoid $J^k\Gamma$ and by $A^\infty_\Gamma$ the profinite Lie algebroid of $J^\infty\Gamma$. Of course, $A^k_\Gamma\subset J^kA$ and $A^\infty_\Gamma\subset J^\infty\Gamma$ are (profinite) sub-algebroids. Notice that, while Proposition \ref{prop:flat_Cartan_action} does not hold in the profinite setting, formula \eqref{eq:torsion_bracket} applied to
$D^\infty$ still defines a fiberwise Lie bracket on $A^\infty_\Gamma$ \cite{AccCr2023}.

When $\Sigma = M\times M$ is the pair groupoid, local bisections correspond to locally defined diffeomorphisms of $M$, and a Lie pseudogroup $\Gamma$ supported by $\Sigma$ is a classical Lie pseudogroup of transformations of $M$. Then, the fiber at $x\in M$ of the algebroid $A^\infty_\Gamma$ can be realized as 
the space of \textbf{formal $\Gamma$-vector fields} at $x$ -- i.e. infinite jets of vector fields defined around $x$ whose flow is by elements of $\Gamma$. The fiberwise Lie bracket \ref{eq:torsion_bracket} on the fiber of $A^\infty_\Gamma$at $x\in M$ is then
\[
\{j^\infty_x X,j^\infty_x Y\} =j^\infty_x[X,Y]
\]
for every two $\Gamma$-vector fields $X$ and $Y$ defined around $x$.

The infinite jet groupoid $J^\infty\Gamma$ of a Lie pseudogroup $\Gamma$ is
thus a profinite Lie groupoid and, as such, comes equipped with the canonical
Cartan connection $\omega^\infty$ of
Proposition~\ref{prop:CartanInfJets}, which is flat.
This example has been studied extensively in \cite{AccCr2023}; there, the authors also investigated principal $(J^\infty\Gamma,\omega^\infty)$-bundles $(P^\infty,\theta^\infty)$, under the additional assumption that $\theta^\infty$ is flat (in the terminology of \cite{AccCr2023}, such bundles are called flat $(J^\infty\Gamma,\omega^\infty)$-structures). They are relevant as a model for (formal) pseudogroup structures on manifolds. We refer the reader to \cite{Catt2020}, \cite{AccCr2023} for more details; here, we limit ourselves to comment upon the characteristic classes constructed in \cite{AccCr2023}, and how they relate to the construction in the present paper. 

Since we are dealing with flat bundles, $\mc_{\theta^\infty}$ vanishes and, as in Proposition \ref{prop:anchored_form_connection}, the map
\[
\Theta^\infty:  TP \to TM\bowtie A_\Gamma^\infty
\]
is a map of Lie algebroids, thus inducing the map of dga's
\[
(\Theta^\infty)^*: (\Omega^\bullet(TM\bowtie A_\Gamma^\infty),d_\oplus) \to (\Omega^\bullet(P^\infty),d_\mathrm{dR}).
\]
\begin{remark}\label{rmk:secondary_classes}
It is worth stressing the analogy with the classical theory of secondary characteristic classes for flat principal group bundles: when a principal $G$-bundle $P$ over $X$ admits a flat connection $\theta$, its Chern-Weil map vanishes identically in positive polynomial degrees and one has instead the characteristic map
\[
\kappa^P_\mathrm{flat}: H^\bullet(\mathfrak{g},K) \to H^\bullet_\mathrm{dR}(X)
\]
defined on the Lie algebra cohomology of $\mathfrak{g}$ relative to a maximal compact subgroup $K$. The map simply evaluates a representative on the connection $\theta$, and, conceptually, it is obtained from the map of dga's
\[
(C^\bullet(\mathfrak{g}),d_\mathrm{CE}) \to (\Omega^\bullet(P),d_\mathrm{dR})
\]
after passing to $K$-basic cochains.


\end{remark}
When dealing with (non-transitive) Lie groupoids, there is no satisfactory analogue of maximal compact subgroups. Thus, it is not immediately clear how to restrict the map
\[
(\Theta^\infty)^*: (\Omega^\bullet(TM\bowtie A_\Gamma^\infty),d_\oplus) \to (\Omega^\bullet(P^\infty),d_\mathrm{dR}).
\]
to obtain a map with values in the deRham cohomology of $X$. In \cite{AccCr2023}, such a map is obtained by working at the level of the Lie groupoid $J^\infty\Gamma$, rather than its algebroid $A_\Gamma^\infty$. 

Recall that the \textbf{Bott-Shulman complex} of a Lie groupoid $\Sigma\tto M$ is the complex
\[
(\Omega^\bullet(N^\bullet \Sigma), d_\mathrm{BS})
\]
consisting of differential forms on the nerve $N^\bullet\Sigma$ of $\Sigma$, equipped with the differential $d_\mathrm{BS}$ obtained as the total differential in the double complex
\[
\begin{tikzcd}
    \Omega^k(N^{p+1}\Sigma) \arrow[r,"d_\mathrm{dR}"] & \Omega^{k+1}(N^{p+1}\Sigma)\\
    \Omega^k(N^p\Sigma)\arrow[r,"d_\mathrm{dR}"]\arrow[u,"\delta"] \arrow[r,"d_\mathrm{dR}"] & \Omega^{k+1}(N^p\Sigma)\arrow[u,"\delta"] 
\end{tikzcd}
\]
where $\delta=\sum\limits_{i}(-1)^id_i^*$ is the \textbf{simplicial differential} of $\Sigma$ -- that is, the maps $d_i$ are the face maps of the nerve. As shown in \cite{AccCr2023}, a flat Cartan connection $\omega$ on $\Sigma$ defines another complex
called the \textbf{Haefliger bicomplex},
\[
(\Omega^\bullet(N^\bullet \Sigma,\omega),d_\mathrm{Hae}) 
\]
consisting of forms that are horizontal with respect to (the connection on the nerve induced by) $\omega$; $d_\mathrm{Hae}$ is obtained by replacing the de Rham differential with the differential on horizontal form. For a Lie group with its Maurer-Cartan form, the Haefliger complex is precisely the complex of differentiable group cochains.

The Haefliger bicomplex has an infinitesimal counterpart; Proposition 5.15 in \cite{AccCr2023} identifies it with the Chevalley--Eilenberg complex of $TM\bowtie A$. Then, Proposition 6.5 in \cite{AccCr2023} constructs a Van Est type map
\[
H^\bullet_\mathrm{Hae}(\Sigma,\omega) \to H^\bullet(TM\bowtie A)
\]
from the cohomology of the Haefliger bicomplex into $H^\bullet(TM\bowtie A)$. In fact, when $\Sigma$ is transitive and flat, and $K\subset \Sigma_x$ is a compact Lie subgroup of the isotropy Lie group at some $x\in M$ such that $\Sigma_x/K$ is contractible, the above map factors through an isomorphism 
\[
H^\bullet_\mathrm{Hae}(\Sigma,\omega) \to H^\bullet(\mathfrak{a}_x, K)\quad \text{(Theorem $6.7$ in \cite{AccCr2023})}
\]
where $\mathfrak{a}_x$ is the isotropy Lie algebra of $TM\bowtie A$ at $x$.

The discussion above can be applied to $(J^\infty\Gamma,\omega^\infty)$ by using the machinery of profinite differential geometry \cite{AccCr2023}. Then, the van Est map of~\cite[\S 6.1]{AccCr2023} shows that, for any flat $(J^\infty\Gamma,\omega^\infty)$-bundle $(P^\infty,\theta^\infty)$, the connection $\theta^\infty$ induces a map
\[
\kappa^{P^\infty}_\mathrm{flat}: H^\bullet_\mathrm{Hae}(J^\infty\Gamma,\omega^\infty)\to H^\bullet_\mathrm{dR}(X)
\]
When the pseudogroup $\Gamma$ is transitive and there exists a compact subgroup $K\subset (J^\infty\Gamma)_x$ of the isotropy $(J^\infty\Gamma)_x$ at $x\in M$ with the property that $(J^\infty\Gamma)_x/K$ is contractible, then one ends up with the characteristic map 
\[
\kappa^{P^\infty}_\mathrm{flat}: H^\bullet(\mathfrak{a}_x, K)\to H^\bullet_\mathrm{dR}(X),
\]
recovering characteristic classes of $\Gamma$-foliations \cite{BottHaefliger}. 

\printbibliography

\end{document}